\newcommand{\ko}{\: , \;}
\numberwithin{equation}{subsection}
\newtheorem{theorem}{Theorem}[section]
\numberwithin{equation}{theorem}
\newtheorem{definition}[theorem]{Definition}
\newtheorem{examples}[theorem]{Examples}
\newtheorem{classification-theorem}[subsubsection]{Classification Theorem}
\newtheorem{decomposition-theorem}[subsubsection]{Decomposition Theorem}
\newtheorem{proposition-definition}[theorem]{Proposition-Definition}
\newtheorem{definition-proposition}[theorem]{Definition-Proposition}
\newtheorem{example-definition}[theorem]{Example-Definition}
\newtheorem{periodicity-conjecture}[subsubsection]{Periodicity Conjecture}
\newtheorem{lemma}[theorem]{Lemma}
\newtheorem{proposition}[theorem]{Proposition}
\newtheorem{corollary}[theorem]{Corollary}
\newtheorem{example}[theorem]{Example}
\newtheorem{remark}[theorem]{Remark}
\newtheorem{notation}[theorem]{Notation}
\newtheorem*{theoremA}{Theorem~A}
\newtheorem*{theoremB}{Theorem~B}
\newtheorem*{theoremC}{Theorem~C}
\newtheorem*{theoremD}{Theorem~D}
\newtheorem{Definition-Proposition}[theorem]{D\'efinition-Proposition}
\newcommand{\reminder}[1]{}
\renewcommand{\mod}{\mathrm{mod}}
\newcommand{\rep}{\mathrm{rep}}
\newcommand{\inj}{\mathrm{inj}\,}
\newcommand{\Mod}{\mathrm{Mod}\,}
\newcommand{\CM}{\mathrm{CM}}
\newcommand{\proj}{\mathrm{proj}\,}
\newcommand{\per}{\mathrm{per} }
\newcommand{\thick}{\mathrm{thick} }
\newcommand{\pvd}{\mathrm{pvd} }
\newcommand{\add}{\mathrm{add} }
\renewcommand{\Im}{\mathrm{Im} }
\newcommand{\tr}{\mathrm{tr}}
\newcommand{\Cone}{\mathrm{Cone}}
\newcommand{\dgcat}{\mathrm{dgcat}}
\newcommand{\Hqe}{\mathrm{Hqe}}
\renewcommand{\rep}{\mathrm{rep}}
\newcommand{\pretr}{\mathrm{pretr} }
\newcommand{\ex}{\mathrm{ex} }
\newcommand{\colim}{\mathrm{colim}}
\newcommand{\cok}{\mathrm{cok} }
\renewcommand{\ker}{\mathrm{ker} }
\newcommand{\obj}{\mathrm{obj} }
\newcommand{\Q}{\mathcal{Q}}
\newcommand{\iso}{\xrightarrow{_\sim}}
\newcommand{\Sq}{\mathrm{Sq}}
\newcommand{\Id}{\mathrm{id}}
\newcommand{\Ex}{\mathrm{Ex}}
\newcommand{\rel}{\mathrm{rel}}
\newcommand{\Ab}{\mathrm{Ab}}
\newcommand{\Def}{\mathrm{def}\kern 0.1em}
\newcommand{\ctr}{\mathrm{ctr}}
\newcommand{\dg}{\mathrm{dg}}
\renewcommand{\H}{\mathcal H}
\newcommand{\D}{\mathcal {D}}
\newcommand{\A}{\mathcal {A}}
\newcommand{\B}{\mathcal {B}}
\newcommand{\C}{\mathcal {C}}
\newcommand{\E}{\mathcal {E}}
\newcommand{\F}{\mathcal {F}}
\newcommand{\cn}{\mathrm{cn}}
\newcommand{\J}{\mathcal {J}}
\newcommand{\I}{\mathcal {I}}
\newcommand{\N}{\mathcal {N}}
\newcommand{\T}{\mathcal T}
\renewcommand{\P}{\mathcal P}
\newcommand{\X}{\mathcal X}
\newcommand{\Y}{\mathcal Y}
\renewcommand{\sp}{\mathrm{sp}}
\newcommand{\Fun}{\mathrm{Fun}}
\newcommand{\Hom}{\mathrm{Hom}}
\newcommand{\RHom}{\mathrm{RHom}}
\newcommand{\Ext}{\mathrm{Ext}}
\newcommand{\Mor}{\mathrm{Mor}}
\renewcommand{\S}{\mathcal S}
\renewcommand{\phi}{\varphi}
\renewcommand{\tilde}[1]{\widetilde{#1}}
\begin{document}
\title[Exact dg categories II : The Embedding Theorem]{Exact dg categories II : The Embedding Theorem}
\author[Xiaofa Chen]{Xiaofa Chen}
\address{University of Science and Technology of China, Hefei, P.~R.~China}
\email{cxf2011@mail.ustc.edu.cn}
\subjclass[2020]{18G35, 18G25, 18E20, 16E30, 16E45}
\date{\today}
\keywords{Extriangulated category, exact dg category, dg derived category, 0-Auslander category, extension-closed subcategory.}

\begin{abstract}
For an exact dg category $\A$, we introduce its bounded dg derived category $\D^b_{dg}(\A)$ and establish the universal exact morphism from $\A$ to $\D^b_{dg}(\A)$. 
We prove that the dg quotient of an exact dg category by a subcategory of projective-injectives carries a canonical exact structure. 
 We show that exact dg categories reproduce under tensor products and functor dg categories.
We apply our results to 0-Auslander extriangulated categories and confirm a conjecture by Fang--Gorsky--Palu--Plamondon--Pressland for the algebraic case.
\end{abstract}
\maketitle

\tableofcontents

\section{Introduction} 

In \cite{Chen24} the notion of exact dg category is proposed. 
Roughly speaking, an exact structure on an additive dg category $\A$
is a class $\mathcal S$ of homotopy short exact sequences in $\A$, satisfying certain axioms analogous
to those of Quillen. 
For exact dg categories $(\A,\mathcal S)$ and $(\A',\mathcal S')$,
a morphism $F:\A\rightarrow \A'$ in the localisation $\Hqe$ of the category of dg categories with respect to quasi-equivalences is exact if it sends the objects in $\mathcal S$ to objects in $\mathcal S'$.
The subcategory of $\Hqe$ consisting of exact dg categories with exact morphisms is denoted by $\Hqe_{\ex}$.
The main motivation is to provide dg enhancements for Nakaoka--Palu's extriangulated categories. 
One of the main results in~\cite{Chen24} is that for an exact dg category $(\A,\mathcal S)$, the category $H^0(\A)$ carries a canonical extriangulated structure $(H^0(\A),\mathbb E,\mathfrak s)$, cf.~\cite[Theorem 4.26]{Chen24}.
 In analogy with the notion of algebraic triangulated category \cite{Keller06d},  an extriangulated category of this form will be called {\em algebraic}.
  
In this paper, we prove several theorems concerning exact dg categories.
Our first result relates exact dg categories to pretriangulated dg categories.
 \begin{theoremA}[Theorem~\ref{main}]\label{intro:main}
Let $(\A,\S)$ be a small exact dg category. 
There exists a universal exact morphism $F:\A\rightarrow \D^b_{\dg}(\A,\S)$
in $\Hqe$ from $\A$ to a pretriangulated dg category $\D^b_{\dg}(\A,\S)$. 
If moreover $\A$ is connective, this morphism satisfies:
\begin{itemize}
\item[1)]It induces a quasi-equivalence from $\tau_{\leq 0}{\A}$ to $\tau_{\leq 0}\D'$ for an extension-closed dg subcategory $\D'$ of $\D^b_{\dg}(\A,\S)$;
\item[2)]It induces a natural bijection $\mathbb E(C,A)\xrightarrow{\sim} \Ext^1_{\D^b(\A,\S)}(FC,FA)$ for each pair of objects $C,A$ in $H^0(\A)$ where $\D^b(\A,\S)=H^0(\D^b_{\dg}(\A,\S))$.
\end{itemize}
We call $\D^b_{\dg}(\A,\S)$ the {\em bounded dg derived category} of $(\A,\S)$. 
\end{theoremA}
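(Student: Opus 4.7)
The plan is to construct $\D^b_{\dg}(\A,\S)$ as a Drinfeld--Keller dg quotient of the pretriangulated hull of $\A$. Concretely, take $\pretr(\A)$, identify the thick dg subcategory $\N$ generated by the ``acyclic'' objects arising from conflations in $\S$ (iterated totalisations of complexes whose three-term subsequences are conflations in $\S$), and set
\[
\D^b_{\dg}(\A,\S) := \pretr(\A)/\N,
\]
with $F$ the composition $\A \hookrightarrow \pretr(\A) \to \pretr(\A)/\N$. Exactness of $F$ is built in: each conflation $A \to B \to C$ yields a distinguished triangle $FA \to FB \to FC \to \Sigma FA$ in $H^0(\D^b_{\dg}(\A,\S))$ because the associated generator of $\N$ becomes zero in the quotient. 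For universality, any exact morphism $G \colon \A \to \B$ into a pretriangulated dg category extends uniquely in $\Hqe$ to $\tilde G \colon \pretr(\A) \to \B$ by the universal property of the pretriangulated hull; exactness forces the generators of $\N$ to become contractible in $\B$, so $\tilde G$ descends to the dg quotient, giving the required factorisation.

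For the connective case, let $\D' \subseteq \D^b_{\dg}(\A,\S)$ be the smallest extension-closed dg subcategory containing the image of $F$. Essential surjectivity of $\tau_{\leq 0}\A \to \tau_{\leq 0}\D'$ is automatic by the choice of $\D'$. The quasi-fully-faithful part is the main technical step: using the description of morphisms in a dg quotient as right fractions $A \xleftarrow{\sim} X \to B$ whose backward arrow has cone in $\N$, I would systematically simplify such roofs using the axioms of exact dg categories from \cite{Chen24}, especially those allowing the completion of partial conflation data. Connectivity of $\A$ keeps all relevant data in non-positive degrees and guarantees that the reductions terminate, so that every morphism in $\tau_{\leq 0}\D'$ is represented, uniquely up to quasi-isomorphism, by a morphism of $\A$.

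Part (2) follows by combining part (1) with the description of $\mathbb E(C,A)$ from \cite[Theorem~4.26]{Chen24} as equivalence classes of conflations from $A$ to $C$: each such conflation yields a distinguished triangle in $\D^b(\A,\S)$ and hence a class in $\Ext^1_{\D^b(\A,\S)}(FC,FA)$. Naturality is immediate; well-definedness uses that a morphism of conflations becomes an isomorphism of triangles in the quotient; surjectivity uses part (1) to lift an arbitrary representative triangle with outer terms in the image of $F$ back to a conflation in $\A$; injectivity uses the extriangulated axioms together with the fact that an isomorphism of triangles can be realised by a morphism of conflations in $\A$. The principal obstacle throughout is the roof-reduction underlying part (1): the classical fully-faithfulness argument for the embedding of an exact category into its bounded derived category must be substantially enhanced to track the higher homotopies intrinsic to the dg setting.
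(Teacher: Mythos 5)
Your construction of $\D^b_{\dg}(\A,\S)$ as the Drinfeld quotient $\pretr(\A)/\N$ by the subcategory generated by totalisations of conflations, and your argument for the universal property, coincide with the paper's (Lemma~\ref{univer}). However, for parts 1) and 2) you have correctly located the difficulty --- quasi-fully-faithfulness of $\tau_{\leq 0}\A\to\tau_{\leq 0}\D'$ and surjectivity onto $\Ext^1$ --- but the ``systematic roof reduction using the axioms'' you propose is not an argument: you give no mechanism for controlling the cone of the denominator of a roof, and connectivity alone does not make such a reduction terminate. The missing idea is structural: one must first prove that the \emph{defective} objects (cokernels of $H^0(B)\to H^0(C)$ for deflations $j:B\to C$, equivalently totalisations of conflations) form a \emph{wide} subcategory of the heart $\mathcal H\simeq\Mod H^0(\A)$ (Lemma~\ref{wide}); this is a genuinely nontrivial use of the axioms $\mathrm{Ex}0$--$\mathrm{Ex}2$ together with the diagram lemmas of \cite{Chen24}, and it is what identifies $\N$ as exactly the objects of $\tr(\A)$ with defective cohomology and equips $\N$ with a bounded t-structure (Corollary~\ref{t}). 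With that in hand, the Hom-bijections $\Hom_{\tr(\A)}(C,\Sigma^iA)\iso\Hom_{\D^b(\A,\S)}(C,\Sigma^iA)$ for $i\leq 0$ follow from a general statement about Verdier quotients along subcategories carrying a torsion pair (Lemma~\ref{lem:tstructureff}), using that representables are left-orthogonal to $\N^{\geq 1}$ and suitably right-orthogonal to $\N^{\leq 0}$; none of this appears in your sketch.

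The same t-structure is what drives the rest: for extension-closedness of the image and surjectivity of $\mathbb E(C,A)\to\Ext^1_{\D^b(\A,\S)}(C,A)$, one truncates the cone $N'$ of the denominator of a roof $C\to\Sigma A$ down to a single object of the heart of $\N$, i.e.\ to the totalisation of one conflation $X'$, and then transports $X'$ by pullback along $C\to C'$ and pushout along $A'\to A$ to produce a conflation with ends $A$ and $C$ realising the given class. Your claim that surjectivity ``uses part (1) to lift an arbitrary representative triangle back to a conflation'' presupposes exactly this reduction, so it is circular as stated. (For injectivity your route via morphisms of conflations is workable but heavier than needed: in the paper, if $[X]$ dies in $\Ext^1$ then $\overline f:A\to B$ becomes a split monomorphism in $\D^b(\A,\S)$, hence in $H^0(\A)$ by the already-established fully-faithfulness, and one concludes by \cite[Proposition~4.19]{Chen24}.) In short: the skeleton is right, but the load-bearing content --- the wide-subcategory lemma, the bounded t-structure on $\N$, and the torsion-pair lemma for the quotient --- is absent, and without it parts 1) and 2) are not proved.
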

We construct $\D^b_{\dg}(\A,\S)$ as the dg quotient
of the pretriangulated hull $\pretr(\A)$ by a full dg subcategory $\N$, which generalises the dg category of acyclic
complexes over a Quillen exact category. 
As a consequence, for a connective exact dg category $(\A,\mathcal S)$, the morphism $F:\A\rightarrow \D^b_{\dg}(\A,\S)$ induces a canonical isomorphism $K_{0}(H^0(\A),\mathbb E,\mathfrak s)\iso K_0(\D^b(\A,\S))$ between Grothendieck groups, cf.~Proposition \ref{prop:Grothendieckgroups}.
Based on Theorem~\hyperref[intro:main]{A}, we obtain a characterisation of algebraic extriangulated categories, cf.~Definition-Proposition~\ref{algebraic}.
 Using a general result concerning $\delta$-functors with weakly effaceable bimodules, cf.~Corollary~\ref{cor:effaceablebimodule}, we obtain that the morphism $F$ also induces natural bijections between higher extension groups, cf.~Proposition~\ref{higher}.


For an extriangulated category $\C$ with a subcategory $\P$ consisting of (not necessarily all) projective-injective objects in $\C$, 
Nakaoka--Palu showed that the ideal quotient $\C/[\P]$ has the structure of an extriangulated category, 
induced from that of $\C$, cf.~\cite[Proposition 3.30]{NakaokaPalu19}. 
Our second result enhances theirs. 
\begin{theoremB}[Theorem~\ref{quot}]\label{intro:quot}
Let $(\A,\S)$ be a small connective exact dg category and $\P$ a full dg subcategory of $\A$ consisting of projective-injective objects in $\A$.
Let $\mathcal T_{\dg}$ be the canonical dg enhancement of $\D^b(\A,\S)/\tr(\P)$.
Then the dg quotient $\A/\P$ carries a canonical exact dg structure $(\A/\P,\overline{\S})$ induced from that of $\A$ 
and its dg derived category is quasi-equivalent to $\mathcal T_{\dg}$.
\end{theoremB}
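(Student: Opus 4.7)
The plan is to proceed in two stages: first, construct and verify the induced exact dg structure $(\A/\P, \overline{\S})$; second, identify its bounded dg derived category with $\mathcal T_{\dg}$.

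For the first stage, I would define $\overline{\S}$ to be the class of homotopy short exact sequences in $\A/\P$ arising as images of sequences in $\S$ under the canonical dg quotient $Q \colon \A \to \A/\P$. The axioms of an exact dg category from \cite{Chen24} would then be verified directly. Two observations make this tractable: objects of $\P$ are both projective and injective with respect to $\S$, so any conflation with all terms in $\P$ splits and becomes zero in $\A/\P$; and homotopy pushouts and pullbacks of conflations descend through $Q$ because $Q$ preserves the relevant mapping cones up to homotopy. The analogous statement on the level of homotopy categories, namely the Nakaoka--Palu result \cite[Proposition 3.30]{NakaokaPalu19} that $H^0(\A)/[\P]$ carries an induced extriangulated structure, serves as a consistency check and ensures the right identification of extension groups on $H^0$.

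For the second stage, I would use the universal property of Theorem~\hyperref[intro:main]{A} to construct a comparison morphism. The composite $\A \xrightarrow{F} \D^b_{\dg}(\A,\S) \to \mathcal T_{\dg}$ sends each object of $\P$ to an object isomorphic to zero in $\mathcal T_{\dg}$, so it factors in $\Hqe$ through $\A/\P$, giving $\bar F \colon \A/\P \to \mathcal T_{\dg}$. Since $\bar F$ sends $\overline{\S}$ to triangles of $\mathcal T_{\dg}$, it is exact; applying Theorem~\hyperref[intro:main]{A} to $(\A/\P, \overline{\S})$ then produces a dg functor $G \colon \D^b_{\dg}(\A/\P, \overline{\S}) \to \mathcal T_{\dg}$. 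For the inverse direction, recall from the construction sketched after Theorem~\hyperref[intro:main]{A} that $\D^b_{\dg}(\A,\S) = \pretr(\A)/\N$ for a suitable dg subcategory $\N$ of acyclic complexes, and analogously $\D^b_{\dg}(\A/\P, \overline{\S}) = \pretr(\A/\P)/\N'$. Since $\pretr(\A/\P) \simeq \pretr(\A)/\tr(\P)$ in $\Hqe$, this reduces to comparing $\thick(\N, \P)$ inside $\pretr(\A)$ with the preimage of $\N'$.

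The main obstacle I expect is precisely this comparison. Concretely, one must show that every acyclic complex for $(\A/\P, \overline{\S})$ lifts, modulo objects of $\tr(\P)$, to an acyclic complex for $(\A,\S)$; this is the relative analogue, for an arbitrary subcategory of projective-injectives, of the classical relation between Frobenius exact categories and their stable categories. Once this comparison is established, $G$ becomes a quasi-equivalence: essential surjectivity on $H^0$ holds because $\mathcal T_{\dg}$ is generated as a triangulated category by the image of $\A/\P$, and full faithfulness reduces to bijections on $\Ext$-groups, which follow from Theorem~\hyperref[intro:main]{A} part~2) together with its higher-extension counterpart.
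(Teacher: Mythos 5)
Your stage-two strategy---produce $G\colon \D^b_{\dg}(\A/\P,\overline{\S})\to\mathcal T_{\dg}$ from the universal property and then check it on $\Ext$-groups---has the right shape, but the proposal has a genuine gap: it never establishes the one computation on which everything else hinges, namely the behaviour of the morphism complexes under the two Verdier quotients. The paper first proves (Lemma~\ref{lem:quotientprojectiveinjective}, summarised in Proposition~\ref{prop:dgsingularitycategory}) that for $A,B\in\A$ the map $\Hom_{\tr(\A)/\tr(\P)}(A,\Sigma^nB)\to\Hom_{\D^b(\A,\S)/\tr(\P)}(A,\Sigma^nB)$ is bijective for $n\le 0$, that the left-hand spaces vanish for $n\ge 1$ and recover $H^0(\A)/[\P]$ for $n=0$, and that $\Hom_{\D^b(\A,\S)}(A,\Sigma^nB)\to\Hom_{\D^b(\A,\S)/\tr(\P)}(A,\Sigma^nB)$ is bijective for $n\ge 1$. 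These statements are proved by a roof calculus using the canonical co-t-structure on $\tr(\A)$, and they are precisely where the hypothesis that $\P$ consists of projective-injectives enters. Without them you cannot even assert $H^0(\A/\P)=H^0(\A)/[\P]$ (a dg quotient can a priori create or kill degree-zero morphisms), so the Nakaoka--Palu result is not merely a ``consistency check'': the identification it predicts is something you must actually prove.

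This gap propagates into both of your stages. In stage one you define $\overline{\S}$ as the image of $\S$ and propose to verify the axioms directly; but the claim that ``homotopy pushouts and pullbacks descend through $Q$'' is exactly what requires control of $\tau_{\le 0}\Hom_{\A/\P}$, and axiom Ex2 moreover demands pullbacks along arbitrary morphisms of $Z^0(\A/\P)$, not only along images of morphisms from $\A$. The paper sidesteps all of this: once the Hom computation shows that $\varphi\colon\A/\P\to\mathcal T_{\dg}$ is quasi-fully faithful with quasi-essential image closed under extensions, the exact structure is inherited for free by Example~\ref{exm:exactdg}~6), with no axiom checking. In stage two, the ``main obstacle'' you single out---lifting acyclic complexes of $(\A/\P,\overline{\S})$ modulo $\tr(\P)$, i.e.\ comparing $\thick(\N\cup\P)$ with the preimage of $\N'$---is a detour the paper never takes. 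It instead shows directly that the canonical morphism $\D^b_{\dg}(\A/\P,\overline{\S})\to\mathcal T_{\dg}$ is a quasi-equivalence: quasi-essentially surjective by generation, and quasi-fully faithful because for $n\ge 1$ the groups $\Ext^n_{\mathcal T_{\dg}}(A,B)\cong\Ext^n_{\D^b(\A,\S)}(A,B)$ (item 4) of Proposition~\ref{prop:dgsingularitycategory}) form a weakly effaceable $\delta$-functor on $H^0(\A)/[\P]$, so that Proposition~\ref{higher} identifies them with $\Ext^n$ of the bounded derived category of $(\A/\P,\overline{\S})$. Your closing sentence gestures at this argument but takes the isomorphism $\Ext^n_{\mathcal T_{\dg}}\cong\Ext^n_{\D^b(\A,\S)}$ for granted; that isomorphism is, once again, the missing Hom computation.
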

As a consequence, we obtain that for a Quillen exact category $\E$ and a full subcategory $\P_0$ of projective-injectives in $\E$, 
the dg quotient $\E/\P_0$ carries a canonical exact structure.
In~\cite[7.3.15]{Cisinski19}, it is proven that every $\infty$-category is equivalent to the localization of the nerve of a small category by a subcategory.
We have an analogous result in the context of exact dg categories: each connective exact dg category is exactly quasi-equivalent to 
the dg quotient $\E/\P_0$ of a Quillen exact category $\E$ by a full dg subcategory $\P_0$ of projective-injectives in $\E$, cf.~Corollary~\ref{cor:connectivequotient}.


Let $(\A,\mathcal S)$ and $(\B,\mathcal S')$ be small exact dg categories. 
Let $(\C,\S'')$ be another small exact dg category. 
A morphism $\mu: \A\otimes \B\rightarrow \C$ in $\Hqe$ is {\em biexact}, if for each object $A\in \A$ and $B\in \B$, the induced morphisms 
\[
\mu_{A,-}:\B\rightarrow \C,\;\;\mu_{-,B}:\A\rightarrow \C
\]
are both exact.
A famous slogan \cite[Remark 16]{Neeman20} by Amnon Neeman is: triangulated categories do not reproduce. 
Our third result reveals a key feature of exact dg categories.
\begin{theoremC}[Theorem~\ref{fun}, Proposition~\ref{prop:universalbilinear}]\label{intro:fun}
Exact dg categories do reproduce under tensor products and functor dg categories. 
More precisely:
\begin{itemize}
\item[1)] If $(\B,\S')$ is connective, then $\rep_{\dg}(\B,\C)$ carries a canonical exact structure. 
\item[2)]If both $(\A,\S)$ and $(\B,\S')$ are connective, then there exists the universal biexact morphism $\A\otimes \B\rightarrow \A\boxtimes \B$.
\end{itemize}
\end{theoremC}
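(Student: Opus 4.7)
The plan is to handle part 1) first and then reduce part 2) to it via a tensor--hom adjunction in $\Hqe$.

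For part 1), I would define the candidate exact structure on $\rep_{\dg}(\B,\C)$ \emph{pointwise}: a homotopy short exact sequence $F' \to F \to F''$ is one such that for every object $B\in \B$, the evaluation $F'(B)\to F(B)\to F''(B)$ is a homotopy short exact sequence in $(\C,\S'')$. I would then verify the axioms of an exact dg category from~\cite{Chen24} by reducing them to the pointwise situation in $\C$. Most axioms descend directly because the relevant homotopy bicartesian squares in $\rep_{\dg}(\B,\C)$ can be built pointwise in $\C$. Connectivity of $\B$ enters to guarantee that the $\tau_{\leq 0}$-truncation of $\rep_{\dg}(\B,\C)$ still captures the correct hom-structure and that conflations defined at the $H^0$-level assemble into a well-defined exact dg structure; without connectivity, negative-degree cohomological contributions can break the closure axioms.

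For part 2), I would proceed by universal property. A biexact morphism $\A\otimes \B \to \C$ amounts, by currying, to a dg morphism $\A \to \rep_{\dg}(\B,\C)$ which factors through the full sub-dg-category of exact functors and is itself exact for the structure from part 1). Thus the candidate $\A\boxtimes \B$ is characterised by
\[
\Hom_{\Hqe_{\ex}}(\A\boxtimes \B,\C) \cong \{\text{biexact morphisms } \A\otimes \B \to \C\}.
\]
To construct it, I would take the extension-closed dg subcategory of a suitable ambient pretriangulated dg category, such as $\D^b_{\dg}(\A,\S)\otimes^{\mathbb L} \D^b_{\dg}(\B,\S')$ or $\pretr(\A\otimes\B)$, generated by the image of $\A\otimes \B$ together with the cones of morphisms of the form $A\otimes b$ and $a\otimes B$, where $a,b$ range over deflations in $\A$ and $\B$ respectively; the induced exact structure then comes from Theorem~\hyperref[intro:main]{A}.

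The main obstacle is the verification of the exact-dg axioms for $\rep_{\dg}(\B,\C)$. Although the pointwise definition of conflations is intuitively natural, the axioms of~\cite{Chen24} require \emph{homotopy coherent} bicartesian squares, and these must be formed inside the functor dg category rather than just pointwise up to quasi-isomorphism. The key technical point is that $\rep_{\dg}(\B,-)$ commutes with the relevant homotopy (co)limits strictly enough, which is exactly what connectivity of $\B$ is designed to secure. Once part 1) is established, part 2) reduces to an adjunction argument together with a check that the extension-closed hull constructed above is independent of the chosen ambient pretriangulated dg category, which follows from the universal property of $\D^b_{\dg}(-,-)$.
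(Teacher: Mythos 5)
Your overall strategy coincides with the paper's: conflations in $\rep_{\dg}(\B,\C)$ are defined pointwise, and $\A\boxtimes\B$ is obtained as an extension closure inside an ambient pretriangulated dg category together with a currying argument. But in part 1) the entire mathematical content lies in the step you defer to ``connectivity of $\B$ securing that $\rep_{\dg}(\B,-)$ commutes with the relevant homotopy (co)limits'', and this is not a formal consequence of connectivity. To verify Ex2 and Ex2$^{op}$ one must actually exhibit an object of $\rep_{\dg}(\B,\C)$ whose evaluations realise the given pointwise homotopy pullbacks. The paper's Lemma~\ref{pointwise} does this by hand: it glues the pointwise defect objects $G(B)=\Sigma\Cone(s_B)$ into a single object $G$ of the heart of the canonical t-structure on $\D(\B^{op}\otimes\C)$, using that this heart is a module category over the $H^0$-categories, and then produces the comparison morphism $\Sigma V\to G$ from an $H^0$-level bimodule map via the adjunction $\Hom_{\D(\B^{op}\otimes\C)}(\Sigma V,G)\cong\Hom_{\D(\B^{e})}(\B,\RHom_{\C}(\Sigma V,G))$, exploiting that $\RHom_{\C}(\Sigma V,G)$ is concentrated in degrees $0$ and $1$; the desired vertex is $\Sigma^{-1}\Cone(u)$. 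A companion argument (Lemma~\ref{suff}) is needed for the converse direction, that a pointwise homotopy (co)cartesian square is homotopy (co)cartesian in the functor category. Without these two constructions your verification of the axioms does not go through.

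In part 2) there is a concrete error in the construction. You propose to close the image of $\A\otimes\B$ under extensions \emph{and} under cones of the morphisms $A\otimes b$ and $a\otimes B$ for deflations $a$, $b$. Since $\Cone(A\otimes b)\cong\Sigma(A\otimes B')$ with $B'$ the kernel of $b$, this adjoins genuine shifts of objects of the image, which do not lie in the extension closure; the curried comparison morphism to $\D^b_{\dg}(\tau_{\leq 0}\C,\S'')$ carries only the extension closure into $\tau_{\leq 0}\C$ (which is extension-closed in its derived category but not closed under cones), so the universal property fails for the enlarged category. The paper takes exactly the $\tau_{\leq 0}$-truncation of the extension closure of the image of $\A\otimes\B$ in $\pretr(\D^b_{\dg}(\A)\otimes\D^b_{\dg}(\B))$, with nothing added. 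Finally, your adjunction argument tacitly uses that the exact morphisms form a full dg subcategory of $\rep_{\dg}(\B,\C)$ carrying an exact structure; this is the paper's Lemma~\ref{lem:internalhom}, which asserts that $\rep^{\ex}_{\dg}(\B,\C)$ is extension-closed, and it needs proof (the paper deduces it from Theorem~\ref{main} by identifying totalizations of the rows of a $3\times 3$ diagram inside $\D^b_{\dg}(\C,\S'')$).
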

Let $\Hqe_{\ex}^{\cn}$ denote the full subcategory of $\Hqe_{\ex}$ consisting of connective exact dg categories.
Building upon Theorem~\hyperref[intro:fun]{C}, we show that a certain monoidal structure on $\Hqe_{\ex}^{\cn}$ is closed, cf.~Corollary~\ref{cor:exactadjunction}.

 Let $(\C,\mathbb E,\mathfrak s)$ be an extriangulated category.
 We denote by $\proj \C$ respectively $\inj \C$ its full subcategory of projectives respectively injectives.
 If the extriangulated category $\C$ is algebraic, then the ideal quotient $\C/[\inj \C\rightarrow\proj \C]$
is also algebraic, cf.~Proposition~\ref{prop:idealquotientinjectiveprojective}. 
When $\C$ is furthermore {\em $0$-Auslander} in the sense of \cite{GorskyNakaokaPalu23}, 
our last result confirms a conjecture by Fang--Gorsky--Palu--Plamondon--Pressland for
the algebraic case.
\begin{theoremD}[Theorem~\ref{thm:FGPPP}]\label{intro:thmD}
For each algebraic $0$-Auslander extriangulated category $\C$, we have
an equivalence of extriangulated categories
\[
\C/[\inj \C\rightarrow\proj \C]\iso \H^{[-1,0]}(\proj \C/[\I])
\]
where $\I$ is the full subcategory of $\C$ consisting of projective-injectives.
\end{theoremD}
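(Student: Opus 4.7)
The plan is to combine the dg enhancement provided by Theorem~A with the dg quotient construction of Theorem~B. Since $\C$ is algebraic, Definition-Proposition~\ref{algebraic} furnishes a small connective exact dg category $(\A,\S)$ with an exact equivalence $H^0(\A)\iso \C$. Let $\I_{\dg}\subseteq \A$ denote the full dg subcategory lifting the projective-injectives $\I\subseteq \C$. By Theorem~B applied to $(\A,\I_{\dg})$, the dg quotient $\A/\I_{\dg}$ carries a canonical exact dg structure whose bounded dg derived category is quasi-equivalent to the canonical dg enhancement $\T_{\dg}$ of $\D^b(\A,\S)/\tr(\I)$. The goal is to identify $H^0(\T_{\dg})$ with $\H^{[-1,0]}(\proj \C/[\I])$ and to show that the induced extriangulated functor $G:\C\to H^0(\T_{\dg})$ has kernel ideal precisely $[\inj \C\to \proj \C]$.

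The $0$-Auslander hypothesis is the decisive ingredient. It guarantees that every $X\in \C$ admits a projective resolution of length one, $0\to P_1\to P_0\to X$, and dually an injective copresentation $X\to I^0\to I^1$. Via Theorem~A these become distinguished triangles in $\D^b(\A,\S)$. After Verdier-quotienting by $\tr(\I)$, the image of $\proj \C/[\I]$ generates $\T:=H^0(\T_{\dg})$ in two homological degrees, and the projective-resolution construction upgrades to a triangle equivalence $\T\iso K^b(\proj \C/[\I])$ under which $\H^{[-1,0]}(\proj \C/[\I])$ appears as the extension-closed subcategory spanned by two-term complexes placed in degrees $-1$ and $0$.

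I would next compute the kernel of $G$. Any morphism factoring as $X\to I\to P\to Y$ with $I\in \inj \C$ and $P\in \proj \C$ is killed by $G$, because such a morphism lifts to a chain map between length-one projective resolutions which, after passing to $\proj \C/[\I]$, becomes null-homotopic. Conversely, the $0$-Auslander extriangles allow a diagram chase forcing any morphism killed by $G$ to admit such a factorisation. Combined with Proposition~\ref{prop:idealquotientinjectiveprojective}, which provides the extriangulated structure on $\C/[\inj \C\to \proj \C]$, this yields a fully faithful, essentially surjective extriangulated functor $\overline{G}:\C/[\inj \C\to \proj \C]\to \H^{[-1,0]}(\proj \C/[\I])$.

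To promote $\overline{G}$ to an equivalence of extriangulated categories, it remains to match the $\mathbb E^1$-groups on both sides. This follows from Theorem~A(2) applied to the connective exact dg category $(\A/\I_{\dg},\overline{\S})$, which supplies natural bijections $\mathbb E^1(\overline{C},\overline{A})\iso \Ext^1_{\T}(\overline{G}\,\overline{C},\overline{G}\,\overline{A})$, together with the description of the extriangulated structure on $\H^{[-1,0]}(\proj \C/[\I])$ as the one induced from $\T$. The main technical obstacle I anticipate is precisely identifying $\H^{[-1,0]}(\proj \C/[\I])$ as an extension-closed subcategory of $\T$ whose intrinsic two-term extriangulated structure coincides with the one inherited from $\T$; this depends crucially on the $0$-Auslander vanishing of the relevant higher extensions in the additive quotient $\proj \C/[\I]$.
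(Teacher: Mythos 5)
Your overall architecture (enhance $\C$ by a connective exact dg category, quotient by the projective--injectives via Theorem~B, then compare with two-term complexes over the projectives) matches the shape of the paper's argument, but there is a genuine error at the pivotal step: the claimed triangle equivalence $\T\iso K^b(\proj\C/[\I])$. After reducing to the case $\I=0$, one has $\T=\D^b(\A,\S)\iso\tr(\P)$, where $\P$ is the full \emph{dg} subcategory of $\A$ on the projectives. The dg category $\P$ is connective but in general not quasi-equivalent to its homotopy category $H^0(\P)=\proj\C$: its Hom complexes may carry nontrivial cohomology in negative degrees (this is exactly the generality allowed in the $0$-Auslander correspondence, Theorem~\ref{thm:0-Auslander correspondence}(2), where $\P$ is an arbitrary connective additive dg category). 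Consequently $\tr(\P)$ and $K^b(H^0(\P))$ are genuinely different triangulated categories, generation of $\T$ by $\proj\C$ notwithstanding, and $\H^{[-1,0]}(\proj\C/[\I])$ is \emph{not} an extension-closed subcategory of $\T$; it lives in the other triangulated category $\tr(H^0(\P))$. If $\P$ were always formal, the theorem would be close to trivial; the entire content is that the higher structure of $\P$ becomes invisible only after passing to the ideal quotient by $[\inj\C\rightarrow\proj\C]$.

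The paper routes around this as follows: it first invokes the $0$-Auslander correspondence (Proposition~\ref{prop:0-Auslander}) to realise $\A$, up to the relevant equivalence, as $\tau_{\leq 0}(\P\ast\Sigma\P)$ inside $\pretr(\P)$; the dg functor $\P\rightarrow H^0(\P)$ then induces a quasi-dense (but not quasi-fully-faithful) exact morphism $\A_{\P,0}\rightarrow\A_{H^0(\P),0}$, hence a functor $H^0(\A)\rightarrow\H^{[-1,0]}(H^0(\P))$ which factors through $\C/(\J\rightarrow\P)$ because $\Hom(\Sigma\P,\P)=0$ in two-term complexes. Full faithfulness and the isomorphism on $\mathbb E$-groups are then proved directly by comparing the long exact sequences coming from two-term projective presentations on both sides and applying the Five-Lemma --- not by embedding both sides into a common ambient triangulated category. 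To repair your argument you would need to replace the equivalence $\T\iso K^b(\proj\C/[\I])$ by this comparison-of-quotients step; as written, the identification of the target and the subsequent ``extension-closed subcategory of $\T$'' argument do not go through.
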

The key ingredient in the proof of Theorem~\hyperref[intro:thmD]{D}  is the $0$-Auslander correspondence between the equivalence classes of certain pairs of connective dg categories and the equivalence classes of $0$-Auslander exact dg categories, cf.~Theorem~\ref{thm:0-Auslander correspondence}.

The paper is structured as follows. 
In Section~\ref{sec:prelim}, we collect basic results about extriangulated categories, 
and some basic notations and terminology needed in this paper. 
We prove a general result concerning $\delta$-functors with weakly effaceable bimodules, cf.~Corollary~\ref{cor:effaceablebimodule}. 
It is a key ingredient in comparing higher extension groups in Proposition~\ref{higher}.

In Section~\ref{sec:embedding}, we prove the embedding theorem for exact dg categories.
We employ it to provide a characterisation of algebraic extriangulated categories, cf.~Proposition-Definition~\ref{algebraic}.
Additionally, we  study the relation between higher extensions.
Furthermore, we study the dg quotients of connective exact dg categories by a class of projective-injective objects. 
We show that each connective exact dg category can be obtained  in this manner from the dg quotient of a Quillen exact category.
We end this section with several classes of examples including stable dg categories, Cohen--Macaulay dg modules and Higgs categories.
Relative cluster categories are then described as the bounded derived categories of the corresponding Higgs dg categories.

In Section~\ref{sec:reproduction}, we show that ``functor dg categories" with exact target naturally inherit exact structures from the target.
Building upon this, we show that a specific monoidal structure on the category of small connective exact dg categories is closed.
In Section~\ref{sec:applications}, we apply our theoretic framework to study the class of $0$-Auslander extriangulated categories.
Within this context, we resolve  the algebraic case of a conjecture posed by Fang--Gorsky--Palu--Plamondon--Pressland.
\subsection*{Acknowledgement}
This paper is a revised and augmented version of the second part of the author's Ph.D thesis \cite{Chen23}. 
The author is grateful to his Ph.D supervisor Bernhard Keller and cosupervisor Xiao-Wu Chen for their constant support and interest in this work.
 He thanks Matthew Pressland for his talk at the ARTA IX on his joint work with Xin Fang, Mikhail Gorsky, Yann Palu, and Pierre-Guy Plamondon.
He thanks Mikhail Gorsky, Gustavo Jasso, Yann Palu, Matthew Pressland, Xiao-Wu Chen, and Yilin Wu for helpful discussions and useful comments. 
While writing this paper, the author was financially supported by Xiaomi Youth Scholar.

\section{Preliminaries}\label{sec:prelim}
In this section, we collect basic results on extriangulated categories, 
and some basic notations and terminology needed in this paper. 
 For more details on extriangulated category,
we refer to \cite{NakaokaPalu19, Palu23,GorskyNakaokaPalu21, GorskyNakaokaPalu23}.
\subsection{$\delta$-functors in extriangulated categories}
Let $(\mathcal C,\mathbb E,\mathfrak s)$ be a skeletally small extriangulated category. 
\begin{lemma}[\cite{LiuNakaoka19}, Proposition 1.20]\label{LN}
Let  
\[
\begin{tikzcd}
A\ar[r,"x"]&B\ar[r,"y"]&C\ar[r,dashed,"\delta"]&\ 
\end{tikzcd}
\]
be any $\mathbb E\mbox{-}$triangle, let $f:A\rightarrow D$ any morphism, and let
\[
\begin{tikzcd}
D\ar[r,"d"]&E\ar[r,"e"]&C\ar[r,dashed,"f_{*}\delta"]&\ 
\end{tikzcd}
\]
be any $\mathbb E\mbox{-}$triangle realizing $f_{*}\delta$.
Then there is a morphism $g$ which gives a morphism of $\mathbb E\mbox{-}$triangles
\[
\begin{tikzcd}
A\ar[r,"x"]\ar[d,"f"swap]&B\ar[r,"y"]\ar[d,"g"red,dashed]&C\ar[d,equal]\ar[r,dashed,"\delta"]&\ \\
D\ar[r,"d"swap]            &E\ar[r,"e"swap]              &C\ar[r,dashed,"f_{*}\delta"swap]           &\ 
\end{tikzcd}
\]
and moreover, the sequence
\[
\begin{tikzcd}
A\ar[r,"\begin{bmatrix} {-}f \\ x \end{bmatrix}"] & D\oplus B\ar[r,"{[}d{,}\; g{]}"]&E\ar[r,dashed,"e^{*}\delta"]&\ 
\end{tikzcd}
\]
becomes an $\mathbb E\mbox{-}$triangle.
\end{lemma}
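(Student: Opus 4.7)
The plan is to decompose the statement into two parts and handle them in order: first, producing the morphism $g: B \to E$ making the given diagram into a morphism of $\mathbb{E}$-triangles; second, establishing that $A \to D \oplus B \to E$ with the prescribed matrices is an $\mathbb{E}$-triangle realizing $e^{*}\delta$.

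For the first part, the pair $(f, \mathrm{id}_C)$ constitutes a morphism of $\mathbb{E}$-extensions from $\delta$ to $f_{*}\delta$ by the very definition of $f_{*}\delta$ via the bifunctoriality of $\mathbb{E}(-,-)$. The existence of a compatible $g$ lifting this to a morphism of the corresponding $\mathbb{E}$-triangles is built into the defining property of the realization $\mathfrak{s}$ in the extriangulated axioms of Nakaoka--Palu. Thus this part is essentially formal and requires no genuine construction.

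For the second part, I would appeal to the octahedral axiom (ET4), or equivalently its pushout formulation. The underlying idea is that the square with vertices $A, B, D, E$ and edges $x, f, d, g$ should be viewed as a homotopy pushout in the extriangulated sense. A standard consequence of (ET4) is that any such pushout square yields a Mayer--Vietoris-type $\mathbb{E}$-triangle $A \xrightarrow{\begin{bmatrix}-f\\ x\end{bmatrix}} D \oplus B \xrightarrow{[d,\ g]} E$ whose boundary extension is $e^{*}\delta$. To derive this from (ET4) directly, I would fix any realization $A \to P \to E$ of $e^{*}\delta$ and use the octahedral configuration obtained by combining the $\mathbb{E}$-triangles for $\delta$ and $f_{*}\delta$ to identify $P$ with $D \oplus B$, tracking the maps through the octahedron.

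The main obstacle will be the sign bookkeeping in the second part: justifying the minus sign in $\begin{bmatrix}-f\\ x\end{bmatrix}$ and verifying that the boundary is precisely $e^{*}\delta$ rather than its negative or some other pullback. A more economical route would be to invoke an equivalent ``pushout axiom'' for extriangulated categories established in the literature, in which case the second part reduces to matching the sign conventions of the present paper with those of the reference.
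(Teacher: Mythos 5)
The paper offers no proof of this lemma: it is imported verbatim from Liu--Nakaoka (Proposition 1.20 of \cite{LiuNakaoka19}, which in turn rests on Nakaoka--Palu's Propositions 3.15/3.17), so there is no internal argument to compare yours against; I can only assess your proposal on its own terms. Your first step is correct and complete: $(f,\mathrm{id}_C)$ is a morphism of $\mathbb E$-extensions from $\delta$ to $f_*\delta$ since $\mathrm{id}_C^*(f_*\delta)=f_*\delta$, and the existence of a compatible $g$ is part of the definition of the realization $\mathfrak s$.

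The second step has a genuine gap. The two $\mathbb E$-triangles you propose to combine, $A\to B\to C$ and $D\to E\to C$, share their \emph{third} term, which is not an (ET4) configuration: (ET4) and (ET4)$^{op}$ require a composable pair of inflations, respectively deflations. The obvious way to manufacture such a pair --- compose the inflation $x\colon A\to B$ with the split inflation $\begin{bmatrix}0\\1\end{bmatrix}\colon B\to D\oplus B$ --- yields the map $\begin{bmatrix}0\\x\end{bmatrix}$, and no automorphism of $D\oplus B$ carries this to $\begin{bmatrix}-f\\x\end{bmatrix}$ unless $f$ factors through $x$, which it need not. Moreover, the appeal to ``any such pushout square yields a Mayer--Vietoris $\mathbb E$-triangle'' is circular here: in an extriangulated category the square $(A,B,D,E)$ is \emph{defined} to be a weak pushout precisely by the property that the displayed sequence is an $\mathbb E$-triangle, so this cannot be taken as a known input. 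Finally, identifying an arbitrary realization $A\to P\to E$ of $e^*\delta$ with $A\to D\oplus B\to E$ compatibly with all four maps is exactly the nontrivial content of the statement; it is asserted rather than derived. (A workable route is to note $f_*(e^*\delta)=e^*(f_*\delta)=0$, since $e$ is the deflation realizing $f_*\delta$, which produces the component $P\to D$; one must then still prove that the resulting map $P\to D\oplus B$ is an isomorphism, and this is where the real work of Nakaoka--Palu's Proposition 3.15 lies.) Your fallback --- invoking an established pushout statement from the literature and matching sign conventions --- is sound, and is in effect exactly what the paper does by citing \cite{LiuNakaoka19}.
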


The following diagram lemma presents a slightly stronger version of \cite[Proposition 3.15]{NakaokaPalu19}.
It is often used as a replacement for the non-existent push-outs or pull-backs in extriangulated categories \cite{Palu23}.
\begin{lemma}\label{lem:diagramlemmaextriangulated4}
Suppose we are given $\mathbb E\mbox{-}$triangles
\[
\begin{tikzcd}
A_1\ar[r,"x_1"]&B_1\ar[r,"y_1"]&C\ar[r,dashed,"\delta"]&,
\end{tikzcd}
\]
\[
\begin{tikzcd}
A_2\ar[r,"x_2"]&B_2\ar[r,"y_2"]&C\ar[r,dashed,"\delta'"]&.
\end{tikzcd}
\]
Then there is a commutative diagram in $\mathcal C$: 
\[
\begin{tikzcd}
                                               &A_2\ar[r,equal]\ar[d,"m_2"]          &A_2\ar[d,"x_2"]\\
A_1\ar[r,"m_1"]\ar[d,equal]     &M\ar[r,"e_1"]\ar[d,"e_2"]              &B_2\ar[d,"y_2"]\\
A_1\ar[r,"x_1"swap]                        &B_1\ar[r,"y_1"swap]                             &C
\end{tikzcd}
\]
which satisfies
\[
\mathfrak s(y_2^{*}\delta)=[A_1\xrightarrow{m_1} M\xrightarrow{e_1} B_2],
\]

\[
\mathfrak s(y_1^{*}\delta')=[A_2\xrightarrow{m_2} M\xrightarrow{e_2} B_1],
\]
\[
(m_1)_{*}(\delta)+(m_2)_{*}(\delta')=0
\]
and moreover, the sequence
\[ 
\begin{tikzcd}
M\ar[r,"\begin{bmatrix}{e_1}\\e_2\end{bmatrix}"]&B_1\oplus B_2\ar[r,"{[}y_1{,}-y_2{]}"]&C\ar[r,dashed,"(m_1)_*(\delta)"]&\,
\end{tikzcd}
\]
is an $\mathbb E\mbox{-}$triangle.
\end{lemma}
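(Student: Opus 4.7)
The plan is to construct $M$ as a ``homotopy pullback'' of the two given $\mathbb E$-triangles over their common endpoint $C$, using the dual of Lemma~\ref{LN} in place of a genuine pullback, and then to extract the symmetric properties by applying the same construction a second time with the two triangles swapped.

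First I would apply the dual of Lemma~\ref{LN} to the $\mathbb E$-triangle realising $\delta$ along the morphism $y_2:B_2\to C$. This produces an $\mathbb E$-triangle $A_1\xrightarrow{m_1}M\xrightarrow{e_1}B_2$ realising $y_2^{*}\delta$, a morphism $e_2:M\to B_1$ forming a morphism of $\mathbb E$-triangles with the original (so that $e_2 m_1 = x_1$ and $y_1 e_2 = y_2 e_1$), and the ``large'' $\mathbb E$-triangle $M\to B_1\oplus B_2\xrightarrow{[y_1,-y_2]}C$ realising $(m_1)_{*}\delta$. To build $m_2:A_2\to M$, I would use $y_2 x_2 = 0$, which forces $\binom{0}{x_2}:A_2\to B_1\oplus B_2$ to be annihilated by $[y_1,-y_2]$; the long exact sequence in $\Hom(A_2,-)$ associated to the large $\mathbb E$-triangle then yields a lift $m_2$ satisfying $e_1 m_2 = x_2$ and $e_2 m_2 = 0$, completing the commutative diagram.

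The main obstacle is to verify that the middle column $A_2\xrightarrow{m_2}M\xrightarrow{e_2}B_1$ is itself an $\mathbb E$-triangle realising $y_1^{*}\delta'$, together with the identity $(m_1)_{*}\delta + (m_2)_{*}\delta' = 0$. I would handle this by applying the dual of Lemma~\ref{LN} a second time, now to the $\mathbb E$-triangle realising $\delta'$ along $y_1:B_1\to C$; this symmetric construction produces an auxiliary object $\widetilde M$ with analogous structure maps and its own large $\mathbb E$-triangle $\widetilde M\to B_1\oplus B_2\to C$ (after reordering the summands) realising a sign-adjusted version of $(m_2)_{*}\delta'$. Comparing the two large $\mathbb E$-triangles, which share the rightmost map $[y_1,-y_2]$ up to a swap of summands introducing the expected sign, produces a canonical isomorphism $M\iso\widetilde M$ compatible with all six structure maps; transporting the middle column of $\widetilde M$ through this isomorphism yields the desired $\mathbb E$-triangle realising $y_1^{*}\delta'$, and matching the two extensions gives the cancellation $(m_1)_{*}\delta + (m_2)_{*}\delta' = 0$. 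The delicate point is to keep track of signs and summand orderings so that the two applications of the dual of Lemma~\ref{LN} align.
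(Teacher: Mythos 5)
Your first step coincides with the paper's: both apply the dual of Lemma~\ref{LN} to $\delta$ along $y_2$ to obtain $m_1,e_1,e_2$ and the $\mathbb E$-triangle $M\to B_1\oplus B_2\xrightarrow{[y_1,-y_2]}C$ realising $(m_1)_*\delta$. From there the paper takes a shorter route: a single application of \cite[Proposition 3.17]{NakaokaPalu19} to this large triangle together with the split conflation $B_2\to B_1\oplus B_2\to B_1$ produces $m_2$, exhibits the middle column as an $\mathbb E$-triangle with some extension $\theta$, and delivers the identities $(m_2)_*(-\delta')=(m_1)_*\delta$ and $[1,0]^*\theta+[y_1,-y_2]^*(-\delta')=0$, from which $\theta=y_1^*\delta'$ follows. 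Your symmetric double application of the dual of Lemma~\ref{LN} is a genuinely different strategy and can be pushed through, but as written it has a real gap.

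The problem is your step 2. The lift $m_2$ of $\left[\begin{smallmatrix}0\\x_2\end{smallmatrix}\right]$ through the inflation $\left[\begin{smallmatrix}e_2\\e_1\end{smallmatrix}\right]$ is not unique: inflations need not be monomorphisms, and the long exact sequence gives no exactness to the left of $\C(A_2,M)$. Two lifts differ by some $h$ with $\left[\begin{smallmatrix}e_2\\e_1\end{smallmatrix}\right]h=0$, and then $(m_2)_*\delta'$ changes by $h_*\delta'$, which the long exact sequence for the large triangle only constrains to lie in the image of $\C(C,C)\to\mathbb E(C,M)$, $c\mapsto c^*(m_1)_*\delta$ -- not to vanish. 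So for an arbitrary lift neither the conflation property of the middle column nor $(m_1)_*\delta+(m_2)_*\delta'=0$ need hold; $m_2$ must instead be \emph{defined} via your step 3, and the commutativity $e_1m_2=x_2$ then re-derived from the symmetric construction rather than obtained for free. Relatedly, ``matching the two extensions'' is too weak a conclusion from the comparison: since $\mathfrak s$ is not injective, knowing the two large conflations are isomorphic does not identify the extension classes. What you actually need is that the dual of (ET3), applied to the common deflation $[y_1,-y_2]$ (after swapping summands and negating the middle term of the second large triangle), yields a \emph{morphism of $\mathbb E$-triangles} $(\phi,1,1)$, so that $\phi_*\bigl((\widetilde m_2)_*\delta'\bigr)=(m_1)_*\delta$ on the nose, with $\phi$ invertible because a morphism of $\mathbb E$-triangles whose second and third components are isomorphisms has invertible first component (cf.~\cite{NakaokaPalu19}); setting $m_2=-\phi\widetilde m_2$ then gives all three remaining assertions after the sign chase you flag. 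Your claim that the isomorphism is ``compatible with all six structure maps'' is also an overstatement: it is compatible only with the inflations into $B_1\oplus B_2$ and with the extension classes, which is fortunately all that is required.
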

\begin{proof}
We apply the dual of Lemma \ref{LN} to the diagram
\[
\begin{tikzcd}
A_1\ar[d,equal]\ar[r,"m_1"]&M\ar[r,"e_1"]\ar[d,dashed,"e_2"]&B_2\ar[r,dashed,"y_2^{*}(\delta)"]\ar[d,"y_2"]&\,\\
A_1\ar[r,"x_1"swap]&B_1\ar[r,"y_1"swap]&C\ar[r,dashed,"\delta"swap]&\,
\end{tikzcd}
\]
Then there exists a morphism $e_2:M\rightarrow B_1$ such that the above diagram commutes in $\mathcal C$ and
\[
\begin{tikzcd}
M\ar[r,"\begin{bmatrix}{e_2}\\e_1\end{bmatrix}"]&B_1\oplus B_2\ar[r,"{[}y_1{,}-y_2{]}"]&C\ar[r,dashed,"(m_1)_*(\delta)"]&\,
\end{tikzcd}
\]
is an $\mathbb E\mbox{-}$triangle.

We then apply \cite[Proposition 3.17]{NakaokaPalu19} to the following diagram
\[
\begin{tikzcd}
A_2\ar[r,"x_2"]\ar[d,"m_2"{swap},dashed]&B_2\ar[r,"-y_2"]\ar[d,"\begin{bmatrix}0\\1\end{bmatrix}"]                    &C\ar[d,equal]\\
M\ar[r,"\begin{bmatrix}e_2\\e_1\end{bmatrix}"]\ar[d,"k"{swap},dashed]                         &B_1\oplus B_2\ar[r,"{[}y_1{,}-y_2{]}"] \ar[d,"{[}1{,}0{]}"]             &C\\
B_1\ar[r,equal]&B_1&
\end{tikzcd}.
\]
So we have an $\mathbb E\mbox{-}$triangle
\[
\begin{tikzcd}
A_2\ar[r,"m_2"]&M\ar[r,"k"]&B_1\ar[r,dashed,"\theta"]&\,
\end{tikzcd}
\]
such that the above diagram commutes (so we have $k=e_2$) and 
\begin{itemize}
\item[(i)] $(m_2)_{*}(-\delta')=(m_1)_*(\delta)$,
\item[(ii)] $(x_2)_{*}(\theta)=0$,
\item[(iii)] $[1,0]^{*}(\theta)+[y_1,-y_2]^{*}(-\delta')=0$.
\end{itemize}
Applying the functor $\begin{bmatrix}1\\0\end{bmatrix}^{*}$ to the last equality, we obtain $\theta=y_1^*(\delta')$.
\end{proof}

\begin{definition}
A {\em contravariant connected sequence of functors} is a pair $(T,\epsilon)$ where $T=(T^i)_{i\geq 0}$ is sequence of right $\C$-modules and $\epsilon$ is a collection of morphisms $\epsilon_{\delta}^i:T^i(A)\rightarrow T^{i+1}(C)$ for each $\mathbb E\mbox{-}$extension $\delta\in\mathbb E(C,A)$ and $i\geq 0$ which is natural with respect to morphisms of $\mathbb E$-extensions.
It is a {\em right $\delta$-functor} if for any $\mathfrak s\mbox{-}$triangle 
\[
\begin{tikzcd}
A\ar[r,"k"]&B\ar[r,"p"] &C\ar[r,dashed,"\delta"]&\;
\end{tikzcd},
\]
 the associated sequence (which is a complex by the naturality of $\epsilon$, cf.~\cite[Proposition 3.20]{GorskyNakaokaPalu21})
\[
\ldots \rightarrow T^{n}(C)\xrightarrow{T^n(p)} T^n(B)\xrightarrow{T^n(k)}T^n(A)\xrightarrow{\epsilon^{n}_{\delta}} T^{n+1}(C)\rightarrow \ldots 
\] 
is exact.
A morphism $(T,\epsilon)\rightarrow (\tilde{T},\tilde{\epsilon})$ of right $\delta$-functors is a family of morphisms of right $\C$-modules $\theta^i:T^i\rightarrow \tilde{T}^i$ which is compatible with the connecting morphisms, i.e.,~for each $\delta\in \mathbb E(C,A)$ and each $i\geq 0$, the following diagram is commutative
\[
\begin{tikzcd}
T^{i}(A)\ar[r,"\epsilon^i_{\delta}"]\ar[d,"\theta^i(A)"swap]&T^{i+1}(C)\ar[d,"\theta^{i+1}(C)"]\\
\tilde{T}^i(A)\ar[r,"\tilde{\epsilon}^i_{\delta}"swap]&\tilde{T}^{i+1}(C)
\end{tikzcd}.
\]
Dually one defines the notion of {\em covariant connected sequence of functors} and {\em left $\delta$-functors}.
\end{definition}

We have the following extriangulated analogue of \cite[Proposition 2.1]{Grothendieck57}.
\begin{proposition}\label{prop:effaceableuniversal}
Let $(T,\epsilon)$ and $(R,\eta)$ be right $\delta$-functors such that $R^i$ is weakly effaceable for each $i>0$.
Then each morphism $R^0\rightarrow T^0$ extends uniquely to a morphism of right $\delta$-functors $(R,\eta)\rightarrow (T,\epsilon)$.
\end{proposition}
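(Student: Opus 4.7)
The plan is to proceed by induction on $n$, mimicking Grothendieck's classical argument but replacing short exact sequences with $\mathbb E$-triangles and systematically using Lemma~\ref{lem:diagramlemmaextriangulated4} as a substitute for pull-backs. With $\theta^0$ given, assume $\theta^0,\ldots,\theta^{n-1}$ have been constructed compatibly with the connecting morphisms. For each $x\in R^n(C)$, weak effaceability of $R^n$ provides an $\mathbb E$-triangle $A\to B\xrightarrow{p}C\dashrightarrow\delta$ with $R^n(p)(x)=0$; by exactness of the $R$-sequence, $x=\eta^{n-1}_\delta(y)$ for some $y\in R^{n-1}(A)$, and I would set
\[
\theta^n(C)(x) := \epsilon^{n-1}_\delta(\theta^{n-1}(A)(y)).
\]

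The main technical work is well-definedness. Independence from the choice of $y$ is immediate: any other lift differs from $y$ by an element in the image of $R^{n-1}(B)\to R^{n-1}(A)$, which $\theta^{n-1}$ carries into the image of $T^{n-1}(B)\to T^{n-1}(A)$ by its naturality, and this image is killed by $\epsilon^{n-1}_\delta$ by exactness of the $T$-sequence. Independence from the effacing $\mathbb E$-triangle is handled in two steps: first, whenever there is a morphism of $\mathbb E$-triangles over $\mathrm{id}_C$ between two choices, the naturality of $\eta$, $\epsilon$ and the inductive hypothesis yield agreement; then, for two unrelated $\mathbb E$-triangles $A_i\to B_i\xrightarrow{p_i}C\dashrightarrow\delta_i$ with $i=1,2$ both effacing $x$, I would invoke Lemma~\ref{lem:diagramlemmaextriangulated4} to produce a common object $M$ sitting in $\mathbb E$-triangles $A_1\to M\to B_2$ and $A_2\to M\to B_1$, giving morphisms of $\mathbb E$-triangles that reduce the comparison to the previous case.

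Once $\theta^n$ is a well-defined set map on each $R^n(C)$, its additivity and naturality in $C$ follow from the analogous properties of $\epsilon$, $\eta$ and $\theta^{n-1}$: given $g\colon C'\to C$ and $x\in R^n(C)$, one lifts an effacing $\mathbb E$-triangle for $x$ through $g$ by the morphism-of-extensions construction and runs the diagram chase. Compatibility with the connecting morphisms is tautological from the construction: given $\delta\in\mathbb E(C,A)$ and $y\in R^{n-1}(A)$, the $\mathbb E$-triangle representing $\delta$ itself effaces $\eta^{n-1}_\delta(y)$, and the recipe above produces exactly $\epsilon^{n-1}_\delta(\theta^{n-1}(A)(y))$. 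Uniqueness is immediate from the same induction: any two extensions must agree on elements of the form $\eta^{n-1}_\delta(y)$, and weak effaceability guarantees that every element of $R^n(C)$ is of this form. The main obstacle is the careful comparison of different effacing $\mathbb E$-triangles, where the absence of genuine pull-backs forces us to rely on Lemma~\ref{lem:diagramlemmaextriangulated4} as a substitute throughout.
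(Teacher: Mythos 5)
Your proposal is correct and follows essentially the same route as the paper: the same inductive definition $\theta^i_C(x)=\epsilon^{i-1}_\delta(\theta^{i-1}_A(y))$, the same argument that the choice of lift $y$ is irrelevant, and the same use of Lemma~\ref{lem:diagramlemmaextriangulated4} to produce a common object mapping to both effacing $\mathbb E$-triangles (via the extension $E\rightarrow B_1\oplus B_2\rightarrow C$), thereby reducing the comparison of two arbitrary effacing triangles to morphisms of $\mathbb E$-triangles over $\mathrm{id}_C$. Nothing essential is missing.
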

\begin{proof}
Suppose we have constructed natural morphisms $\theta^j:R^j\iso T^j$ which are compatible with the connecting morphisms for $0\leq j<i$.

Let $C$ be an object in $\C$ and $x\in R^i(C)$ an arbitrary element.
Since $R^i$ is effaceable, there exists a $\mathbb E$-triangle 
\[
\begin{tikzcd}
A\ar[r,"k"]&B\ar[r,"p"] &C\ar[r,dashed,"\delta"]&\;
\end{tikzcd},
\]
 such that $R^i(p)(x)=0$.
We have the following diagram
\[
\begin{tikzcd}
R^{i-1}(B)\ar[r,"R^{i-1}(k)"]\ar[d,"\theta^{i-1}_{B}"swap]&R^{i-1}(A)\ar[r,"\eta_{\delta}^{i-1}"]\ar[d,"\theta^{i-1}_{A}"swap]&R^i(C)\ar[r,"R^i(p)"]\ar[d,dashed]&R^i(B)\\
T^{i-1}(B)\ar[r,"T^{i-1}(k)"swap]&T^{i-1}(A)\ar[r,"\epsilon_{\delta}^{i-1}"swap]&T^i(C)\ar[r]&T^i(B)\\
\end{tikzcd}.
\]
By the exactness of the top horizontal sequence, there exists an element $y\in R^{i-1}(A)$ such that $\eta_{\delta}^{i-1}(y)=x$.
Since the left square commutes by the construction of $\theta^{i-1}$, it is clear that the element $\epsilon_{\delta}^{i-1}\circ \theta_{A}^{i-1}(y)$ is independent of the choice of $y$. 
Let 
\[
\begin{tikzcd}
A'\ar[r,"l"]&B'\ar[r,"q"] &C\ar[r,dashed,"\delta'"]&\;
\end{tikzcd}
\] 
be another such $\mathbb E$-triangle. 
By Lemma~\ref{lem:diagramlemmaextriangulated4}, we have the following diagram
\[
\begin{tikzcd}
&A'\ar[r,equal]\ar[d,"f"]&A'\ar[d,"l"]&\;\\
A\ar[r,"m"]\ar[d,equal]&E\ar[r,"r"]\ar[d,"g"]&B'\ar[d,"q"]\ar[r,dashed,"q^{*}(\delta)"]&\;\\
A\ar[r,"k"swap]&B\ar[r,"p"swap] &C\ar[r,dashed,"\delta"swap]&\;
\end{tikzcd}
\]
where $m_*(\delta)=-f_*(\delta')$ and the sequence
\[
\begin{tikzcd}
E\ar[r,"\begin{bmatrix}r\\g\end{bmatrix}"]&B'\oplus B\ar[r,"{[}-q{,}\;p{]}"] &C\ar[r,dashed,"m_{*}(\delta)"]&\;
\end{tikzcd}
\] 
is an $\mathbb E$-triangle. So we have the following diagram made of $\mathbb E$-triangles
\[
\begin{tikzcd}
A\ar[r,"k"]\ar[d,"m"swap]&B\ar[r,"p"] \ar[d,"\begin{bmatrix}1\\0\end{bmatrix}"swap]&C\ar[r,dashed,"\delta"]\ar[d,equal]&\;\\
E\ar[r,"\begin{bmatrix}g\\r\end{bmatrix}"swap]&B\oplus B'\ar[r,"{[}-q{,}\;p{]}"swap] &C\ar[r,dashed,"m_{*}(\delta)"swap]\ar[d,equal]&\;\\
A'\ar[r,"l"swap]\ar[u,"-f"]&B'\ar[r,"q"swap] \ar[u,"\begin{bmatrix}0\\-1\end{bmatrix}"]&C\ar[r,dashed,"\delta'"swap]&\;
\end{tikzcd}
\]
This implies that $\epsilon_{\delta}^{i-1}\circ \theta_{A}^{i-1}(y)$ is independent of the choice of the $\mathbb E$-triangle. 
Similarly as above, one shows that the map $\theta^{i}_{C}: R^{i}(C)\rightarrow T^{i}(C)$ is a morphism of abelian groups.
It is direct to verify that $\theta^i:R^i\rightarrow T^{i}$ is a natural transformation 
and it follows directly from the construction that $\theta^i$ is compatible with the connecting morphisms. 
By induction on $i$, we have natural transformations $\theta^i:R^i\rightarrow T^i$ for $i\geq 0$ which are compatible with the connecting morphisms. The uniqueness is also clear from the construction of the $\theta^i$.
\end{proof}

\begin{definition}[\cite{GorskyNakaokaPalu21}, Definitions 4.5]\label{def:deltafunctor}
A {\em $\delta$-functor} is a triple $(T,\epsilon,\eta)$ where $T=(T^i)_{i\geq 0}$ be a sequence of $\C$-$\C$-bimodules
and $\epsilon$ and $\eta$ are collections of morphisms $\epsilon_{\delta}^i:T^i(A,-)\rightarrow T^{i+1}(C,-)$ and $\eta_{\delta}^i:T^i(?,C)\rightarrow T^{i+1}(?,A)$ for each $\mathbb E$-extension $\delta\in\mathbb E(C,A)$ and $i\geq 0$ which are natural with respect to morphisms of $\mathbb E$-extensions, and such that for each $\mathfrak s$-triangle
\[
\begin{tikzcd}
A\ar[r,"k"]&B\ar[r,"p"] &C\ar[r,dashed,"\delta"]&\;
\end{tikzcd},
\]
 the associated complexes
 \[
\ldots \rightarrow T^{n}(C,-)\xrightarrow{T^n(p,-)} T^n(B,-)\xrightarrow{T^n(k,-)}T^n(A,-)\xrightarrow{\epsilon^{n}_{\delta}} T^{n+1}(C,-)\rightarrow \ldots 
\] 
\[
\ldots \rightarrow T^{n}(?,A)\xrightarrow{T^n(?,k)} T^n(?,B)\xrightarrow{T^n(?,p)}T^n(?,C)\xrightarrow{\eta^{n}_{\delta}} T^{n+1}(?,A)\rightarrow \ldots 
\] 
are exact in $\Mod \C$, resp.,~in $\C\Mod$. 
A morphism $(T,\epsilon,\eta)\rightarrow (\tilde{T},\tilde{\epsilon},\tilde{\eta})$ of $\delta$-functors is a family of morphisms of $\C$-$\C$-bimodules $\theta^i:T^i\rightarrow \tilde{T}^i$ which is compatible with the connecting morphisms.
\end{definition}

Let $(T,\epsilon,\eta)$ a $\delta$-functor. 
We assume that
\begin{equation}\label{assumption:delta1}
 \text{$T^0=\Hom(?,-)$ and $T^i$ are effaceable for $i>0$.}
 \end{equation}

By Proposition~\ref{prop:effaceableuniversal}, we have that $T^i$ is isomorphic to $\mathbb E^i$ for $i>0$ which is compatible with the connecting morphisms $\epsilon$.
We see that $\epsilon_{\delta}^0:\Hom(A,-)\rightarrow \mathbb E(C,-)$ is given by 
\[
(\epsilon_{\delta}^0)_U:\Hom(A,U)\rightarrow \mathbb E(C,U),\;\; f\mapsto f_{*}\delta
\]
for each object $U\in\C$. 
\begin{equation}\label{assumption:delta2}
\text{We assume that a similar description holds for $\eta_{\delta}^0$.}
\end{equation}  
For each $i\geq 2$ and any two $\mathfrak s$-triangles
\[
\begin{tikzcd}
A\ar[r,"k"]&B\ar[r,"p"] &C\ar[r,dashed,"\delta"]&\;
\end{tikzcd},
\]
\[
\begin{tikzcd}
F\ar[r,"l"]&G\ar[r,"q"] &H\ar[r,dashed,"\psi"]&\;
\end{tikzcd}
\]
\begin{equation}\label{assumption:delta3}
\text{we assume the following diagram~(\ref{dia:bivariantdeltafunctor}) is commutative}
\end{equation}
\begin{equation}\label{dia:bivariantdeltafunctor}
\begin{tikzcd}
 T^{i-2}(A,H)\ar[r,"\epsilon_{\delta}^{i-2}"]\ar[d,"\eta_{\psi}^{i-2}"swap]&T^{i-1}(C,H)\ar[d,"\eta_{\psi}^{i-1}"]\\
T^{i-1}(A,F)\ar[r,"\epsilon_{\delta}^{i-1}"swap]&T^i(C,F)
\end{tikzcd}\;.
\end{equation}

Put $\mathbb E^0=\Hom_{\C}(?,-)$. 
From the definition of $\mathbb E^n$ and the construction of the connecting morphisms, cf.~\cite[Claim 3.11]{GorskyNakaokaPalu21}, it is clear that $(\mathbb E^n)_{n\geq 0}$ together with its connecting morphisms satisfy the above assumptions~(\ref{assumption:delta1}--\ref{assumption:delta3}).
\begin{corollary}\label{cor:effaceablebimodule}
Keep the notations as above. 
Let $(R,\gamma,\mu)$ be another $\delta$-functor which satisfies the above assumptions (\ref{assumption:delta1}--\ref{assumption:delta3}). 
 Then there is a unique isomorphism of $\delta$-functors $\theta^i: R^i\rightarrow T^i$
 for $i\geq 0$ such that $\theta^0=\Id_{\Hom_{\C}(?,-)}$.
\end{corollary}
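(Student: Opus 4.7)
The plan is to apply Proposition~\ref{prop:effaceableuniversal} twice: first, with the covariant variable fixed, to obtain natural transformations as morphisms of right $\delta$-functors; then to exploit its uniqueness clause to promote them to a morphism of bimodules compatible with the connecting morphisms in both variables.

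For each fixed object $B\in \C$, assumption~(\ref{assumption:delta1}) makes $(R^i(?,B))_{i\geq 0}$ and $(T^i(?,B))_{i\geq 0}$ right $\delta$-functors with weakly effaceable higher terms. Applied to the identity of $\Hom(?,B)$, Proposition~\ref{prop:effaceableuniversal} produces a unique family of natural transformations $\theta^i_B:R^i(?,B)\rightarrow T^i(?,B)$ compatible with $\gamma$ and $\epsilon$. Naturality in $B$ follows again by uniqueness: for any $g:B\rightarrow B'$, the two composites $T^i(?,g)\circ \theta^i_B$ and $\theta^i_{B'}\circ R^i(?,g)$ are morphisms of right $\delta$-functors extending $\Hom(?,g)$ and hence agree. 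Running the same argument with $R$ and $T$ interchanged furnishes two-sided inverses, so each $\theta^i$ is an isomorphism of $\C$-$\C$-bimodules.

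It remains to prove the compatibility $\theta^{i+1}_F\circ \mu^i_\psi=\eta^i_\psi\circ \theta^i_H$ for each $\mathfrak s$-triangle $F\rightarrow G\rightarrow H$ with class $\psi$. We induct on $i\geq 0$. For $i=0$, assumption~(\ref{assumption:delta2}) identifies both $\mu^0_\psi$ and $\eta^0_\psi$ with pushout by $\psi$, and a direct calculation via $\xi=\gamma^0_\xi(\Id)$ shows that $\theta^1$ acts as the identity under the standard isomorphisms $R^1\cong \mathbb E\cong T^1$, giving the base case. For $i\geq 1$, given $x\in R^i(A,H)$, use weak effaceability in the first variable to pick an $\mathfrak s$-triangle $A'\rightarrow B'\xrightarrow{p'}A$ with class $\delta'$ such that $R^i(p',H)(x)=0$, and write $x=\gamma^{i-1}_{\delta'}(y)$ for some $y\in R^{i-1}(A',H)$.

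The identity for $x$ then follows by a diagram chase combining: assumption~(\ref{assumption:delta3}) applied to $R$ to rewrite $\mu^i_\psi\gamma^{i-1}_{\delta'}=\gamma^i_{\delta'}\mu^{i-1}_\psi$; the first-variable compatibility of $\theta$ to convert $\gamma^i_{\delta'}$ into $\epsilon^i_{\delta'}$; the inductive hypothesis to replace $\theta^i_F\circ \mu^{i-1}_\psi$ by $\eta^{i-1}_\psi\circ \theta^{i-1}_H$; assumption~(\ref{assumption:delta3}) applied to $T$ to exchange $\epsilon^i_{\delta'}\circ \eta^{i-1}_\psi$ with $\eta^i_\psi\circ \epsilon^{i-1}_{\delta'}$; and once more the compatibility of $\theta$, in the form $\epsilon^{i-1}_{\delta'}\circ \theta^{i-1}_H=\theta^i_H\circ \gamma^{i-1}_{\delta'}$, evaluated at $y$, which recovers $\eta^i_\psi\circ \theta^i_H(x)$. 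The main obstacle is precisely this inductive step: interchanging first- and second-variable connecting morphisms is possible only thanks to assumption~(\ref{assumption:delta3}), and the full verification requires invoking it both for $R$ and for $T$. Uniqueness of the whole family $(\theta^i)_{i\geq 0}$ is inherited from the uniqueness clause of Proposition~\ref{prop:effaceableuniversal} already used in the first paragraph.
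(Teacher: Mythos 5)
Your proof is correct and rests on exactly the same ingredients as the paper's: Proposition~\ref{prop:effaceableuniversal} applied with one variable frozen, weak effaceability to write a class as the image of a connecting morphism, the commutation assumption~(\ref{assumption:delta3}) for both $R$ and $T$, and induction on the degree. The packaging differs slightly. The paper produces \emph{two} families of bimodule isomorphisms --- $\theta^i$ from the contravariant side (automatically compatible with $\epsilon$) and $\lambda^i$ from the covariant side (automatically compatible with $\eta$) --- and then shows $\theta^i=\lambda^i$ by effacing in \emph{both} variables, writing $x=\mu^{i-1}_\psi\gamma^{i-2}_\delta(y)=\gamma^{i-1}_\delta\mu^{i-2}_\psi(y)$ with $y$ two degrees down and invoking the inductive hypothesis in degrees $i-1$ and $i-2$. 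You instead build only the family $\theta^i$ and verify its $\eta$-compatibility directly, effacing in a single variable and descending one degree per step; the resulting five-step chase is the same computation, just not mediated by a second family. Your version is marginally leaner; the paper's version makes the symmetry between the two variables more visible. One cosmetic slip: in the base case, $\mu^0_\psi$ and $\eta^0_\psi$ act on the contravariant argument, so by assumption~(\ref{assumption:delta2}) they are given by \emph{pullback} $f\mapsto f^*\psi$ rather than pushout; this does not affect the argument.
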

\begin{proof}
By Proposition~\ref{prop:effaceableuniversal}, we have that for each $i\geq 0$, the right $\C$-module $R^i(?,A)$ is naturally isomorphic to $T^i(?,A)$ in $\Mod \C$ and the isomorphism is natural in the variable $A$ and compatible with the connecting morphisms. 
Hence we have an isomorphism of $\C$-$\C$-bimodules $\theta^i: R^i\rightarrow T^i$ for each $i\geq 0$.
Similarly $R^i(C,-)$ is naturally isomorphic to $T^i(C,-)$ and this also gives an isomorphism of $\C$-$\C$-bimodules $\lambda^i:R^i\rightarrow T^i$ for each $i\geq 0$.

We identify both $R^1$ and $T^1$ with $\mathbb E$ and by assumption the natural isomorphisms $\theta^1$ and $\lambda^1$ are both given by the identity.
We only need to check that the natural isomorphisms $\theta^i$ and $\lambda^i$ coincide with each other for each $i\geq 2$ and we proceed by induction on $i$.

Let $C$ and $F$ be objects in $\C$ and $x\in R^{i}(C,F)$ for $i\geq 1$.
Since $R^i$ and $R^{i-1}$ are both effaceable, there exists $\mathfrak s$-triangles
\[
\begin{tikzcd}
A\ar[r,"k"]&B\ar[r,"p"] &C\ar[r,dashed,"\delta"]&\;
\end{tikzcd},
\]
\[
\begin{tikzcd}
F\ar[r,"l"]&G\ar[r,"q"] &H\ar[r,dashed,"\psi"]&\;
\end{tikzcd}
\]
such that there exists an element $y\in R^{i-2}(A,H)$ with $x=\mu_{\psi}^{i-1}\gamma_{\delta}^{i-2}(y)=\gamma_{\delta}^{i-1}\mu_{\psi}^{i-2}(y)$ where the second equality is given by the commutativity of the diagram~\ref{dia:bivariantdeltafunctor} for the bimodules $R^{i}$, $R^{i-1}$ and $R^{i-2}$.
Since by induction $\theta^{j}$ coincides with $\lambda^{j}$ for $j=i-2$ and $i-1$, we obtain that the isomorphisms $\theta^{i}$ and $\lambda^i$ coincide with each other and this finishes the proof. 
\end{proof}

Let $(\C,\mathbb E,\mathfrak s)$ be a skeletally small extriangulated category.
The underlying category $\C$ being additive, one has the {\em split Grothendieck group} $K_{0}^{\sp}(\C)$ defined as the quotient of the free abelian group generated by the isomorphism classes $[X]$ of objects in $\C$ by the subgroup generated by the elements $[X]-[Y]+[Z]$ for each split short exact sequences
\[
\begin{tikzcd}
X\ar[r,tail]&Y\ar[r,two heads]&Z
\end{tikzcd}
\]
in $\C$. The image of $[X]$ in $K_0^{\sp}(\C)$ is denoted by $[X]^{\sp}$.
\begin{definition}[\cite{ZhuZhuang21}]
The {\em Grothendieck group} $K_0(\C,\mathbb E,\mathfrak s)$ of the extriangulated category $(\C,\mathbb E,\mathfrak s)$ is the abelian group
\[
K_0(\C,\mathbb E,\mathfrak s)\coloneqq K_0^{\sp}(\C)/\langle [X]^{\sp}-[Y]^{\sp}+[Z]^{\sp}\mid \text{(\ref{conf:C}) is an $\mathfrak s$-triangle} \rangle
\]
\begin{equation}\label{conf:C}
\begin{tikzcd}
X\ar[r,tail]&Y\ar[r,two heads]&Z.
\end{tikzcd}
\end{equation}
\end{definition}
	\subsection{Recollections on dg categories}\label{subsection:notations}
	In this subsection, we collect basic notations and terminology needed in this paper. 
	The standard references for dg categories are \cite{Keller94, Keller06d, Drinfeld04,Toen11, BondalKapranov90}.

Throughout we fix a commutative ring $k$. 
	We write $\otimes$ for the tensor product over $k$.  
	We denote by $\C_{\dg}(k)$ the dg category of complexes of $k$-modules and by $\C(k)$ the category of complexes of $k$-modules. 
	Let $M$ be complex of $k$-modules. We put
	\[
	\begin{tikzcd}
	\tau_{\leq 0}M=(\cdots\ar[r]&M^{-2}\ar[r]&M^{-1}\ar[r]&Z^0M\ar[r]&0\ar[r]&\cdots).
	\end{tikzcd}
	\] 
                 For a dg category $\A$, denote by $\tau_{\leq 0}\A$ the dg category with the same objects as $\A$ 
                 and whose morphism complexes are given by
                 \[
                 (\tau_{\leq 0} \A)(A_1, A_2) = \tau_{\leq 0}(\A(A_1, A_2)).
                  \]
                 The composition is naturally induced by that of $\A$. 
A dg category $\A$ is {\em connective} if for each pair of objects $A_1, A_2\in \A$, the 
 complex $\Hom_{\A}(A_1, A_2)$  has cohomology concentrated in non-positive degrees.
It is called {\em strictly connective} if the components of the complex $\Hom_{\A}(A_1, A_2)$ vanish
in all positive degrees.
Following To\"en, the dg category $\tau_{\leq 0}\A$ is called the {\em connective cover} of $\A$.

	Let $\A$ be a dg $k$-category.
	For two objects $A_1$ and $A_2$, the Hom complex is denoted by $\Hom_{\A}(A_1,A_2)$ or 
	$\A(A_1,A_2)$.	An element $f$ of $\A(A_1,A_2)^{p}$ will be called a {\em homogeneous} morphism of degree $p$ with the notation $|f|=p$. 
	A homogeneous morphism $f:A_1\rightarrow A_2$ is {\em closed} if we have $d(f)=0$.
	
	We denote by $Z^0(\A)$ the category with the same objects as $\A$ and whose morphism
spaces are defined by
	\[
	(Z^0\A)(A_1,A_2)=Z^0(\A(A_1,A_2)),
	\]
 where $Z^0$ is the kernel of $d:\A(A_1,A_2)^0\rightarrow \A(A_1,A_2)^1$.
 Similarly, we denote by $H^0(\A)$ the category with the same objects as $\A$ and whose 
 morphism spaces are given by 
 \[
 (H^0\A)(A_1,A_2)=H^0(\A(A_1,A_2)),
 \]
 where $H^0$ denotes the zeroth cohomology of the complex.
 A dg category $\A$ is {\em additive} if $H^0(\A)$ is additive as a category.
 
	The {\em oppositie dg category} $\A^{op}$ has the same objects as $\A$ and its morphism spaces are defined by
	\[
	\A^{op}(X,Y)=\A(Y,X);
	\] 
                 the composition of $f\in\A^{op}(Y,X)^{p}$ with $g\in \A^{op}(Z,Y)^{q}$ is given by $(-1)^{pq}gf$.
	By a dg $\A$-module $M$, we mean a right dg $\A$-module, i.e.,~a dg functor $\A^{op}\rightarrow \C_{\dg}(k)$. 
	We denote by $\C_{\dg}(\A)$ the dg category of right dg $\A$-modules. The category of dg 
	$\A$-modules is
	\[
	\C(\A)=Z^0(\C_{\dg}(\A)).
	\] 
	The category up to homotopy of dg $\A$-modules is 
	\[
	\mathcal H(\A)=H^0(\C_{\dg}(\A)).
	\]

For each object $A\in\A$, we have the right dg module {\em represented by} $A$
 \[
 A^{\wedge}=\Hom_{\A}(-,A).
 \]
 A dg $\A$-module $M$ is {\em representable} if it is isomorphic to $A^\wedge$ for some
 $A\in\A$ and {\em quasi-representable} if it is quasi-isomorphic to $A^\wedge$ for some $A \in \A$.
 The {\em Yoneda dg functor}
 \[
 \A\rightarrow \C_{\dg}(\A),\;\;f:A\rightarrow A' \mapsto f^{\wedge}:A^{\wedge}\rightarrow A'^{\wedge}
 \]
 is fully faithful.
 
 A dg functor $F:\A\rightarrow \B$ is a {\em quasi-equivalence} if
 \begin{itemize}
 \item[a)] it is {\em quasi-fully faithful}, i.e.,~for all objects $A, A'\in A$, the morphism
  \[
 F_{A,A'}:\A(A,A')\rightarrow \B(FA,FA').    
 \] 
 is a quasi-isomorphism, and
 \item[b)] the induced functor  $H^0(F):H^0(\A)\rightarrow H^0(\B)$ is an equivalence of categories.
 \end{itemize}
The category $\dgcat$ of small dg categories admits the Dwyer-Kan model structure (cf.~\cite{Tabuada05}), 
 whose weak equivalences are the quasi-equivalences. Its homotopy category is denoted by $\Hqe$. 
 There exists a cofibrant replacement functor $Q$ on $\dgcat$ such that for any $\A\in\dgcat$, the natural dg functor $Q(\A)\rightarrow \A$ is the identity on the set of objects. 
 In particular, the Hom-complexes of $Q(\A)$ are cofibrant over $k$.
 The {\em tensor product} $\A\otimes \B$ of two dg categories $\A$ and $\B$ has the class of objects  $\obj(\A)\times \obj(\B)$ and the morphism spaces
 \[
 (\A\otimes\B)((A,B),(A',B'))=\A(A,A')\otimes \B(B,B')
 \]
 with the natural compositions and units. 
 This defines a symmetric monoidal structure $-\otimes -$ on $\dgcat$ which is closed.
 For $\A, \B\in\dgcat$, put $\A\otimes^{\mathbb L}\B=\A\otimes Q(\B)$. 
 This extends to a bifunctor $-\otimes^{\mathbb L}-:\dgcat\times \dgcat\rightarrow \dgcat$ and then passes through the homotopy categories
 \[
 -\otimes^{\mathbb L}-:\Hqe\times \Hqe\rightarrow \Hqe.
 \]

For a dg category $\A$, we denote by $\D(\A)$ its {\em derived category}, a triangulated category. 
 By definition, $\D(\A)$ is the localization of $\C(\A)$ at the class of {\em quasi-isomorphisms}, i.e.,~morphisms of dg $\A$-modules which induce quasi-isomorphisms of complexes when evaluated at any object in $\A$.
 Let $\pi:\C(\A)\rightarrow \D(\A)$ be the quotient functor. 
 For a morphism $j:M\rightarrow N$ in $\C(\A)$, we denote by $\overline{\jmath}=\pi(j):M\rightarrow N$ the corresponding morphism in $\D(\A)$.
 A dg functor $F:\A\rightarrow \B$ induces a triangle functor $F_*:\D(\A)\rightarrow \D(\B)$ which is an equivalence of triangulated categories if $F$ is a quasi-equivalence.
 The {\em dg derived category} $\D_{\dg}(\A)$ of $\A$ is defined to be the full dg subcategory of $\C_{\dg}(\A)$ consisting of cofibrant dg $\A$-modules in the {\em projective model structure} of $\C(\A)$ (cf.~\cite[Theorem 3.2]{Keller06d}).
 The canonical functor $H^0(\D_{\dg}(\A))\rightarrow \D(\A)$ is an equivalence of triangulated categories.
For dg categories $\A$ and $\B$, we define $\rep(\B,\A)$ to be the full subcategory of $\D(\A\otimes^{\mathbb L} \B^{op})$ whose objects are the dg bimodules $X$ such that $X(-,B)$ is quasi-representable for each object $B$ of $\B$. 
 We define the canonical dg enhancement $\rep_{\dg}(\B,\A)$ to be the full dg subcategory of $\D_{\dg}(\A\otimes^{\mathbb L}\B^{op})$ whose objects are those of $\rep(\B,\A)$.
A dg category $\A$ is {\em pretriangulated} if the canonical inclusion $H^0(\A)\rightarrow \D(\A)$ is a triangulated subcategory.
Note that $\rep_{\dg}(\B,\A)$ is pretriangulated if $\A$ is pretriangulated.
For a dg category $\A$, we have its {\em pretriangulated hull} $\pretr(\A)$ (\cite{BondalKapranov90, Drinfeld04, BondalLarsenLunts04}). 
We denote the triangulated category $H^0(\pretr(\A))$ by $\tr(\A)$.
The Yoneda dg functor $\A\rightarrow \C_{\dg}(\A)$ extends to a fully faithful dg functor $\pretr(\A)\rightarrow \D_{\dg}(\A)$.
It induces a fully faithful triangle functor $\tr(\A)\rightarrow \D(\A)$ whose essential image is the triangulated subcategory of $\D(\A)$ generated by the representable dg modules.

	For a small category $\I$, we have the $k$-category $k\I$ whose set of objects is the same as that of $\I$ and for each pair of objects $x,y$ in $\C$, the space $k\I(i,j)$ is the free $k$-module generated by the set $\I(i,j)$. 
	The {\em square} category $\mathrm{Sq}$ is the path category of the quiver 
\[
\begin{tikzcd}
00\ar[r,"f"]\ar[d,"g"swap] & 01\ar[d,"j"] \\
10\ar[r,"k"swap] & 11
\end{tikzcd}
\]
with the commutativity relation $jf\sim kg$;
the {\em cospan} category $\mathrm{Cosp}$ is the path category of the quiver
\[
\begin{tikzcd}
&01\ar[d]\\
10\ar[r]&11;
\end{tikzcd}
\]
the {\em span} category $\mathrm{Sp}$ is the path category of the quiver
\[
\begin{tikzcd}
00\ar[r]\ar[d]&01\\
10&
\end{tikzcd}.
\]

\begin{lemma}[\cite{KellerNicolas13,BeligiannisReiten07}] \label{lemma:tstructure}
 Let $\A$ be a dg category.
There is a canonical t-structure 
\[
(\D(\A)^{\leq 0},\D(\A)^{\geq 0})
\]
on $\D(\A)$ such that $\D(\A)^{\geq 0}$ is formed by those dg modules whose cohomology is concentrated in non-negative degrees. 
\end{lemma}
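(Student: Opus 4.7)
The plan is to set $\D(\A)^{\leq 0}$ to be the full subcategory of dg modules $X$ with $H^i(X(A))=0$ for all $i>0$ and $A\in\A$, and $\D(\A)^{\geq 0}$ as in the statement. The shift-closure conditions $\D(\A)^{\leq 0}[1]\subseteq \D(\A)^{\leq 0}$ and $\D(\A)^{\geq 0}[-1]\subseteq \D(\A)^{\geq 0}$ are immediate from these cohomological descriptions.

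For the orthogonality $\Hom_{\D(\A)}(X,Y)=0$ when $X\in \D(\A)^{\leq 0}$ and $Y\in \D(\A)^{\geq 1}$, I would pass to a cofibrant resolution $P\xrightarrow{\sim} X$ in the projective model structure on $\C(\A)$, so that $\Hom_{\D(\A)}(X,Y)=H^0\Hom_{\C_{\dg}(\A)}(P,Y)$, and then run a standard hypercohomology spectral sequence whose $E_2$-page involves $\Ext$-groups between the cohomologies of $P$ and $Y$ over $\A$; the cohomology of $P$ sits in degrees $\leq 0$ while that of $Y$ sits in degrees $\geq 1$, forcing the vanishing of every term in the relevant total degree.

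The substantive step is the construction, for each $X\in \D(\A)$, of a triangle $U\to X\to V\to U[1]$ with $U\in \D(\A)^{\leq 0}$ and $V\in \D(\A)^{\geq 1}$. The naive approach of applying the smart truncation of complexes to $X(A)$ levelwise fails as soon as $\A$ has morphisms of positive degree, since the action of such a morphism need not preserve the truncation. I would instead argue abstractly: $\D(\A)$ is compactly generated by the shifted representables $\{A^\wedge[n] : A\in \A,\; n\in \mathbb{Z}\}$, and the class $\D(\A)^{\leq 0}$ is stable under set-indexed coproducts (cohomology commutes with coproducts) and under $[1]$. By the version of Brown representability underpinning the cited work of Keller--Nicolas and Beligiannis--Reiten, the inclusion $\D(\A)^{\leq 0}\hookrightarrow \D(\A)$ then admits a right adjoint $\tau^{\leq 0}$; the counit $\tau^{\leq 0}X\to X$ yields the desired truncation triangle, and the cone lands in $\D(\A)^{\geq 1}$ by the orthogonality established above.

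The main obstacle is this last appeal: one needs to know that $\D(\A)^{\leq 0}$ is itself sufficiently generated to allow Brown representability. For a \emph{connective} $\A$ this is transparent, since the representables $A^\wedge$ already lie in $\D(\A)^{\leq 0}$ and form a set of compact generators there; in the non-connective case one must instead work with suitable cohomological truncations of the shifted representables and verify that they generate $\D(\A)^{\leq 0}$ under coproducts and extensions. This is precisely what the cited references provide, so I would invoke them rather than reprove the generation statement.
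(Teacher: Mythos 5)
The paper offers no proof of this lemma---it is imported from the cited references---so the real question is whether your sketch reconstructs what those references establish. It does not, and the failure is in your very first move: you define $\D(\A)^{\leq 0}$ as the subcategory of modules with cohomology concentrated in non-positive degrees. Note that the statement only describes $\D(\A)^{\geq 0}$ cohomologically, and the paper remarks immediately afterwards that the analogous description of $\D(\A)^{\leq 0}$ holds \emph{when $\A$ is connective}---a deliberate asymmetry. For non-connective $\A$ your candidate pair is simply not a t-structure. Two concrete failures: (i) for $\A=k[u,u^{-1}]$ with $|u|=2$ and $d=0$, every nonzero dg module has cohomology in arbitrarily negative and positive degrees, so both of your subcategories reduce to $\{0\}$ and the decomposition axiom fails for $A^{\wedge}$; (ii) for $\A=k[u]$ with $|u|=2$ and $d=0$, put $X=\A/u\A$ and $Y=X[-1]$, so $H^*X$ sits in degree $0$ and $H^*Y$ in degree $1$, yet the non-split extension $0\to u\A/u^2\A\to \A/u^2\A\to \A/u\A\to 0$ gives a nonzero class in $\Hom_{\D(\A)}(X,Y)$. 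This also kills your spectral-sequence argument for orthogonality: when $H^*(\A)$ has elements of positive degree, $\Ext^p_{H^*(\A)}$ between a module in degrees $\leq 0$ and one in degrees $\geq 1$ need not vanish in the relevant internal degree.

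The correct architecture, and the one the cited references use, runs the other way around: take $\D(\A)^{\leq 0}$ to be the aisle \emph{generated} by the representables, i.e.\ the smallest full subcategory containing all $A^{\wedge}$ and closed under set-indexed coproducts, extensions and the shift $[1]$. The Keller--Nicol\'as/Beligiannis--Reiten machinery for aisles generated by a set of compact objects (your Brown-representability step, applied to this subcategory rather than to the cohomologically defined one) produces the right adjoint and hence the truncation triangles. The co-aisle is then \emph{computed}, not postulated: $Y\in(\D(\A)^{\leq 0})^{\perp}$ iff $\Hom(A^{\wedge}[n],Y)=H^{-n}(Y(A))$ vanishes for all $A$ and all $n\geq 0$, which is exactly the assertion that $\D(\A)^{\geq 0}$ consists of the modules with cohomology in non-negative degrees. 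Your sketch coincides with this only in the connective case (where $A^{\wedge}\in\D(\A)^{\leq 0}$ forces the two descriptions of the aisle to agree); your correct observation that levelwise smart truncation fails to be a module map in the presence of positive-degree morphisms should have been the warning that the cohomological description of the aisle fails there too.
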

Note that we have $A^{\wedge}\in\D(\A)^{\leq 0}$ for each $A\in\A$.
When $\A$ is connective, then $\D(\A)^{\leq 0}$ is formed by those dg modules whose cohomology is concentrated in non-positive degrees.
If $\A$ is strictly connective, then for each dg $\A$-module $M$, the truncation $\tau_{\leq 0}M$ becomes a dg $\A$-submodule and the canonical triangle for $M$ is induced by the short exact sequence $0\rightarrow \tau_{\leq 0}M\rightarrow M\rightarrow \tau_{\geq 1}M\rightarrow 0$ of dg $\A$-modules.

\begin{proposition}$($\cite[Proposition 6.2.1]{Bondarko10}$)$ \label{cot}
Suppose $\A$ is a connective dg category. 
Put $\T=\tr(\A)$. 
Let $\T^{\leq 0}$ (resp.,~$\T^{\geq 0}$) be the full subcategory of $\T$ which consists of objects isomorphic to those of the form $A=(\oplus_{i=1}^{n} A_i[r_i],q)$ for $n\geq 0$ and $r_i\geq 0$ (resp.,~$r_i\leq 0$) for each $i$. 
Then $(\T^{\geq 0}, \T^{\leq 0})$ is a co-t-structure on $\T$, the {\em canonical co-t-structure}. 
\end{proposition}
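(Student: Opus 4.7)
The strategy is to verify the three axioms of co-t-structure by working directly with the twisted-complex presentation of objects of $\tr(\A) = H^0(\pretr(\A))$. Replacing $\A$ by its connective cover $\tau_{\leq 0}\A$, which is quasi-equivalent and induces the same $\tr$, we may assume $\A$ is strictly connective, so that $\A(P, P')^n = 0$ for all $n > 0$; then every object of $\tr(\A)$ is isomorphic to a twisted complex $X = (\oplus_{i=1}^n A_i[r_i], q)$ with $q$ a Maurer--Cartan element of degree $1$.

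The shift invariances $\Sigma \T^{\leq 0} \subset \T^{\leq 0}$ and $\Sigma^{-1}\T^{\geq 0} \subset \T^{\geq 0}$ are immediate from the definition, since shifting each $r_i$ by $\pm 1$ preserves the sign conditions. For the orthogonality, given $X = (\oplus_i A_i[r_i], q_X) \in \T^{\geq 0}$ (so $r_i \leq 0$) and $Y = (\oplus_j A_j'[s_j], q_Y) \in \T^{\leq 0}$ (so $s_j \geq 0$), the zeroth graded component of the Hom complex $\Hom_{\pretr(\A)}(\Sigma^{-1} X, Y)$ decomposes as
\[
\bigoplus_{i,j} \A(A_i, A_j')^{s_j - r_i + 1},
\]
each summand of which has degree $\geq 1$ and thus vanishes by strict connectivity; hence $\Hom_\T(\Sigma^{-1} X, Y) = 0$.

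The heart of the proof is the weight decomposition. For $X = (\oplus_i A_i[r_i], q)$, the off-diagonal component $q_{ji}: A_i[r_i] \to A_j[r_j]$ lies in $\A(A_i, A_j)^{r_j - r_i + 1}$ and vanishes unless $r_j < r_i$. Partitioning the indices into $I_+ = \{i : r_i \geq 0\}$ and $I_- = \{i : r_i \leq -1\}$, any nonzero $q_{ji}$ with source in $I_-$ forces $r_j < r_i \leq -1$ and hence $j \in I_-$; therefore $X_- = (\oplus_{i \in I_-} A_i[r_i], q|_{I_-})$ is a sub-twisted complex with quotient $X_+ = (\oplus_{i \in I_+} A_i[r_i], q|_{I_+})$. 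The induced triangle $X_- \to X \to X_+ \to X_-[1]$ in $\tr(\A)$ then yields the weight decomposition $\Sigma^{-1} B \to X \to A \to B$ upon setting $A = X_+ \in \T^{\leq 0}$ and $B = X_-[1] \in \T^{\geq 0}$. The main obstacle is the careful bookkeeping of the twisted-complex formalism—verifying that the sign-based partition genuinely produces a sub-twisted complex, which is precisely the point where connectivity of $\A$ intervenes decisively, and checking that closure under direct summands holds (a point that may require an additional idempotent-completeness argument in $\tr(\A)$).
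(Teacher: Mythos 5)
The paper offers no proof of this proposition: it is quoted from Bondarko, so there is no in-text argument to measure yours against. That said, your verification of three of the four axioms is correct and is exactly the standard argument. After passing to the connective cover, a degree-one twist component $q_{ji}\in\A(A_i,A_j)^{1+r_j-r_i}$ can be nonzero only when $r_j<r_i$, so the indices with $r_i\leq -1$ do span a sub-twisted-complex $X_-$ with quotient $X_+$, and the degreewise split extension gives the weight decomposition with $A=X_+\in\T^{\leq 0}$ and $B=\Sigma X_-\in\T^{\geq 0}$; the same degree count shows that the whole degree-zero component of $\Hom_{\pretr(\A)}(\Sigma^{-1}X,Y)$ vanishes for $X\in\T^{\geq 0}$, $Y\in\T^{\leq 0}$, which is the orthogonality in the correct direction. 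The shift-invariances are indeed immediate.

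The genuine gap is the axiom you defer: stability of $\T^{\leq 0}$ and $\T^{\geq 0}$ under direct summands. This will not be repaired by "an idempotent-completeness argument in $\tr(\A)$": the category $\tr(\A)=H^0(\pretr(\A))$ is in general not idempotent complete, and in any case the issue is not whether idempotents split but whether a summand that \emph{does} exist in $\tr(\A)$ of a twisted complex with shifts of one sign is again isomorphic to such a twisted complex. Unlike the aisles of a t-structure, the two classes of a co-t-structure are not characterised by an orthogonality condition, so retract-closure cannot be read off from the vanishing $\Hom_{\T}(\Sigma^{-1}\T^{\geq 0},\T^{\leq 0})=0$; it is the only non-formal axiom here, and it is genuinely used in the paper (for instance when the coheart is described as consisting of direct summands of representables). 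Bondarko establishes it by an inductive argument on the length of the weight filtration, using the weight decompositions and the orthogonality you have already proved to show that a retract of an object with weights in a given interval again has weights in that interval. To complete your proof you must either reproduce that argument or cite it explicitly; as written, the proposal proves that $(\T^{\geq 0},\T^{\leq 0})$ satisfies all the co-t-structure axioms except Karoubi-closedness of the two classes.
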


\subsection{Exact dg categories}
In this subsection, we briefly recall the definitions of homotopy short exact sequences and exact dg categories.
For more details, we refer to \cite{Chen24}.

Let $\I$ be the dg $k$-path category of the following quiver with relations:
\begin{equation}\label{quiv:3term}
\begin{tikzcd}
0\ar[r,"f",""{swap, name=1}]&1\ar[r,"g",""{swap,name=2}]&2,\ar[r,from=1,to=2,dashed,no head,swap,bend right =8ex]
\end{tikzcd}
\end{equation}
where $f$ and $g$ are closed of degree 0, and $gf=0$.

\begin{definition}[{\cite[Definition 3.14]{Chen24}}]
The category $\mathcal H_{3t}(\A)$ of {\em 3-term homotopy complexes} (={\em 3-term h-complexes}) over $\A$ is defined to be the 0-th homology category $H^0(\Fun_{A_\infty}(\I,\A))$ of the dg category $\Fun_{A_\infty}(\I,\A)$ of strictly unital $A_{\infty}$-functors 
 from $\I$ to $\A$, cf.~\cite{Lefevre03}. 
\end{definition}

We unpack the definition that we will actually work with as follows.
We identify objects $X$ in $\mathcal H_{3t}(\A)$ with diagrams in $\A$
\begin{equation}\label{dia:F}
\begin{tikzcd}
&A_0\ar[r,"f"]\ar[rr,bend right = 8ex,"h"swap]&A_1\ar[r,"j"]&A_2,
\end{tikzcd}
\end{equation}
where $|f|=|j|=0$, $|h|=-1$ and $d(f)=0$, $d(j)=0$ and $d(h)=-jf$.
Suppose $X'$ is given by the diagram where a superscript $'$ is added to each object and each morphism. 
A morphism from $X$ to $X'$ is given by an equivalence class of 6-tuples $(r_i,s_j,t)$, $i=0$, $1$, $2$, $j=1$, $2$, where 
\[
r_i:A_i\rightarrow A_i', |r_i|=0, d(r_i)=0,
\]
\[
s_j:A_{j-1}\rightarrow A_{j}', |s_j|=-1, d(s_1)=f'r_0-r_1f,
\]
\begin{equation*}
 d(s_2)=j'r_1-r_2j, 
\end{equation*} 
and 
\begin{equation*}
t:A_0\rightarrow A_2', |t|=-2, d(t)=r_2\circ h-h'\circ r_0-s_2\circ f-j'\circ s_1,
\end{equation*}
which can be identified with the following diagram:
\begin{equation*}\label{mordegreen}
\begin{tikzcd}
&A_0\ar[r,"f"]\ar[d,"r_0"swap]\ar[rd,"s_1"red,swap ,red]\ar[rrd,"t"blue,blue]\ar[rr,bend left = 8ex,"h"]&A_1\ar[d,"r_1"swap]\ar[rd,"s_2"red,red]\ar[r,"j"]&A_2\ar[d,"r_2"]\\
&A_0'\ar[r,"f'"swap]\ar[rr,bend right = 8ex,"h'"swap]&A_1'\ar[r,"j'"swap]&A_2'
\end{tikzcd}.
\end{equation*}

A 3-term homotopy complex over $\A$ (\ref{dia:F}) yields a canonical morphism in $\C(\A)$
\[
u: A_0^{\wedge}\rightarrow \Sigma^{-1}\Cone(j^{\wedge}:A_1^{\wedge}\rightarrow A_2^{\wedge}),
\]
where $u=\begin{bmatrix}f^{\wedge}\\h^{\wedge}\end{bmatrix}$.
\begin{definition}[{\cite[Lemma 3.22]{Chen24}}] \label{intro:homotopyleftexact}
A $3$-term h-complex (\ref{dia:F}) is homotopy left exact if and only if the map $\tau_{\leq 0}(u)$ is a quasi-isomorphism of dg $\tau_{\leq 0}\A$-modules. Dually, we define the notion of homotopy right exact sequence and then the notion of homotopy short exact sequence.
\end{definition}
A {\em homotopy square} \cite[Definition 3.21]{Chen24} in $\A$ is a diagram in $\A$
\begin{equation}\label{square:A}
\begin{tikzcd} 
A\ar[r,"{i}"]\ar[d,"{j}"swap]\ar[rd,"h"blue,blue]&{B}\ar[d,"{p}"]\\
{B'}\ar[r,"{q}"swap]&{C}
\end{tikzcd}
\end{equation}
where $i$, $j$, $p$ and $q$ are closed morphisms of degree $0$, and $h$ is a morphism of degree $-1$ such that $d(h)=pi-qj$, yields a canonical $3$-term h-complex
\begin{equation}\label{sequence:A}
\begin{tikzcd}
A\ar[r,"\begin{bmatrix}i\\j\end{bmatrix}"]\ar[rr,bend right=8ex,"h"swap]&B\oplus B'\ar[r, "{[}p{,}-q{]}"]&C.\\
\end{tikzcd} 
\end{equation}
The homotopy square (\ref{square:A}) is a {\em homotopy pullback square} (={\em homotopy cartesian square}) if the $3$-term h-complex (\ref{sequence:A}) is homotopy left exact.
Dually, we have the notions of {\em homotopy pushout square} (={\em homotopy cocartesian square}) and {\em homotopy bicartesian square}.

\begin{theorem}[\cite{CanonacoOrnaghiStellari18} cf.~also \cite{Faonte17a}]
We have a canonical isomorphism 
\[
\Fun_{A_{\infty}}(-,-)\rightarrow \rep_{\dg}(-,-)
\]
of bifunctors $\Hqe_{k-\mathrm{cf}}^{op}\times \Hqe\rightarrow \Hqe$, where $\Hqe_{k-\mathrm{cf}}$ is the
full subcategory of $\Hqe$ whose objects are the dg categories whose morphism
complexes are cofibrant dg $k$-modules.
\end{theorem}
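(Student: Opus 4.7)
The plan is to construct the comparison morphism explicitly and then deduce its invertibility by recognising both sides as presentations of the internal $\Hqe$-hom from $\A$ to $\B$. First I would define, for each $\A\in\Hqe_{k-\mathrm{cf}}$ and each $\B\in\Hqe$, a dg functor
\[
\Phi_{\A,\B}: \Fun_{A_\infty}(\A,\B)\longrightarrow \rep_{\dg}(\A,\B)
\]
sending a strictly unital $A_\infty$-functor $F$ to the $\B$-$\A$-bimodule $X_F$ with $X_F(B,A)=\B(B,FA)$. The left $\B$-action is given by post-composition, while the right $\A$-action is encoded by the higher Taylor components
\[
F_n: \A(A_{n-1},A_n)\otimes \cdots \otimes \A(A_0,A_1)\longrightarrow \B(FA_0,FA_n)[1-n]
\]
bundled into the corresponding twisting cochain on the tensor coalgebra $T\bar{\A}[1]$. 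The construction extends to $A_\infty$-natural transformations in the standard fashion. Since $X_F(-,A)=(FA)^{\wedge}$ is representable, $X_F$ indeed lies in $\rep_{\dg}(\A,\B)$.

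Second, I would establish that $\Phi_{\A,\B}$ is a quasi-equivalence by comparing both sides with the internal mapping object in $\Hqe$. On one hand, To\"en's theorem on quasi-functors identifies $\rep_{\dg}(\A,\B)$ with $\mathbb R\Hom(\A,\B)$ in $\Hqe$. On the other hand, Lef\`evre--Hasegawa's strictification theorem together with Keller's $A_\infty$-formalism implies that $\Fun_{A_\infty}(\A,\B)$ models the same mapping space when $\A$ has cofibrant hom-complexes: each quasi-functor admits a strictly unital $A_\infty$-representative, and $A_\infty$-quasi-isomorphisms correspond bijectively to bimodule quasi-isomorphisms of the associated $X_F$. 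Naturality of $\Phi_{\A,\B}$ in both variables, contravariant in $\A$ using the $k$-cofibrancy of its hom-complexes in order to pre-compose with $A_\infty$-functors coherently, should then be visible from the bar-complex description.

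Third, I expect the main obstacle to be essential surjectivity: showing that every right quasi-representable bimodule $X$ is isomorphic in $\D(\B\otimes^{\mathbb L}\A^{op})$ to some $X_F$. Concretely, one chooses for each $A\in\A$ a representative $FA\in\B$ together with a quasi-isomorphism $X(-,A)\simeq (FA)^{\wedge}$, and then transports the right $\A$-action on $X$ along these quasi-isomorphisms via the homotopy transfer theorem; the cofibrancy hypothesis on the hom-complexes of $\A$ is exactly what guarantees that this transfer produces well-defined higher components $F_n$ assembling into a strictly unital $A_\infty$-functor, unique up to $A_\infty$-isomorphism. Once $\Phi$ is shown to be a quasi-equivalence, the remaining verification that the family $\{\Phi_{\A,\B}\}$ descends to an isomorphism of bifunctors $\Hqe_{k-\mathrm{cf}}^{op}\times \Hqe\to \Hqe$ reduces to routine bookkeeping with bar resolutions, since composition of $A_\infty$-functors translates under $\Phi$ into the derived tensor product of bimodules.
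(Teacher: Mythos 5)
First, a point of comparison: the paper does not prove this statement at all --- it is imported verbatim from \cite{CanonacoOrnaghiStellari18} (see also \cite{Faonte17a}) and is only \emph{used} (to derive Corollary~\ref{strictify}), so there is no internal proof to measure your argument against. Judged against the actual proofs in those references, your overall strategy (attach to a strictly unital $A_\infty$-functor $F$ the bimodule $X_F(B,A)=\B(B,FA)$ with right $\A$-action twisted by the Taylor components, then show this is a quasi-equivalence onto the right quasi-representable bimodules) is the correct one, and your first step is essentially what they do.

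The gaps are in your second and third steps. Your second step is close to circular: invoking ``Lef\`evre--Hasegawa's strictification theorem together with Keller's $A_\infty$-formalism'' to conclude that $\Fun_{A_\infty}(\A,\B)$ models $\mathbb{R}\Hom(\A,\B)$ is, in substance, the theorem you are trying to prove; moreover Lef\`evre--Hasegawa works over a field, whereas here $k$ is an arbitrary commutative ring and only the hom-complexes of the source are assumed cofibrant, which is exactly the regime where the known field-case arguments break down and where the real work of \cite{CanonacoOrnaghiStellari18} lies. In your third step, ``transport the right $\A$-action along the chosen quasi-isomorphisms via homotopy transfer'' does not come for free: over a general $k$ the transfer requires deformation-retract data on the modules $X(-,A)$ (not merely $k$-cofibrancy of the hom-complexes of $\A$), and, more seriously, producing a \emph{strictly unital} $A_\infty$-functor (rather than one unital up to homotopy) is a genuine obstruction over an arbitrary base ring --- resolving it is one of the main technical contributions of the cited paper and cannot be dismissed as automatic. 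Finally, the bifunctoriality and the descent of $\Phi$ to a canonical isomorphism in $\Hqe$ in both variables is also not ``routine bookkeeping'': compatibility of the bimodule assignment with composition of $A_\infty$-functors on one side and derived tensor product of bimodules on the other must be checked up to coherent quasi-isomorphism, which again occupies a substantial portion of the source. So your sketch records the right architecture but leaves the two load-bearing steps unproved.
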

\begin{corollary}\label{strictify}
Let $\A'$ be a full dg subcategory of $\A$ and $\B$ be a small dg category. 
Let $F:\rep(\B,\A')\rightarrow \rep(\B,\A)$ be the functor induced by the inclusion $\A'\rightarrow \A$.
Then an object $X\in \rep(\B,\A)$ lies in the essential image of $F$ if and only if for each object $B$ in $\B$, $X(-,B)$ is quasi-isomorphic to a dg $\A$-module represented by an object in $\A'$.
\end{corollary}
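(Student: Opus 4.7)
The plan is to reduce to the $A_\infty$-functor description via the cited theorem and then transport $X$ through a quasi-equivalence of suitable full dg subcategories of $\A$.

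First, replace $\B$ by a cofibrant replacement (which does not affect $\rep(\B,-)$) so that the theorem of Canonaco--Ornaghi--Stellari identifies $\rep_{\dg}(\B,\A) \cong \Fun_{A_\infty}(\B,\A)$ and similarly for $\A'$, in such a way that $F$ corresponds to post-composition with the inclusion $\iota:\A'\hookrightarrow \A$. The forward implication is then immediate: if $X\cong F(X')$ with $X'\in\rep(\B,\A')$, then the $A_\infty$-functor $G':\B\to\A'$ corresponding to $X'$ yields $X(-,B)\simeq (G'(B))^{\wedge}_{\A}$ with $G'(B)\in\A'$.

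For the converse, let $\A''$ denote the full dg subcategory of $\A$ on those objects which are isomorphic in $H^0(\A)$ to an object of $\A'$. Since $\A'\subseteq \A''\subseteq \A$ are full dg subcategories and every object of $\A''$ is, by definition, isomorphic in $H^0$ to one of $\A'$, the inclusion $\A'\hookrightarrow \A''$ is a quasi-equivalence. Consequently the induced functor $\rep(\B,\A')\to\rep(\B,\A'')$ is an equivalence. Now, suppose $X\in\rep(\B,\A)$ satisfies $X(-,B)\simeq A_B^{\wedge}$ with $A_B\in\A'$ for every $B\in\B$. Let $G:\B\to\A$ be an $A_\infty$-functor corresponding to $X$. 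By the Yoneda lemma, $G(B)$ is homotopy equivalent to $A_B$ in $\A$, hence $G(B)\in\A''$ for every object $B$. Because $\A''$ is a full dg subcategory of $\A$, the $A_\infty$-functor $G$ factors on the nose as $\iota''\circ G''$ for an $A_\infty$-functor $G'':\B\to\A''$, where $\iota'':\A''\hookrightarrow\A$ is the inclusion.

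Translating back via the equivalence $\Fun_{A_\infty}(\B,-)\cong \rep_{\dg}(\B,-)$, the object $G''$ corresponds to some $X''\in\rep(\B,\A'')$ with $F_{\A''}(X'')\cong X$ in $\rep(\B,\A)$, where $F_{\A''}$ is the functor induced by $\iota''$. Combining this with the equivalence $\rep(\B,\A')\iso\rep(\B,\A'')$ given by the quasi-equivalence $\A'\hookrightarrow\A''$ produces the desired $X'\in\rep(\B,\A')$ with $F(X')\cong X$. The only mild technicality is the verification that post-composition with the inclusion $\A'\hookrightarrow\A$ really corresponds to $F$ at the level of bimodules, but this is precisely the content of the naturality of the isomorphism $\Fun_{A_\infty}(-,-)\cong \rep_{\dg}(-,-)$ in the second argument stated in the theorem, so no separate obstruction arises.
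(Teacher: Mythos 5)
Your proof is correct and follows essentially the same route as the paper: both pass to the closure $\A''$ of $\A'$ under homotopy equivalences (your "objects isomorphic in $H^0(\A)$ to an object of $\A'$" is the same subcategory), use that $\A'\hookrightarrow\A''$ is a quasi-equivalence inducing an equivalence on $\rep(\B,?)$, and then invoke the Canonaco--Ornaghi--Stellari identification to recognise the image as the $A_\infty$-functors landing in $\A''$. Your write-up is merely a more explicit unwinding of the paper's terser argument, with no substantive difference.
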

\begin{proof}
Let $\A''$ be the closure of $\A'$ under homotopy equivalences in $\A$. 
The inclusion of 
$\A'$ into $\A''$ is a quasi-equivalence and therefore induces a quasi-equivalence in
$\rep_{\dg}(\B,?)$. 
By the above theorem, it also induces a quasi-equivalence in
$\Fun_{A_{\infty}}(\B,?)$. 
Thus, we may assume that $\A'$ is stable under homotopy equivalences
in $\A$. 
Now the inclusion $\Fun_{A_{\infty}}(\B,\A') \to \Fun_{A_{\infty}}(\B,\A)$ is clearly an isomorphism
onto the full subcategory of $A_\infty$-functors whose values on objects lie in $\A'$. 
Thanks to the theorem, this implies the statement.
\end{proof}

\begin{definition}[{\cite[Definition 4.1]{Chen24}}]\label{exactdgstructure}
An {\em exact structure} on $\A$ is a class $\mathcal{S}\subseteq \mathcal H_{3t}(\A)$ stable under isomorphisms, consisting of homotopy short exact sequences (called {\em conflations})
\[
\begin{tikzcd}
A\ar[r, tail,"i"]\ar[rr,bend right=8ex,"h"swap]&B\ar[r,two heads, "p"]&C\\
\end{tikzcd} 
\]
where $i$ is called an {\em inflation} and $p$ is called a {\em deflation}, such that the following axioms are satisfied
\begin{itemize}
\item[Ex0]$\Id_{0}$ is a deflation.
\item[{Ex}1]Compositions of deflations are deflations.
\item[{Ex}2]Given a deflation $p:B\rightarrow C$ and any map $c: C'\rightarrow C$ in $Z^0(\A)$, the object
\begin{tikzcd}
B\ar[r,"p"]&C&C'\ar[l,"c"swap]
\end{tikzcd}
admits a homotopy pullback 
\[
\begin{tikzcd} 
{B'}\ar[r,"{p'}"]\ar[d,"{b}"swap]\ar[rd,"s"blue,blue]&{C'}\ar[d,"{c}"]\\
{B}\ar[r,"{p}"swap]&{C}
\end{tikzcd}
\]
and ${p'}$ is also a deflation.
\item[$\Ex2^{op}$]Given an inflation $i: A\rightarrow B$ and any map $a:A\rightarrow A'$ in $Z^0(\A)$, the object
\begin{tikzcd}
A'&A\ar[r,"i"]\ar[l,"a"swap]&B
\end{tikzcd}
admits a homotopy pushout 
\[
\begin{tikzcd}
{A}\ar[r,"{i}"]\ar[d,"{a}"swap]\ar[rd,"s"blue,blue]&{B}\ar[d,"{j}"]\\
{A'}\ar[r,"{i'}"swap]&{B'}
\end{tikzcd}
\]
and ${i'}$ is also an inflation.
\end{itemize}
We call $(\A,\mathcal {S})$ or simply $\A$ an {\em exact dg category}.
\end{definition} 
\begin{definition}[{\cite[Definition 4.3]{Chen24}}]\label{def:exactmorphism}
Let $(\A,\mathcal S)$ and $(\A',\mathcal S')$ be exact dg categories.
A morphism $F:\A\rightarrow \A'$ in $\Hqe$ is {\em exact} if the induced functor $\mathcal H_{3t}(\A)\rightarrow \mathcal H_{3t}(\A')$ sends objects in $\mathcal S$ to objects in $\mathcal S'$.
We denote by $\Hqe_{\ex}((\A,\S),(\A',\S'))$ the subset of $\Hqe(\A,\A')$ consisiting of exact morphisms.
We denote by $\Hqe_{\ex}$ the subcategory of $\Hqe$ consisting of exact dg categories with exact morphisms.
An exact morphism $(\A,\S)\rightarrow (\A',\S')$ is an {\em exact quasi-equivalence} if it is an isomorphism in $\Hqe_{\ex}$.
\end{definition}

\begin{examples}\label{exm:exactdg}
Let $\A$ be an additive dg category.
\begin{itemize}
\item[1)] If $\A$ is pretriangulated, then the class of all homotopy short exact sequences determines an exact structure on $\A$, cf.~\cite[Example 4.7]{Chen24}.
\item[2)] If $\A$ is concentrated in degree zero, the exact dg structures on $\A$ are the same as the Quillen exact structures on $\A$, cf.~\cite[Example 4.6]{Chen24}. 
 \item[3)] The notion of exact dg structure is self-dual, cf.~\cite[Proposition 4.9 d)]{Chen24}. 
\item[4)] Let $F:\A\rightarrow \A'$ be a quasi-equivalence of dg categories. 
Then the quasi-equivalence $F$ induces a bijection
between the class of exact dg structures on $\A$ and that on $\A'$.
Moreover, for each exact structure on $\A$ and the exact structure on $\A'$ induced by this bijection, the dg functor $F$ induces an equivalence of extriangulated categories between $(H^0(\A),\mathbb E,\mathfrak s)$ and $(H^0(\A'),\mathbb E',\mathfrak s')$, cf.~\cite[Remark 4.5 b)]{Chen24}.
 \item[5)] Any additive dg category $\A$ with the closure under isomorphisms of the class of all split exact sequences is an exact dg category. 
 We call this the {\em trivial exact structure}. 
 Let $\B$ be another exact dg category. 
 Then any dg functor $F:\A\rightarrow \B$ is exact when $\A$ is endowed with the trivial exact structure.
\item[6)] A full dg subcategory $\A'$ of an exact dg category $\A$ is {\em extension-closed} provided that for each conflation in $\A$
\[
\begin{tikzcd}
A\ar[r,"f"]\ar[rr,bend right=8ex,"h"swap]&B\ar[r,"j"]&C,
\end{tikzcd}
\]
if $A$ and $C$ belong to $\A'$, then so does $B$. 
If $\A$ is an exact dg category and $\B$ is an extension-closed subcategory, then $\B$ inherits an exact structure, cf.~\cite[Example-Definition 5.7]{Chen23}. 
\end{itemize}
\end{examples}




  
\section{Embedding theorem for connective exact dg categories}\label{sec:embedding}
Throughout this section, let $(\A,\S)$ be an exact dg category. 
It is {\em connective} if the underlying dg category $\A$ is connective, i.e.,~the Hom complexes $\Hom_{\A}(X,Y)$ has cohomology concentrated in non-positive degrees for all $X$, $Y$ in $\A$.
By Example~\ref{exm:exactdg} 4), we may assume that $\A$ has cofibrant Hom complexes.
One of the main aims of this section is to show the following theorem.

\begin{theorem}\label{main}
Let $(\A,\S)$ be an exact dg category. 
There exists a universal exact morphism $F:\A\rightarrow \D^b_{\dg}(\A,\S)$ in $\Hqe$ from $\A$ to a pretriangulated dg category $\D^b_{\dg}(\A,\S)$. 

If $\A$ is connective, this morphism satisfies:
\begin{itemize}
\item[1)]It induces a quasi-equivalence from $\tau_{\leq 0}{\A}$ to $\tau_{\leq 0}\D'$ for an extension-closed dg subcategory $\D'$ of $\D^b_{\dg}(\A,\S)$;
\item[2)]It induces a natural bijection $\mathbb E(C,A)\xrightarrow{\sim} \Ext^1_{\D^b(\A,\S)}(FC,FA)$ for each pair of objects $C,A$ in $H^0(\A)$ where $\D^b(\A)=H^0(\D^b_{\dg}(\A,\S))$.
\end{itemize}
We call $\D^b_{\dg}(\A,\S)$ the {\em bounded dg derived category} of $(\A,\S)$.
\end{theorem}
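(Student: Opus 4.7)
The plan is to construct $\D^b_{\dg}(\A,\S)$ explicitly as a dg quotient. Following the hint in the introduction, I would let $\N\subseteq \pretr(\A)$ be the full dg subcategory whose objects are obtained by extensions, shifts, direct summands, and homotopy equivalences from the canonical totalisations (cones in $\pretr(\A)$ built out of $i$, $p$ and the nullhomotopy $h$) associated to each conflation
\[
\begin{tikzcd}
A\ar[r,tail,"i"]\ar[rr,bend right=8ex,"h"swap]&B\ar[r,two heads, "p"]&C.
\end{tikzcd}
\]
Then set $\D^b_{\dg}(\A,\S)\coloneqq \pretr(\A)/\N$ (Drinfeld dg quotient) and define $F$ to be the composite of the Yoneda dg embedding $\A\hookrightarrow \pretr(\A)$ with the quotient morphism.

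Exactness of $F$ is then immediate, since the totalisation of any conflation lies in $\N$ and hence becomes zero in $H^0(\D^b_{\dg}(\A,\S))$, so each conflation is sent to a distinguished triangle. For universality, any exact morphism $G:\A\to\T$ into a pretriangulated dg category extends uniquely to a morphism $\tilde G:\pretr(\A)\to\T$ in $\Hqe$ by the universal property of $\pretr$; exactness of $G$ forces $\tilde G$ to send each totalisation to a contractible object, and hence to send $\N$ to zero. The universal property of the Drinfeld dg quotient then produces the unique factorisation through $\D^b_{\dg}(\A,\S)$.

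For the connective case, I take $\D'$ to be the full dg subcategory of $\D^b_{\dg}(\A,\S)$ whose objects are the $FA$ for $A\in\A$, and verify that $\D'$ is extension-closed. The key technical step is to show that the natural map
\[
\tau_{\leq 0}\A(A,A')\longrightarrow \tau_{\leq 0}\D^b_{\dg}(\A,\S)(FA,FA')
\]
is a quasi-isomorphism, using Drinfeld's explicit model of the quotient: connectivity of $\A$ ensures that $\pretr(\A)(A,N)$ has cohomology in a range that prevents ``detours'' through $\N$ from contributing in non-positive degrees. For part 2), I send the class of a conflation to the class of its associated triangle in $\D^b(\A,\S)$, use the description of $\mathbb E$ from \cite[Theorem 4.26]{Chen24} for well-definedness, and prove bijectivity by lifting extension classes in the quotient to conflations via the axioms Ex2 and Ex2$^{op}$.

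The main obstacle is the quasi-isomorphism in part 1): it pins down exactly how much of $\A$ survives under passage to the dg quotient, and requires a careful cohomological computation in Drinfeld's model of $\pretr(\A)/\N$, exploiting connectivity to bound the contributions of morphisms into $\N$. Once this quasi-equivalence is in place, the $\Ext^1$-bijection in part 2) follows by a standard resolution argument in the connective co-$t$-structure of Proposition~\ref{cot}, combined with the exactness axioms.
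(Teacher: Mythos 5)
Your construction of $\D^b_{\dg}(\A,\S)$ as the Drinfeld dg quotient $\pretr(\A)/\N_{\dg}$, with $\N_{\dg}$ generated by the totalisations of conflations, and your proof of the universal property are exactly the paper's (Lemma~\ref{univer}); that part is fine. The gap is in the connective case, where you defer the whole content of parts 1) and 2) to an unspecified ``careful cohomological computation in Drinfeld's model.'' The paper does not compute in Drinfeld's model at all. The key missing idea is structural: one must first prove that the \emph{defective} objects of the heart $\mathcal H\simeq\Mod H^0(\A)$ (the cokernels of $H^0(j)$ for deflations $j$, equivalently the totalisations of conflations) form a \emph{wide} subcategory, i.e.\ are closed under extensions, kernels and cokernels (Lemma~\ref{wide}); this uses the axioms Ex0--Ex2$^{op}$ together with the diagram lemmas (Lemma~\ref{exact}). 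It follows that the canonical t-structure on $\D(\A)$ restricts to a bounded t-structure on $\N$ whose heart is exactly the defective objects (Corollary~\ref{t}). Only then can one compute morphisms in the quotient: the vanishing $\Hom_{\tr(\A)}(A,\Sigma^i N)=0$ and $\Hom_{\tr(\A)}(N,\Sigma^{i+2}A)=0$ for $i\le -1$ and $N$ defective, combined with the Iyama--Yang lemma (Lemma~\ref{lem:tstructureff}), gives $\Hom_{\tr(\A)}(C,\Sigma^iA)\iso\Hom_{\D^b(\A,\S)}(C,\Sigma^iA)$ for $i\le 0$. Without Corollary~\ref{t} you cannot reduce an arbitrary object of $\N$ appearing as the cone of a denominator to (iterated extensions of shifts of) totalisations of conflations, and the ``range of cohomology'' argument you gesture at does not close.

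Two further points. For extension-closedness of $\D'$, it is not enough to quote Ex2 and Ex2$^{op}$: given a triangle $A\to E\to C\xrightarrow{u/t}\Sigma A$ one must first truncate $\Cone(t)$ into the heart of the t-structure on $\N$ so that it becomes the totalisation of a single conflation $X'$, then use the induced maps $c:C\to C'$ and $a:A'\to A$ to form $a_*c^*[X']$ and check its middle term is $E$; that reduction step is exactly what Corollary~\ref{t} buys you. For injectivity in part 2), which you do not address, the paper argues that if $[X]$ dies in $\Ext^1_{\D^b(\A,\S)}(C,A)$ then $\overline f$ becomes a split monomorphism in the quotient, hence (by the full faithfulness already established in part 1)) is split in $H^0(\A)$, so $[X]=0$ by \cite[Proposition 4.19]{Chen24}. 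Finally, the co-t-structure of Proposition~\ref{cot} that you invoke plays no role in this proof; the relevant tool is the t-structure on $\N$.
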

We will prove Theorem~\ref{main} after Lemma~\ref{lem:tstructureff}.
To avoid the heavy use of the symbol $\wedge$, we will omit it for representable dg modules.
\begin{example} If $\A$ is a Quillen exact category, then $\D^b_{\dg}(\A,\S)$ is quasi-equivalent to the canonical dg enhancement of the bounded derived category of $\A$, whence the name.
\end{example}

\begin{remark} The analogous theorem for exact $\infty$-categories \cite{Barwick15} in the sense of Barwick is
due to Klemenc \cite{Klemenc22}.
\end{remark}

\subsection{Construction of the universal morphism}
Let $\mathcal N$ be the full triangulated subcategory of $\tr(\A)$ generated by the total dg modules $N$ of conflations 
\[
\begin{tikzcd}
A\ar[r,"f"]\ar[rr,"h"swap,bend right=8ex]&B\ar[r,"j"]&C
\end{tikzcd}. 
\]
So $N$ is defined by the following diagram in $\C_{\dg}(\A)$
\begin{equation}\label{TR4}\tag{$\bigstar$}
\begin{tikzcd}[every label/.append style={font=\tiny}]
A\ar[r,"f"]\ar[d,"\begin{bmatrix}-f\\-h\end{bmatrix}",swap]&B\ar[r,"\begin{bmatrix}0\\1\end{bmatrix}"]\ar[d,equal]&U\ar[rd,"s"red,red]\ar[r,"{[}1{,}\;0{]}"]\ar[d,"{[}-h{,}j{]}",swap]&\Sigma A\ar[d]\\ 
V\ar[r,"{[}-1\ 0{]}"swap]&B\ar[r,"j"swap]&C\ar[r,"\begin{bmatrix}0\\1\end{bmatrix}"swap]\ar[d,"\begin{bmatrix}0\\0\\1\end{bmatrix}",swap]&\Sigma V\ar[d]\\ 
& &N\ar[r,equal]\ar[d]&N\ar[d]\\
&&\Sigma U\ar[r]&\Sigma^{2}A
\end{tikzcd}
\end{equation}
where we omit the symbol $\wedge$ for representable dg modules and where 
\[
s=\begin{bmatrix}0&0\\-1&0\end{bmatrix} \ko U=\Cone(f)\, \ko V=\Sigma^{-1}\Cone(j) \mbox{ and }
N=\Cone([-h,\;j]).
\]
Let $\mathcal N_{\dg}$ be the full dg subcategory of $\pretr(\A)$ consisting of the objects in $\N$. 
Let $F$ be the canonical morphism from $\A$ to the Drinfeld dg quotient $\pretr(\A)/\mathcal N_{\dg}$
(recall that we assume the morphism complexes of $\A$ to be cofibrant hence flat). We consider
$F$ as a morphism in the category $\Hqe$ obtained from the category of small dg categories
by localizing at the class of quasi-equivalences.

\begin{lemma}\label{univer} The morphism
$F:\A\rightarrow \pretr(\A)/\mathcal N_{\dg}$ is the universal exact morphism (cf.~Definition~\ref{def:exactmorphism}) from $\A$ to a pretriangulated dg category.
\end{lemma}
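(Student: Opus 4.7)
The plan is to verify two assertions: (i) $F$ is exact in the sense of Definition~\ref{def:exactmorphism}, and (ii) for every pretriangulated dg category $\mathcal{T}$, equipped with its canonical exact structure from Example~\ref{exm:exactdg}~1), composition with $F$ yields a bijection
\[
\Hqe(\pretr(\A)/\mathcal{N}_{\dg},\mathcal{T}) \xrightarrow{\sim} \Hqe_{\ex}(\A,\mathcal{T}).
\]
The strategy is to combine the universal property of the pretriangulated hull $\pretr(\A)$ (which identifies $\Hqe(\pretr(\A),\mathcal{T})$ with $\Hqe(\A,\mathcal{T})$ when $\mathcal{T}$ is pretriangulated) with the universal property of the Drinfeld dg quotient (which identifies $\Hqe(\pretr(\A)/\mathcal{N}_{\dg},\mathcal{T})$ with the subset of $\Hqe(\pretr(\A),\mathcal{T})$ killing $\mathcal{N}_{\dg}$). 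Note that the Drinfeld quotient $\pretr(\A)/\mathcal{N}_{\dg}$ is itself pretriangulated because $\mathcal{N}$ is a triangulated subcategory of $\tr(\A)$.

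First I would verify that $F$ is exact. Given a conflation $A \to B \to C$ in $\S$ with connecting homotopy $h$, diagram~(\ref{TR4}) exhibits its total dg module as $N = \Cone([-h,j]\colon U \to C)$, where $U = \Cone(f^{\wedge})$. Since $N \in \mathcal{N}_{\dg}$ is contractible in the Drinfeld quotient, the morphism $U \to C$ given by $[-h,j]$ becomes a quasi-isomorphism there. This identifies $A \to B \to C \to \Sigma A$ with a distinguished triangle in $H^0(\pretr(\A)/\mathcal{N}_{\dg})$, and hence the image of the original $3$-term h-complex is a homotopy short exact sequence in the canonical exact structure on the pretriangulated target.

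For the universal property, I would start with an exact morphism $G\colon \A \to \mathcal{T}$ to a pretriangulated dg category. By the universal property of $\pretr$, $G$ extends uniquely to a morphism $\tilde{G}\colon \pretr(\A) \to \mathcal{T}$ in $\Hqe$. Exactness of $G$ means that for every conflation the image in $\mathcal{T}$ is a homotopy short exact sequence, hence induces a distinguished triangle in $H^0(\mathcal{T})$. Applying $\tilde{G}$ to diagram~(\ref{TR4}) then forces $\tilde{G}(N) \simeq 0$ in $\mathcal{T}$ for each generating object $N$; since $\mathcal{N}_{\dg}$ is the full dg subcategory on the triangulated closure of these $N$, the morphism $\tilde{G}$ kills $\mathcal{N}_{\dg}$, and so factors uniquely through $\pretr(\A)/\mathcal{N}_{\dg}$ by the universal property of the Drinfeld quotient. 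Uniqueness on the quotient follows formally from the uniqueness statements for $\pretr$ and the Drinfeld quotient.

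The central technical point is the equivalence between \emph{killing the objects $N \in \mathcal{N}_{\dg}$} and \emph{sending conflations to homotopy short exact sequences} in the pretriangulated target. One direction uses that a homotopy short exact sequence in a pretriangulated dg category is detected, via Definition~\ref{intro:homotopyleftexact}, by the cone of the comparison map $U \to C$ being zero, which is precisely $N$; the reverse direction uses that once $U \to C$ becomes a quasi-isomorphism, the induced triangle $A \to B \to C \to \Sigma A$ realises the $3$-term h-complex as a homotopy short exact sequence. Getting this correspondence cleanly at the dg level, rather than merely at the level of triangulated categories, is the main obstacle I would expect.
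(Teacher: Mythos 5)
Your proposal is correct and follows essentially the same route as the paper: first checking exactness of $F$ via the diagram~(\ref{TR4}), where inverting $[-h,j]\colon U\to C$ (equivalently, killing $N$) produces the triangle $A\to B\to C\to\Sigma A$ in the quotient, and then obtaining universality by composing the universal property of $\pretr(\A)$ with that of the Drinfeld dg quotient, using that an exact $G$ annihilates each totalization $N$ and hence all of $\mathcal N_{\dg}$. The extra care you flag about matching homotopy short exact sequences with the vanishing of $N$ in the pretriangulated target is a point the paper treats tersely, but your argument for it is sound.
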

\begin{proof}Put $\D^b(\A,\S)=H^0(\pretr(A)/\mathcal N_{\dg})$.
We first show that $F$ is exact.
Let $X$ be a conflation as follows
\[
\begin{tikzcd}
A\ar[r,"f"]\ar[rr,"h"swap,bend right=8ex]&B\ar[r,"j"]&C
\end{tikzcd}.
\]
We consider the following roof $[1,0]/[-h,j]$: 
\[
\begin{tikzcd}
&U\ar[ld,Rightarrow,"{[}-h{,}j{]}"swap]\ar[rd,"{[}1{,}0{]}"]&\\
C&&\Sigma A
\end{tikzcd}
\]
where $U$ is the cone of $f:A\rightarrow B$.
We have the following triangle in $\D^b(\A,\S)$:
\[
\begin{tikzcd}
A\ar[r,"f/1"]&B\ar[r,"j/1"]&C\ \ar[r,"{[1}{,}0{]}{/}{[}-h{,}j{]}"]&\ \Sigma A.
\end{tikzcd}
\]
So the morphism $F$ is exact.

We show that $F$ is universal.
Let $G:\A\rightarrow \B$ be an exact morphism in $\mathrm{Hqe}$ from $\A$ to a pretriangulated dg category $\B$.
By the universal property of $\A\rightarrow \pretr(\A)$, there exists a unique morphism from $\pretr(\A)$ to $\B$ in $\mathrm{Hqe}$ which extends $G$.
Since $G$ is an exact morphism, $H^0(G)$ sends the totalization $N$ of $X$ to a zero object in $H^0(\B)$.
By the universal property of Drinfeld dg quotient \cite{Tabuada10}, there exists a unique morphism in 
$\Hqe$ from $\pretr(A)/\mathcal N_{\dg}$ to $\B$.
\end{proof}
\begin{proposition}\label{prop:Grothendieckgroups}
For a connective exact dg category $\A$, the universal exact morphism $F:\A\rightarrow\D^b_{\dg}(\A,\S)$ induces an isomorphism $K_0(H^0(\A))\rightarrow K_0(\D^b(\A,\S))$, where $K_0(H^0(\A))$ denotes the Grothendieck group of the extriangulated category $(H^0(\A),\mathbb E,\mathfrak s)$.
\end{proposition}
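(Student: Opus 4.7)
The plan is to exploit the Drinfeld quotient presentation $\D^b_{\dg}(\A,\S)=\pretr(\A)/\mathcal N_{\dg}$ established in Lemma~\ref{univer}, together with the canonical co-t-structure on $\tr(\A)$ from Proposition~\ref{cot}. First I would define a map
\[
\phi:K_0(H^0(\A),\mathbb E,\mathfrak s)\longrightarrow K_0(\D^b(\A,\S)),\quad [A]^{\sp}\longmapsto [F(A)].
\]
Since $F$ is exact by Lemma~\ref{univer}, each conflation in $(\A,\S)$ is sent to a distinguished triangle in $\D^b(\A,\S)$, so that the defining relations $[A]^{\sp}-[B]^{\sp}+[C]^{\sp}$ are killed and $\phi$ is well defined.

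For the inverse, Thomason's localisation theorem applied to the Verdier quotient $\D^b(\A,\S)=\tr(\A)/\N$ yields
\[
K_0(\D^b(\A,\S))\;\cong\;K_0(\tr(\A))\big/\langle [N]:N\in\N\rangle.
\]
Since $\A$ is connective, Proposition~\ref{cot} furnishes a bounded canonical co-t-structure on $\tr(\A)$ whose coheart is additively generated by $H^0(\A)$. Invoking Bondarko's weight complex theorem for bounded co-t-structures, the inclusion $H^0(\A)\hookrightarrow\tr(\A)$ then induces an isomorphism $\psi_0:K_0^{\sp}(H^0(\A))\xrightarrow{\sim} K_0(\tr(\A))$, whose inverse is given on objects by the Euler characteristic of the associated weight complex.

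It remains to identify the relations. For a conflation $A\xrightarrow{f}B\xrightarrow{j}C$ with totalisation $N$, the construction diagram~(\ref{TR4}) exhibits distinguished triangles
\[
A\to B\to U\to\Sigma A\quad\text{and}\quad U\to C\to N\to\Sigma U,
\]
so additivity in $K_0(\tr(\A))$ gives $[N]=[C]-[U]=[A]-[B]+[C]$, i.e.~$\psi_0^{-1}[N]=[A]^{\sp}-[B]^{\sp}+[C]^{\sp}$. Because $\N$ is, by construction, the triangulated subcategory generated by such totalisations, $\psi_0$ descends to an isomorphism
\[
K_0(\D^b(\A,\S))\;\cong\;K_0^{\sp}(H^0(\A))\big/\langle[A]^{\sp}-[B]^{\sp}+[C]^{\sp}\rangle\;=\;K_0(H^0(\A),\mathbb E,\mathfrak s),
\]
visibly inverse to $\phi$.

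The main technical obstacle is the invocation of Bondarko's weight complex theorem, in particular controlling the possible passage to the idempotent completion of the coheart: one must verify that relations arising from direct-summand decompositions in the coheart do not produce extraneous relations modulo the conflation relations. This can be handled by observing that the co-t-structure of Proposition~\ref{cot} already forces every class in $K_0(\tr(\A))$ to be a $\mathbb Z$-linear combination of classes of objects of $H^0(\A)$, after which the weak functoriality of the weight complex (compared on successive weight filtrations) reduces every computation to the two triangles above.
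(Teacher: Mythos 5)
Your proposal follows the same overall architecture as the paper's proof: both reduce the claim to the localisation formula $K_0(\D^b(\A,\S))\cong K_0(\tr(\A))/\langle[N]\rangle$ (you via ``Thomason'', the paper via Illusie's \emph{op.\ cit.}\ VIII 3.1 -- the same $K_0$-level statement), both read off $[N]=[A]-[B]+[C]$ from the two triangles in the diagram~(\ref{TR4}), and both hinge on the isomorphism $K_0^{\sp}(H^0(\A))\iso K_0(\tr(\A))$. Where you genuinely diverge is in establishing that last isomorphism: you apply Bondarko's weight-complex theorem directly to the canonical co-t-structure on $\tr(\A)$ from Proposition~\ref{cot}, whereas the paper factors the map through $K_0(\tr(H^0(\A)))=K_0(\H^b(H^0(\A)))$, quotes the result of Chen--Li--Zhang--Zhao that $K_0^{\sp}(H^0(\A))\iso K_0(\tr(H^0(\A)))$ to get injectivity of $K_0^{\sp}(H^0(\A))\to K_0(\tr(\A))$ from the composite, and gets surjectivity because every object of $\tr(\A)$ is an iterated cone of objects of $H^0(\A)$. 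The obstacle you flag is real: Bondarko's theorem computes $K_0(\tr(\A))$ as $K_0^{\sp}$ of the \emph{coheart}, which is the retract closure of $H^0(\A)$ in $\tr(\A)$ and may be strictly larger, and your proposed fix via ``weak functoriality of the weight complex'' does not clearly deliver injectivity. The clean way to close this in your approach is a cofinality argument: since every object $W$ of the coheart satisfies $W\oplus W'\cong A$ for some $A\in H^0(\A)$, any relation $\sum n_i[A_i]=0$ in $K_0^{\sp}(\mathrm{coheart})$ witnessed by an isomorphism after adding a coheart object $W$ already holds in $K_0^{\sp}(H^0(\A))$ after adding $W'$ to both sides, so $K_0^{\sp}(H^0(\A))\to K_0^{\sp}(\mathrm{coheart})$ is injective; combined with surjectivity onto $K_0(\tr(\A))$ this yields the desired isomorphism. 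With that repair your route is a valid, self-contained alternative to the paper's citation of Chen--Li--Zhang--Zhao, at the cost of invoking Bondarko's machinery.
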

\begin{proof}
We assume that $\A$ is strictly connective.
The dg functor $\A\rightarrow H^0(\A)$ induces a dg functor between the pretriangulated hulls $\pretr(\A)\rightarrow \pretr(H^0(\A))$.
We have the following commutative diagram
\[
\begin{tikzcd}
\A\ar[r]\ar[d,hook]&H^0(\A)\ar[d,hook]\\
\pretr(\A)\ar[r]&\pretr(H^0(\A))
\end{tikzcd}
\begin{tikzcd}
H^0(\A)\ar[r,equal]\ar[d,hook]&H^0(\A)\ar[d,hook]\\
\tr(\A)\ar[r]&\tr(H^0(\A))=\H^b(H^0(\A))
\end{tikzcd}
\]
By \cite[Corollary B]{ChenLiZhangZhao24}, we have $K_0^{\sp}(H^0(\A))\iso K_0(\tr(H^0(\A)))$. 
Hence the canonical map $K_0^{\sp}(H^0(\A))\rightarrow K_0(\tr(\A))$ is injective.
Since $\tr(\A)$ is generated as a triangulated category by the objects in $H^0(\A)$, it is also surjective. 
Hence we have $K_0^{\sp}(H^0(\A))\iso K_0(\tr(\A))$.

By definition, we have $\D^b(\A,\S)\iso \tr(\A)/\N$ where $\N$ is the full triangulated subcategory of $\tr(\A)$ generated by the totalizations of the conflations.
By \cite[VIII 3.1]{Illusie77}, we have that $K_0(\D^b(\A,\S))$ is the quotient of $K_0(\tr(\A))$ by the subgroup generated by $[N]\in K_0(\tr(\A))$ where $N$ runs through the totalizations of conflations in $(\A,\S)$.
Note that in (\ref{TR4}) we have $[N]=[C]-[U]=[C]-[B]+[A]$ in $K_0(\tr(\A))$.
It follows that the isomorphism $K_0^{\sp}(H^0(\A))\iso K_0(\tr(\A))$ induces a canonical isomorphism 
\[
K_0(H^0(\A),\mathbb E,\mathfrak s)\iso K_0(\D^b(\A,\S)).
\]
\end{proof}
Denote by $\mathcal H$ the heart of the canonical t-structure $(\D(\A)^{\leq 0},\D(\A)^{\geq 0})$, cf.~Lemma~\ref{lemma:tstructure}. 
We denote by $H^0:\D(\A)\rightarrow \mathcal H$ the associated homological functor.
Recall that we omit the symbol $\wedge$ for representable dg modules.

\begin{lemma}\label{leftexactsequence}
A 3-term h-complex
\[
\begin{tikzcd}
A\ar[r,"f"]\ar[rr,bend right=8ex,"h"swap]&B\ar[r,"j"]&C
\end{tikzcd}
\]
is exact if and only if its totalization $N$ lies in $\mathcal H$. In this case, the totalization $N$ is the cokernel of $H^0(B)\xrightarrow{H^0(j)}H^0(C)$ in $\mathcal H$.
\end{lemma}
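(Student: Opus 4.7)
The plan is to extract from the octahedron~(\ref{TR4}) two complementary triangles and then analyze the cohomology of $N$ through each of them. The third column of~(\ref{TR4}) reads as the triangle
\[
U \xrightarrow{[-h,j]} C \to N \to \Sigma U
\]
in $\D(\A)$ with $U = \Cone(f^\wedge)$, while the fourth column (shifted by $\Sigma^{-1}$) identifies $\Sigma^{-1}N$ with $\Cone(u)$ and yields the triangle
\[
A \xrightarrow{u} V \to \Sigma^{-1}N \to \Sigma A,
\]
where $V = \Sigma^{-1}\Cone(j^\wedge)$ and $u = \begin{bmatrix} f \\ h \end{bmatrix}$ is exactly the canonical map from Definition~\ref{intro:homotopyleftexact}.

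I would first unpack homotopy left exactness via the second triangle: the condition that $\tau_{\leq 0}(u)$ be a quasi-isomorphism amounts to $H^i(u)$ being an isomorphism for $i \leq 0$, and tracing this through the long exact cohomology sequence gives the equivalent vanishings $H^k(N) = 0$ for $k \leq -2$ together with an injection $H^{-1}(N) \hookrightarrow H^1(A)$. Dually, analyzing the first triangle in the category of left dg $\A$-modules, i.e.\ in $\D(\A^{op})$, homotopy right exactness translates to $H^k(N) = 0$ for $k \geq 2$ together with a dual side condition on $H^1(N)$ involving $H^{-1}(C)$.

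With these translations in hand, one direction is immediate: if $N \in \mathcal{H}$, i.e.\ $H^i(N) = 0$ for all $i \neq 0$, then all four conditions are trivially satisfied and the 3-term h-complex is homotopy short exact. Conversely, homotopy short exactness yields $H^k(N) = 0$ for $|k| \geq 2$ at once, and in the connective case the two side conditions degenerate (since $H^1(A) = 0 = H^{-1}(C)$) to force $H^{\pm 1}(N) = 0$ as well; in general they must be combined by threading them through the common octahedral structure of~(\ref{TR4}). For the ``moreover'' part, the degree-$0$ long exact sequence of the first triangle gives $N \cong \cok(H^0([-h,j])\colon H^0(U) \to H^0(C))$; since $h$ has degree $-1$, $H^0(h) = 0$, so this cokernel factors through $H^0(j)$, and combined with $H^0(U) = \cok(H^0(f))$ and $H^0(jf) = 0$ (from $jf = -dh$), we conclude $N \cong \cok(H^0(j))$ in $\mathcal{H}$.

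The hardest step will be the dual argument for homotopy right exactness: transcribing the definition cleanly into a cohomological statement about $N$ viewed as a left dg $\A$-module, and then using the resulting side condition to rule out residual middle cohomology in the non-connective setting.
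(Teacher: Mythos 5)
Your analysis of the triangle $A \xrightarrow{u} V \to \Sigma^{-1}N \to \Sigma A$ coming from the last column of~(\ref{TR4}) matches the paper's: both of you translate homotopy left exactness into the vanishing of the negative cohomologies of $N$, and your route to the cokernel statement (via the triangle $U \to C \to N \to \Sigma U$ and the surjectivity of $H^0(B)\to H^0(U)$) is an equivalent traversal of the same octahedron as the paper's (which instead passes through $N\cong H^1(V)$ and the second row). You are also right, and more explicit than the paper, that left exactness only directly kills $H^k(N)$ for $k\le -2$ and leaves a boundary term $H^{-1}(N)\cong\ker\bigl(H^1(A(A'))\to H^1(V(A'))\bigr)$; but "threading them through the common octahedral structure" is not an argument, and this term is left genuinely unresolved in your write-up.

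The real gap is your treatment of the other half. First, membership of $N$ in $\D(\A)^{\leq 0}$ requires no exactness hypothesis at all: by construction $N$ is an iterated extension of $C$, $\Sigma B$ and $\Sigma^2 A$, all of which lie in the aisle $\D(\A)^{\leq 0}$ of Lemma~\ref{lemma:tstructure}, which is closed under extensions and positive shifts. This is the paper's one-line observation, and it is the only thing needed on that side. Second, and more seriously, your proposed substitute --- deriving ``$H^k(N)=0$ for $k\geq 2$'' from homotopy right exactness by working in $\D(\A^{op})$ --- does not do what you want. Homotopy right exactness is a condition on the corepresented (left) dg modules, i.e.\ it constrains groups of the form $\Hom_{\D(\A)}(N,\Sigma^k A')$, morphisms \emph{out of} the totalization; it says nothing about the cohomology $H^k(N(A'))$ of $N$ as a \emph{right} module, which is what governs membership in the heart. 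Moreover, when $\A$ is not connective, $\D(\A)^{\leq 0}$ is the aisle generated by the representables and is \emph{not} characterized by vanishing of positive cohomology, so even a correct proof of $H^{\geq 2}(N)=0$ would not place $N$ in $\D(\A)^{\leq 0}$. This mistranslation also undermines your converse direction: you claim that $N\in\mathcal H$ makes ``all four conditions'' hold and hence the complex is exact, but two of those four conditions are not equivalent to homotopy right exactness, so the implication $N\in\mathcal H\Rightarrow$ right exact is never established. You correctly identified this as the hardest step; it is hard because it is the wrong step.
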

\begin{proof}
Consider the diagram (\ref{TR4}).
For $A'\in\A$ and $i\leq 0$, we have 
\[
\Hom_{\D(\A)}(A',\Sigma^{i}A)\xrightarrow{\sim}\Hom_{\D(\A)}(A',\Sigma^{i}V)
\]
and so we have $\Hom_{\D(\A)}(A',\Sigma^{i-1}N)=0$ and hence $N\in \D(\A)^{\geq 0}$. 
Since representable dg modules lie in $\D(\A)^{\leq 0}$, 
it is clear from the construction that $N\in \D(\A)^{\leq 0}$. 
Therefore, we have $N\in\mathcal H$.

From the rightmost column of the diagram (\ref{TR4}), we have $H^1(V)\xrightarrow{\sim}N$. 
From the second row of (\ref{TR4}), we have an exact sequence 
\[
\begin{tikzcd}H^0(B)\ar[r,"H^0(j)"]&H^0(C)\ar[r]&H^1(V)\ar[r]&H^1(B)=0.\end{tikzcd}
\]
So the second statement follows.
\end{proof}
\begin{remark}
We have an exact sequence in $\mathcal H$:
\[
\begin{tikzcd}
H^0(A)\ar[r,"H^0(f)"]&H^0(B)\ar[r,"H^0(j)"]&H^0(C)\ar[r]&N\ar[r]&0
\end{tikzcd}
\]
where, with the notations of  (\ref{TR4}), the image of the morphism $H^0(j)$ is $H^0(U)$.
Using the canonical extriangulated category $(H^0(\A),\mathbb E,\mathfrak s)$, this is equivalent to saying that $N$ is the {\em defect} (cf.~\cite[Definition 2.4]{Ogawa21}) of the conflation $
A\xrightarrow{\overline{f}}B\xrightarrow{\overline \jmath}C 
$
in $(H^0(\A),\mathbb E,\mathfrak s)$.
\end{remark}
\begin{notation}\label{not:N}
We denote by $\tilde{\mathcal N}\subseteq \mathcal H$ the essential image of $\mathcal N$ under the homological functor $H^0:\D(\A)\rightarrow \mathcal H$.
\end{notation}
\begin{definition}\label{def:defective}
An object in $\mathcal H$ is {\em defective}, if it is the cokernel of a map 
\[
H^0(B)\xrightarrow{H^0(\overline{j})}H^0(C)\ko
\]
where $j$ is a deflation, or equivalently if it is the totalization of some conflation.
\end{definition}
Let $X_i$, $i=1,2$, be homotopy short exact sequences
\[
\begin{tikzcd}
A_i\ar[r,"f_i"]\ar[rr,"h_i"swap,bend right=8ex]&B_i\ar[r,"j_i"]&C_i
\end{tikzcd}
\]
and $\alpha: X_1 \rightarrow X_2$ a morphism of ${\mathcal H}_{3t}(\A)$. Let
$X_3$ be homotopy square in $\A$ 
\[
\begin{tikzcd}
A_1 \ar[r,"f_1"] \ar[d] \ar[rd] & B_1 \ar[d]\\
A_2 \ar[r,"f_2"swap] & B_2
\end{tikzcd}
\]
obtained by restricting a representative of $\alpha$.
Notice that by \cite[Lemma 3.22]{Chen24}, up to (non canonical) isomorphism,
the object $X_3$ is independent of the choice of representative.
Suppose that $\alpha$ restricts to an isomorphism $C_1\rightarrow C_2$ in $H^0(\A)$. 
Then $X_3$ is homotopy cartesian by \cite[Proposition 3.31]{Chen24}. 
Let $N_i$ be the totalization of $X_i$ for $i=1$, $2$, $3$. 

\begin{lemma}\label{exact}
Keep the notations as above. The following statements hold.
\begin{itemize}
\item[a)] There exists a triangle in $\D(\A)$
\[
N_3\rightarrow N_1\rightarrow N_2\rightarrow \Sigma N_3
\]
\item[b)] If $X_1$ is a conflation, the triangle in a) induces a short exact sequence in $\mathcal H$
\[
0\rightarrow N_3\rightarrow N_1\xrightarrow{g} N_2\rightarrow 0
\]
where $g$ is the morphism induced by $\alpha$.
\end{itemize}
\end{lemma}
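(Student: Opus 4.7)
The plan is to realise both $N_1$ and $N_2$ as cones of maps out of the same base object and then invoke the octahedral axiom. By the rightmost column of the diagram (\ref{TR4}), each $N_i$ fits into a triangle
\[
U_i \xrightarrow{[-h_i,j_i]} C_i \to N_i \to \Sigma U_i
\]
in $\D(\A)$, where $U_i = \Cone(f_i)$. Using the isomorphism $C_1\iso C_2$ provided by the restriction of $\alpha$, I identify both $C_i$'s with a single object $C\in\D(\A)$; the restriction of $\alpha$ to the first two terms of the $X_i$ then induces a morphism $\phi:U_1\to U_2$ compatible with the maps $[-h_i,j_i]$. Applying the octahedral axiom to the composition $U_1\xrightarrow{\phi}U_2\xrightarrow{[-h_2,j_2]}C$ produces a distinguished triangle
\[
\Cone(\phi)\to N_1\to N_2\to \Sigma\Cone(\phi),
\]
so part a) reduces to the identification $\Cone(\phi)\simeq N_3$.

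For this identification, I invoke \cite[Proposition 3.31]{Chen24} to ensure that $X_3$ is homotopy cartesian. Its associated 3-term h-complex
\[
A_1 \to A_2\oplus B_1 \to B_2,
\]
equipped with the homotopy provided by $\alpha$, has totalisation $N_3$, which by construction is the iterated cone $\Cone(\Cone(A_1\to A_2\oplus B_1)\to B_2)$. On the other hand, $\Cone(\phi)$ arises as the iterated cone of the same (homotopy) commutative square
\[
\begin{tikzcd}
A_1 \ar[r,"f_1"] \ar[d] & B_1 \ar[d] \\
A_2 \ar[r,"f_2"swap] & B_2
\end{tikzcd}
\]
with the cones formed in the opposite order (first the columns, then the induced map between them). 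A bicomplex-style computation in $\pretr(\A)$ then exhibits an explicit isomorphism between the two iterated cones in $\D(\A)$.

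For part b), I first argue that $N_1, N_2, N_3\in\mathcal{H}$. Since $X_1$ and $X_2$ are homotopy short exact sequences, Lemma~\ref{leftexactsequence} gives $N_1, N_2\in\mathcal{H}$ directly. For $N_3$, although $X_3$ is only known to be homotopy cartesian so that the associated 3-term h-complex is only homotopy left exact, the argument in the proof of Lemma~\ref{leftexactsequence} uses only left exactness to place the totalisation in $\mathcal{H}$, and the same reasoning applies to $N_3$. Taking the long exact cohomology sequence of the triangle from part a) then collapses it to the short exact sequence $0\to N_3\to N_1\to N_2\to 0$ in $\mathcal{H}$, and the middle map agrees with $g$ by naturality of the cone construction and the fact that $\phi$ is induced by the restriction of $\alpha$.

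The main obstacle is the identification $\Cone(\phi)\simeq N_3$: while intuitively clear from the total-complex-of-a-bicomplex picture, a rigorous proof in $\pretr(\A)$ requires careful bookkeeping of the various homotopies at play — the $h_i$ from the $X_i$, the homotopy witnessing the homotopy square structure of $X_3$, and the higher homotopy $t$ implicit in the definition of a morphism in $\mathcal{H}_{3t}(\A)$.
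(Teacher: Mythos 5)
Your argument is correct, but it is organised differently from the paper's. The paper first normalises $c=\Id_{C_1}$ in $Z^0(\A)$ (using that $\overline{c}$ is invertible), then writes the morphism $\alpha$ as a two-row diagram whose top row is the $3$-term h-complex $A_1\xrightarrow{\phi'}B_1\oplus A_2\to B_2$ attached to the homotopy square $X_3$, and applies the $3\times3$-lemma to the graded-split short exact sequence $A_2\rightarrowtail\Cone(\phi')\twoheadrightarrow\Cone(f_1)$ mapping to $\Sigma^{-1}\Cone(j_2)\to B_2\to C_1$; the cones of the three vertical maps are $\Sigma^{-1}N_2$, $N_3$ and $N_1$ on the nose, so the triangle of a) drops out with no further identification. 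You instead run the octahedron on $U_1\xrightarrow{\phi}U_2\xrightarrow{[-h_2,j_2]}C$, which is equally valid but forces you to (i) interpret the compatibility of $\phi$ with the maps $[-h_i,j_i]$ only up to the homotopy built from $s_2$, $t$ and $c$ --- harmless, since equal morphisms in $\D(\A)$ have isomorphic cones, but it is exactly the bookkeeping the paper's normalisation $c=\Id$ is designed to avoid --- and (ii) prove the ``rows-then-columns versus columns-then-rows'' identification $\Cone(\phi)\simeq N_3$, which the paper's choice of middle object $B_1\oplus A_2$ makes tautological. Your treatment of b) is also fine and in one respect slightly more economical: you observe that homotopy left exactness of the $3$-term h-complex of $X_3$ already forces $N_3\in\mathcal H$ (the $\D(\A)^{\leq 0}$ half being automatic from the construction of totalizations), whereas the paper invokes that $X_3$ is in fact homotopy bicartesian. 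The only soft spot is the identification of the middle map of your triangle with the map $g$ induced by $\alpha$ on cokernel presentations, which you assert by ``naturality''; this is true but deserves the one-line check that the octahedron's canonical map $\Cone(g\phi)\to\Cone(g)$ induces $(\overline{b},\overline{c})$ on the presentations $H^0(B_i)\to H^0(C_i)\to N_i\to 0$.
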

Notice that if $X_1$ is a conflation, then $X_2$ is a conflation by the dual of \cite[Proposition 4.9 b)]{Chen24}
and hence $X_3$ is homotopy bicartesian by the dual of \cite[Lemma 4.12]{Chen24}. 
\begin{proof} a)
Suppose the morphism $\alpha:X_1\rightarrow X_2$ is given by the following diagram in $\A$:
\[
\begin{tikzcd}
A_1\ar[r,"f_1"]\ar[rd,"s_1"{red,swap},red]\ar[rrd,"t"{blue,near start},blue]\ar[rr,bend left=8ex,"h_1"]\ar[d,"a"swap]&B_1\ar[rd,"s_2"red,red]\ar[r,"j_1"]\ar[d,"b"{swap,near end}]&C_1\ar[d,"c"]\\
A_2\ar[rr,bend right=8ex,"h_2"swap]\ar[r,"f_2"swap]&B_2\ar[r,"j_2"swap]&C_2
\end{tikzcd}.
\]
Since $\overline{c}$ is an isomorphism in $H^0(\A)$, we may replace $C_1$ by $C_2$, $j_1:B_1\rightarrow C_1$ by $c\circ j_1$ and $h_1$ by $c\circ h_1$. 
Thus we may assume $C_1=C_2$ and $c=\Id_{C_1}$ in $Z^0(\A)$.
Then we have the following diagram in $Z^0(\A)$:
\[
\begin{tikzcd}
A_1\ar[rrd,"-t"blue,blue]\ar[rr,bend left=8ex,"s_1"]\ar[r,"\phi"]\ar[d,equal]& B_1\oplus A_2\ar[rd,"\psi"{red},red]\ar[r,"{[}b{,}-f_2{]}"]\ar[d,"{[}1{,}0{]}"{swap,near end}]& B_2\ar[d,"j_2"]\\
A_1\ar[rrd,"t"{swap,near end,blue},blue]\ar[rd,"s_1"{red,swap},red]\ar[r,"f_1"]\ar[d,"a"swap]& B_1\ar[rd,"s_2"{red},red]\ar[r,"j_1"]\ar[d,"b"{swap,near end}]& C_1\ar[d,equal]\\
A_2\ar[rr,bend right=6ex,"h_2"swap]\ar[r,"f_2"swap]&B_2\ar[r,"j_2"swap]&C_1
\end{tikzcd}
\]
where we have omitted a zero diagonal map and the homotopy $h_1:A_1\rightarrow C_1$ and where $\phi=\begin{bmatrix}f_1\\a\end{bmatrix}$, $\psi={[}-s_2{,}-h_2{]}$.
It induces a canonical diagram in $\C(\A)$
\[
\begin{tikzcd}
A_2\ar[r,"\begin{bmatrix}0\\0\\1\end{bmatrix}"]\ar[d,"\begin{bmatrix}-f_2\\-h_2\end{bmatrix}"swap]&\Cone(\phi)\ar[rd,"s'"{red},red]\ar[r,"\begin{bmatrix}1\ 0\ 0\\0\ 1\ 0\end{bmatrix}"]\ar[d,"u"]&\Cone(f_1)\ar[d,"{[}-h_1{,}j_1{]}"]\\
\Sigma^{-1}\Cone(j_2)\ar[r,"{[}1{,}0{]}"swap]&B_2\ar[r,"j_2"swap]&C_1
\end{tikzcd}
\]
where $u={[}-s_1{,}b{,}-f_2{]}$, $s'=[t,s_2,h_2]$ and where the first row is a graded-split short exact sequence in $\C_{\dg}(\A)$. 
By the $3\times 3$-lemma, we have a triangle in $\D(\A)$ 
\[
N_3\rightarrow N_1\rightarrow N_2\rightarrow \Sigma N_3
\]
which gives the short exact sequence of b) in $\mathcal{H}$ when $X_1$ is a conflation.
\end{proof}
\subsection{The canonical t-structure on the category $\mathcal N$}\label{subsec:N}

In the rest of this section, we keep the assumption that $\A$ is connective. In this case, we have 

\begin{itemize}
\item The left aisle $\D(\A)^{\leq 0}$ is formed by those dg modules whose cohomology is concentrated in non-positive degrees.
\item The heart $\mathcal H$ of the t-structure $(\D(\A)^{\leq 0},\D(\A)^{\geq 0})$ is equivalent to $\Mod H^0(\A)$ and the cohomological functor $H^0$ sends a dg module $M$ to $H^0(M):A\mapsto H^0(M(A))$.
\item In particular $H^0$ is fully faithful on the full subcategory of quasi-representable dg modules and $H^0(M)$ is projective in the heart for each quasi-representable dg module $M$.
\end{itemize}

Recall that a full subcategory of an abelian category is {\em wide} if it is stable under extensions,
kernels and cokernels. In particular, each wide subcategory is abelian and its inclusion is fully 
faithful and fully exact.
\begin{lemma}\label{wide}
The full subcategory of $\mathcal H$, consisting of defective objects (cf.~Definition~\ref{def:defective}), is a wide subcategoy.
\end{lemma}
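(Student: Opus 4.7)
The plan is to verify the three closure properties of a wide subcategory by systematically lifting module-theoretic data in $\H$ to conflation-theoretic data in $(\A,\S)$, and then applying Lemma~\ref{exact} together with the axioms $\Ex1$, $\Ex2$ and $\Ex2^{op}$. The crucial input, recorded just before the lemma, is the projectivity in $\H$ of every representable module $H^0(X)$ for $X\in H^0(\A)$, which is what allows morphisms and extensions among defective objects to be lifted.

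For closure under kernels and cokernels, I would take a morphism $g\colon N_1\to N_2$ between defectives, where $N_i$ is the totalization of a conflation $X_i\colon A_i\to B_i\to C_i$. Using successive projectivity of $H^0(C_1)$ and $H^0(B_1)$, the map $g$ lifts to a morphism of conflations $\alpha\colon X_1\to X_2$ with components $a,b,c$. Applying axiom $\Ex2$ to the deflation $j_2$ along $c\colon C_1\to C_2$, I factor $\alpha$ as $X_1\to X_2'\to X_2$, where $X_2'\colon A_2\to B_2'\to C_1$ is the pullback conflation, the first factor inducing the identity on the $C$-components and the second inducing the identity on the $A$-components. Lemma~\ref{exact}(b) applied to $X_1\to X_2'$ yields a short exact sequence $0\to N_3\to N_1\to N_2'\to 0$ in $\H$ with $N_3$ defective, and the dual statement (valid by the self-duality of exact dg categories recorded in Examples~\ref{exm:exactdg}~3)) applied to $X_2'\to X_2$ yields $0\to N_2'\to N_2\to N_3'\to 0$ with $N_3'$ defective. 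Splicing these identifies $\ker(g)\cong N_3$, $\im(g)\cong N_2'$ and $\cok(g)\cong N_3'$ as defective objects.

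For closure under extensions, given a short exact sequence $0\to N_1\to N\to N_2\to 0$ in $\H$, projectivity of $H^0(C_2)$ lifts the surjection $H^0(C_2)\twoheadrightarrow N_2$ to a map $\tilde p\colon H^0(C_2)\to N$; the composite $H^0(B_2)\to H^0(C_2)\to N$ lands in $N_1$ and, lifting through $H^0(C_1)\twoheadrightarrow N_1$, produces $\tilde\delta\colon B_2\to C_1$ in $H^0(\A)$. I then realize $N$ as the totalization of the conflation determined by the block-triangular morphism
\[
j=\begin{bmatrix}j_1 & -\tilde\delta\\ 0 & j_2\end{bmatrix}\colon B_1\oplus B_2\longrightarrow C_1\oplus C_2,
\]
and verify that $j$ is a deflation via the factorization
\[
j=(1_{C_1}\oplus j_2)\circ\begin{bmatrix}1 & -\tilde\delta\\ 0 & 1\end{bmatrix}\circ(j_1\oplus 1_{B_2}),
\]
in which the outer factors are direct sums of deflations with identities, the middle factor is an isomorphism in $Z^0(\A)$ (hence a deflation), and the composite is a deflation by $\Ex1$.

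The main obstacle will be the extension step, specifically the verification that the totalization of this block-triangular conflation really recovers the middle term $N$. This identification reduces to a diagram chase in $\H$: the map $(c_1,c_2)\mapsto \pi_1(c_1)+\tilde p(c_2)$ from $H^0(C_1)\oplus H^0(C_2)$ to $N$ annihilates the image of $H^0(j)$ thanks to the defining relation $\tilde p\circ H^0(j_2)=\iota\circ\pi_1\circ H^0(\tilde\delta)$, and the induced map from the cokernel of $H^0(j)$ to $N$ is surjective since $\pi_1$ surjects onto $N_1\hookrightarrow N$ while $\tilde p$ covers every class in $N/N_1\cong N_2$; injectivity is obtained by lifting any element in its kernel back to $H^0(B_1)\oplus H^0(B_2)$ using a final round of projectivity of $H^0(B_2)$.
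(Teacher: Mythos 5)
Your proposal is correct and follows essentially the same route as the paper: the extension step is the same horseshoe-lemma construction with the identical three-fold factorization of the block-triangular deflation, and the kernel/cokernel step is the same lift-to-a-morphism-of-conflations, factor through the pullback conflation (the paper cites \cite[Lemma 4.10]{Chen24} for this factorization where you invoke $\Ex2$ directly), and apply Lemma~\ref{exact} and its dual. The only point you gloss over is that the object $N_3$ arising from the homotopy bicartesian square is itself defective, which the paper justifies in one line via Axiom $\Ex2$.
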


\begin{corollary}\label{t}
An object in $\mathcal H$ lies in $\tilde{\mathcal N}$ (cf.~Notation~\ref{not:N}) if and only if it is defective. 
The t-structure $(\D(\A)^{\leq 0},\D(\A)^{\geq  0})$ on $\D(\A)$ restricts to a bounded t-structure on $\N$.
\end{corollary}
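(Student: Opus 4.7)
The plan is to bootstrap from Lemma \ref{wide} — which says the defective objects form a wide subcategory $\mathcal W \subseteq \mathcal H$ — to a triangulated subcategory of $\D(\A)$ containing $\mathcal N$. First I would consider the full subcategory $\mathcal N_{\mathcal W} \subseteq \D(\A)$ consisting of those objects all of whose cohomologies lie in $\mathcal W$. Using the long exact sequence of cohomology attached to a triangle $X \to Y \to Z \to \Sigma X$, together with the closure of $\mathcal W$ under kernels, cokernels, and extensions, one checks that $\mathcal N_{\mathcal W}$ is a triangulated subcategory. By Lemma \ref{leftexactsequence}, each generator of $\mathcal N$ — the totalization of a conflation — lies in $\mathcal H \cap \mathcal W$, hence in $\mathcal N_{\mathcal W}$, giving $\mathcal N \subseteq \mathcal N_{\mathcal W}$.

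From this inclusion the first assertion drops out immediately. The ``if'' direction is Lemma \ref{leftexactsequence}: a defective object is by definition a totalization, which lies in $\mathcal N \cap \mathcal H$ and therefore in $\tilde{\mathcal N}$. For the converse, if $M \in \mathcal H$ is isomorphic to $H^0(X)$ for some $X \in \mathcal N$, then $M \in \mathcal W$ by the inclusion $\mathcal N \subseteq \mathcal N_{\mathcal W}$, i.e.,~$M$ is defective.

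For the second assertion, I would first dispose of boundedness: every object of $\tr(\A)$ has cohomology in a bounded range, since it is built from representables — all in $\D(\A)^{\leq 0}$ by connectivity — via finitely many shifts and cones. The nontrivial point is that the truncations $\tau_{\leq n}$ preserve $\mathcal N$. Here the key observation is that for $X \in \mathcal N$, each cohomology $H^i(X)$ is defective by the first assertion, hence $H^i(X)[-i]$ already lies in $\mathcal N$ (it is a shift of a totalization). A straightforward induction on $n$, starting with $\tau_{\leq n}X = 0$ for $n$ sufficiently negative and using the truncation triangles $\tau_{\leq n}X \to \tau_{\leq n+1}X \to H^{n+1}(X)[-(n+1)] \to \Sigma\tau_{\leq n}X$, then shows $\tau_{\leq n}X \in \mathcal N$ for all $n$, and in particular for $n=0$.

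The main obstacle is pinpointing that Lemma \ref{wide} supplies exactly the closure properties needed to promote defectiveness from a condition on objects of $\mathcal H$ to a triangulated-subcategory-defining condition on $\D(\A)$: in a triangle $X \to Y \to Z \to \Sigma X$ with $H^i(X), H^i(Z) \in \mathcal W$ for all $i$, the middle cohomology $H^i(Y)$ is only an extension of a subobject of $H^i(Z)$ by a quotient of $H^i(X)$, so both the wideness of $\mathcal W$ (closure under kernels and cokernels) and its closure under extensions are essential. Once this step is in place, the rest of the argument is formal manipulation with the canonical t-structure.
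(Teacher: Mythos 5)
Your proposal is correct and follows essentially the same route as the paper: the paper likewise uses Lemma \ref{wide} to show that the objects with all cohomologies defective (and bounded) form a triangulated subcategory containing the generators of $\mathcal N$, hence all of $\mathcal N$. You merely set up the auxiliary subcategory inside $\D(\A)$ rather than inside $\N$ and spell out the truncation-triangle induction for the restricted t-structure, which the paper leaves implicit.
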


\begin{proof}[Proof of the Corollary] 
Let $\mathcal N'$ be the full subcategory of $\N$ consisting of objects $N$ such that $H^i(N)$ is defective for each $i\in\mathbb Z$ and only finitely many of them are nonzero. 
By Lemma \ref{wide}, this is a triangulated subcategory of $\mathcal N$. It certainly contains totalizations of conflations and thus is equal to $\N$. 
Therefore the objects $N$ in $\N$ have the property that $H^i(N)$ is defective for each $i\in \mathbb Z$ and only finitely many of them is nonzero.
\end{proof}
\begin{proof}[Proof of Lemma \ref{wide}]
Denote by $\overline{\mathcal N}$ the full subcategory of $\mathcal H$ consisting of defective objects. Consider a 
three-term complex
\begin{align}
P\rightarrow Q\rightarrow R \label{short}
\end{align}
in $\Mod H^0(\A)$ which is exact in the middle. 

(1) We first show that $\overline{\mathcal N}$ is closed under extensions.
Assume the sequence (\ref{short}) is a short exact sequence with $P$ and $R$ defective.
Suppose $P$ and $R$ are totalizations of $X_1$ and $X_2$ respectively, where $X_i$, $i=1,2$, is of the form
\[
\begin{tikzcd}
A_i\ar[r,"f_i"]\ar[rr,"h_i"swap,bend right=8ex]&B_i\ar[r,"j_i"]&C_i
\end{tikzcd}.
\]
Then $P$ is the cokernel of $H^0B_1\xrightarrow{H^0(\overline{\jmath}_1)} H^0C_1$ and $R$ is the cokernel of $H^0B_2\xrightarrow{H^0(\overline{\jmath}_2)} H^0C_2$.
By the horseshoe lemma, $Q$ is the cokernel of 
\[
H^0(B_1\oplus B_2)\xrightarrow{\begin{bmatrix}\overline{\jmath}_1\ \ \overline{h}\\0\ \ \overline{\jmath}_2\end{bmatrix}}H^0(C_1\oplus C_2)
\]
in $\mathcal H$, where $h:B_2\rightarrow C_1$ is a morphism in $Z^0(\A)$.
Since the morphism 
\[
B_1\oplus B_2\xrightarrow{\begin{bmatrix}{j}_1\ \ h\\0\ \ {j}_2\end{bmatrix}}C_1\oplus C_2
\]
 can be written as the following composition
\[
\begin{tikzcd}
B_1\oplus B_2\ar[r,"{\begin{bmatrix}j_1\ \ 0\\0\ \ 1\end{bmatrix}}"] &C_1\oplus B_2\ar[r,"{\begin{bmatrix}1\ \ h\\0\ \ 1\end{bmatrix}}"]&C_1\oplus B_2\ar[r,"{\begin{bmatrix}1\ \ 0\\0\ \ j_2\end{bmatrix}}"]&C_1\oplus C_2 \ko
\end{tikzcd}
\]
it is a deflation by the axioms ${\Ex0}$ and ${\Ex2}$. Therefore $Q$ is also defective.

(2) We show that $\overline{\mathcal N}$ is closed under kernels.
Assume the sequence (\ref{short}) is a left short exact sequence in $\mathcal H$ with $Q$ and $R$ defective.
Suppose $Q$ and $R$ are totalizations of conflations $X_1$ and $X_2$ respectively, as described in $(1)$.
The morphism $Q\rightarrow R$ induces a commutative diagram in $\mathcal H$
\[
\begin{tikzcd}
H^0B_1\ar[r,"H^0(\overline{\jmath_1})"]\ar[d,dotted]&H^0C_1\ar[r]\ar[d,dotted,"H^0(\overline{c})"]&Q\ar[d]\ar[r]&0\\
H^0B_2\ar[r,"H^0(\overline{\jmath_2})"swap]&H^0C_2\ar[r]&R\ar[r]&0
\end{tikzcd}
\]
which in turn gives a commutative diagram in $\D(\A)$. 
This diagram lifts to a morphism $\mu: j_1\rightarrow j_2$ in $H^0(\Mor(\A))$ which induces a morphism $\alpha: X_1\rightarrow X_2$ in $\mathcal H_{3t}(\A)$. 
Let $\overline{a}:A_1\rightarrow A_2$ be the restriction of $\alpha$ to $A_1$. 
By \cite[Lemma 4.10]{Chen24}, the morphism $\alpha$ can be written as a composition $X_1\xrightarrow{\beta} {X}_3\xrightarrow{\gamma} X_2$ where $X_3$ is a conflation with ends $A_2$ and $C_1$, and the morphism $\beta$ restricts to $\overline a:A_1\rightarrow A_2$ and $\Id_{C_1}$, and the morphism $\gamma$ restricts to $\Id_{A_2}$ and $\overline{c}:C_1\rightarrow C_2$.
Now we apply Lemma \ref{exact} and its dual to the morphisms $X_1\rightarrow {X_3}$ and ${X_3}\rightarrow X_2$ respectively.
We get short exact sequences 
$
0\rightarrow N'\rightarrow Q\rightarrow N''\rightarrow0 
$, and  
$
0\rightarrow N''\rightarrow R\rightarrow N'''\rightarrow 0
$
in $\mathcal H$,
where, by Axiom ${\Ex2}$, $N'$ is the totalization of a conflation. 
This shows that $P$ is isomorphic to  $N'$ and thus is defective.

(3) We show that $\overline{\mathcal N}$ is closed under cokernels.
Assume we have an exact sequence 
$
Q\rightarrow R\rightarrow S\rightarrow 0
$
in $\overline{\mathcal N}$,
where $Q$ and $R$ are defective.
Using the notation in $(2)$, we find that $S$ is isomorphic to $N'''$. 
By Proposition \cite[Proposition 4.9 b)]{Chen24}, $N'''$ is the totalization of a conflation. 
This shows that $S$ is also defective.
\end{proof}

The following lemma is standard.
\begin{lemma}[\cite{IyamaYang20}, Lemma 3.3]\label{lem:tstructureff}
Let $\T$ be a triangulated category with a thick subcategory $\mathcal S$. 
Suppose that $\mathcal S$ admits a torsion pair (\cite[Definition 2.2]{IyamaYoshino08}) $\mathcal S=(\X, \Y)$.
Put $\P=\X\cap \Sigma^{-1}\Y$.
Then for any $A\in {^{\perp}\Y}$ and $B\in \Sigma^{-1}\X^{\perp}$, where the symbol $^{\perp}$ stands for the Hom orthogonal, the canonical map 
\[
\Hom_{\T/[\P]}(A,B)\rightarrow \Hom_{\T/\mathcal S}(A,B)
\]
is a bijection.
\end{lemma}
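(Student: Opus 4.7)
The plan is to verify bijectivity by analysing the canonical comparison map separately on the injective and surjective sides. Well-definedness is immediate: since $\P\subseteq\X\subseteq\mathcal S$, every morphism in $\T$ that factors through $\P$ is annihilated in the Verdier quotient $\T/\mathcal S$, so the localisation functor $\T\to\T/\mathcal S$ descends to the ideal quotient $\T/[\P]$. The real content is that the two orthogonality hypotheses are strong enough to force every obstruction to bijectivity to live inside $\P$.

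For injectivity, suppose $f:A\to B$ becomes zero in $\T/\mathcal S$, so $f=g\circ h$ for some $h:A\to S$ and $g:S\to B$ with $S\in\mathcal S$. Choose the torsion pair triangle $X\xrightarrow{\alpha}S\xrightarrow{\beta}Y\to\Sigma X$ with $X\in\X$, $Y\in\Y$; since $A\in{^\perp\Y}$ we have $\beta h=0$, so $h=\alpha h_1$ for some $h_1:A\to X$, and the problem reduces to factoring $g\alpha:X\to B$ through $\P$. Applying the torsion pair to $\Sigma X\in\mathcal S$ and shifting by $-1$ yields a triangle
\[
\Sigma^{-1}X'\xrightarrow{u}X\xrightarrow{v}\Sigma^{-1}Y'\xrightarrow{w}X'
\]
with $X'\in\X$ and $Y'\in\Y$. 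The hypothesis $B\in\Sigma^{-1}\X^{\perp}$ then gives $(g\alpha)\circ u=0$, so $g\alpha=\ell\circ v$ for some $\ell:\Sigma^{-1}Y'\to B$. An octahedron built on the composite $A\to X\xrightarrow{v}\Sigma^{-1}Y'$, combined with a further torsion pair decomposition of $\Sigma^{-1}Y'$ and a second application of $A\in{^\perp\Y}$, produces a single object $P\in\X\cap\Sigma^{-1}\Y=\P$ through which $f$ factors.

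For surjectivity, every morphism $A\to B$ in the Verdier quotient is represented by a roof $A\xleftarrow{s}A'\xrightarrow{a}B$ with $C\coloneqq\Cone(s)\in\mathcal S$. Decomposing $C$ via the torsion pair and invoking $A\in{^\perp\Y}$ together with TR3 produces a refined roof whose cone lies in $\X$; a dual refinement exploiting $B\in\Sigma^{-1}\X^{\perp}$ and the torsion pair on the shift of this cone then pushes the cone into $\P$. Such a refined roof corresponds, via the universal property of the ideal quotient, to a genuine morphism in $\T/[\P]$ mapping to the original class.

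The principal obstacle is executing the octahedron step in the injectivity argument cleanly: the two partial factorizations of $f$ — through $X\in\X$ on the source side and through $\Sigma^{-1}Y'\in\Sigma^{-1}\Y$ on the target side — must be merged into a single factorization through $\X\cap\Sigma^{-1}\Y$. This requires careful bookkeeping of the iterated torsion pair triangles and a coordinated use of both orthogonality hypotheses; analogous care is needed in the surjectivity argument to verify that each refinement of the roof preserves its equivalence class in $\T/\mathcal S$.
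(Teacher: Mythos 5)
The paper offers no proof of this lemma (it is quoted from Iyama--Yang), so I assess your sketch on its own terms. Your skeleton is right -- iterated torsion-pair decompositions driven by the two orthogonality hypotheses -- but the step you yourself flag as ``the principal obstacle'' is a genuine gap, not bookkeeping. After you factor $f$ as $A\xrightarrow{h_1}X\xrightarrow{v}\Sigma^{-1}Y'\xrightarrow{\ell}B$, the ``further torsion pair decomposition of $\Sigma^{-1}Y'$'' gives a triangle $X_1\xrightarrow{p}\Sigma^{-1}Y'\xrightarrow{q}Y_1\to\Sigma X_1$ and $A\in{}^{\perp}\Y$ lets $vh_1$ factor through $X_1\in\X$ -- but a priori this only returns you to ``$f$ factors through $\X$'', and alternating between $\X$- and $\Sigma^{-1}\Y$-factorizations need not terminate. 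The missing idea is that $X_1$ \emph{automatically} lies in $\Sigma^{-1}\Y$: rotating the triangle gives $\Sigma^{-1}Y_1\to X_1\to\Sigma^{-1}Y'\to Y_1$, so $X_1$ is an extension of two objects of $\Sigma^{-1}\Y$, and $\Y=\X^{\perp}\cap\mathcal S$ is extension-closed (and $\mathcal S$ is thick, so $\Sigma^{-1}Y'\in\mathcal S$ and the torsion pair applies to it). Hence $X_1\in\X\cap\Sigma^{-1}\Y=\P$ and injectivity follows; no octahedron is needed at this stage. Without this observation your argument does not close.

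Your surjectivity argument also ends incorrectly. Once the roof $A\xleftarrow{s}W\xrightarrow{a}B$ has been refined so that $\Cone(s)=X\in\X$, you should \emph{not} try to push the cone into $\P$ and then appeal to ``the universal property of the ideal quotient'': $\T/[\P]$ is an additive quotient, not a localisation, so a roof with cone in $\P$ is not a morphism there and nothing gets inverted. The correct (and simpler) finish is that the triangle $\Sigma^{-1}X\to W\xrightarrow{s}A\to X$ together with $\Hom(\Sigma^{-1}X,B)=0$ forces $a$ to factor as $a=a'\circ s$ for an honest morphism $a':A\to B$ in $\T$, whose image in $\T/\mathcal S$ is the given class; this even shows $\Hom_{\T}(A,B)\to\Hom_{\T/\mathcal S}(A,B)$ is surjective before passing to $\T/[\P]$.
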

In the above lemma, we have $\P=0$ when $\mathcal S=(\X,\Y)$ is in particular a t-structure.
\begin{proof}[Proof of Theorem~\ref{main}]
By construction, the dg category $\D^b_{\dg}(\A,\S)$ is the dg quotient \linebreak $\pretr(\A)/\mathcal N_{\dg}$ and $F:\A\rightarrow \D^b_{\dg}(\A,\S)$ is the canonical morphism in $\Hqe$. 
This morphism is universally exact by Lemma \ref{univer}. 
Put $\D^b(\A,\S)=H^0(\D^b_{\dg}(\A,\S))$. 
For simplicity, for objects $A\in H^0(\A)$, we keep the same notation $A$ for the corresponding object in $\D^b(\A,\S)$.
Note that for any object $A\in\A$ and any totalization $N$ of a conflation in $\A$, we have $\Hom_{\tr(\A)}(A, \Sigma^{i}N)=0$ and $\Hom_{\tr(\A)}(N,\Sigma^{i+2} A)=0$ for $i\leq -1$. 
It follows from Corollary~\ref{t} and Lemma~\ref{lem:tstructureff} that for objects $A$ and $C$ in $H^0(\A)$, we have
\[
\Hom_{\tr(\A)}(C,\Sigma^{i}A)\iso \Hom_{\D^b(\A,\S)}(C,\Sigma^{i}A)
\]
for $i\leq 0$.
Consider a triangle 
$
\begin{tikzcd}
A\ar[r] &E\ar[r]&C\ar[r,"u{/}t"]&\Sigma A
\end{tikzcd}
$ in $\D^b(\A,\S)$
where $u/t$ is the following roof
\[
\begin{tikzcd}
&W\ar[ld,Rightarrow,"t"swap]\ar[rd,"u"]&\\
C&&\Sigma A
\end{tikzcd}.
\]
Similarly as above, we may assume that the mapping cone $N'=\Cone(t)$ belongs to $\mathcal N^{\leq 0}$. The composition 
$
\begin{tikzcd}
\Sigma^{-1}N'\ar[r] &W\ar[r,"u"]& \Sigma A
\end{tikzcd}
$
factors through $\Sigma^{-1}H^0(N')$. Therefore we may assume that $N'$ lies in the heart of the t-structure on $\mathcal N$.
So $N'$ is the totalization of a conflation $X'$ of the form
\[
\begin{tikzcd}
A'\ar[r,"f'"]\ar[rr,bend right=8ex,"h'"swap]&B'\ar[r,"j'"]&C'.
\end{tikzcd}
\]
Then the morphisms $C\rightarrow N'$ and $\Sigma^{-1}N'\rightarrow \Sigma A$ give rise to morphisms $c:C\rightarrow C'$ and $a:A'\rightarrow A$ in $H^0(\A)$.
Put $[X'']=c^*[X']$ and $[X''']=a_*[X'']\in \mathbb E(C,A)$. 
Let $N'',N'''$ be the totalizations of $X''$ and $X'''$ respectively. 
Consider the corresponding diagrams (\ref{TR4})  for these conflations.
We have the following commutative diagram in $\tr(\A)$
\[
\begin{tikzcd}
&C\ar[rr]\ar[ld]&&N''\ar[ld]\ar[r]\ar[dd]&N'''\ar[dd]\\
C'\ar[rr]&&N'\ar[rd]&&\\
&&&\Sigma^2 A'\ar[r]&\Sigma^2 A
\end{tikzcd}
\]
So the roof $u/t$ is equivalent to the roof 
\[
\begin{tikzcd} & U'''\ar[ld,Rightarrow]\ar[rd]\\
C&&\Sigma A
\end{tikzcd}
\]
which is given by the conflation $X'''$
\[
\begin{tikzcd}
A\ar[r,"f'''"]\ar[rr,"h'''"swap,bend right=8ex]&B'''\ar[r,"j'''"]&C.
\end{tikzcd}
\]
So the object $E$ is isomorphic to $B'''$ and the image of $H^0(\A)$ under the functor $H^0(F)$ is an extension-closed subcategory of $\D^b(\A,\S)$. 
This proves 1).

It remains to show that $H^0(F)$ induces isomorphisms $\mathbb E(C,A)\xrightarrow{\sim} \Ext^1_{\D^b(\A,\S)}(C,A)$ for objects $C$ and $A$ in $H^0(\A)$.
We observe that if $[X]=[X']\in \mathbb E(C,A)$,  then $X$ and $X'$ give equivalent roofs. 
So we have a well-defined map 
$
\mathbb E(C,A)\rightarrow \Ext^1_{\D^b(\A,\S)}(C,A)
$.
From the above proof, this map is a surjection.
From the definition of the bifunctor $\mathbb E$, it is clear that this map is natural in the variables $C$ and $A$.
From the definition of the addition operation on $\mathbb E(C,A)$, it is clear that this map is additive.
Now suppose that $[X]$ is an element in $\mathbb E(C,A)$ which is sent to zero in $\Ext^1_{\D^b(\A,\S)}(C,A)$.  
Suppose that $X$ is of the form
\[
\begin{tikzcd}
A\ar[r,"f"]\ar[rr,bend right=8ex,"h"swap]&B\ar[r,"j"]&C.
\end{tikzcd}
\]
Then the morphism $\overline{f}/1:A\rightarrow B$ in $\D^b(\A,\S)$ is a split monomorphism.
Since $H^0(F)$ is fully faithful, the morphism $\overline{f}:A\rightarrow B$ is a split monomorphism in $H^0(\A)$. 
By $2)\Rightarrow 1)$ in \cite[Proposition 4.19]{Chen24} , the element $[X]$ is zero in $\mathbb E(C,A)$. 
This shows 2).
\end{proof}






\subsection{Algebraic extriangulated categories}
\begin{proposition-definition}\label{algebraic}Let $\C$ be an extriangulated category. The following statements are equivalent:
\begin{itemize}
\item[1)]$\C$ is equivalent, as an extriangulated category, to a full extension-closed subcategory of an algebraic triangulated category.
\item[2)]$\C$ is equivalent, as an extriangulated category, to $\B/(\mathcal P_0)$ for a Quillen exact category $\B$ and a class $\mathcal P_0$ of projective-injective objects.
\item[3)]$\C$ is equivalent, as an extriangulated category, to $(H^0(\A),\mathbb E,\mathfrak s)$ for an exact dg category $(\A,\S)$.
\end{itemize}
If one of the above equivalent conditions holds, then $\C$ is called an {\em algebraic extriangulated category}.
In this case, the exact dg category $(\A,\S)$ in 3) is called a {\em dg enhancement} for $\C$.
\end{proposition-definition}
\begin{proof}
Let us prove that 1) implies 2). Suppose the extriangulated category $(\C,\mathbb E,\mathfrak s)$ is equivalent to a full extension-closed subcategory of $\underline{\E}$ where $\E$ is a Frobenius Quillen exact category. 
Let $F:\C\rightarrow \underline{\E}$ be the fully exact functor which is an equivalence onto its essential image.
Let $\pi:\E\rightarrow \underline{\E}$ be the canonical quotient functor.
We form the following diagram
\[
\begin{tikzcd}
\B\ar[r,dashed,hook]\ar[d,dashed,two heads]&\E\ar[d,two heads,"\pi"]\\
\C\ar[r,hook,"F"swap]&\underline{\E}\mathrlap{\;.}
\end{tikzcd}
\]
Let $\B$ be the full subcategory of $\E$ consisting of objects $X$ such that $\pi(X)$ belongs to the essential image of $F$.
The category $\B$ is extension-closed in $\E$ since the essential image of $F$ is closed under extension and $\pi$ is a fully exact functor. 
Note also that $\B$ contains all projective-injective objects in $\E$.
So $\B$ is canonically exact and the full subcategory
\[
\P_0=\{P\in\B| \text{$P$ is projective-injective in $\E$}\}
\]
consists of projective-injective objects of $\B$ (but perhaps not all).
By \cite[Proposition 3.30]{NakaokaPalu19}, $\B/[\P_0]$ has the structure of an extriangulated category, induced from that of $\B$.
The functor $F$ induces a canonical equivalence of categories 
\[
G:\C\iso \B/[\P_0],\;\; A\mapsto F(A)
\]
which, by definition, is a fully exact functor.

Let us prove that conversely, condition 2) implies 1). Suppose $\C$ is equivalent to $\B/[\P_0]$ as extriangulated categories.
Then we have a fully exact fully faithful functor
\[
\C\iso\B/[\P_0]\hookrightarrow \D^b(\B)/\mathrm{thick}(\P_0).
\]
The triangle quotient $\D^b(\B)/\mathrm{thick}(\P_0)$ is an algebraic triangulated category and $\C$ identifies with an extension-closed subcategory of $\D^b(\B)/\mathrm{thick}(\P_0)$.

Let us show that 1) implies 3). Suppose $\C$ is equivalent to a full extension-closed subcategory of $H^0(\A')$ for a pretriangulated dg category $\A'$.
Let $F:\C\rightarrow H^0(\A')$ be a fully exact functor which is an equivalence onto its essential image.
Let $\A$ be the full dg subcategory of $\A'$ whose objects are those in the essential image of $F$.
Then $\A$ is an extension-closed subcategory of $\A'$, since $F$ is a fully exact functor. 
By Example \ref{exm:exactdg} 6), $\A$ inherits an exact dg structure from that of $\A'$.
By the definition of the biadditive functor $\mathbb E$ associated with an exact dg structure, 
we see that the inclusion $H^0(\A)\hookrightarrow H^0(\A')$ is a fully exact fully faithful functor.
Thus $\C$ is equivalent to $H^0(\A)$ as an extriangulated category where $\A$ is an exact dg category.

Let us show that conversely, condition 3) implies 1).
Suppose $\C$ is equivalent to $H^0(\A)$ for an exact dg category $\A$.
By \cite[Remark 4.25]{Chen24}, we may replace $\A$ with $\tau_{\leq 0}\A$ and assume that $\A$ is a connective dg category.
Then by Theorem \ref{main}, we have a fully exact fully embedding $H^0(\A)\hookrightarrow \D^b(\A,\S)$ where $\D^b(\A,\S)$ is an algebraic triangulated category.
\end{proof}
\begin{remark}
\begin{itemize}
\item[a)] By \cite[Remark 4.25]{Chen24}, each algebraic extriangulated category admits a connective dg enhancement.
\item[b)] Item 1) shows that the class of algebraic extriangulated categories is closed under forming extension-closed subcategories.
\item[c)] Item 2) shows that the class of algebraic extriangulated categories is closed under forming ideal quotients by projective-injective objects. 
\item[d)] For an ideal quotient category $\B/(\P_0)$, where $\B$ is a Quillen exact category and $\P_0$ is a class of projective-injective objects, it is a Frobenius extriangulated category \cite[Definition 7.1]{NakaokaPalu19} if and only if $\B$ is a Frobenius exact category.
In particular, we have that the stable category of an algebraic Frobenius extriangulated category is an {\em algebraic} triangulated category.
Notice that although the stable category of a Frobenius extriangulated category is a triangulated category, it is not necessarily an algebraic triangulated category.  
\end{itemize}
\end{remark}

In fact, the class of extriangulated categories can also be described using a seemingly more
general version of item 2) in Proposition-Definition~\ref{algebraic}.
This generalisation is suggested by \cite[Theorem 2.8]{FangGorskyPaluPlamondonPressland23a}, 
stating that if $\C$ is an extriangulated category and $\I$ is an ideal generated by a family of morphisms 
$f:I\rightarrow P$ from injectives to projectives, then $\C/\I$ is naturally extriangulated.
In the context of algebraic extriangulated categories, we have the following result.
\begin{proposition}\label{prop:idealquotientinjectiveprojective}
Let $\C$ be an algebraic extriangulated category. 
Let $\{f:I\rightarrow P\}$ be a family of morphisms from injectives to projectives in $\C$. 
Denote by $[I\rightarrow P]$ the ideal generated by this family of morphisms.
Then $\C/[I\rightarrow P]$ is still an algebraic extriangulated category.
\end{proposition}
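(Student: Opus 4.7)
The plan is to exhibit $\C/[I\to P]$ as the $H^0$ of an exact dg category, from which algebraicity will follow via Proposition-Definition~\ref{algebraic}. By Proposition-Definition~\ref{algebraic} together with~\cite[Remark 4.25]{Chen24}, fix a connective exact dg enhancement $(\A,\S)$ with $H^0(\A) \simeq \C$. For each morphism $f\colon I\to P$ in the given family, lift it to a closed degree-zero morphism $\tilde f\colon I\to P$ in $\A$.

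Next I would construct an enlarged connective dg category $\A'$ by freely adjoining, for each $f$, an object $M_f$ together with closed degree-zero morphisms $v_f\colon I\to M_f$ and $u_f\colon M_f\to P$ subject to the single relation $u_f v_f = \tilde f$. Inspecting the generating morphisms one checks that $\Hom_{\A'}(A,A') = \Hom_{\A}(A,A')$ for all $A, A' \in \A$---any path through some $M_f$ collapses via the relation to a morphism already in $\A$---and that $\A'$ remains connective. Equip $\A'$ with the exact dg structure whose conflations are the isomorphic copies of direct sums $(X_0 \rightarrowtail Y_0 \twoheadrightarrow Z_0) \oplus (M \rightarrowtail M \oplus M'' \twoheadrightarrow M'')$, where the first summand is a conflation in $(\A,\S)$, the second is a split conflation, and $M, M'' \in \add\{M_f\}$. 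The axioms of Definition~\ref{exactdgstructure} descend from those of $(\A,\S)$, and each $M_f$ is projective-injective in $(\A',\S')$ because every conflation with an $M_f$-end is split on that component.

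Let $\P$ be the full dg subcategory of $\A'$ on the objects $\{M_f\}$. By Theorem~\hyperref[intro:quot]{B} applied to the pair $(\A',\P)$, the dg quotient $\A'/\P$ carries a canonical exact dg structure, so $H^0(\A'/\P)$ is algebraic by Proposition-Definition~\ref{algebraic}. It remains to identify $H^0(\A'/\P)$ with $\C/[I\to P]$ as extriangulated categories. Each $M_f$ becomes zero in the quotient, and the remaining objects recover those of $\C$; a $\C$-morphism $A \to A'$ vanishes in $H^0(\A'/\P)$ iff it factors through some $M_f$ in $H^0(\A')$, and any such factorisation takes the form $A \xrightarrow{\bar q} I \xrightarrow{\bar v_f} M_f \xrightarrow{\bar u_f} P \xrightarrow{\bar p} A' = \bar p \circ \bar f \circ \bar q$, so the quotient ideal is exactly $[I \to P]$. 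Agreement of the $\mathbb E$-bifunctors follows from the observation that $\mathbb E_{\A'}(Y,X) = \mathbb E_{\A}(Y,X)$ for $X, Y \in \A$ (the condition on the ends forces any $M_f$-summand of the middle object to be split off trivially) together with the description of the conflations in $\A'/\P$ supplied by Theorem~\hyperref[intro:quot]{B}, which recovers the extriangulated structure of~\cite[Theorem 2.8]{FangGorskyPaluPlamondonPressland23a} on $\C/[I\to P]$.

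The main technical obstacle is the verification of axioms Ex2 and $\mathrm{Ex2}^{\mathrm{op}}$ for $(\A',\S')$: one must produce homotopy (co)cartesian squares whose source or target may involve $M_f$-summands. The key observation that renders this tractable is that the $M_f$-component of every conflation is required to split, so any such base-change decomposes as a homotopy (co)cartesian square in $(\A,\S)$ direct-summed with an identity square on the $M_f$-component; in particular inflations and deflations in $\A'$ are stable under the base-changes demanded by Ex2 and $\mathrm{Ex2}^{\mathrm{op}}$.
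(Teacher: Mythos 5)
Your route is genuinely different from the paper's: the paper first reduces to the case where $\C$ is a Quillen exact category (via item 2) of Proposition-Definition~\ref{algebraic}), then realises your objects $M_f$ concretely as the objects $f\colon I\rightarrow P$ of the morphism category $\Mor(\C)$, endowed with the exact structure of Dr\"axler--Reiten--Smal\o--Solberg that makes these objects projective-injective; the embedding $X\mapsto \Id_X$ then does all the work, with no free construction, no dg enhancement, and no verification of the exact-structure axioms from scratch. Your $\A'$ is essentially the full subcategory of $\Mor(\C)$ on the $\Id_X$ and the $f\colon I\rightarrow P$, so the two constructions are close in spirit, but the paper's avoids having to check that the freely adjoined Hom-complexes collapse correctly and that your ad hoc class of conflations is an exact structure.

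There is, however, a genuine gap at exactly the point you flag as the ``main technical obstacle.'' Your key observation --- that every base-change demanded by Ex2 and $\mathrm{Ex2^{op}}$ decomposes as a homotopy (co)cartesian square in $(\A,\S)$ direct-summed with an identity square on the $M_f$-component --- is not true as stated, because the map $c\colon C'\rightarrow C$ along which you pull back need not be diagonal with respect to the decompositions $C'\cong Z'\oplus M'$, $C\cong Z_0\oplus M''$: it has an off-diagonal component $M'\rightarrow Z_0$, which by construction factors as $M'\xrightarrow{u} P\xrightarrow{h} Z_0$. To split the pullback of the deflation $Y_0\twoheadrightarrow Z_0$ along $(z,hu)$ into an $\A$-pullback plus an identity square on $M'$, you must first replace $hu$ by $0$, i.e.\ lift $h\colon P\rightarrow Z_0$ through $Y_0\twoheadrightarrow Z_0$ --- and this lift exists precisely because $P$ is \emph{projective} in $\C$; dually, $\mathrm{Ex2^{op}}$ needs the injectivity of $I$. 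As written, your argument never invokes the hypothesis that the $I$ are injective and the $P$ projective, so it would equally ``prove'' that $\C/[\text{any family of morphisms}]$ is an algebraic extriangulated category, which is false. This is the same place where the paper uses the hypothesis (to show that the objects $I\rightarrow P$ are projective-injective with respect to the image conflations of $\theta$), so the omission is not cosmetic; supplying the lifting argument would repair the proof, but the remaining soft spots (that $\Hom_{\A'}(A,A')=\Hom_{\A}(A,A')$ survives the dg free construction, that $\add\{M_f\}$ meets $H^0(\A)$ trivially so that closure under isomorphism of your conflation class is controlled, and the final matching of $\mathbb E$-bifunctors with the structure of \cite[Theorem 2.8]{FangGorskyPaluPlamondonPressland23a}) would still need to be written out.
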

\begin{proof} By item 2) of the preceding Proposition-Definition, we may assume
that $\C$ is a Quillen exact category.
In \cite[Proposition 1.7]{DraxlerReitenSmaloSolberg99}, it is shown that given a full subcategory $\N\subset \E$ of a Quillen exact category $\E$, the class of conflations which make the objects in $\N$ projective-injective 
is a Quillen exact structure on $\E$.
We endow $\Mor(\C)$ with the componentwise exact structure and view $\{f:I\rightarrow P\}$ as a 
full subcategory of $\Mor(\C)$.
Let $\F$ be the exact category whose underlying category is $\Mor(\C)$ and whose exact structure is 
obtained from $\Mor(\C)$ by making the objects $f:I\rightarrow P$ projective-injective.
Then $\F$ is an exact category.
We have the inclusion functor 
$
\theta:\C\rightarrow \F
$
sending an object $X$ to $(\Id:X\rightarrow X)$. Clearly, it induces a functor
$
\phi:\C/[I\rightarrow P]\rightarrow \F/(I\rightarrow P)
$.
We have the following observations:
\begin{itemize}
\item $\theta$ is an exact functor. 
Indeed, a conflation in $\C$ 
\[
\begin{tikzcd}
0\ar[r]&X\ar[r]&Y\ar[r]&Z\ar[r]&0
\end{tikzcd}
\]
is sent, by $\theta$, to a conflation in $\Mor(\C)$
\[
\begin{tikzcd}[row sep=small]
&&&I\ar[dd,blue]\ar[rd,blue]\ar[ld,dashed,red]&&\\
0\ar[r]&X\ar[r]\ar[dd,equal]&Y\ar[rr]\ar[dd,equal]&&Z\ar[r]\ar[dd,equal]&0\\
&&&P\ar[rd,blue]\ar[ld,dashed,red]&&\\
0\ar[r]&X\ar[r]&Y\ar[rr]&&Z\ar[r]&0
\end{tikzcd}
\]
For any morphism from $I\rightarrow P$ to $\Id:Z\rightarrow Z$, the morphism $P\rightarrow Z$ factors through $Y$ since $P$ is projective in $\C$.
This shows that the morphism from $I\rightarrow P$ to $\Id:Z\rightarrow Z$ factors through $\Id:Y\rightarrow Y$.
Hence the objects $I\rightarrow P$ are projective with respect to the conflations in the essential image of $\theta$. 
Dually, one shows that they are injective. 
Hence the functor $\theta$ sends conflations to conflations in $\F$.
\item The induced functor $\phi$ is also fully faithful. This follows by definition.
\item The full embedding $\phi:\C/[I\rightarrow P]\rightarrow \F/(I\rightarrow P)$ is fully exact.
Let $\Id_X:X\rightarrow X$ and $\Id_Z:Z\rightarrow Z$ be two objects in the image of the functor $\phi$.
Consider a conflation in $\F$
\[
\begin{tikzcd}
0\ar[r]&X\ar[r]\ar[d,equal]&Y\ar[r]\ar[d,"\alpha"]&Z\ar[r]\ar[d,equal]&0\\
0\ar[r]&X\ar[r]&Y'\ar[r]&Z\ar[r]&0\mathrlap{.}
\end{tikzcd}
\]
The map $\alpha$ is an isomorphism in $\C$. 
Hence $\alpha:Y\rightarrow Y'$ is isomorphic to $\Id:Y\rightarrow Y$ and the above conflation is equivalent to a conflation in the essential image of $\theta$.
\item By definition, the category $\F/(I\rightarrow P)$ is an algebraic extriangulated category.
\end{itemize}
Therefore, the functor $\phi$ is a fully exact fully faithful functor from $\C/[I\rightarrow P]$ to an algebraic extriangulated category and hence $\C/[I\rightarrow P]$ is also algebraic.
\end{proof}
\subsection{Higher extensions}
Let $(\C,\mathbb E,\mathfrak s)$ be a small extriangulated category.
In their paper \cite{GorskyNakaokaPalu21}, for $n\geq 1$, Gorsky--Nakaoka--Palu defined the higher 
extension bimodule $\mathbb E^{n}(?,-)$ on $\C$ as the $n$th tensor power over $\C$ of $\mathbb E$.
Recall that a right $\C$-module $F:\C^{op}\rightarrow \Ab$ is {\em weakly effaceable} if for each $C\in \C$ and each element $x\in F(C)$, there exists a deflation $p:B\rightarrow C$ such that $F(p)(x)=0$, cf.~\cite[A.2]{Keller90} and also~\cite[Definition 2.6]{KaledinLowen15}.
Dually one has the notion of weakly effaceable left module.
A $\C$-$\C$-bimodule $G:\C^{op}\otimes \C\rightarrow \Ab$ is {\em weakly effaceable}, if for each 
$C\in\C$, the left module $G(C,-)$ and the right module $G(?,C)$ are both weakly effaceable.
By \cite[Corollary 3.5]{NakaokaPalu19}, for each $A\in \C$, the right $\C$-module 
$\mathbb E(?,A):\C^{op}\rightarrow \Ab$ is weakly effaceable. It follows that the tensor powers
$\mathbb E^n(?,-)$ are weakly effaceable.

In the rest of this section, we will omit the symbol $^{\wedge}$ in the representable dg module $A^{\wedge}$ for $A\in\A$.
Recall the extriangulated category $(H^0(\A),\mathbb E,\mathfrak s)$ associated with the exact dg category $\A$.
Clearly $(\Ext^n_{\D^b(\A)}(?,-))_{n\geq 0}$ together with the connecting morphisms given by triangles in $\D^b(\A)$ induced by $\mathbb E$-triangles in $H^0(\A)$, is a $\delta$-functor  (cf.~Definition~\ref{def:deltafunctor}) for $H^0(\A)$.
It clearly satisfies the assumptions~\ref{assumption:delta2} and~\ref{assumption:delta3}.
\begin{proposition}\label{higher}
Let $\A$ be a connective exact dg category and $F:\A\rightarrow \D^{b}_{\dg}(\A)$ the universal exact morphism from $\A$ 
to a pretriangulated dg category. 
Then we have a canonical isomorphism of $\delta$-functors $\alpha:\mathbb E^n(?,-)\xrightarrow{\sim} \Ext^n_{\D^b(\A)}(?,-)$.\end{proposition}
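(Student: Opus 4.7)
The plan is to deduce this from Corollary~\ref{cor:effaceablebimodule} applied to the two $\delta$-functors on the extriangulated category $\C = (H^0(\A),\mathbb E,\mathfrak s)$: the standard tower $(\mathbb E^n)_{n\geq 0}$, which already satisfies the assumptions (\ref{assumption:delta1}--\ref{assumption:delta3}) as noted before the corollary; and the candidate $R^n := \Ext^n_{\D^b(\A)}(F?,F-)$, whose connecting morphisms $\gamma_\delta^i$ and $\mu_\delta^i$ come from the triangle $FA\to FB\to FC\to \Sigma FA$ in $\D^b(\A)$ associated with an $\mathbb E$-triangle $A\to B\to C\dashrightarrow$ via the exact morphism $F$. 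Exactness of the induced long sequences in $\Hom_{\D^b(\A)}(F?,-)$ and $\Hom_{\D^b(\A)}(-,F?)$ is automatic for triangulated categories.

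The easy verifications are as follows. That $R^0 = \Hom_\C(?,-)$ follows from Part 1 of Theorem~\ref{main}: $F$ induces a quasi-equivalence $\tau_{\leq 0}\A \iso \tau_{\leq 0}\D'$ onto an extension-closed subcategory $\D'\subseteq \D^b_{\dg}(\A,\S)$, so $H^0(F)$ is fully faithful. Part 2 of Theorem~\ref{main} provides the bimodule isomorphism $R^1\cong \mathbb E$, compatible with the push-forward description of the connecting morphism (assumption (\ref{assumption:delta2})). Assumption (\ref{assumption:delta3}), the bivariant compatibility of the two connecting morphisms attached to a pair of conflations, is the standard graded-commutativity of the two boundary maps in a double triangle, and can be checked directly in $\D^b(\A)$.

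The main obstacle is verifying that $R^n$ is weakly effaceable for $n\geq 1$. For $n=1$ this follows from $R^1\cong \mathbb E$ and weak effaceability of $\mathbb E$, which can also be seen explicitly: given $x\in R^1(C,A)$, realizing the corresponding element of $\mathbb E(C,A)$ as a conflation $A\xrightarrow{i}B\xrightarrow{p}C$ produces a triangle in $\D^b(\A)$ in which $x\circ p=0$ and $(\Sigma i)\circ x = 0$. For $n\geq 2$ I would argue by induction: given $y\in R^n(C,A)$, the aim is to factor $y = \delta\circ z$ with $z\in R^{n-1}(C,M)$ and $\delta\in R^1(M,A)\cong \mathbb E(M,A)$ coming from some conflation $A\to N\to M$ in $\A$. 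To produce such a factorisation, I would represent $y$ as a roof $u/t$ in $\tr(\A)$ with $\text{cone}(t)\in\N$, and use the bounded t-structure on $\N$ (Corollary~\ref{t}) together with the $\Hom$-vanishing estimates used in the proof of Theorem~\ref{main} to reduce, via Lemma~\ref{lem:tstructureff}, to the case where $\text{cone}(t)$ is a defective object, i.e., the totalization of a single conflation. The resulting factorisation then combines with the inductive hypothesis (applied to $z$) and axiom $\Ex2^{op}$ to furnish the required inflation of $A$ effacing $y$; the deflation side is dual. Once both sides are effaceable, Corollary~\ref{cor:effaceablebimodule} delivers a unique isomorphism of $\delta$-functors $\alpha:\mathbb E^n\iso R^n$ extending $\Id_{\Hom_\C}$, completing the proof.
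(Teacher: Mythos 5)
Your high-level strategy coincides with the paper's: both reduce the statement to Corollary~\ref{cor:effaceablebimodule} by checking that $R^n=\Ext^n_{\D^b(\A)}(F?,F-)$ satisfies assumptions (\ref{assumption:delta1}--\ref{assumption:delta3}), with the only substantial point being weak effaceability of $R^n$ for $n\geq 1$; your treatment of $R^0$, $R^1$ and of the compatibility (\ref{assumption:delta3}) matches what the paper does. However, your key reduction for $n\geq 2$ is wrong. You cannot reduce to the case where $\Cone(t)$ is a single defective object: for a defective object $\tilde N$ the complex $\RHom_{\A}(\tilde N,A')$ is concentrated in degree $2$ and $\Hom_{\D(\A)}(C,\Sigma^i\tilde N)=0$ for $i\neq 0$, so if $\Cone(t)$ were defective the roof $u/t$ would be equivalent to a genuine morphism $C\to\Sigma^nA$ in $\tr(\A)$, which vanishes for $n\geq 1$ by connectivity. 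In other words, your reduction would prove that $\Ext^n_{\D^b(\A)}(C,A)=0$ for all $n\geq 2$, which is false in general. The correct truncation, and the one the paper uses, only brings $\Cone(t)$ into $\N^{[-n+1,0]}$, i.e.\ an iterated extension $(\Sigma^{n-1}N_{n-1})*\cdots*N_0$ of $n$ shifted defective objects; the map $C\to N$ only ``sees'' the bottom layer $N_0$ while the map $\Sigma^{-1}N\to\Sigma^nA$ only ``sees'' the top layer $\Sigma^{n-1}N_{n-1}$, so no single layer suffices.

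The actual content of the paper's proof, which your sketch skips, is the diagram analysis needed to peel off the top layer of this filtration: one extracts from the gluing data a morphism $b\colon A_{n-2}\to C_{n-1}$ and a class $a_*[X_{n-1}]\in\mathbb E(C_{n-1},A)$, and shows (using several $\Hom$-vanishing computations, e.g.\ that $\Hom_{\D(\A)}(\Sigma^{-n}\Cone(u),\Sigma A)=0$) that the given roof equals a degree-$(n-1)$ morphism $C\to\Sigma^{n-1}A_{n-2}$ followed by $\Sigma^{n-1}$ of the degree-one extension class $b^*a_*[X_{n-1}]$. Once such a factorisation through a class in the image of $\mathbb E$ is in hand, effaceability is immediate (push forward along the inflation realizing that class) --- note that at this point you do not need your inductive hypothesis on $R^{n-1}$ at all, and $\Ex2^{op}$ plays no role. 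So the gap is precisely the production of the factorisation $y=\delta\circ z$ with $\delta$ realized by a conflation in $\A$; asserting it via a reduction to a single defective cone does not work, and the filtration argument replacing it is the heart of the proof.
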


\begin{proof}
Put $\C=H^0(\A)$. 
Let us show the $\C$-$\C$-bimodule $\Ext^n_{\D^b(\A)}(?,-)$ is effaceable for each $n\geq 1$.
By construction, we have $\D^b(\A)=\tr(\A)/\mathcal N$. 
Consider a morphism from $C$ to $\Sigma^{n}(A)$ in $\D^b(\A)$ given  by the roof 
\[
\begin{tikzcd}
&M\ar[Rightarrow,ld,"s",swap]\ar[rd,"w"]&\\
C&&\Sigma^n (A)
\end{tikzcd}
\]
where $s$ and $w$ are morphisms in $\tr(\A)$ and $N=\Cone(s)$ lies in $\N$.  
We may assume $N\in \N^{\leq 0}$. 
By Corollary \ref{t}, $N$ is an extension of its homology which are defective objects in $\mathcal H$. 
For defective objects $\tilde{N}$ we know that $\RHom_{\A}(\tilde{N}, A')$ is concentrated in degree $2$ for $A'\in \A$.  
So we may assume $N\in \mathcal N^{[-n+1,0]}$ and write $N=(\Sigma^{n-1}N_{n-1})*(\Sigma^{n-2}N_{n-2})*\cdots*N_{0}$ where $N_i$ is defective for each $i$. 
Suppose for each $i$, $N_{i}$ is the totalization of a conflation $X_i$:
\[
\begin{tikzcd}
A_i\ar[r,"f_i"]\ar[rr,"h_i"swap,bend right=6ex]&B_i\ar[r,"j_i"]&C_i.
\end{tikzcd}
\]
We have a canonical map $p:N_{n-2}\rightarrow\Sigma^{2}N_{n-1}$. 
Consider the following diagram in $\tr(\A)$:
\[
\begin{tikzcd}
\Sigma A_{n-2}\ar[r]&\Sigma V_{n-2}\ar[r]&N_{n-2}\ar[r]\ar[d,"p"near start, swap]&\Sigma^2 A_{n-2}\ar[ld,dotted]\ar[lld,dotted]\\
\Sigma^2 U_{n-1}\ar[r]&\Sigma^2 C_{n-1}\ar[r]&\Sigma^2 N_{n-1}\ar[r]&\Sigma^3U_{n-1}.
\end{tikzcd}
\]
Since $\A$ is connective, we have $\Hom_{\D(\A)}(V_{n-2},\Sigma N_{n-1})=0$ and $\Hom_{\D(\A)}(A_{n-2},\Sigma U_{n-1})=0$. 
Thus we have a morphism $b: A_{n-2}\rightarrow C_{n-1}$ which induces the morphism $p$. 

The map from $\Sigma^{-1}N$ to $\Sigma^n A$ gives rise to a morphism $N_{n-1}\rightarrow \Sigma^2(A)$. 
It induces a map $a:A_{n-1}\rightarrow A$ which is well-defined up to a morphism factoring through $\overline{f}_{n-1}$. 
Thus, we get an element $a_*[X_{n-1}]=\delta_{n-1}\in\mathbb E(C_{n-1},A)$.
We denote by $u$ the composition $C\rightarrow N\rightarrow N_{\geq -n+2}$. 
The map $\Sigma^{n-2}N_{n-2}\rightarrow N_{\geq -n+2}$ gives rise to a canonical map $N_{\geq -n+2}\rightarrow \Sigma^{n}A_{n-2}$. 
It induces a canonical map $v:\Cone(u)\rightarrow \Sigma^{n} A_{n-2}$. 
We have the following commuative diagram:
\[
\begin{tikzcd}
\Sigma^{-1}N_{n-1}\ar[d,equal]\ar[r]&\Sigma^{-n}N\ar[r]\ar[d]&\Sigma^{-n}N_{\geq -n+2} \ar[r]\ar[d]&N_{n-1}\ar[d,equal]\\
\Sigma^{-1}N_{n-1}\ar[r]\ar[d,equal]&\Sigma^{-(n-1)}M\ar[r]\ar[d,dashed,"x"]&\Sigma^{-n}\Cone(u)\ar[r]\ar[d,"y"]&N_{n-1}\ar[d,equal]\\
\Sigma^{-1}N_{n-1}\ar[r]&\Sigma A_{n-1}\ar[r]&\Sigma V_{n-1}\ar[r]&N_{n-1}
\end{tikzcd}
\]
where $y$ is the composition $\Sigma^{-n}\Cone(u)\rightarrow A_{n-2}\rightarrow C_{n-1}\rightarrow \Sigma V_{n-1}$.
Since the spaces $\Hom_{\D(\A)}(\Sigma^{-n}N_{\geq -n+2},\Sigma A)$ and $\Hom_{\D(\A)}(\Sigma^{-n+1}C,\Sigma A)$ both vanish, the space 
\[
\Hom_{\D(\A)}(\Sigma^{-n}\Cone(u),\Sigma A)
\]
 also vanishes and hence the composition $\Sigma^{-n+1}M\xrightarrow{x}\Sigma A_{n-1}\xrightarrow{\Sigma a} \Sigma A$ equals to the morphism $\Sigma^{-n+1}(w)$.
 We have the following diagram in $\D(\A)$:
 \[
 \begin{tikzcd}
 &&&\Sigma^{-n+1}M\ar[llldd,Rightarrow,"\Sigma^{-n+1}(s)"swap,bend right=6ex]\ar[rrdd,"x",bend left=9ex]\ar[rrrdd,"\Sigma^{-n+1}(w)",bend left=9ex]\ar[ld,Rightarrow]&&&\\
 &&\Sigma^{-n}\Cone(u)\ar[lld,Rightarrow,"\Sigma^{-n+1}(t)"]\ar[d,"\Sigma^{-n}(v)"]&&\Sigma V_{n-1}&&\\
 \Sigma^{-n+1}C&&A_{n-2}\ar[r,"b"]&C_{n-1}\ar[ru]&&\Sigma A_{n-1}\ar[lu,Rightarrow]\ar[r,"\Sigma(a)"]&\Sigma A.
 \end{tikzcd}
 \]
 Therefore, the morphism $w/s$ is equal to the composition of $\Sigma^{-1}(v)/t$ and a morphism $\Sigma^{n-1}A_{n-2}\rightarrow \Sigma^{n}A$. 
 Hence the left $\C$-module $\Ext^n_{\D^b(\A)}(C,-)$ is effaceable for each $n\geq 1$ and $C\in \C$. 
 Similarly one shows that the right $\C$-module $\Ext^n_{\D^b(\A)}(?,A)$ is effaceable for each $n\geq 1$ and $A\in \C$.
 The conclusion follows from Corollary~\ref{cor:effaceablebimodule}.
\end{proof}

\subsection{Quotients with respect to projective-injective objects}

\begin{lemma}
Let $\C$ be a connective dg category and $\C'\subseteq\C$ a full dg subcategory. 
Put $\B=\tr(\C)/\tr(\C')$. 
Then for $A,B\in \C$, we have 
$
\Hom_{\B}(A,\Sigma^n B)=0
$
 for $n>0$ and 
 \[
 \Hom_{\B}(A,B)\iso (H^0(\C)/[\C'])(A,B).
 \] 
\end{lemma}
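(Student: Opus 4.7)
The plan is to deduce both statements from Lemma~\ref{lem:tstructureff} applied to the thick subcategory $\mathcal{S} = \tr(\C')$ of $\T = \tr(\C)$. Since $\C$ is connective, so is $\C'$, and Proposition~\ref{cot} endows $\mathcal{S}$ with its canonical co-t-structure $(\mathcal{S}^{\geq 0}, \mathcal{S}^{\leq 0})$, whose coheart is $\add(\C')$. Shifting the weight decomposition by one, for each $C\in\mathcal{S}$ we obtain a triangle $X\to C\to Y\to\Sigma X$ with $X\in\mathcal{S}^{\geq 0}$ and $Y\in\mathcal{S}^{\leq -1}$; together with the orthogonality $\Hom(\mathcal{S}^{\geq 0},\mathcal{S}^{\leq -1})=0$ inherited from the co-t-structure axioms, this provides an Iyama--Yoshino torsion pair $(\X,\Y)=(\mathcal{S}^{\geq 0},\mathcal{S}^{\leq -1})$ on $\mathcal{S}$. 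Using $\Sigma^{-1}\mathcal{S}^{\leq -1}=\mathcal{S}^{\leq 0}$, I compute
\[
\P = \X\cap\Sigma^{-1}\Y = \mathcal{S}^{\geq 0}\cap\mathcal{S}^{\leq 0} = \add(\C'),
\]
so that the ideal $[\P]$ in $\tr(\C)$ coincides with $[\C']$.

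Next I check the orthogonality hypotheses of Lemma~\ref{lem:tstructureff} for the test objects $A$ and $\Sigma^n B$ with $A, B\in\C$ and $n\geq 0$. Every object of $\Y$ is an iterated extension of shifts $\Sigma^r C'$ with $r\geq 1$ and $C'\in\C'$, and connectivity of $\C$ yields
\[
\Hom_{\tr(\C)}(A,\Sigma^r C') = H^r(\Hom_\C(A,C')) = 0 \quad \text{for } r\geq 1,
\]
whence $A\in{}^{\perp}\Y$. Dually, objects of $\X$ are iterated extensions of $\Sigma^r C'$ with $r\leq 0$, and for $n\geq 0$ one has $n+1-r\geq 1$, so
\[
\Hom_{\tr(\C)}(\Sigma^r C', \Sigma^{n+1} B) = H^{n+1-r}(\Hom_\C(C',B)) = 0,
\]
i.e., $\Sigma^n B\in\Sigma^{-1}\X^{\perp}$. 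Lemma~\ref{lem:tstructureff} then gives a natural bijection
\[
\Hom_{\tr(\C)/[\C']}(A, \Sigma^n B) \iso \Hom_\B(A,\Sigma^n B).
\]

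Both conclusions of the lemma follow immediately: for $n>0$, connectivity gives $\Hom_{\tr(\C)}(A,\Sigma^n B) = H^n(\Hom_\C(A,B)) = 0$, so its quotient $\Hom_\B(A,\Sigma^n B)$ vanishes; and for $n=0$, $\Hom_{\tr(\C)}(A,B) = (H^0\C)(A,B)$, and quotienting by morphisms factoring through $\C'$ yields $(H^0(\C)/[\C'])(A,B)$. The main delicate point is pinning down the sign conventions of Proposition~\ref{cot} carefully enough to confirm that the shifted pair $(\mathcal{S}^{\geq 0},\mathcal{S}^{\leq -1})$ is indeed an Iyama--Yoshino torsion pair with the claimed $\P$; once this bookkeeping is settled, the remainder is a routine consequence of connectivity.
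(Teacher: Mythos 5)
Your argument is correct, but it takes a different route from the paper's. The paper proves the lemma by a direct calculus-of-fractions computation: given a roof $A \Leftarrow M \to \Sigma^n B$ with $P=\Cone(M\Rightarrow A)$ in $\tr(\C')$, it uses a weight decomposition of $P$ for the canonical co-t-structure to arrange $P\in\T^{>-1}$, observes that $\Hom(\Sigma^{-1}P,\Sigma^nB)=0$ for $n\geq 0$ so that every such roof is equivalent to a genuine morphism of $\tr(\C)$, and then, for $n=0$, shows that a morphism factoring through $\tr(\C')$ must factor through the coheart and hence through an object of $\C'$ itself. You instead package this roof manipulation into the already-stated Lemma~\ref{lem:tstructureff}, taking $\mathcal S=\tr(\C')$ with the shifted torsion pair $(\mathcal S^{\geq 0},\mathcal S^{\leq -1})$ coming from the restricted co-t-structure, computing $\P$ to be the coheart (whose ideal agrees with $[\C']$ since factoring through a retract of $W\in\C'$ implies factoring through $W$), and verifying the two orthogonality hypotheses by connectivity. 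Your checks of the torsion-pair axioms (rotating the weight decomposition of $\Sigma^{-1}C$) and of the vanishing $\Hom(A,\Sigma^{r}C')=0$ for $r\geq 1$ and $\Hom(\Sigma^{r}C',\Sigma^{n+1}B)=0$ for $r\leq 0$, $n\geq 0$ are all correct. What your approach buys is uniformity — it is exactly how the paper itself handles the analogous comparison $\tr(\A)\to\D^b(\A,\S)$ in the proof of Theorem~\ref{main} — at the cost of having to verify the hypotheses of the Iyama--Yang lemma; the paper's direct argument is more self-contained. One small point worth flagging in both treatments: Lemma~\ref{lem:tstructureff} is stated for a \emph{thick} subcategory, whereas $\tr(\C')$ need not be closed under summands in $\tr(\C)$; this is harmless (one may pass to the thick closure, which inherits the co-t-structure and defines the same Verdier quotient and, after intersecting the shifted aisles, the same ideal $[\C']$), but it deserves a sentence.
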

\begin{proof}
Put $\T=\tr(\C)$.
Since $\C$ is connective, by Proposition \ref{cot} we have a canonical co-t-structure $(\T^{\geq 0}, \T^{\leq 0})$ on $\T$, which restricts to a co-t-structure on $\tr(\C')$. 
We denote by $\T^{-n}$ the $\Sigma^n$ of the coheart of the co-t-structure.

Suppose we have a morphism in $\B$ as follows
\[
\begin{tikzcd}&M\ar[ld,Rightarrow,"s"swap]\ar[rd,"b"]&\\
A&&\Sigma^nB
\end{tikzcd}
\]
where $s$ and $b$ are morphisms in $\tr(\C)$ and $P=\Cone(s)$ lies in $\tr(\C')$. 
Take a weight decomposition of $P: \sigma_{>-1}(P)\rightarrow P\rightarrow \sigma_{\leq -1}P\rightarrow \Sigma \sigma_{\geq -1}P$. 
Since $A\in H^0(\C)$ lies in the coheart of the co-t-structure, $A\rightarrow P$ factors through $\sigma_{>-1}(P)$. 
Hence we may assume $P\in \T^{>-1}$. 
Note that for $n\geq 0$ we have $\Sigma^{n}B\in \T^{-n}$ and hence the map $\Sigma^{-1}P\rightarrow\Sigma^{n}B$ vanishes.
Therefore, the map $b$ factors through $A$ for $n\geq 0$. 
So for $n>0$, we have $\Hom_{\B}(A,\Sigma^{n}B)=0$. 

Now assume $n=0$. 
If a map $f:A\rightarrow B$ in $\tr(\C)$ factors through an object in $\tr(\C')$, then it factors through an object in the coheart, i.e.,~a retract of an object $W\in \C'$ in $\tr(\C')$. 
Therefore, it factors through an object in $\C'$. 
So we have 
\[
\Hom_{\B}(A,B)=(H^0(\C)/[\C'])(A,B).
\]  
\end{proof}

\begin{definition}
An object $P$ in $(H^0(\A),\mathbb E,\mathfrak s)$ is said to be {\em projective} if for each conflation 
\[
\begin{tikzcd}
X\ar[r,"f"]\ar[rr,"h"swap, bend right=8ex]&Y\ar[r,"j"]&Z
\end{tikzcd} 
\]
the morphism $\overline{\jmath}$ induces a surjection $\Hom_{\D(\A)}(P,Y)\rightarrow \Hom_{\D(\A)}(P,Z)$, or equivalently, if $P$ is projective in the extriangulated category $(H^0(\A),\mathbb E,\mathfrak s)$.
Dually, we define {\em injective} objects in $(H^0(\A),\mathbb E,\mathfrak s)$.
\end{definition}

\begin{lemma}\label{lem:leftorthogonal}
Let $(\A,\S)$ be a connective exact dg category.
Then an object $P$ in $H^0(\A)$ is projective if and only if $\Hom_{\tr(\A)}(P,\mathcal N)=0$ where $\mathcal N$ is the full triangulated subcategory of $\tr(\A)$ generated by the total dg modules of conflations.
\end{lemma}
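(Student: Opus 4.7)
The plan is to use the diagram (\ref{TR4}) to reduce everything to an explicit triangle in $\tr(\A)$ connecting $N$ with the representables $B$ and $C$, and then apply $\Hom_{\tr(\A)}(P,-)$. Concretely, reading off the third column of (\ref{TR4}) one has the triangle
\[
U \xrightarrow{[-h,j]} C \longrightarrow N \longrightarrow \Sigma U,
\]
where $U = \Cone(f)$ sits in the triangle $A \xrightarrow{f} B \to U \to \Sigma A$, and the composition $B \to U \to C$ is exactly $j$. Since $\A$ is connective and $P$ is representable, we have $\Hom_{\tr(\A)}(P,\Sigma^{i}A')=0$ for all $A'\in\A$ and all $i>0$. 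Applying $\Hom_{\tr(\A)}(P,-)$ to the two triangles and chasing, we find that the connecting map $\Hom_{\tr(\A)}(P,\Sigma U)=0$, so that
\[
\Hom_{\tr(\A)}(P,N) \;\cong\; \cok\bigl(j_{*}\colon \Hom_{H^{0}(\A)}(P,B)\to \Hom_{H^{0}(\A)}(P,C)\bigr).
\]

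For the implication ``$\Leftarrow$'', assume $\Hom_{\tr(\A)}(P,\mathcal N)=0$. Specialising the above identification to an arbitrary conflation shows that $j_{*}$ is surjective, which is precisely the defining condition for $P$ to be projective in the extriangulated category $(H^{0}(\A),\mathbb E,\mathfrak s)$.

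For the converse ``$\Rightarrow$'', assume $P$ is projective. Since $\mathcal N$ is the triangulated subcategory of $\tr(\A)$ generated by the totalisations $N$, it suffices to show $\Hom_{\tr(\A)}(P,\Sigma^{i}N)=0$ for every totalisation $N$ and every $i\in\mathbb Z$. For $i=0$ the projectivity of $P$ combined with the cokernel identification above gives the vanishing. For $i\geq 1$, shift the two triangles $i$ times and use connectivity of $\A$: all relevant terms $\Hom_{\tr(\A)}(P,\Sigma^{i}C)$ and $\Hom_{\tr(\A)}(P,\Sigma^{i+1}U)$ vanish, forcing $\Hom_{\tr(\A)}(P,\Sigma^{i}N)=0$. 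For $i\leq -1$, invoke Lemma~\ref{leftexactsequence}: $N\in\mathcal H\subseteq \D(\A)^{\geq 0}$, hence $\Sigma^{i}N\in\D(\A)^{\geq 1}$, while $P\in \D(\A)^{\leq 0}$, so again the $\Hom$ vanishes by the t-structure.

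The only mild subtlety is the case $i\geq 1$, where one needs the vanishing $\Hom_{\tr(\A)}(P,\Sigma^{i+1}U)=0$; this is immediate from the triangle $A\to B\to U\to \Sigma A$ and the connectivity bound $\Hom_{\tr(\A)}(P,\Sigma^{j}A')=0$ for $j>0$, $A'\in\A$. Aside from this bookkeeping, the argument is a direct application of the explicit realisation of $N$ provided by (\ref{TR4}).
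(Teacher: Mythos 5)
Your proof is correct and follows essentially the same route as the paper: both arguments come down to identifying $\Hom_{\tr(\A)}(P,N)$ with the cokernel of $j_{*}\colon\Hom_{H^0(\A)}(P,B)\to\Hom_{H^0(\A)}(P,C)$ and checking that $\Hom_{\tr(\A)}(P,\Sigma^{i}N)=0$ for $i\neq 0$. The only difference is presentational — you re-derive by explicit triangle chases on (\ref{TR4}) what the paper obtains by citing Lemma~\ref{leftexactsequence} (that $N$ lies in the heart and is the cokernel of $H^0(j)$) together with the projectivity of $H^0(P)$ in the heart.
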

\begin{proof}Notice that the totalization $N$ of a conflation
\[
\begin{tikzcd}
X\ar[r,"f"]\ar[rr,"h"swap,bend right=6ex]&Y\ar[r,"j"]\ar[r]&Z
\end{tikzcd} 
\]
lies in the heart of the canonical t-structure of $\D(\A)$ and is the cokernel of the map
\[
\Hom_{\D(\A)}(-,Y)\rightarrow \Hom_{\D(\A)}(-,Z).
\]
Also for objects $A'$ in $H^0(\A)$, we have $\Hom_{\tr(\A)}(A', \Sigma^i N)=0$ for $i\neq 0$. 
So $P$ is projective if and only if $\Hom_{\D(\A)}(P,N)=0$ for each totalization $N$ of a conflation.
\end{proof}

Let $F:\A\rightarrow \D^b_{\dg}(\A,\S)$ be the universal exact morphism from a connective exact dg category $\A$ into a pretriangulated dg category $\D^b_{\dg}(\A,\S)$. 
Recall that we put $\D^b(\A,\S)=H^0(\D^b_{\dg}(\A,\S))$. 
Let $(\T^{\geq 0}, \T^{\leq 0})$ be the canonical co-t-structure  on $\T=\tr(\A)$ introduced in Proposition \ref{cot}.
Since we have
\[
\T^{\geq 0}=\bigcup_{i\geq0}\mathcal{H}[-i]*\mathcal{H}[-i+1]*\cdots*\mathcal{H}
\]
 where $\mathcal{H}$ is the coheart of the co-t-structure consisting of direct summands of $A$ in $\tr(\A)$ for $A\in\A$, it follows from Theorem~\ref{main} that $\tau_{\leq 0}\RHom_{\A}(A, M)$ is quasi-isomorphic to $\tau_{\leq 0}\Hom_{\D^b_{\dg}(\A,\S)}(A,M)$ for $A\in \A$ and $M\in \T^{\geq 0}$. 
If $P$ is a projective object in the extriangulated category $H^0(\A)$, then by Lemma~\ref{lem:leftorthogonal} $P$ lies in the left orthogonal of $\N$ and we have 
$
\Hom_{\tr(\A)}(P,X)\xrightarrow{\sim} \Hom_{\D^b(\A,\S)}(FP, X)
$
 for any $X\in\pretr(\A)$. 

\begin{lemma}\label{lem:quotientprojectiveinjective}
Let $\P$ be a full dg subcategory of $\A$ consisting of projective-injective objects in $H^0(\A)$. 
Let $\B=\tr(\A)/\tr(\P)$ and $\B'=\D^b(\A,\S)/\tr(\P)$.  
Then for $A,B\in\A$, the following statements hold:
\begin{itemize}
\item[1)]The canonical map $\Hom_{\B}(A, \Sigma^n B)\rightarrow\Hom_{\B'}(A,\Sigma^n B)$ is bijective for $n\leq 0$.
\item[2)]The canonical map $\Hom_{\D^b(\A,\S)}(A,\Sigma^{n}B)\rightarrow\Hom_{\B'}(A,\Sigma^nB)$ is bijective for {$n\geq 1$}.
\end{itemize}
\end{lemma}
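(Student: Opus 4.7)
The plan is to deduce both parts from applications of Lemma~\ref{lem:tstructureff} for suitable torsion pairs, after first establishing that $\tr(\P)$ and $\N$ are mutually Hom-orthogonal in $\tr(\A)$ at every shift.

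First I would show that $\Hom_{\tr(\A)}(\Sigma^{i}\tr(\P), \Sigma^{j}\N) = 0$ and $\Hom_{\tr(\A)}(\Sigma^{j}\N, \Sigma^{i}\tr(\P)) = 0$ for all $i, j \in \mathbb{Z}$. By Corollary~\ref{t}, every object of $\N$ is an iterated extension of shifted defective modules, so both statements reduce by d\'evissage to the case of a single totalization $N_{0}$ and a single object $P \in \P$. The first direction proceeds as in Lemma~\ref{lem:leftorthogonal}: for $i \neq j$ the vanishing is automatic since $N_{0}$ lies in the heart of the canonical $t$-structure on $\D(\A)$, and for $i = j$ it reduces to $N_{0}(P) = 0$, which holds by projectivity of $P$. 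For the dual direction, the vanishing of $\Hom_{\tr(\A)}(N_{0}, \Sigma^{j}I)$ for $I \in \P$ is handled by the $t$-structure when $j \leq -1$ and by the connectivity of $\A$ when $j \geq 2$; the critical cases $j = 0, 1$ are the main obstacle and are discussed below.

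For part~1, I would apply Lemma~\ref{lem:tstructureff} with $\T = \B$ and $\mathcal{S}$ the thick subcategory of $\B$ generated by the image of $\N$. Mutual Hom-orthogonality implies that the quotient functor $\tr(\A) \to \B$ restricts to a fully faithful embedding $\N \hookrightarrow \B$, and the bounded $t$-structure on $\N$ from Corollary~\ref{t} then transports to a torsion pair $(\overline{\N}^{\leq 0}, \overline{\N}^{\geq 1})$ on $\mathcal{S}$ with trivial intersection $\X \cap \Sigma^{-1}\Y$. Using Hom-orthogonality once more, $\Hom_{\B}(A, N) = \Hom_{\tr(\A)}(A, N)$ and $\Hom_{\B}(N, \Sigma^{n}B) = \Hom_{\tr(\A)}(N, \Sigma^{n}B)$ for any $A, B \in \A$ and $N \in \N$; these vanish when $N \in \overline{\N}^{\geq 1}$, respectively $N \in \overline{\N}^{\leq -1}$ with $n \leq 0$, by the orthogonality $\Hom(\D(\A)^{\leq 0}, \D(\A)^{\geq 1}) = 0$. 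This verifies the two perpendicularity hypotheses of Lemma~\ref{lem:tstructureff} and yields part~1.

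For part~2, I would apply Lemma~\ref{lem:tstructureff} with $\T = \D^{b}(\A,\S)$ and $\mathcal{S} = \tr(\P)$, which embeds fully faithfully in $\D^{b}(\A,\S) = \tr(\A)/\N$ thanks to the other direction of Hom-orthogonality. Since $\P$ is connective, Proposition~\ref{cot} endows $\tr(\P)$ with the canonical co-$t$-structure, whose weight decomposition supplies a torsion pair $(\X, \Y)$ with $\X$ generated by $\Sigma^{i}P$ for $i \leq 0$ and $\Y$ by $\Sigma^{i}P$ for $i \geq 1$. Both $\Hom_{\D^{b}(\A,\S)}(\X, \Y) = 0$ and $\X \cap \Sigma^{-1}\Y = 0$ follow from $\Ext^{k}_{\D^{b}(\A,\S)}(P, P') = \mathbb{E}^{k}(P, P') = 0$ for $k \geq 1$, a consequence of Proposition~\ref{higher} and projectivity of $P$. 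The two perpendicularity hypotheses then reduce to $\mathbb{E}^{i}(A, P) = 0$ for $i \geq 1$ (injectivity of $P$) and $\mathbb{E}^{n+1-i}(P, B) = 0$ for $i \leq 0, n \geq 1$ (projectivity of $P$), which are immediate from Proposition~\ref{higher}.

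The hard part will be treating the remaining critical cases $j = 0$ and $j = 1$ in the dual Hom-orthogonality. For these I plan to apply $\Hom_{\tr(\A)}(-, \Sigma^{j}I)$ to the triangles $A \to B \to U$ and $U \to C \to N_{0}$ extracted from diagram~(\ref{TR4}), then trace the resulting long exact sequences. The input $\mathbb{E}(C, I) = 0$ coming from injectivity of $I$ forces the canonical map $\Hom(C, I) \to \Hom(U, I)$ to be an isomorphism, which in turn forces $\Hom(N_{0}, I) = 0$ and $\Hom(N_{0}, \Sigma I) = 0$.
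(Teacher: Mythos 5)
Your architecture is genuinely different from the paper's: the paper manipulates roofs directly (lifting the denominator of a roof via injectivity of $\P$, normalizing its cone with the canonical co-t-structure, and applying the Five Lemma for $n\leq 0$), whereas you want to establish full mutual Hom-orthogonality of $\tr(\P)$ and $\N$ inside $\tr(\A)$ and then feed two torsion pairs into Lemma~\ref{lem:tstructureff}. That strategy is viable, but several of the vanishing statements on which it rests are justified by arguments that fail at exactly the degrees you dismiss as easy.

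The central problem is your computation of $\Hom_{\tr(\A)}(N_0,\Sigma^j I)$ for a totalization $N_0$ of a conflation $A\to B\to C$ and $I\in\P$. From the triangles $A\to B\to U\to \Sigma A$ and $U\to C\to N_0\to \Sigma U$ one gets $\Hom(N_0,\Sigma^2 I)\cong\Hom(U,\Sigma I)\cong\mathrm{coker}\bigl(\Hom_{H^0(\A)}(B,I)\to\Hom_{H^0(\A)}(A,I)\bigr)$, which does \emph{not} vanish by connectivity: it vanishes because this cokernel injects into $\mathbb E(C,I)=0$, i.e.\ because $I$ is injective. So $j=2$, which you file under ``handled by connectivity,'' is the genuinely critical case, while your designated critical cases $j=0,1$ already follow from homotopy right exactness of the conflation plus connectivity, with no input from injectivity. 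Likewise $j\leq -1$ is not handled by the t-structure: a representable $I$ over a connective dg category lies in $\D(\A)^{\leq 0}$ but may have cohomology in arbitrary non-positive degrees, so $\Sigma^jI$ need not lie in $\D(\A)^{\geq 1}$ and the orthogonality $\Hom(\D(\A)^{\leq 0},\D(\A)^{\geq 1})=0$ does not apply; what saves you is again homotopy right exactness, which gives $\Hom(C,\Sigma^mI)\cong\Hom(U,\Sigma^mI)$ for all $m\leq 0$. The same over-reliance on the t-structure recurs in your verification of the hypotheses of Lemma~\ref{lem:tstructureff} for part 1: $\Hom_{\tr(\A)}(\Sigma^{-1}\X,\Sigma^nB)$ does not vanish because $\Sigma^nB$ lies in $\D(\A)^{\geq 1}$ (it need not), but because $\RHom_{\A}(\tilde N,B)$ is concentrated in degree $2$ for defective $\tilde N$ and representable $B$. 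Finally, in part 2 the intersection $\X\cap\Sigma^{-1}\Y$ of your torsion pair on $\tr(\P)$ is not zero --- it is the coheart, i.e.\ the retract-closure of $\P$ --- so Lemma~\ref{lem:tstructureff} only identifies $\Hom_{\B'}(A,\Sigma^nB)$ with the quotient of $\Hom_{\D^b(\A,\S)}(A,\Sigma^nB)$ by morphisms factoring through that coheart, and you still need $\Hom_{\D^b(\A,\S)}(P,\Sigma^nB)=\mathbb E^n(P,B)=0$ for $n\geq 1$ to conclude. All of this is repairable with tools already in the paper, but as written the orthogonality computation --- the backbone of your approach --- locates the use of injectivity in the wrong degree and does not go through.
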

\begin{proof}Consider a morphism in $\B'$
\[
\begin{tikzcd}&M\ar[ld,Rightarrow,"s",swap]\ar[rd,"b"]&\\
A&&\Sigma^nB
\end{tikzcd}
\]
where $s$ and $b$ are morphisms in $\D^b(\A,\S)$ and $P=\Cone(s)\in \tr(\P)$. 
Put $\T=\tr(\A)$. Since objects in $\P$ are injective, the quotient functor $F:\T\rightarrow \D^b(\A,\S)\iso \T/\mathcal N$ induces bijections $\Hom_{\T}(X,Q)\xrightarrow{\sim} \Hom_{\D^b(\A,\S)}(X,Q)$ for $X\in \T$ and $Q\in \tr(\P)$. We may assume $s=F(s')$ for some morphism $s'$ in $\T$. 
Let $(\T^{\geq 0}, \T^{\leq 0})$ be the canonical co-t-structure on $\T$. 
Since $A\in \T^{\geq 0}$, we may assume $P\in \T^{\geq 0}$. 

1) Assume $n\leq 0$. Consider the triangle $\Sigma^{-1}P\rightarrow M\xrightarrow{s} A\rightarrow P$ in $\T$ and the induced long exact sequence
\[
\begin{tikzcd}
\cdots\ar[r]&\T(M,\Sigma^{-1}B)\ar[r]\ar[d]&\T(P,B)\ar[r]\ar[d,"\sim"]&\T(A,B)\ar[r,two heads]\ar[d,"\sim"]&\T(M,B)\ar[d]\\
\cdots\ar[r]&(M,\Sigma^{-1}B)\ar[r]&(P,B)\ar[r]&(A,B)\ar[r,two heads]&(M,B)
\end{tikzcd}
\]
where the Hom spaces in the second row are in $\D^b(\A,\S)$.
By the Five-Lemma, we have 
\begin{align}
\Hom_{\T}(M,\Sigma^{n}B)\xrightarrow{\sim} \Hom_{\D^b(\A,\S)}(M,\Sigma^{n}B) \label{projinj}
\end{align}
for $n\leq 0$. 
So we have $b=F(b')$ for some morphism $b'$ in $\T$. 
This shows that the canonical map
$
\Hom_{\B}(A, \Sigma^n B)\rightarrow\Hom_{\B'}(A,\Sigma^n B)
$
 is a surjection for $n\leq 0$. 
Assume that the morphism $b=F(b'):M\rightarrow \Sigma^n B$ in $\D^b(\A,\S)$ factors through an object in $\tr(\P)$.  
By bijection (\ref{projinj}) and since $\P$ consists of projective-injective objects, the morphism $b'$ in $\T$ also factors through an object in $\tr(\P)$.
 So the canonical map $\Hom_{\B}(A, \Sigma^n B)\rightarrow\Hom_{\B'}(A,\Sigma^n B)$ is a bijection for each $n\leq 0$.

2) Assume $n\geq 1$. 
Since $\Hom_{\D^b(\A,\S)}(\Sigma^{-1}P, \Sigma^{n} B)=0$ for $P\in \T^{\geq 0}$, the morphism $b:M\rightarrow \Sigma^{n}B$ factors through $A$. 
So the map $\Hom_{\D^b(\A,\S)}(A,\Sigma^{n}B)\rightarrow\Hom_{\B'}(A,\Sigma^nB)$ is a surjection.
Let $f:A\rightarrow \Sigma^n B$ be a morphism in $\D^b(\A,\S)$ which factors through an object $Q$ in $\tr(\P)$. 
We may assume $Q\in \tr(\P)^{\geq 0}$. 
Then we have $\Hom_{\D^b(\A,\S)}(Q,\Sigma^n B)=0$ and hence the morphism $f$ is zero.
So the canonical map 
\[
\Hom_{\D^b(\A,\S)}(A,\Sigma^{n}B)\rightarrow\Hom_{\B'}(A,\Sigma^nB)
\]
 is a bijection for $n\geq 1$.
\end{proof}

Let $\P$ be a full dg subcategory of a connective exact dg category $\A$ consisting of projective-injective objects. 
Then we have the following diagram
\[
\begin{tikzcd}&\tr(\A)\ar[r,"F_1"]\ar[d,"G_1"swap]&\D^b(\A,\S)\ar[d,"G_2"]\\
H^0(\A)/[H^0(\P)]\ar[r, hook,"H"swap]&\tr(\A)/\tr(\P)\ar[r,"F_2"swap]&\D^b(\A,\S)/\tr(\P).
\end{tikzcd}
\]
In summary, we have
\begin{proposition}\label{prop:dgsingularitycategory}
Let $A,B\in \A$. The following statements hold:
\begin{itemize}
\item[1)]$H$ is fully faithful.
\item[2)]For $n\geq 1$, $\Hom_{\tr(\A)/\tr(\P)}(A,\Sigma^n B)=0$.
\item[3)]For $n\leq 0$, $\Hom_{\tr(\A)/\tr(\P)}(A,\Sigma^n B)\xrightarrow{\sim}\Hom_{\D^b(\A,\S)/\tr(\P)}(A,\Sigma^n B)$.
\item[4)]For $n\geq 1$, $\Hom_{\D^b(\A,\S)}(A,\Sigma^n B)\xrightarrow{\sim}\Hom_{\D^b(\A,\S)/\tr(\P)}(A,\Sigma^n B)$. 
In particular, the essential image of $F_2\circ H$ is closed under extension in $\D^b(\A,\S)/\tr(\P)$. 
\item[5)]For $n\leq 0$, $\Hom_{\tr(\A)}(A,\Sigma^n B)\xrightarrow{\sim} \Hom_{\D^{b}(\A,\S)}(A,\Sigma^n B)$.
\end{itemize}
\end{proposition}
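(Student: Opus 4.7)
The plan is that each of the five items is a short consequence of results already established in this subsection or in the embedding theorem itself, so the proof reduces to a packaging argument. I would organise it as four short citations, handling the ``in particular'' in item 4) as the only clause that needs a mini-argument.

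For items 1) and 2), I would apply the first (unlabelled) lemma of this subsection --- which computes $\Hom_{\tr(\C)/\tr(\C')}(A,\Sigma^n B)$ for a connective dg category $\C$ and full dg subcategory $\C'$ --- to $\C = \A$ and $\C' = \P$; this is legitimate since $\A$ is connective by our standing assumption. Its $n=0$ clause gives the identification $(H^0(\A)/[H^0(\P)])(A,B) \iso \Hom_{\tr(\A)/\tr(\P)}(A,B)$, which is exactly the fully faithfulness of $H$ in 1), and its vanishing clause for positive degrees is 2).

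For items 3) and 4), I would invoke parts 1) and 2) of Lemma~\ref{lem:quotientprojectiveinjective} directly. The ``in particular'' in 4) needs one extra step: an extension of $FA$ by $FB$ in $\D^b(\A,\S)/\tr(\P)$ is classified by an element of $\Ext^1_{\D^b(\A,\S)/\tr(\P)}(A,B)$, which by 4) is canonically identified with $\Ext^1_{\D^b(\A,\S)}(A,B)$, and further with $\mathbb E(A,B)$ by Theorem~\ref{main}~2). Realising this class by a conflation in $(\A,\S)$ produces a middle term $E \in H^0(\A)$ whose image under $F_2 \circ H$ recovers the given extension up to isomorphism, so the essential image of $F_2 \circ H$ is extension-closed in $\D^b(\A,\S)/\tr(\P)$.

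For item 5), this is precisely the bijection $\Hom_{\tr(\A)}(A,\Sigma^i B) \iso \Hom_{\D^b(\A,\S)}(A,\Sigma^i B)$ for $i \leq 0$ that was established in the course of proving Theorem~\ref{main}: apply Lemma~\ref{lem:tstructureff} to $\mathcal S = \N$ with the bounded t-structure $(\N^{\leq 0},\N^{\geq 0})$ supplied by Corollary~\ref{t} (so that $\P = 0$), and use that $\N \subset \H$ together with connectivity of $\A$ to verify $A \in {}^\perp \N^{\geq 0}$ and $\Sigma^i B \in (\Sigma^{-1}\N^{\leq 0})^\perp$ for $i \leq 0$. No genuinely hard step appears; the only care needed is the bookkeeping around the degree conventions of the co-t-structure on $\tr(\A)$ (whose coheart contains $H^0(\A)$) versus the restricted t-structure on $\N$, and this has already been set up in Subsection~\ref{subsec:N}, so this is the mildest possible obstacle.
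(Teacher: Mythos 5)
Your proof is correct and follows exactly the route the paper takes: the paper introduces this proposition with ``In summary, we have'', i.e.\ items 1)--2) are the unlabelled lemma on $\tr(\C)/\tr(\C')$ applied to $(\A,\P)$, items 3)--4) are Lemma~\ref{lem:quotientprojectiveinjective}, and item 5) is the bijection obtained from Corollary~\ref{t} and Lemma~\ref{lem:tstructureff} in the proof of Theorem~\ref{main}. Your mini-argument for the ``in particular'' clause of 4) (lift the connecting morphism via the bijection, realise it by a conflation using Theorem~\ref{main}, and use uniqueness of cones) is also the intended one.
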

For an extriangulated category $\C$ with a subcategory $\P$ consisting of (not necessarily all) projective-injective objects in $\C$, Nakaoka--Palu showed that the ideal quotient $\C/[\P]$ has the structure of an extriangulated category, induced from that of $\C$, cf.~\cite[Proposition 3.30]{NakaokaPalu19}. In the context of exact dg categories, we have the following theorem. 
\begin{theorem}\label{quot}
Let $(\A,\S)$ be a connective exact dg category and $\P$ a full dg subcategory of $\A$ consisting of projective-injective objects in $\A$.
Let $\mathcal T_{\dg}$ be the canonical dg enhancement of $\D^b(\A,\S)/\tr(\P)$.
Then the dg quotient $\A/\P$ carries a canonical exact structure $(\A/\P,\overline{\S})$ induced from $(\A,\S)$ and its dg derived category is quasi-equivalent to $\mathcal T_{\dg}$.
\end{theorem}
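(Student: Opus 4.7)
The plan is to build a canonical morphism $G:\A/\P\to\mathcal T_{\dg}$ in $\Hqe$ and to use it both to equip $\A/\P$ with the desired exact structure and to identify $\mathcal T_{\dg}$ with $\D^b_{\dg}(\A/\P,\overline{\S})$. Let $F:\A\to\D^b_{\dg}(\A,\S)$ be the universal exact morphism of Theorem~\ref{main} and let $\pi:\D^b_{\dg}(\A,\S)\to\mathcal T_{\dg}$ be the canonical dg quotient by the full dg subcategory $\tr_{\dg}(\P)$. Since every object of $\P$ is sent to a zero object in $H^0(\mathcal T_{\dg})$, the universal property of the Drinfeld quotient produces a unique factorisation of $\pi\circ F$ through $\A\to\A/\P$, yielding $G$.

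Let $\mathcal T'_{\dg}\subseteq\mathcal T_{\dg}$ be the full dg subcategory on the essential image of $G$. Using the standard identification $\pretr(\A/\P)\simeq\pretr(\A)/\tr_{\dg}(\P)$ in $\Hqe$, we have $H^n\Hom_{\A/\P}(A,B)\cong\Hom_{\tr(\A)/\tr(\P)}(A,\Sigma^nB)$ for $A,B\in\A$. By Proposition~\ref{prop:dgsingularitycategory}~2) this vanishes for $n\geq 1$, so $\A/\P$ is connective, and by Proposition~\ref{prop:dgsingularitycategory}~3) it is naturally isomorphic to $H^n\Hom_{\mathcal T_{\dg}}(A,B)$ for $n\leq 0$. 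Hence $G$ restricts to a quasi-equivalence $\A/\P\iso\tau_{\leq 0}\mathcal T'_{\dg}$. Moreover, Proposition~\ref{prop:dgsingularitycategory}~4) asserts that $\mathcal T'_{\dg}$ is extension-closed in the pretriangulated dg category $\mathcal T_{\dg}$, so by Example~\ref{exm:exactdg}~1) and 6) it inherits a canonical exact dg structure. Its connective cover $\tau_{\leq 0}\mathcal T'_{\dg}$ in turn inherits an exact dg structure via \cite[Remark~4.25]{Chen24}, and Example~\ref{exm:exactdg}~4) transports this along the above quasi-equivalence to the promised structure $\overline{\S}$ on $\A/\P$.

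To identify $\D^b_{\dg}(\A/\P,\overline{\S})$ with $\mathcal T_{\dg}$, I will verify that $\mathcal T_{\dg}$ together with $G$ satisfies the universal property of Theorem~\ref{main} for $(\A/\P,\overline{\S})$. By the definition of $\overline{\S}$, the morphism $G$ is exact, and the composite $\A\to\A/\P$ sends conflations of $\S$ to conflations of $\overline{\S}$. Given any exact morphism $H:\A/\P\to\B$ into a pretriangulated dg category $\B$, precomposition yields an exact morphism $\A\to\B$ which, by Theorem~\ref{main} applied to $\A$, extends uniquely to $\tilde H:\D^b_{\dg}(\A,\S)\to\B$; since $\tilde H$ annihilates $\P$ in $H^0(\B)$, it descends uniquely through the Drinfeld quotient $\mathcal T_{\dg}=\D^b_{\dg}(\A,\S)/\tr_{\dg}(\P)$. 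I expect the main technical hurdle to be the commutation $\pretr(\A/\P)\simeq\pretr(\A)/\tr_{\dg}(\P)$ in $\Hqe$ together with verifying that the transported class $\overline{\S}$ admits the concrete description as the closure under isomorphism in $\mathcal H_{3t}(\A/\P)$ of the images of conflations of $\S$; this concrete description is what makes the exactness of $\A\to\A/\P$ tautological in the universal-property argument, and it is the step where the projectivity-injectivity of $\P$ is genuinely used, to prevent the extra Drinfeld morphisms of degree $-1$ from producing conflations in $(\A/\P,\overline{\S})$ that do not lift to $(\A,\S)$.
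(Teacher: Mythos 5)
Your proposal is correct, and the first half coincides with the paper's argument: the paper likewise deduces from items (1)--(4) of Proposition~\ref{prop:dgsingularitycategory} that the canonical morphism $\varphi\colon\A/\P\to\mathcal T_{\dg}$ is quasi-fully faithful with extension-closed quasi-essential image, whence the induced structure $\overline{\S}$. Where you genuinely diverge is in identifying $\D^b_{\dg}(\A/\P,\overline{\S})$ with $\mathcal T_{\dg}$: the paper does this by comparing Hom spaces, using item (4) of Proposition~\ref{prop:dgsingularitycategory} together with Proposition~\ref{higher} (hence the effaceability machinery of Corollary~\ref{cor:effaceablebimodule}) to match all higher extension groups, whereas you verify directly that $(\mathcal T_{\dg},G)$ satisfies the universal property of Lemma~\ref{univer}, by lifting an exact $H\colon\A/\P\to\B$ along $q$, extending through $\D^b_{\dg}(\A,\S)$ via Theorem~\ref{main}, and descending through the Drinfeld quotient; uniqueness follows since both $-\circ q$ and $-\circ\pi$ are injective on $\Hqe$-morphism sets. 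Your route is cleaner in that it avoids the $\delta$-functor comparison entirely, at the cost of relying on the identification $\pretr(\A/\P)\simeq\pretr(\A)/\tr_{\dg}(\P)$, which the paper sidesteps by phrasing everything through Proposition~\ref{prop:dgsingularitycategory}; the paper's route yields the sharper quantitative statement that $\Ext^n$ is preserved degree by degree. One remark: the ``technical hurdle'' you flag at the end --- that every conflation of $\overline{\S}$ be isomorphic to the image of a conflation of $\S$ --- is not actually needed for your universal-property argument, which only uses the easy direction that $q$ is exact; the converse is a consequence (via Theorem~\ref{main}~2) and item (4)), not a prerequisite.
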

\begin{proof}
By the items $(1)$ and $(3)$ of Proposition~\ref{prop:dgsingularitycategory}, we have $H^0(\A/\P)=H^0(\A)/[\P]$ and the canonical morphism $\varphi:\A/\P\rightarrow \mathcal T_{\dg}$ in $\Hqe$ is quasi-fully faithful. 
By item (4), the quasi-essential image of $\varphi$ is closed under extensions in $\T_{\dg}$ and therefore $\A/\P$ carries a canonical exact dg structure.
Again by item $(4)$ and Proposition \ref{higher}, the bounded dg derived category of $\A/\P$ is quasi-equivalent to $\mathcal T_{\dg}$.
\end{proof}
In particular, we obtain a short exact sequence of triangulated categories
\[
\begin{tikzcd}
0\ar[r]&\tr(\P)\ar[r]&\D^b(\A,\S)\ar[r]&\D^b(\A/\P,\overline{\S})\ar[r]&0.
\end{tikzcd}
\]

\begin{remark}
Let $(\E,\mathbb E,\mathfrak s)$ be an algebraic extriangulated category. 
By Proposition-Definition~\ref{algebraic} we have that $\E$ is equivalent, as an extriangulated category, to $\B/(\mathcal P_0)$ for a Quillen exact category $\B$ and a class $\mathcal P_0$ of projective-injective objects. 
Then by Proposition~\ref{prop:dgsingularitycategory}, we could take the dg quotient $\B/\P_0$ as a dg enhancement of $(\E,\mathbb E,\mathfrak s)$.
\end{remark}
\begin{corollary}\label{cor:connectivequotient}
Each connective exact dg category is exactly quasi-equivalent to a dg quotient $\E/\P_0$, where $\E$ is a Quillen exact category and $\P_0$ is a full subcategory of projective-injectives in $\E$, with the exact dg structure in Theorem~\ref{quot}. In particular, each connective additive dg category is quasi-equivalent to a dg quotient $\E/\P_0$ of an additive category $\E$ by a full subcategory $\P_0$.
\end{corollary}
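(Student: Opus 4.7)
The plan is to combine Proposition-Definition~\ref{algebraic} with Theorem~\ref{quot}, using the embedding theorem as the bridge between the two.

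First I would apply the implication $(3)\Rightarrow(2)$ of Proposition-Definition~\ref{algebraic} to the extriangulated category $(H^0(\A),\mathbb{E},\mathfrak{s})$, which is algebraic by construction. This produces a Quillen exact category $\E$ together with a full subcategory $\P_0\subseteq\E$ of projective-injective objects and an equivalence of extriangulated categories $H^0(\A)\simeq \E/(\P_0)$.

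Next I would apply Theorem~\ref{quot} to the pair $(\E,\P_0)$, viewing $\E$ as a connective exact dg category concentrated in degree zero. This endows the dg quotient $\E/\P_0$ with a canonical exact dg structure $\overline{\S}$, and identifies its bounded dg derived category with the canonical dg enhancement of $\D^b(\E)/\mathrm{thick}(\P_0)$. On $H^0$-level we obtain an equivalence of extriangulated categories $H^0(\E/\P_0)\simeq \E/(\P_0)\simeq H^0(\A)$.

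The key step is to lift this extriangulated equivalence to an exact quasi-equivalence $\A\simeq \E/\P_0$ in $\Hqe_{\ex}$. I would argue as follows: both $\A$ and $\E/\P_0$ are connective, and by Theorem~\ref{main} they embed via their universal exact morphisms into pretriangulated dg categories $\D^b_{\dg}(\A,\S)$ and $\D^b_{\dg}(\E/\P_0,\overline{\S})$, which are two canonical dg enhancements of the same algebraic triangulated category $\D^b(\A,\S)\simeq \D^b(\E)/\mathrm{thick}(\P_0)$ (the passage from the extriangulated equivalence to this triangulated equivalence uses Proposition~\ref{higher} and Proposition~\ref{prop:dgsingularitycategory}). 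Matching these enhancements and transporting the extension-closed subcategories, part 1) of Theorem~\ref{main} identifies $\A$ with $\tau_{\leq 0}\D'$ and $\E/\P_0$ with $\tau_{\leq 0}\D''$ for extension-closed dg subcategories $\D',\D''$ whose $H^0$'s correspond under the equivalence of extriangulated categories; hence $\D'$ and $\D''$ are identified, and so are their connective covers. The main obstacle I anticipate is making the identification of the two canonical dg enhancements compatible with the extension-closed images; this essentially uses that the canonical dg enhancement of an algebraic triangulated category is well-defined up to quasi-equivalence together with the rigidity provided by the embedding theorem.

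For the ``in particular'' statement, I would equip the connective additive dg category with its trivial (split) exact structure, in which every object of the target is projective-injective (cf.~Example~\ref{exm:exactdg} 5)). Applying the first part, the resulting Quillen exact category $\E$ can be taken with its split exact structure, so $\E$ is just an additive category and $\P_0$ an arbitrary full subcategory, and the exact dg quasi-equivalence reduces to an ordinary quasi-equivalence of dg categories $\A\simeq \E/\P_0$.
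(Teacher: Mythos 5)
Your overall strategy --- produce a Quillen exact category $\E$ with projective-injectives $\P_0$ from Proposition-Definition~\ref{algebraic} and then invoke Theorem~\ref{quot} --- is natural, but the ``key step'' contains a genuine gap. The equivalence $H^0(\A)\simeq \E/(\P_0)$ produced by Proposition-Definition~\ref{algebraic} is an equivalence of extriangulated categories at the $H^0$-level only; it is not a priori induced by any morphism in $\Hqe$, and your $\E$ carries no dg functor to or from $\A$. To bridge this you appeal to the claim that ``the canonical dg enhancement of an algebraic triangulated category is well-defined up to quasi-equivalence,'' but uniqueness of dg enhancements is false in this generality (it is a deep theorem available only for special classes of triangulated categories), and even where two enhancements agree, an abstract triangle equivalence between the $H^0$'s of two extension-closed dg subcategories need not be realized by a dg functor matching those subcategories. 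So the identification of $\D'$ with $\D''$, and hence of $\tau_{\leq 0}\D'$ with $\tau_{\leq 0}\D''$, is not established.

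The paper circumvents exactly this issue by choosing $\E$ and $\P_0$ so that an explicit zigzag of dg functors exists from the start: it sets $\D=\D^b_{\dg}(\A,\S)$, takes $\E=Z^0(\D')$ with the Quillen exact structure inherited from the graded-split structure on the Frobenius category $Z^0(\D)$, and takes $\P_0=\ctr(\D)$, the contractibles. The canonical dg functors $Z^0(\D')\to\D'\hookrightarrow\D$ then induce a comparison morphism $\D^b_{\dg}(Z^0(\D'))/\ctr(\D)\to\D$, and the real work of the proof is the hands-on verification (using the iterated cylinders $I^nX=\Cone(\Id_{I^{n-1}X})$) that this morphism is bijective on $\Hom(X,\Sigma^iY)$ for $i\leq 0$ as well as $i\geq 1$, whence $\tau_{\leq 0}\D'\simeq Z^0(\D')/\ctr(\D)$. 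Your proposal contains no substitute for this computation. If you instead ran the proof of Proposition-Definition~\ref{algebraic} with the specific Frobenius model $Z^0(\D^b_{\dg}(\A,\S))$, your $\E$ would become the paper's $Z^0(\D')$ and the argument could be completed along the paper's lines --- but that concretization, together with the negative-degree Hom computation, is precisely what is missing.
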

\begin{proof}
Let $\A$ be a connective exact dg category. 
By Theorem~\ref{main}, we have a quasi-equivalence $\A\iso \tau_{\leq 0}\D'$ where $\D'$ is an extension-closed subcategory of a (strictly) pretriangulated dg category $\D=\D_{\dg}^b(\A)$.
Then $Z^0(\D)$ is a Frobenius exact category with the contractible objects being the projective-injectives, when endowed with the graded-split exact structure. 
Its stable category is isomorphic to the triangulated category $H^0(\D)$.
Note that $\Ext^1_{Z^0(\D)}(?,-)$ of this exact structure is naturally isomorphic to $\Hom_{H^0(\D)}(?,\Sigma(-))$.
Since $Z^0(\D')$ is extension-closed in $Z^0(\D)$, it inherits a Quillen exact structure with respect to which all the contractibles are projective-injective.
We denote by $\ctr(\D)\subset Z^0(\D')$ the full subcategory consisting of all the contractibles in $\D$.
We have the following diagram in $\Hqe$
\[
\begin{tikzcd}
Z^0(\D')\ar[r]\ar[d]&\pretr(Z^0(\D'))\ar[r]\ar[d,dashed]&\pretr(Z^0(\D'))/\ctr(\D)\ar[r]\ar[ld,dashed,"G"swap]&\D^b_{\dg}(Z^0(\D'))/\ctr(\D)\ar[lld,dashed,"F"]\\
\D'\ar[r,hook]&\D&&
\end{tikzcd}.
\]
Put $\T=H^0(\D^b_{\dg}(Z^0(\D'))/\ctr(\D))$.
By Proposition~\ref{higher} and Proposition~\ref{prop:dgsingularitycategory}, the morphism $F$ induces a canonical isomorphism
\[
\Hom_{\T}(X,\Sigma^iY)\iso \Hom_{H^0(\D)}(X,\Sigma^iY)
\]
for $X$ and $Y$ in $Z^0(\D')$ and $i\geq 1$. 
Let us show that it is also true for $i\leq 0$.
For this we work with $\T'=\tr(Z^0(\D'))/\tr(\ctr(\D))$ instead of $\T$ (and the morphism $G$ instead of $F$), by Proposition~\ref{prop:dgsingularitycategory} (3).
Suppose we have a morphism $f:\Sigma^{-i}X\rightarrow Y$ in $H^0(\D)$ for $i\leq 0$. 
Recall that we denote by $IX=\Cone(\Id_{X})$.
We have the following sequence in $Z^0(\D')$
\[
\begin{tikzcd}[column sep=small]
X\ar[rr,tail]&&IX\ar[rd,two heads,dashed]\ar[rr]&&I^2X\ar[rd,two heads,dashed]\ar[rr]&&I^3X\ar[rr]&&\ldots\ar[rr]&&I^{-i}X\ar[rd,two heads,dashed]&\\
&&&\Sigma X\ar[ru, tail,dashed]&&\Sigma^2 X\ar[ru,tail,dashed]&&&&&&\Sigma^{-i}X
\end{tikzcd}
\]
The morphism $f:\Sigma^{-i}X\rightarrow Y$ is represented by a cocylce $g:\Sigma^{-i}X\rightarrow Y$.
Then $g$ together with the projection of the above sequence to $X$ yields a morphism $\Sigma^{-i}X\rightarrow Y$ in $\T'$.
It is clear that its image under the functor $H^0(G):\T\rightarrow H^0(\D)$ is $f$.

Suppose we have a morphism $X\rightarrow \Sigma^iY$
\[
\begin{tikzcd}
&Z\ar[rd,"a"]\ar[ld,Rightarrow,"s"swap]&\\
X&&\Sigma^i Y
\end{tikzcd}
\]
 in $\T'$ whose image under $H^0(G)$ is zero in $H^0(\D)$.
 We may assume that $Z$ is of the form
 \[
 \begin{tikzcd}
 \ldots\ar[r]&0\ar[r]&X\ar[r]&IX\ar[r]&I^2X\ar[r]&\ldots\ar[r]&I^{-i}X\ar[r]&\ldots
 \end{tikzcd}
 \]
 where $X$ is in degree zero, and that the morphism $s:Z\rightarrow X$ is given by the projection onto $X$.
 Then the morphism $a$ is induced by a morphism $I^{-i}X\rightarrow Y$
 whose precomposition with $I^{-i-1}X\rightarrow I^{-i}X$ is zero.
 So it induces a cocyle $\Sigma^{-i}X\rightarrow Y$ which by assumption is a coboundary. 
 Hence the morphism $a$ in $\D^b(Z^0(\D'))$ factors through $I^{-i+1}X$ and the morphism $a/s:X\rightarrow \Sigma^iY$ is zero in $\T'$.
 
 Since the objects in $D'$ form a set of generators for the triangulated categories $\T$ and $H^0(\D)$, it follows that $F$ is a quasi-equivalence.
 It follows that $\tau_{\leq 0}\D'$ is quasi-equivalent to the dg quotient $Z^0(\D')/\ctr(\D)$. 
 For the second statement, we simply notice that each additive dg category 
 admits the trivial exact structure given by the closure under isomorphisms of the class of all split exact sequences.
\end{proof}
\subsection{Examples}
We illustrate the results in this section through the following classes of examples.
\subsubsection{Stable dg categories}\label{exm:exactdgpretriangulated}
Let $(\A,\mathcal S)$ be a small connective exact dg category such that the associated extriangulated category $H^0(\A)$ is triangulated. 
By \cite[Theorem 6.5]{Chen24}, $\A$ is a stable dg category with the canonical exact structure $\S$.
We show that $\A$ is quasi-equivalent to the $\tau_{\leq 0}$-truncation of a pretriangulated dg category.
\begin{proposition}\label{prop:stable}
The connective dg category $\A$ is quasi-equivalent to $\tau_{\leq 0}\D^b_{\dg}(\A,\S)$.
\end{proposition}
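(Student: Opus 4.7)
My plan is to apply Theorem~\ref{main}, extract a quasi-equivalence onto an extension-closed dg subcategory, and then show this subcategory exhausts $\D^b_{\dg}(\A,\S)$ up to quasi-equivalence by leveraging the triangulated structure of $H^0(\A)$.

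By Theorem~\ref{main}, the universal exact morphism $F\colon\A\to\D^b_{\dg}(\A,\S)$ induces a quasi-equivalence $\tau_{\leq 0}\A\iso \tau_{\leq 0}\D'$ for some extension-closed full dg subcategory $\D'\subseteq\D^b_{\dg}(\A,\S)$. Since $\A$ is connective, $\A\to\tau_{\leq 0}\A$ is a quasi-equivalence, so it will suffice to show that the inclusion $\iota\colon\D'\hookrightarrow\D^b_{\dg}(\A,\S)$ is itself a quasi-equivalence; as $\iota$ is quasi-fully faithful by construction, this reduces to showing that the essential image of $H^0(F)\colon H^0(\A)\to\D^b(\A,\S)$ is all of $\D^b(\A,\S)$.

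The crux is to show that this image is stable under the suspension $[1]$ of the triangulated category $\D^b(\A,\S)$. Writing $\Sigma$ for the suspension of the triangulated category $H^0(\A)$, for any object $A\in H^0(\A)$ the identity morphism yields an element
\[
\Id_{\Sigma A}\in \Hom_{H^0(\A)}(\Sigma A,\Sigma A)=\mathbb{E}(\Sigma A,A),
\]
realized by the $\mathbb{E}$-triangle $A\to 0\to \Sigma A$. Since $F$ is an additive exact morphism, Theorem~\ref{main}~2) sends this triangle to a triangle
\[
FA\longrightarrow 0\longrightarrow F(\Sigma A)\xrightarrow{\alpha} FA[1]
\]
in $\D^b(\A,\S)$. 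Because the middle term vanishes, $\alpha$ is necessarily an isomorphism, so $F(\Sigma A)\cong FA[1]$. A symmetric argument starting from the triangle $\Sigma^{-1}A\to 0\to A$ produces $F(\Sigma^{-1}A)\cong FA[-1]$.

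Combining shift-closedness with extension-closedness (Theorem~\ref{main}~1)), the essential image is closed under mapping cones and hence is a triangulated subcategory of $\D^b(\A,\S)$. Since by construction $\D^b_{\dg}(\A,\S)=\pretr(\A)/\mathcal{N}_{\dg}$, the category $\D^b(\A,\S)$ is generated as a triangulated category by the images of the representable dg modules, i.e.~by the essential image of $H^0(F)$. Hence this image fills out $\D^b(\A,\S)$, making $\iota$ a quasi-equivalence, and applying $\tau_{\leq 0}$ yields the desired quasi-equivalence $\A\iso\tau_{\leq 0}\D^b_{\dg}(\A,\S)$. The main obstacle is the identification $F(\Sigma A)\cong FA[1]$, which forces the two a priori distinct suspensions on the essential image of $H^0(F)$ to coincide; this is exactly the content of Theorem~\ref{main}~2) applied to the degenerate conflation through $0$, so the real work has already been carried out upstream.
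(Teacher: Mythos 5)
Your proof is correct and follows essentially the same route as the paper: the paper's own argument observes that $H^0(F)\colon H^0(\A)\to\D^b(\A,\S)$ is a fully faithful triangle functor whose image contains a set of generators, hence a triangle equivalence, and then invokes Theorem~\ref{main}~1) to conclude at the level of $\tau_{\leq 0}$. Your explicit verification that $F(\Sigma A)\cong FA[1]$ via the conflation $A\to 0\to\Sigma A$ is exactly the content the paper compresses into the assertion that $H^0(F)$ is a triangle functor, so no gap remains.
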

\begin{proof}
The canonical functor $H^0(\A)\rightarrow \D^b(\A,\S)=H^0(\D^b_{\dg}(\A,\S))$ is a triangle functor and it is fully faithful by Theorem~\ref{main}. 
Its essential image contains a family of generators for $\D^b(\A,\S)$ and hence it a triangle equivalence. 
By Theorem~\ref{main}, we have that $\A$ is quasi-equivalent to $\tau_{\leq 0}\D^b_{\dg}(\A,\S)$.
\end{proof}

Similarly, one shows that if $\A$ is the $\tau_{\leq 0}$-truncation of a pretriangulated dg category $\T$ with the induced exact structure, then $\T$ is the bounded dg derived category of $\A$ via the canonical dg functor $\A\rightarrow \T$. 
\begin{corollary}
Let $(\A,\S)$ be a connective Frobenius exact dg category, i.e.,~the associated extriangulated category $(H^0(\A),\mathbb E,\mathfrak s)$ is Frobenius.
Let $\P$ be its full dg subcategory of projectives.
Endow $\A/\P$ with the exact structure $\tilde{\S}$ given by Theorem~\ref{quot}.
Then we have an exact quasi-equivalence $\A/\P\iso \tau_{\leq 0}(\D^b_{\dg}(\A,\S)/\pretr(\P))$.
\end{corollary}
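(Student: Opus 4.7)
The plan is to combine Theorem~\ref{quot} with Proposition~\ref{prop:stable} applied to the quotient $(\A/\P,\tilde{\S})$. The key observation is that the Frobenius hypothesis forces $(\A/\P,\tilde{\S})$ to be a stable (i.e.~pretriangulated up to $\tau_{\leq 0}$) exact dg category, at which point Proposition~\ref{prop:stable} identifies it with the connective cover of its bounded dg derived category, and Theorem~\ref{quot} identifies that dg derived category with $\D^b_{\dg}(\A,\S)/\pretr(\P)$.

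First I would verify that $(\A/\P,\tilde{\S})$ is a connective stable exact dg category. By hypothesis $(H^0(\A),\mathbb{E},\mathfrak{s})$ is a Frobenius extriangulated category with $H^0(\P)$ the full subcategory of projective-injectives, so the ideal quotient $H^0(\A)/[H^0(\P)]$ is a triangulated category. By Proposition~\ref{prop:dgsingularitycategory}(1), $H^0(\A/\P)\iso H^0(\A)/[\P]$, and the extriangulated structure induced by $\tilde{\S}$ on this category agrees, via the fully exact embedding into $\D^b(\A,\S)/\tr(\P)$ supplied by Theorem~\ref{quot} and Proposition~\ref{prop:dgsingularitycategory}(4), with its canonical triangulated structure. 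Hence, by \cite[Theorem~6.5]{Chen24}, $(\A/\P,\tilde{\S})$ is stable.

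Next I would apply Proposition~\ref{prop:stable} to $(\A/\P,\tilde{\S})$ to obtain a quasi-equivalence
\[
\A/\P \iso \tau_{\leq 0}\D^b_{\dg}(\A/\P,\tilde{\S}).
\]
By Theorem~\ref{quot}, $\D^b_{\dg}(\A/\P,\tilde{\S})$ is quasi-equivalent to the canonical dg enhancement $\mathcal T_{\dg}$ of $\D^b(\A,\S)/\tr(\P)$. Both $\mathcal T_{\dg}$ and the Drinfeld dg quotient $\D^b_{\dg}(\A,\S)/\pretr(\P)$ are pretriangulated dg categories (the latter being a dg quotient of a pretriangulated dg category by a pretriangulated dg subcategory) whose $H^0$ identifies with $\D^b(\A,\S)/\tr(\P)$, so they are quasi-equivalent in $\Hqe$ by the universal property of the Drinfeld dg quotient. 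Splicing these quasi-equivalences yields the desired identification $\A/\P\iso \tau_{\leq 0}(\D^b_{\dg}(\A,\S)/\pretr(\P))$.

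Finally, the exactness of the quasi-equivalence is essentially built into the construction: the underlying morphism in $\Hqe$ is the composition of the universal exact morphism $\A\to \A/\P$ (Theorem~\ref{quot}) with the universal exact morphism $\A/\P\to \D^b_{\dg}(\A/\P,\tilde{\S})$ (Theorem~\hyperref[intro:main]{A}), and the exact structure on $\tau_{\leq 0}(\D^b_{\dg}(\A,\S)/\pretr(\P))$ is the one inherited from the pretriangulated dg category $\D^b_{\dg}(\A,\S)/\pretr(\P)$ via Example~\ref{exm:exactdg}(1) and (4). The main subtle point is the identification of the two dg enhancements of $\D^b(\A,\S)/\tr(\P)$; however, once one knows both are pretriangulated with the correct $H^0$, the identification in $\Hqe$ is forced and no genuine obstruction arises.
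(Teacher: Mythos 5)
Your proposal is correct and follows essentially the same route as the paper: observe that $H^0(\A/\P)=H^0(\A)/[H^0(\P)]$ is triangulated so that $(\A/\P,\tilde{\S})$ is a stable exact dg category, apply Proposition~\ref{prop:stable} to identify it with $\tau_{\leq 0}$ of its bounded dg derived category, and use Theorem~\ref{quot} to identify that derived category with $\D^b_{\dg}(\A,\S)/\pretr(\P)$. One caveat: your remark that two pretriangulated dg categories with the same $H^0$ are automatically quasi-equivalent is false in general (dg enhancements need not be unique), but it is harmless here because the ``canonical dg enhancement'' $\mathcal T_{\dg}$ in Theorem~\ref{quot} is by construction the Drinfeld dg quotient $\D^b_{\dg}(\A,\S)/\pretr(\P)$, so no comparison of distinct enhancements is actually required.
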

\begin{proof}
Since $H^0(\A/\P)=H^0(\A)/[H^0(\P)]$ is a triangulated category, $(\A/\P,\tilde{\S})$ is a stable dg category with the canonical exact structure. 
By Proposition~{prop:stable} and Theorem~\ref{quot}, we have 
\[
\A/\P\iso \tau_{\leq 0}\D^b_{\dg}(\A/\P,\tilde{\S})\iso \tau_{\leq 0}(\D^b_{\dg}(\A,\S)/\pretr(\P)).
\]
\end{proof}
\subsubsection{Cohen--Macaulay dg modules}\label{exm:CMdgmodule}
This example is taken from \cite{Jin20}.
Let $k$ be a field. 
A dg $k$-algebra $A$ is {\em proper} if $\sum_{i\in \mathbb Z}\dim_{k}{H^i(A)<\infty}$. 
It is {\em Gorenstein} if the thick subcategory $\per(A)$ of the derived category $\D(A)$ 
generated by $A$ coincides with the thick subcategory $\thick(DA)$ generated by $DA$, where $D=\Hom_k(-,k)$ is the $k\mbox{-dual}$.
Let $A$ be a connective proper Gorenstein dg algebra.
A dg $A\mbox{-}$module is {\em perfectly valued} if its total cohomology is finite-dimensional and we denote by $\pvd(A)$ the triangulated category of perfectly valued dg $A\mbox{-}$modules.
A dg $A\mbox{-}$module $M$ in $\pvd(A)$ is {\em {Cohen--Macaulay}} if $H^i(M)=0$ and $\Hom_{\D(A)}(M,\Sigma^i A)=0$ for $i>0$.
Let $\CM A$ be the subcategory of $\pvd(A)$ consisting of Cohen-Macaulay dg $A\mbox{-}$modules.
By \cite[Theorem 2.4]{Jin20}, the category $\CM A$ is an $\Ext\mbox{-}$finite Frobenius extriangulated category with $\proj(\CM A)=\add A$. 
Let $\pvd_{\dg}(A)$ be the canonical dg enhancement of $\pvd(A)$ and $\A\subseteq \pvd(A)$ the full dg subcategory consisting of Cohen--Macaulay dg modules. 
Put $\CM_{\dg}(A)=\tau_{\leq 0}\A$. 
Since $\A$ is extension-closed in $\pvd_{\dg}(A)$, the dg category $\CM_{\dg}(A)$ inherits a canonical exact structure $\S$ whose corresponding extriangulated category is $\CM A$. 
\begin{proposition} \label{prop:CMderived}
The bounded dg derived category of $(\CM_{\dg}(A),\S)$ is quasi-equivalent to $\pvd_{\dg}(A)$. 
\end{proposition}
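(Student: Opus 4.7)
The plan is to apply the universal property of the bounded dg derived category (Theorem~\ref{main}) to the canonical inclusion, and then check that the induced morphism is a quasi-equivalence. Since the full dg subcategory $\A\subseteq \pvd_{\dg}(A)$ is extension-closed in the pretriangulated dg category $\pvd_{\dg}(A)$, and $\CM_{\dg}(A)=\tau_{\leq 0}\A$ is quasi-equivalent to $\A$, composition yields an exact morphism $\iota:(\CM_{\dg}(A),\S)\to \pvd_{\dg}(A)$ in $\Hqe_{\ex}$. The universal property then provides a unique morphism $G:\D^b_{\dg}(\CM_{\dg}(A),\S)\to \pvd_{\dg}(A)$ in $\Hqe$ extending $\iota$.

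To establish that $G$ is quasi-fully faithful, I would verify that the induced maps $\Ext^n_{\D^b(\CM_{\dg}(A),\S)}(C,A')\to \Ext^n_{\pvd(A)}(C,A')$ are bijective for all $C,A'\in \CM A$ and $n\in\mathbb{Z}$. For $n\leq 0$ this is immediate from Theorem~\ref{main}~1), since both sides reduce to $H^n\Hom_{\A}(C,A')$. For $n=1$, Theorem~\ref{main}~2) identifies the left-hand side with $\mathbb{E}(C,A')$, and the same identification holds on the right because $\CM A$ is extension-closed in $\pvd(A)$. For $n\geq 2$, Proposition~\ref{higher} identifies the left side with $\mathbb{E}^n(C,A')$. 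For the right side, the bifunctor $\Ext^{\bullet}_{\pvd(A)}(-,-)|_{\CM A\times \CM A}$ is a $\delta$-functor satisfying the hypotheses of Corollary~\ref{cor:effaceablebimodule}: the bivariant compatibility is standard for Yoneda products in a derived category, and effaceability in both variables holds because $\CM A$ is Frobenius with $\proj(\CM A)=\inj(\CM A)=\add A$, while the defining CM-vanishings $\Hom_{\D(A)}(M,\Sigma^n A)=0=H^n(M)$ for $M\in\CM A$ and $n\geq 1$ force deflations from $\add A$ (resp.\ inflations into $\add A$) to kill higher $\Ext$ groups. Corollary~\ref{cor:effaceablebimodule} then gives the desired identification with $\mathbb{E}^n$.

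It remains to show essential surjectivity, i.e.\ $\thick_{\pvd(A)}(\CM A)=\pvd(A)$. Here I would invoke the dg analogue of the Buchweitz--Happel equivalence, valid under the connective proper Gorenstein hypothesis: the canonical functor $\underline{\CM A}\to \pvd(A)/\per(A)$ is a triangle equivalence. Combined with $\per(A)=\thick_{\pvd(A)}(A)\subseteq \thick_{\pvd(A)}(\CM A)$, this implies that for every $M\in\pvd(A)$ there exists $N\in\CM A$ isomorphic to $M$ modulo $\per(A)$, hence $M$ fits into a triangle whose other two vertices lie in $\CM A$ and $\per(A)$, so that $M\in\thick_{\pvd(A)}(\CM A)$.

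The main obstacle lies in this last generation step, which is the only place where the Gorenstein hypothesis enters essentially. In lieu of citing Buchweitz--Happel, one can argue directly by a Cohen--Macaulay approximation procedure: for $M\in\pvd(A)$ not already CM, one chooses a surjection from an object of $\add A$ onto an appropriate cohomological truncation and iterates on the cone; termination is guaranteed by the Gorenstein identity $\per(A)=\thick(DA)$. Once generation is established, $H^0(G)$ is fully faithful on a family of generators and essentially surjective, hence $G$ is a quasi-equivalence.
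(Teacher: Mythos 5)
Your proposal is correct and follows essentially the same route as the paper: construct the canonical morphism via the universal property of Theorem~\ref{main}, compare $\Ext$-groups in all degrees (degrees $\leq 0$ from Theorem~\ref{main}~1), degree $1$ from Theorem~\ref{main}~2), and higher degrees by an effaceability/dimension-shift argument using enough projectives in $\add A$ together with the Cohen--Macaulay vanishing conditions), and finish with the fact that $\CM A$ generates $\pvd(A)$. The only cosmetic difference is that the paper quotes the generation statement directly from Jin's Lemma~3.9(2), whereas you deduce it from the Buchweitz--Happel type equivalence $\underline{\CM A}\iso \pvd(A)/\per(A)$; both facts come from the same source and the deduction you give is valid.
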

\begin{proof}
Clearly, the canonical dg functor $\CM_{\dg}(A)\rightarrow \pvd_{\dg}(A)$ is exact. 
By Theorem~\ref{main}, it induces a canonical morphism $\D^b_{\dg}(\CM_{\dg}(A),\S)\rightarrow \pvd_{\dg}(A)$ in $\Hqe$.
For objects $X$ and $Y$ in $\CM(A)$, we have a canonical bijection $\mathbb E(X,Y)\iso \Hom_{\pvd(A)}(X,\Sigma Y)$.
Note that for projective objects $P\in \add(A)$, the Hom spaces $\Hom_{\D(A)}(P,\Sigma^{i} M)$ vanish for $i>0$ and $M\in\CM A$, by the definition of Cohen--Macaulay dg modules.
Since the exact dg category $\CM_{\dg}(A)$ has enough projectives, we have bijections $\mathbb E^{i}(X,Y)\iso \Hom_{\pvd(A)}(X,\Sigma^iY)$ for $i\geq 1$.
By \cite[Lemma 3.9 (2)]{Jin20}, the triangulated category $\pvd(A)$ is generated by $\CM A$.
Therefore the above morphism $\D^b_{\dg}(\CM_{\dg}(A),\S)\rightarrow \pvd_{\dg}(A)$ is a quasi-equivalence.
\end{proof}
 This equivalence induces an equivalence from the singularity category of $\CM_{\dg}(A)$ onto that of $A$ in complete analogy with
the situation for a finite-dimensional Iwanaga–Gorenstein algebra concentrated in degree
$0$.

\subsubsection{t-structures}
Let $\T$ be a pretriangulated dg category and $\A'$ a full dg subcategory such that $H^0(\A')$ is an aisle in the triangulated category $H^0(\T)$.
Clearly $\A'$ is stable under extensions in $\T$ and hence inherits a canonical exact structure $(\A',\S)$, cf.~Example~\ref{exm:exactdg} 6).
Put $\A=\tau_{\leq 0}\A'$. 
By \cite[Remark 4.5 a)]{Chen24}, we obtain a canonical exact structure $(\A,\S)$.
The composite dg functor $\A\rightarrow \A'\hookrightarrow \T$ is exact and hence induces a canonical morphism $\D^b_{\dg}(\A,\S)\rightarrow \T$ in $\Hqe$.
\begin{proposition}\label{prop:t-structure}
The canonical morphism $\D^b_{\dg}(\A,\S)\rightarrow \T$ is a quasi-equivalence.
\end{proposition}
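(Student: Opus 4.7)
The plan is to verify that the canonical morphism $F\colon \D^b_{\dg}(\A,\S) \to \T$ is quasi-fully faithful and essentially surjective on $H^0$, hence a quasi-equivalence.

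First I would establish essential surjectivity. The triangle functor $H^0(F)$ contains $H^0(\A')$ in its essential image, since $\A = \tau_{\leq 0}\A'$ provides an identification $H^0(\A) \iso H^0(\A')$ compatible with the compositions $\A \to \D^b_{\dg}(\A,\S) \xrightarrow{F} \T$ and $\A \to \A' \hookrightarrow \T$. The aisle hypothesis on $H^0(\A') \subseteq H^0(\T)$ implies $H^0(\A')$ generates $H^0(\T)$ as a triangulated category, so $H^0(F)$ is essentially surjective.

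For quasi-fully faithfulness, I fix $A, B \in \A$ and study the map of Hom complexes $F_{A,B}\colon \D^b_{\dg}(\A,\S)(A,B) \to \T(A,B)$ in each cohomological degree. In non-positive degrees, Theorem~\ref{main} part~1) produces a quasi-equivalence $\tau_{\leq 0}\A \iso \tau_{\leq 0}\D'$ for an extension-closed dg subcategory $\D' \subseteq \D^b_{\dg}(\A,\S)$. Combined with the definitional equality $\A = \tau_{\leq 0}\A'$ and the full faithfulness of the dg inclusion $\A' \hookrightarrow \T$, this shows $H^i(F_{A,B})$ is an isomorphism for $i \leq 0$. In positive degrees $i \geq 1$, Proposition~\ref{higher} provides a natural isomorphism $\Ext^i_{\D^b(\A,\S)}(A,B) \iso \mathbb E^i(A,B)$, so the task reduces to comparing $\mathbb E^i(A,B)$ with $\Hom_{H^0(\T)}(A, \Sigma^i B)$.

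The main obstacle will be this last comparison for $i \geq 2$. The case $i = 1$ is immediate: the extriangulated structure on the extension-closed subcategory $H^0(\A') \subseteq H^0(\T)$ is, by definition, inherited from the triangulated structure of $H^0(\T)$, so $\mathbb E(A,B) \iso \Hom_{H^0(\T)}(A, \Sigma B)$. For $i \geq 2$, I would invoke Corollary~\ref{cor:effaceablebimodule}: the collection $(\Hom_{H^0(\T)}(?, \Sigma^n -))_{n \geq 0}$ restricted to $H^0(\A') \times H^0(\A')$ carries the structure of a bivariant $\delta$-functor, with connecting morphisms coming from the triangles in $H^0(\T)$ associated to conflations in $H^0(\A')$. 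Its zeroth term is $\Hom_{H^0(\A')}(?,-)$ and for $n \geq 1$ it is weakly effaceable: given any $\alpha\colon C \to \Sigma^n B$, rotating the triangle on $\alpha$ in $H^0(\T)$ yields a triangle $\Sigma^{n-1} B \to E \to C \xrightarrow{\alpha} \Sigma^n B$ in which $\Sigma^{n-1}B$ and $C$ both lie in $H^0(\A')$ (aisle closure under $\Sigma$), hence $E \in H^0(\A')$ (aisle closure under extensions), so the resulting conflation $\Sigma^{n-1}B \to E \to C$ in $H^0(\A')$ has a deflation pulling $\alpha$ back to zero; the dual argument handles the other variable. The uniqueness statement in Corollary~\ref{cor:effaceablebimodule}, applied to the $\delta$-functors $(\mathbb E^n)_{n \geq 0}$ and $(\Hom_{H^0(\T)}(?, \Sigma^n -))_{n \geq 0}$ (which agree in degree $0$ and in degree $1$), then forces an isomorphism between them, completing the verification of quasi-fully faithfulness and therefore of the proposition.
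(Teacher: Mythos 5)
Your overall architecture matches the paper's: quasi-full-faithfulness in degrees $\leq 0$ via Theorem~\ref{main}, identification of the positive-degree Hom groups with $\mathbb E^i$, and essential surjectivity from generation. Where you genuinely differ is in the comparison $\mathbb E^i(A,B)\iso\Hom_{H^0(\T)}(A,\Sigma^iB)$ for $i\geq 2$: you run the effaceable-$\delta$-functor machinery of Corollary~\ref{cor:effaceablebimodule}, whereas the paper's proof rests on a single observation --- since the aisle is stable under $\Sigma$, the triangle $X\to 0\to\Sigma X\xrightarrow{\Id}\Sigma X$ is a conflation of $H^0(\A)$, so $H^0(\A)$ has enough injectives and the injectives are the zero objects --- and then dimension-shifts exactly as in (the dual of) Proposition~\ref{prop:CMderived}. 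The paper's route is shorter; yours is a legitimate alternative using the same Section~2 toolkit already deployed in Proposition~\ref{higher}.

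Two caveats. First, your effaceability claim in the covariant variable does not follow by literally dualizing your rotation argument: given $\alpha\colon C\to\Sigma^nB$, the dual rotation produces a triangle $B\to E'\to\Sigma^{-(n-1)}C$, and $\Sigma^{-(n-1)}C$ need not lie in $H^0(\A')$, since aisles are closed under $\Sigma$ but not under $\Sigma^{-1}$. The correct (and simpler) fix is precisely the paper's observation: $B\to 0$ is an inflation because $B\to 0\to\Sigma B$ is a conflation, and it kills everything, so $\Hom_{H^0(\T)}(C,\Sigma^n(-))$ is trivially weakly effaceable. Second, your assertion that the aisle hypothesis ``implies $H^0(\A')$ generates $H^0(\T)$ as a triangulated category'' is not a consequence of the aisle axioms: the triangulated subcategory generated by an aisle $\mathcal U$ is $\bigcup_{n}\Sigma^{-n}\mathcal U$, which is proper unless every object of $H^0(\T)$ lies in some $\Sigma^{-n}\mathcal U$. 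This boundedness is indeed needed for essential surjectivity; the paper's proof makes the same implicit assumption (via ``the same reasoning as in Proposition~\ref{prop:CMderived}'', where generation is a cited input), so this is a gap in the stated hypotheses rather than in your reasoning specifically --- but you should flag it as an assumption rather than derive it from the aisle property.
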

\begin{proof}
Let $X$ be an object in $\A$.
We view it as an object in the triangulated category $H^0(\T)$.
We have the triangle $X\rightarrow 0\rightarrow \Sigma X\xrightarrow {\Id_{\Sigma X}}\Sigma X$ in $H^0(\A)$.
Since $\Sigma X$ is also an object in $H^0(\A)$, this means that $H^0(\A)$ is an extriangulated category with enough injectives and the injectives are the zero objects.
Now, the same reasoning as in the proof of Proposition~\ref{prop:CMderived} shows that the morphism $\D^b_{\dg}(\A,\S)\rightarrow \T$ is a quasi-equivalence.
\end{proof}
\subsubsection{Relative cluster categories as derived categories}\label{subsection:higgscategories}
One of the motivations for our work is Yilin Wu's construction of the Higgs category associated with a finite ice quiver with potential (see \cite{Wu23a}\cite{Wu21} for the Jacobi-finite case and \cite{KellerWu23} for the Jacobi-infinite case). 

Let $(Q,F,W)$ be a Jacobi-finite ice quiver with potential. 
Denote by $\Gamma_{\rel}$ the relative Ginzburg dg algebra $\Gamma_{\rel}(Q,F,W)$. 
Let $e=\sum_{i\in F}e_i$ be the idempotent associated with all frozen vertices. 
Let $\pvd_{e}(\Gamma_{\rel})$ be the full subcategory of $\pvd(\Gamma_{\rel})$ of the dg $\Gamma_{\rel}$-modules whose restriction to frozen vertices is acyclic.
Then the {\em relative cluster category} $\C=\C(Q,F,W)$ associated to $(Q,F,W)$ is defined as the Verdier quotient of triangulated categories
$
\per(\Gamma_{\rel})/\pvd_{e}(\Gamma_{\rel}).
$
The {\em relative fundamental domain} $\mathcal F^{\rel}_{\Gamma_{\rel}}=\mathcal F^{\rel}$ associated to $(Q,F,W)$ is defined as the following subcategory of $\per(\Gamma_{\rel})$
\[
\mathcal F^{\rel}{\coloneqq}\{\Cone(X_1\xrightarrow{f}X_0) | X_i\in\add(\Gamma_{\rel}) \text{ and } \Hom(f,I) \text{ is surjective}, \forall I\in \mathcal P{\coloneqq}\add(e\Gamma_{\rel})\}.
\]
It is clear that for objects $X$ and $Y$ in $\F^{\rel}$, every morphism $f:X\rightarrow \Sigma Y$ in $\per(\Gamma_{\rel})$ factors through an object in $\add(\Sigma \Gamma_{\rel})$.
Let $\pi^{\rel}:\per(\Gamma_{\rel})\rightarrow \C$ be the canonical quotient functor. 
By \cite[Proposition 5.20]{Wu23a}, the restriction of the quotient functor $\pi^{\rel}$ to $\mathcal F^{\rel}$ is fully faithful. 
In particular for objects $X$ and $ Y$ in $\mathcal F^{\rel}$, the functor $\pi^{\rel}$ induces an injection 
$
\Ext^1_{\per(\Gamma_{\rel})}(X,Y)\rightarrow\Ext^1_{\C}(\pi^{\rel}X,\pi^{\rel}Y).
$
The {\em Higgs category} (\cite[Definition 5.22]{Wu23a}) $\mathcal H$ is the image of $\mathcal F^{\rel}$ in $\C$ under the quotient functor $\pi^{\rel}$. 
Indeed, we have the following
\begin{lemma}\label{equi}Let $X,Y$ be objects in $\mathcal F^{\rel}$.
Then $\pi^{\rel}$ induces a canonical isomorphism 
\[
\Hom_{\per(\Gamma_{\rel})}(X,\Sigma^{i}Y)\xrightarrow{\sim}\Hom_{\C}(\pi^{\rel}X,\Sigma^{i}\pi^{\rel}Y).
\] for any $i\leq 0$ or for $Y\in \add\Gamma_{\rel}$ and $i\leq 1$.

\end{lemma}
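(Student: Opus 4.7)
\emph{Plan.} The canonical map is induced by the Verdier quotient $\pi^{\rel}\colon\per(\Gamma_{\rel}) \twoheadrightarrow \C = \per(\Gamma_{\rel})/\pvd_{e}(\Gamma_{\rel})$. A standard criterion from the theory of Verdier quotients says that the map
\[
\Hom_{\per(\Gamma_{\rel})}(X, \Sigma^{i} Y) \longrightarrow \Hom_{\C}(\pi^{\rel} X, \Sigma^{i} \pi^{\rel} Y)
\]
is bijective as soon as
\[
\Hom_{\per(\Gamma_{\rel})}(N, \Sigma^{j} Y) = 0 \quad \text{for every } N \in \pvd_{e}(\Gamma_{\rel}) \text{ and every } j \in \{i, i+1\}.
\]
Injectivity follows because any morphism killed in $\C$ factors through some $N \in \pvd_{e}(\Gamma_{\rel})$, which then forces it to be zero. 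Surjectivity follows by applying $\Hom_{\per(\Gamma_{\rel})}(-, \Sigma^{i} Y)$ to the distinguished triangle $M \xrightarrow{s} X \to \Cone(s) \to \Sigma M$ associated to a roof $X \xleftarrow{s} M \xrightarrow{u} \Sigma^{i} Y$ representing a morphism in $\C$: the assumed vanishing of $\Hom_{\per(\Gamma_{\rel})}(\Cone(s), \Sigma^{i+1} Y)$ makes $s^{*}$ surjective, so $u$ lifts to a morphism $X \to \Sigma^{i} Y$ in $\per(\Gamma_{\rel})$.

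The task therefore reduces to checking that $\Hom_{\per(\Gamma_{\rel})}(N, \Sigma^{j} Y) = 0$ for all $N \in \pvd_{e}(\Gamma_{\rel})$ in the range $j \leq 1$ when $Y \in \mathcal F^{\rel}$ (yielding the range $i \leq 0$), and in the range $j \leq 2$ when $Y \in \add \Gamma_{\rel}$ (yielding the range $i \leq 1$). I would first treat the case $Y \in \add \Gamma_{\rel}$ using the relative $3$-Calabi--Yau structure on the morphism $e\Gamma_{\rel} \to \Gamma_{\rel}$ established in~\cite{Wu23a, KellerWu23}. This supplies a natural duality of the shape $\Hom_{\per(\Gamma_{\rel})}(N, \Sigma^{j} Y) \cong D\Hom_{\per(\Gamma_{\rel})}(Y, \Sigma^{3-j} N)$; for $Y = e_{v} \Gamma_{\rel}$ the dualised Hom-group becomes a piece of the cohomology of $N$ located at the vertex $v$, which vanishes for frozen $v$ thanks to the defining acyclicity of the restriction of $N$ to $e\Gamma_{\rel}$, and is killed for non-frozen $v$ in the required range by combining the connectivity of $\Gamma_{\rel}$ with the truncation functors for the canonical $t$-structure on $\D(\Gamma_{\rel})$ restricted to $\pvd_{e}(\Gamma_{\rel})$. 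The case $Y \in \mathcal F^{\rel}$ then follows by dévissage along the defining triangle $X_{1} \xrightarrow{f} X_{0} \to Y \to \Sigma X_{1}$, where the surjectivity condition on $\Hom(f, I)$ for $I \in \mathcal P$ provides the extra degree of vanishing contributed by $X_{1}$.

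The main obstacle is controlling $\Hom_{\per(\Gamma_{\rel})}(N, \Sigma^{j} Y)$ for an arbitrary $N \in \pvd_{e}(\Gamma_{\rel})$, whose cohomology may live in any degree a priori. The key is to weave together three ingredients---the relative Calabi--Yau duality, the acyclicity of the frozen restriction of $N$, and the defining surjectivity condition on objects of $\mathcal F^{\rel}$---in a way that delivers precisely the asymmetric ranges $i \leq 0$ versus $i \leq 1$ appearing in the two cases of the lemma.
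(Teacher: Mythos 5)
There is a genuine gap in your reduction step. Your Verdier-quotient criterion asks for the vanishing of $\Hom_{\per(\Gamma_{\rel})}(N,\Sigma^{j}Y)$ for \emph{every} $N\in\pvd_{e}(\Gamma_{\rel})$ in the stated ranges of $j$, and this is false: $\pvd_{e}(\Gamma_{\rel})$ is closed under all shifts, and the very relative $3$-Calabi--Yau duality you invoke shows that for an unfrozen vertex $v$ and $Y=\Gamma_{\rel}$ one has $\Hom(\Sigma^{m}S_{v},\Sigma^{j}\Gamma_{\rel})\cong D\Hom(\Gamma_{\rel},\Sigma^{3-j+m}S_{v})\neq 0$ whenever $m=j-3$, which lands inside your range $j\leq 2$. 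So the vanishing you want to "check" cannot hold over all of $\pvd_{e}(\Gamma_{\rel})$, and the proof cannot close as written.

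The missing idea is to exploit the \emph{relative} t-structure $(\D(\Gamma_{\rel})^{\rel}_{\leq 0},\D(\Gamma_{\rel})^{\rel}_{\geq 0})$, which restricts to $\per(\Gamma_{\rel})$ and to $\pvd_{e}(\Gamma_{\rel})$. Since every $X\in\mathcal F^{\rel}$ lies in $\add\Gamma_{\rel}\ast\add\Sigma\Gamma_{\rel}\subset\per(\Gamma_{\rel})^{\rel}_{\leq 0}$, a morphism $X\to N$ factors through the truncation of $N$, and the cone of the denominator of a roof out of $X$ may likewise be replaced by an object of $\pvd_{e}(\Gamma_{\rel})^{\rel}_{\leq 0}$. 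Only for such truncated $M$ does the duality $\Hom(M,\Sigma^{i}\Gamma_{\rel})\cong D\Hom(\Gamma_{\rel},\Sigma^{3-i}M)$ yield zero in the relevant degrees. You do mention truncation functors, but you deploy them to analyse the cohomology of $N$ at unfrozen vertices rather than to restrict the class of $N$ for which vanishing is required --- and you reference the canonical t-structure rather than the relative one, which is the one that restricts to $\pvd_{e}(\Gamma_{\rel})$ here. Two smaller points: the passage from $Y\in\add\Gamma_{\rel}$ to general $Y\in\mathcal F^{\rel}$ is indeed a d\'evissage along the defining triangle (done in the paper by applying $\Hom(X,-)$ and the five lemma, using the $\add\Gamma_{\rel}$ case in degrees $\leq 1$), but the surjectivity condition on $\Hom(f,I)$ plays no role in this lemma, so that part of your sketch is a red herring.
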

\begin{proof}
We first show the case when $Y=\Gamma_{\rel}$ and $i\leq 1$. 
Recall the relative t-structure $(\D(\Gamma_{\rel})^{\rel}_{\leq 0}, \D(\Gamma_{\rel})^{\rel}_{\geq 0})$ on $\D(\Gamma_{\rel})$ which restricts to t-structures on $\per (\Gamma_{\rel})$ and $\pvd_{e}(\Gamma_{\rel})$, cf.~\cite[Propositions 4.10, 4.11]{Wu23a}.
We show the injectivity of the map.
Let $f:X\rightarrow \Sigma^{i}\Gamma_{\rel}$ be a map in $\per(\Gamma_{\rel})$ which factors through an object $M$ in $\pvd_{e}(\Gamma_{\rel})$. 
We want to show that $f$ is indeed a zero map.
By definition, we have $\Gamma_{\rel}\in \per(\Gamma_{\rel})^{\rel}_{\leq 0}$ and hence $X\in \add \Gamma_{\rel}*\add \Sigma\Gamma_{\rel}\subset \per(\Gamma_{\rel})^{\rel}_{\leq 0}$.
So we may suppose $M\in \pvd_{e}(\Gamma_{\rel})^{\rel}_{\leq 0}$.
By \cite[Corollary 3.13]{Wu23a}, we have 
$
\Hom_{\Gamma_{\rel}}(M,\Sigma^i\Gamma_{\rel})\xrightarrow{\sim}D\Hom_{\Gamma_{\rel}}(\Gamma_{\rel}, \Sigma^{3-i}M)=0
$ 
where $D=\Hom(-,k)$. 
This shows the injectivity of the morphism. 
We now show the surjectivity of the map. Consider a roof $b/s$ 
\[
\begin{tikzcd}
&N\ar[ld,Rightarrow,"s"swap]\ar[rd,"b"]&\\
X&&\Sigma^i\Gamma_{\rel}
\end{tikzcd}.
\]
Since $X\in\per(\Gamma_{\rel})^{\rel}_{\leq 0}$, we may assume $\Cone(s)\in \pvd(\Gamma_{\rel})^{\rel}_{\leq 0}$.
Then since we have
\[
\Hom_{\Gamma_{\rel}}(\Sigma^{-1}\Cone(s),\Sigma^{i} \Gamma_{\rel})\iso D\Hom_{\Gamma_{\rel}}(\Gamma,\Sigma^{2-i} \Cone(s))=0,
\] 
the morphism $b$ factors through $s$ and this shows the surjectivity of the map.

For general $Y\in \mathcal F^{\rel}$ and $i\leq 0$, it follows from the above case by applying the homological functor $\Hom(X,-)$ to the defining triangle for $Y$.
\end{proof}
Let $\per_{\dg}(\Gamma_{\rel})$ respectively $\pvd_{e}(\Gamma_{\rel})_{\dg}$ be the canonical dg enhancement of $\per(\Gamma_{\rel})$ respectively $\pvd_{e}(\Gamma_{\rel})$. 
Let $\C_{\dg}$ be the canonical dg enhancement of $\C$ given by the dg quotient $\per_{\dg}(\Gamma_{\rel})/\pvd_{e}(\Gamma_{\rel})_{\dg}$. 
Let $\mathcal H_{\dg}$ be the full dg subcategory of $\C_{\dg}$ consisting of objects in $\mathcal H$.
Let $\F^{\rel}_{\dg}$ be the dg full subcategory of $\per_{\dg}(\Gamma_{\rel})$ consisting of objects in $\F^{\rel}$.
By \cite[Definition 5.12, Proposition 5.39]{Wu23a}, the dg category $\F^{\rel}_{\dg}$ (resp.,~$\mathcal H_{\dg}$) is extension-closed in $\per_{\dg}(\Gamma_{\rel})$ (resp.,~$\C_{\dg}$) and thus inherits a canonical exact dg structure. 
By Lemma \ref{equi}, we obtain the following.
\begin{corollary}
The canonical morphism $\per_{\dg}(\Gamma_{\rel})\rightarrow \C_{\dg}$ induces a quasi-equivalence 
$
\tau_{\leq 0}\F_{\dg}^{\rel}\rightarrow \tau_{\leq 0} \mathcal H_{\dg}
$.
\end{corollary}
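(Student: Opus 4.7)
The plan is to deduce the corollary directly from Lemma~\ref{equi} together with the definition of $\mathcal H$, essentially reformulating what the lemma already gives us about Hom spaces into a statement about truncated Hom complexes.

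First I would observe that the canonical morphism $\pi^{\rel}_{\dg}:\per_{\dg}(\Gamma_{\rel})\rightarrow \C_{\dg}$, being a dg quotient functor, sends $\F^{\rel}_{\dg}$ into $\C_{\dg}$ with quasi-essential image contained in $\mathcal H_{\dg}$. By the very definition of $\mathcal H$ as the image of $\mathcal F^{\rel}$ under $\pi^{\rel}$, the induced functor $H^0(\F^{\rel}_{\dg})\rightarrow H^0(\mathcal H_{\dg})$ is essentially surjective, which is preserved by the connective cover functor $\tau_{\leq 0}$.

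Next, the key step is quasi-full-faithfulness of the induced dg functor on connective covers. For objects $X,Y\in \F^{\rel}$, we have
\[
H^i\bigl(\Hom_{\per_{\dg}(\Gamma_{\rel})}(X,Y)\bigr)=\Hom_{\per(\Gamma_{\rel})}(X,\Sigma^iY)
\]
and similarly
\[
H^i\bigl(\Hom_{\C_{\dg}}(\pi^{\rel}X,\pi^{\rel}Y)\bigr)=\Hom_{\C}(\pi^{\rel}X,\Sigma^i\pi^{\rel}Y).
\]
Lemma~\ref{equi} tells us precisely that the natural map between these groups is a bijection for all $i\leq 0$. Consequently, the morphism of Hom complexes induced by $\pi^{\rel}_{\dg}$ becomes a quasi-isomorphism after applying $\tau_{\leq 0}$, since the truncation depends only on cohomology in non-positive degrees.

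Putting the two observations together, the dg functor $\tau_{\leq 0}\F^{\rel}_{\dg}\rightarrow \tau_{\leq 0}\mathcal H_{\dg}$ is quasi-fully faithful and essentially surjective on $H^0$, hence a quasi-equivalence. There is no substantive obstacle here beyond invoking Lemma~\ref{equi}; the only thing to be careful about is ensuring that the case $i=0$ of the lemma (which is part of the $i\leq 0$ range) suffices for surjectivity of the map on $H^0$ of Hom complexes, and that essential surjectivity is unaffected by truncation since $H^0(\tau_{\leq 0}\A)=H^0(\A)$ for any dg category~$\A$.
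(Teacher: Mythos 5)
Your proof is correct and is exactly the argument the paper intends: the corollary is stated as an immediate consequence of Lemma~\ref{equi} (bijectivity of $\pi^{\rel}$ on $\Hom(X,\Sigma^i Y)$ for $i\leq 0$), combined with the definition of $\mathcal H$ as the image of $\F^{\rel}$, and you have simply spelled out the details the paper leaves implicit. Your care about $H^0(\tau_{\leq 0}\A)=H^0(\A)$ and the fact that $\tau_{\leq 0}$ of a map of complexes is a quasi-isomorphism precisely when the map is an isomorphism on $H^i$ for $i\leq 0$ is exactly what is needed.
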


\begin{proposition}
Via the canonical quasi-equivalence $\tau_{\leq 0}\F^{\rel}_{\dg}\rightarrow \tau_{\leq 0} \mathcal H_{\dg}$, the exact dg structure on $\tau_{\leq 0}\F^{\rel}_{\dg}$ is identified with the exact substructure on $\tau_{\leq 0}\mathcal H_{\dg}$ which makes the objects in $\add(\Gamma_{\rel})$ projective.
Moreover, the bounded dg derived category of $\tau_{\leq 0}\mathcal H_{\dg}$ is $\C_{\dg}$ and
the bounded dg derived category of $\tau_{\leq 0}\F_{\dg}^{\rel}$ is $\per_{\dg}(\Gamma_{\rel})$.
\end{proposition}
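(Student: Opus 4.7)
My plan is to combine Theorem~\ref{main} with the effaceability framework of Proposition~\ref{prop:effaceableuniversal} and Proposition~\ref{higher} to establish the two derived-category identifications in parallel, and then separately match the exact structures using Lemma~\ref{equi} together with the cohomological vanishing that comes from connectivity of $\Gamma_{\rel}$. The identification of structures will in fact rely on the derived-category identification for $\F^{\rel}_{\dg}$, so I will handle that piece first.

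For the two derived-category statements, the exact inclusions $\tau_{\leq 0}\F^{\rel}_{\dg} \hookrightarrow \per_{\dg}(\Gamma_{\rel})$ and $\tau_{\leq 0}\H_{\dg} \hookrightarrow \C_{\dg}$ (exact by extension-closedness, cf.~Example~\ref{exm:exactdg} 6)) induce, by the universal property in Theorem~\ref{main}, canonical dg morphisms $\D^b_{\dg}(\tau_{\leq 0}\F^{\rel}_{\dg}) \to \per_{\dg}(\Gamma_{\rel})$ and $\D^b_{\dg}(\tau_{\leq 0}\H_{\dg}) \to \C_{\dg}$. Essential surjectivity is immediate: $\Gamma_{\rel}$ (respectively $\pi^{\rel}\Gamma_{\rel}$) lies in $\F^{\rel}$ (respectively $\H$) and compactly generates the target. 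For full faithfulness, Theorem~\ref{main}(2) combined with extension-closedness handles $n=0,1$; for $n\ge 2$, I combine Proposition~\ref{higher} (identifying $\Ext^n_{\D^b}$ with $\mathbb E^n$) with Proposition~\ref{prop:effaceableuniversal} to compare $\mathbb E^n_{\F^{\rel}}$ (respectively $\mathbb E^n_{\H}$) with $\Ext^n_{\per(\Gamma_{\rel})}(-,Y)$ (respectively $\Ext^n_{\C}(-,Y)$) as right $\delta$-functors. Both agree at $n=0$, so it remains to verify effaceability for $n\ge 1$. For $\F^{\rel}$, I use that $\add(\Gamma_{\rel})$ furnishes enough projectives: given $X=\Cone(X_1\xrightarrow{f}X_0)\in\F^{\rel}$, the defining triangle $X_1\to X_0\to X$ is a conflation in $\F^{\rel}$, and connectivity of $\Gamma_{\rel}$ together with the long exact sequence of cohomology yields $H^n(Y)=0$ for $Y\in\F^{\rel}$, $n\ge 1$, hence $\Ext^n_{\per(\Gamma_{\rel})}(X_0,Y)=0$. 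For $\H$, I invoke Wu's theorem that $\H$ is a Frobenius extriangulated category with enough projectives, whose projective-injectives kill positive extensions in $\C$ (verifiable through Lemma~\ref{equi} combined with the $2$-Calabi--Yau duality of the Higgs category).

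For the identification of exact structures, a conflation on the $\F^{\rel}$ side is a triangle $X\to Y\to Z\to \Sigma X$ in $\per(\Gamma_{\rel})$ with all terms in $\F^{\rel}$. Applying $\Hom_{\per(\Gamma_{\rel})}(\Gamma_{\rel},-)$ and using $\Hom_{\per(\Gamma_{\rel})}(\Gamma_{\rel},\Sigma X)=H^1(X)=0$, then translating to $\C$ via Lemma~\ref{equi} (applicable since $\Gamma_{\rel}\in\add(\Gamma_{\rel})$ and the relevant shift is $\le 1$), shows that $\Hom_{\C}(\pi^{\rel}\Gamma_{\rel},\pi^{\rel}Y)\to\Hom_{\C}(\pi^{\rel}\Gamma_{\rel},\pi^{\rel}Z)$ is surjective, so the induced conflation in $\H$ belongs to the substructure making $\add(\Gamma_{\rel})$ projective. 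Conversely, given a conflation in $\H$ with $\pi^{\rel}\Gamma_{\rel}$ projective, the connecting $\pi Z\to\Sigma\pi X$ in $\C$ vanishes against $\pi^{\rel}\Gamma_{\rel}$; since $\Gamma_{\rel}$ generates $\per(\Gamma_{\rel})$, this vanishing criterion allows one to lift the connecting to a morphism $Z\to \Sigma X$ in $\per(\Gamma_{\rel})$, and extension-closedness of $\F^{\rel}$ ensures the completed triangle lies in $\F^{\rel}$.

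The principal obstacle will be this converse lifting in the structure identification: controlling precisely when a morphism $\pi Z\to\Sigma \pi X$ in $\C$ lifts to $\per(\Gamma_{\rel})$. The relevant obstruction lives in Hom-groups into $\pvd_{e}(\Gamma_{\rel})$, and the projectivity hypothesis on $\pi^{\rel}\Gamma_{\rel}$ must be translated into vanishing of the obstruction via the defining two-term presentations of $\F^{\rel}$-objects and the relative $t$-structure on $\per(\Gamma_{\rel})$ (cf.~the proof of Lemma~\ref{equi}).
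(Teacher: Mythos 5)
Your overall architecture matches the paper's: both parts of the derived-category statement are obtained from Theorem~\ref{main} and Proposition~\ref{higher} applied to the exact inclusions into $\per_{\dg}(\Gamma_{\rel})$ and $\C_{\dg}$, with generation by $\Gamma_{\rel}$ giving essential surjectivity, and the structure identification rests on Lemma~\ref{equi}. Two points of divergence are worth recording. First, for degrees $n\geq 2$ on the $\F^{\rel}$ side, the paper does not run the effaceability/universal $\delta$-functor comparison: it simply observes that $\mathbb E^i_{\F^{\rel}}$ vanishes for $i\geq 2$ (because $\Gamma_{\rel}$ is projective with enough projectives) and that $\Ext^i_{\per(\Gamma_{\rel})}(X,Y)=0$ for $i\geq 2$ and $X,Y\in\F^{\rel}$, so the two $\delta$-functors agree trivially there; your route is correct but heavier. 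Second, and more substantively, your ``principal obstacle'' in the structure identification is not actually an obstacle, and the paper's argument shows why: every morphism $X\to\Sigma Y$ in $\per(\Gamma_{\rel})$ between objects of $\F^{\rel}$ automatically factors through $\add(\Sigma\Gamma_{\rel})$ (apply $\Hom(-,\Sigma Y)$ to the triangle $X_0\to X\to\Sigma X_1$ and use $H^1(Y)=0$), so the image of $\Ext^1_{\per(\Gamma_{\rel})}(X,Y)\hookrightarrow\Ext^1_{\C}(\pi^{\rel}X,\pi^{\rel}Y)$ is exactly the set of classes factoring through $\add(\pi^{\rel}\Sigma\Gamma_{\rel})$; conversely a class killed by all morphisms from $\add(\pi^{\rel}\Gamma_{\rel})$ factors through $\Sigma Q_1$ via the presentation deflation $Q_0\twoheadrightarrow Z$, and both legs of that factorization lift by the two cases of Lemma~\ref{equi}. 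No analysis of obstructions in $\pvd_e(\Gamma_{\rel})$ is needed beyond what Lemma~\ref{equi} already encapsulates.

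One genuine soft spot: for the Higgs side you justify the vanishing of $\Ext^{\geq 1}_{\C}$ on projectives by ``$2$-Calabi--Yau duality of the Higgs category.'' The relative cluster category is only Calabi--Yau in a relative sense, and a naive $2$-CY duality $\Hom_{\C}(\pi^{\rel}\Gamma_{\rel},\Sigma^2 M)\cong D\Hom_{\C}(M,\pi^{\rel}\Gamma_{\rel})$ would predict \emph{non}vanishing in degree $2$; so this justification does not stand as written. The required input (which the paper outsources to \cite[Section 5.8]{Wu23a}) has to come from the relative $t$-structure and relative CY computations of the kind used in the proof of Lemma~\ref{equi}, not from an absolute $2$-CY duality. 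With that reference corrected, your argument goes through.
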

\begin{proof}
Let $X$ and $Y$ be two objects in $\F^{\rel}$ and $g:\pi^{\rel}X\rightarrow \Sigma \pi^{\rel}Y$ a morphism in $\mathcal C$. 
Then by Lemma \ref{equi}, $g$ is of the form $\pi^{\rel}(f)$ for some $f:X\rightarrow \Sigma Y$ if and only if $g$ factors though an object in $\add (\pi\Sigma \Gamma_{\rel})$. 
This is exactly the exact substructure which makes the objects in $\add(\Gamma_{\rel})$ projective.

By \cite[Section 5.8]{Wu23a}, Theorem~\ref{main} and Proposition~\ref{higher}, the canonical exact morphism 
$
\tau_{\leq 0}\mathcal H_{\dg}\rightarrow \C_{\dg}
$
characterises $\C_{\dg}$ as the bounded dg derived category of $\tau_{\leq 0}\H_{\dg}$.

Since $\Gamma_{\rel}$ is projective in the extriangulated category $\F^{\rel}$, higher extensions $\mathbb E^i$ of $\F^{\rel}$ vanish for $i\geq 2$.
Also, we have $\Ext^{i}_{\per(\Gamma_{\rel})}(X,Y)=0$ for $X,Y\in\F^{\rel}$ and $i\geq 2$.
Thus by  Theorem \ref{main} and Proposition \ref{higher}, the canonical exact morphism 
$
\tau_{\leq 0}\mathcal F^{\rel}_{\dg}\rightarrow \per_{\dg}(\Gamma_{\rel})
$
characterises $\per_{\dg}(\Gamma_{\rel})$ as the bounded dg derived category of $\tau_{\leq 0}\mathcal F^{\rel}_{\dg}$.

\end{proof}

\section{Reproduction of exact dg categories}\label{sec:reproduction}
In this section, we show that ``functor dg categories" $\rep_{\dg}(\B,\A)$ with exact target $\A$ and connective source $\B$ carry canonical exact structures, cf.~Theorem~\ref{fun}. 
As an example, we show that when $\B$ is a certain path $k$-category with radical square zero relations, and $\A$ carries the trivial exact structure, we obtain a Frobenius exact structure, cf.~Proposition~\ref{prop:2-periodic}.
We define {\em biexact} morphisms from the tensor product of two exact dg categories to another exact dg category and show the existence of a universal biexact morphism. 
We show that in this way we obtain a closed monoidal category with internal hom whose value on a pair of small connective exact dg categories $(\B,\A)$ given by extension-closed dg subcategory of $\rep_{\dg}(\B,\A)$ consisting of exact morphisms. 

In this section, it is convenient to work with $\rep(k\Sq,\A)$ instead of $\mathcal H_{3t}(\A)$, to be consistent with $\rep_{\dg}(\B,\A)$. 
Recall that by \cite[3.2]{Chen24}, we have a fully faithful functor $\mathcal H_{3t}(\A)\hookrightarrow \rep(k\Sq,\A)$ whose essential image consists of the squares in $\C(\A)$
\begin{equation}\label{squa:X}
\begin{tikzcd}
X\ar[r]\ar[d]&Y\ar[d]\\
N\ar[r]&Z
\end{tikzcd}
\end{equation}
where $N$ is acyclic.
In particular, for such squares (\ref{squa:X}), we may assume that $X$, $Y$, $Z$ are representable dg $\A$-modules and  $N=\Cone(\Id_{X})$.
An exact dg structure on an additive dg category $\A$ can then be defined to be a class $\mathcal S\subset \rep(k\Sq,\A)$ of homotopy bicartesian squares (\ref{squa:X}), where $N$ is acyclic, which satisfies similar axioms as those of Definition~\ref{exactdgstructure}.
Exceptionally, we will employ this definition in this section.

\subsection{Functor dg categories}\label{subsec:functordg}
A famous slogan by Amnon Neeman is: triangulated categories do not reproduce. 
A key feature of exact dg categories is that they do reproduce, as the following theorem shows.
\begin{theorem}\label{fun}
Let $\A$ be a small exact dg category. 
If $\B$ is a small connective dg category, then $\rep_{\dg}(\B,\A)$ is canonically an exact dg category. 
\end{theorem}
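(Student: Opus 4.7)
The natural approach is to transport the exact structure from $\A$ to $\rep_{\dg}(\B,\A)$ pointwise: a homotopy bicartesian square $X$ in $\rep_{\dg}(\B,\A)$ is declared to be a conflation if and only if its evaluation $X(B)$ at every object $B \in \B$ is a conflation in $(\A,\mathcal S)$. Denote this class by $\overline{\mathcal S}$. To carry this out, I would first isolate the key technical lemma: when $\B$ is connective, a homotopy square in $\rep_{\dg}(\B,\A)$ is homotopy (co)cartesian if and only if its evaluation at each $B \in \B$ is homotopy (co)cartesian in $\A$. This is done by unpacking Definition~\ref{intro:homotopyleftexact}, which translates homotopy left exactness into a quasi-isomorphism condition on $\tau_{\leq 0}$-truncations in the derived category; the connectivity of $\B$ ensures that this $\tau_{\leq 0}$-condition can be tested pointwise via the canonical $t$-structure of Lemma~\ref{lemma:tstructure}.

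Axioms Ex0 and Ex1 are then immediate from the pointwise definition together with the corresponding axioms for $(\A,\mathcal S)$. For Ex2, given a pointwise deflation $p: E \to F$ in $\rep_{\dg}(\B,\A)$ and a closed morphism $c: F' \to F$ of degree zero, I would construct the homotopy pullback as follows. View $E$, $F$, $F'$ as objects of $\rep_{\dg}(\B, \pretr(\A))$ and form the homotopy fiber
\[
E' \;=\; \Sigma^{-1}\Cone\bigl([c,-p]: F'\oplus E \to F\bigr),
\]
a priori living in $\rep_{\dg}(\B, \pretr(\A))$. Pointwise at each $B \in \B$, axiom Ex2 for $(\A,\mathcal S)$ produces a homotopy pullback in $\A$ whose total complex is quasi-isomorphic to $E'(B)$, so that $E'(B)$ is quasi-isomorphic to a representable $\A$-module. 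By Corollary~\ref{strictify}, $E'$ therefore belongs to $\rep_{\dg}(\B,\A)$, and the canonical projection $p': E' \to F'$ is a pointwise deflation. Axiom Ex2$^{op}$ follows by a dual argument.

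The main obstacle is the pointwise detection of homotopy (co)cartesian squares in $\rep_{\dg}(\B,\A)$: one must show that the $\tau_{\leq 0}$-truncation of a mapping cone in $\D(\B^{op}\otimes^{\mathbb L}\A)$ is compatible with evaluation at each $B$, and that a morphism of quasi-representable bimodules in $\rep(\B,\A)$ whose every evaluation is a quasi-isomorphism is itself an isomorphism. Both of these rely crucially on $\B$ being connective: without this, the $\tau_{\leq 0}$ of a bimodule would mix contributions from different objects of $\B$ and the pointwise criterion would fail. Once the pointwise detection of homotopy cartesian squares is established, the verification of all four axioms reduces to a routine application of the hypotheses on $(\A,\mathcal S)$.
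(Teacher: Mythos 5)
Your overall strategy --- declaring a square in $\rep_{\dg}(\B,\A)$ to be a conflation when it is homotopy bicartesian and pointwise a conflation, and reducing the axioms to pointwise-detection statements --- is exactly the paper's (Lemmas~\ref{suff} and~\ref{pointwise}). But your verification of Ex2 has a genuine gap. You take as candidate homotopy pullback the homotopy fibre $E'=\Sigma^{-1}\Cone\bigl([c,-p]\colon F'\oplus E\to F\bigr)$ formed in $\rep_{\dg}(\B,\pretr(\A))$, and claim that Ex2 in $\A$ makes each $E'(-,B)$ quasi-representable so that Corollary~\ref{strictify} applies. This is false: by Definition~\ref{intro:homotopyleftexact}, a homotopy pullback $B'$ of $B\xrightarrow{p}C\xleftarrow{c}C'$ in $\A$ only satisfies that $\tau_{\leq 0}(u)$ is a quasi-isomorphism, where $u\colon B'^{\wedge}\to\Sigma^{-1}\Cone([p,-c])$. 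The cone of $u$ is a shift of the \emph{defect} of the resulting conflation $B'\to B\oplus C'\to C$, i.e.\ the cokernel of $H^0(B\oplus C')\to H^0(C)$ in the heart, which is nonzero whenever the deflation does not split --- already for an ordinary Quillen exact category. So $E'(-,B)$ has nontrivial cohomology in degree $1$ and is \emph{not} quasi-representable; Corollary~\ref{strictify} does not place $E'$ in $\rep_{\dg}(\B,\A)$. The homotopy pullback in the exact-dg sense is not the homotopy fibre in the pretriangulated hull: they agree only after $\tau_{\leq 0}$.

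Repairing this is precisely the technical content of the paper's Lemma~\ref{pointwise}, and it is the step your argument skips. There one takes the pointwise homotopy pullbacks $H(B)$ (which exist by Ex2 in $\A$ and are representable by construction), forms the strict kernel $V$ of $X_1\oplus PX_2\to X_2$, observes that $G(B):=\Sigma\Cone\bigl(H(B)\to V(-,B)\bigr)$ lies in the heart of $\D(\A)$, assembles the $G(B)$ into a single bimodule $G$ using that the heart of the canonical t-structure on $\D(\B^{op}\otimes\A)$ is $\Mod(H^0(\B)^{op}\otimes H^0(\A))$ (this is where the connectivity of $\B$ is used), produces a morphism $u\colon\Sigma V\to G$ by an adjunction argument, and finally sets $X_0=\Sigma^{-1}\Cone(u)$, which is pointwise quasi-isomorphic to $H(B)$ and hence genuinely lies in $\rep_{\dg}(\B,\A)$. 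The nontrivial point is the functorial gluing of the pointwise pullbacks, not the formation of a cone in the ambient pretriangulated category. (A minor further remark: the ``if and only if'' you assert in your detection lemma is more than is needed; only the implication from pointwise (co)cartesian to (co)cartesian, i.e.\ Lemma~\ref{suff}, is used.)
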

We will prove Theorem~\ref{fun} after Lemma~\ref{pointwise}.
For a dg category $\A$, we denote by $\tilde{\A}$ the full dg subcategory of $\C_{\dg}(\A)$ consisting of cofibrant quasi-representable dg $\A$-modules. 
 The Yoneda dg functor $Y:\A\rightarrow \C_{\dg}(\A)$ induces a quasi-equivalence from $\A$ to $\tilde{\A}$, which we still denote by $Y$.
 We may assume that $\B$ is cofibrant as a dg $k$-category. 
 For each $B\in \B$, let $R_B:\rep_{\dg}(\B,\A) \rightarrow \tilde{\A}$ be the dg functor given by $X\mapsto X(-,B)$. 
 We have a morphism in $\mathrm{Hqe}$ given by the following roof of dg functors 
 \[
 \begin{tikzcd}
 &\tilde{\A}&\\
 \rep_{\dg}(\B,\A)\ar[ru,"R_B"]&&\A\ar[lu,"Y"swap]
 \end{tikzcd}.
 \]
 It then induces a functor $\rep(k\mathcal I, \rep_{\dg}(\B,\A))\rightarrow \rep(k\mathcal{I},\A)$ for each small category $\mathcal I$, which we denote by $F^{\mathcal I}_B$ or $F_B$ if there is no ambiguity.
 \begin{lemma}\label{suff}
 Let $X$ be an object in $\rep(k\mathrm{Sq},\rep_{\dg}(\B,\A))$. 
If for each $B\in\B$, $F_B(X)\in \rep(k\mathrm{Sq},\A)$ is a homotopy cocartesian (resp.,~cartesian) square, 
then $X$ is also a homotopy cocartesian (resp.,~cartesian) square. 
 \end{lemma}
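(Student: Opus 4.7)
The plan is to translate both the conclusion and the hypothesis into the assertion that a certain comparison map of dg modules is a quasi-isomorphism after applying $\tau_{\leq 0}$, and then to reduce the global test to the pointwise one via a cellular filtration. I focus on the homotopy cartesian case; the cocartesian case is completely dual.

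First I would unfold the definition of homotopy cartesian in $\rep(k\mathrm{Sq},\rep_{\dg}(\B,\A))$. By Definition~\ref{intro:homotopyleftexact}, $X$ is homotopy cartesian iff the canonical comparison map
\[
u \colon X_{00}^{\wedge} \longrightarrow \Sigma^{-1}\Cone\bigl((-j^{\wedge},p^{\wedge})\bigr)
\]
of dg $\rep_{\dg}(\B,\A)$-modules becomes a quasi-isomorphism of dg $\tau_{\leq 0}\rep_{\dg}(\B,\A)$-modules after $\tau_{\leq 0}$. By the Yoneda lemma this is equivalent to requiring, for every $T \in \rep_{\dg}(\B,\A)$, that
\[
\tau_{\leq 0}\Hom_{\rep_{\dg}(\B,\A)}(T, X_{00}) \longrightarrow \tau_{\leq 0}\Hom_{\rep_{\dg}(\B,\A)}(T, \Sigma^{-1}\Cone)
\]
be a quasi-isomorphism in $\C(k)$. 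The hypothesis that each $F_B(X)$ is homotopy cartesian in $\rep(k\mathrm{Sq},\A)$ unfolds, using that the roof defining $F_B$ preserves cones and shifts and that $F_B(Y)(A)=Y(A,B)$ when $Y$ is viewed as a dg $\A\otimes^{\mathbb L}\B^{op}$-bimodule, into the statement that for every $A\in\A$ and $B\in\B$ the bimodule map $X_{00}\to\Sigma^{-1}\Cone$ is a $\tau_{\leq 0}$-quasi-isomorphism when evaluated at the point $(A,B)$.

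To bridge the pointwise and the global statements I would replace $T$ by a cofibrant resolution $\tilde T$ in the projective model structure on dg $\A\otimes^{\mathbb L}\B^{op}$-modules, and invoke a cellular filtration of $\tilde T$ whose graded pieces are shifts of free bimodules of the form $\A(-,A)\otimes\B(B,-)$. For such a free piece, the identification $\Hom\bigl(\A(-,A)\otimes\B(B,-),Y\bigr)\simeq Y(A,B)$ makes the global $\tau_{\leq 0}$-test coincide exactly with the pointwise test at $(A,B)$, which is given by hypothesis. One then extends from the free pieces to $\tilde T$ by observing that the class of bimodules for which the requisite $\tau_{\leq 0}$-acyclicity holds is closed under shifts, cofibres and filtered (homotopy) colimits, since $\tau_{\leq 0}$ and cohomology commute with filtered colimits of complexes of $k$-modules. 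The main obstacle will be controlling the interplay between the $\tau_{\leq 0}$-truncation and the cellular filtration on $\tilde T$: \emph{a priori}, uncontrolled positive-degree contributions could destroy the reduction, but the connectivity of $\B$ ensures that the cellular subquotients sit in cohomological degrees that are uniformly bounded above along the filtration, so the $\tau_{\leq 0}$-acyclicity transfers cleanly from the free building blocks to $\tilde T$, and hence to $T$. This is precisely where the connectivity hypothesis on $\B$ plays its essential role.
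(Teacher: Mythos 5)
Your overall strategy---reduce the global Yoneda test to the pointwise one by showing that the class of ``good'' test bimodules contains the free bimodules $\A(-,A)\otimes\B(B,-)$ and is closed under the operations needed to build an arbitrary cofibrant bimodule---breaks down at the closure step. The pointwise hypothesis is only a statement after $\tau_{\leq 0}$: for a free cell $F=\A(-,A)\otimes\B(B,-)$ it gives that $H^j\Hom(F,X_{00})\to H^j\Hom(F,\Sigma^{-1}\Cone)$ is an isomorphism for $j\leq 0$, and this property is inherited by suspensions $\Sigma^{d}F$ with $d\geq 0$ but \emph{not} by desuspensions: for $\Sigma^{-d}F$ with $d>0$ the test requires isomorphisms on $H^j$ for $j\leq d$, about which the hypothesis says nothing. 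A cellular model of a general object $T$ of $\rep_{\dg}(\B,\A)$ genuinely needs cells in positive cohomological degrees, because $\A$ is \emph{not} assumed connective in Theorem~\ref{fun} (only $\B$ is), so the complexes $T(A,B)\simeq\A(A,A_B)$ may have cohomology in arbitrarily high degrees. Your appeal to the connectivity of $\B$ to bound the degrees of the cellular subquotients is therefore misplaced: those degrees are governed by $\A$, not by $\B$. There is a secondary problem with the colimit step as well: $T$ sits in the contravariant slot of $\RHom$, so an exhaustive cellular filtration of $T$ produces an \emph{inverse} system of Hom-complexes, and ``cohomology commutes with filtered colimits'' is not the relevant fact; one would need a Milnor $\lim^1$ argument, which again interacts delicately with the truncation.

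The paper sidesteps all of this with a single adjunction. For the cocartesian case it shows, for $W\in\rep_{\dg}(\B,\A)$ and $j\leq 0$, that
\[
\Hom_{\D(\B^{op}\otimes\A)}(X_2,\Sigma^jW)\iso\Hom_{\D(\B^{e})}(\B,\RHom_{\A}(X_2,\Sigma^jW)),
\]
and then uses that $\B$, being connective, lies in the left aisle of $\D(\B^{e})$, so that the right-hand side depends only on $\tau_{\leq 0}\RHom_{\A}(X_2,\Sigma^jW)$; the pointwise hypothesis says precisely that the map $\tau_{\leq 0}\RHom_{\A}(X_2,\Sigma^jW)\to\tau_{\leq 0}\RHom_{\A}(C(f),\Sigma^jW)$ is a quasi-isomorphism of $\B$-bimodules, and running the adjunction backwards finishes the argument. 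This is where the connectivity of $\B$ actually enters: it permits inserting the truncation inside $\Hom_{\D(\B^{e})}(\B,-)$, rather than bounding the degrees of any cells. If you want to keep a generation-style argument, you should resolve in the $\B$-direction only (e.g.\ via the bar resolution of $\B$ over $\B^{e}$, whose cells are non-negative suspensions), which is in effect what the displayed adjunction accomplishes in one stroke.
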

 \begin{proof}
We show the case when $F_B(X)$ is homotopy cocartesian for each $B\in\B$. The other case can be shown similarly. 
We may suppose that $X$ is of the form
 \[
 \begin{tikzcd}
 X_{0}^{\wedge}\ar[r,"f^{\wedge}"]\ar[d]&X_{1}^{\wedge}\ar[d,"j^{\wedge}"]\\
 IX_0^{\wedge}\ar[r]&X_{2}^{\wedge}
 \end{tikzcd}
 \]
 where $X_i\in \rep_{\dg}(\B,\A)$ for $i=0,1,2$, and $IX_0^{\wedge}=\Cone(\Id_{X_0^{\wedge}})$.
 For a morphism $g$ in $\C(\B^{op}\otimes \A)$, we denote by $C(g)$ its cone.
 Consider the following diagram in $\C(\B^{op}\otimes \A)$
 \[
 \begin{tikzcd}
 X_0\ar[r,"f"]\ar[d,"s=\begin{bmatrix}-f\\0\end{bmatrix}"swap]&X_1\ar[r,"\begin{bmatrix}0\\1\end{bmatrix}"]\ar[d,equal]&C(f)\ar[d,"r={[}0{,}j{]}"]\\
 \Sigma^{-1}C(j)\ar[r,"{[}-1{,}0{]}"swap]&X_1\ar[r,"j"swap]&X_2
 \end{tikzcd}.
 \]
 Let $W$ be an object in $\rep_{\dg}(\B,\A)$.
 Since $F_{B}(X)$ is homotopy cocartesian for $B\in \B$, for each pair of objects $B_1,B_2$ in $\B$, and $i\leq 0$, we have
 \[
 \Hom_{\D(\A)}(X_2(-,B_1),\Sigma^{i}W(-,B_2))\xrightarrow{\sim}\Hom_{\D(\A)}(C(f)(-,B_1),\Sigma^{i}W(-,B_2)),
 \]
 and hence for $j\leq 0$ we have
 \[
 \tau_{\leq 0}\RHom_{\A}(X_2,\Sigma^{j}W)\xrightarrow{\sim}\tau_{\leq 0}\RHom_{\A}(C(f),\Sigma^{j}W).
 \]
 Thus we have 
\[
\begin{aligned}
\Hom_{\D(\rep_{\dg}(\B,\A))}(X_2^{\wedge}, \Sigma^{j}W^{\wedge})&\xrightarrow{\sim}\Hom_{\D(\B^{op}\otimes \A)}(X_2, \Sigma^{j}W)\\
&\xrightarrow{\sim}\Hom_{\D(\B^{e})}(\B,\RHom_{\A}(X_2,\Sigma^jW))\\
&\xleftarrow{\sim}\Hom_{\D(\B^{e})}(\B,\tau_{\leq 0}\RHom_{\A}(X_2,\Sigma^{j}W))\\
&\xrightarrow{\sim}\Hom_{\D(\B^{e})}(\B,\tau_{\leq 0}\RHom_{\A}(C(f),\Sigma^{j}W))\\
&\xrightarrow{\sim}\Hom_{\D(\B^e)}(\B,\RHom_{\A}(C(f),\Sigma^{j}W))\\
&\xrightarrow{\sim}\Hom_{\D(\B^{op}\otimes\A)}(C(f),\Sigma^{j}W)\\
&\xrightarrow{\sim}\Hom_{\D(\rep_{\dg}(\B,\A))}(C(f^{\wedge}),\Sigma^{j}W^{\wedge}).
\end{aligned}
\]
This shows that $X$ is a homotopy cocartesian square.
 \end{proof}
 \begin{lemma}\label{pointwise}
 Let $S$ be an object in $\rep(k\mathrm{Cosp}, \rep_{\dg}(\B,\A))$ (resp.,~$\rep(k\mathrm{Sp}, \rep_{\dg}(\B,\A))$).
If for each $B\in \B$, the object $F_B(S)$ in $\rep(k\mathrm{Cosp}, \A)$ (resp.,~$\rep(k\mathrm{Sp}, \A)$) admits a homotopy pullback (resp.,~homotopy pushout). 
Then $S$ admits a homotopy pullback (resp.,~homotopy pushout).
 \end{lemma}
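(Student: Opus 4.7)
The plan is to construct the homotopy pullback of $S$ by forming it in the ambient pretriangulated dg category $\C_{\dg}(\B^{op}\otimes\A)$ and then verifying that the missing vertex is quasi-representable at each object of $\B$. I focus on the cospan case; the span case follows by duality.

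First, I view $S$ as a cospan in the pretriangulated dg category $\C_{\dg}(\B^{op}\otimes\A)$ and form its homotopy pullback there (explicitly as a shift of the cone on the difference map). This produces a homotopy cartesian square
\[
X\in\rep(k\mathrm{Sq},\C_{\dg}(\B^{op}\otimes\A))
\]
whose restriction to $\mathrm{Cosp}$ is isomorphic to $S$; let $Y$ denote the object at the new vertex.

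The main step is to show $Y\in\rep_{\dg}(\B,\A)$, i.e.,~that $Y(-,B)$ is quasi-representable for each $B\in\B$. Since the evaluation functor $(-)(-,B)\colon \C_{\dg}(\B^{op}\otimes\A)\to\C_{\dg}(\A)$ preserves shifts and cones, it sends $X$ to a homotopy cartesian square in $\tilde{\A}$ whose restriction to $\mathrm{Cosp}$ is $F_B(S)$. By hypothesis, $F_B(S)$ admits a homotopy pullback in $\A$, so there is a homotopy cartesian square in $\tilde{\A}$ with the same restriction to $\mathrm{Cosp}$ whose missing vertex is (quasi-)representable by an object of $\A$. By the uniqueness of homotopy pullbacks in $\tilde{\A}$, the vertex $F_B(Y)=Y(-,B)$ is quasi-isomorphic to that representable dg $\A$-module, so $Y\in\rep_{\dg}(\B,\A)$.

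Finally, Lemma~\ref{suff} applied to $X$, now regarded as an object of $\rep(k\mathrm{Sq},\rep_{\dg}(\B,\A))$, will imply that $X$ is homotopy cartesian in $\rep_{\dg}(\B,\A)$; it is then the required homotopy pullback of $S$. The only potential subtlety is the coherence of the pointwise construction, but this is automatic because quasi-representability is a pointwise condition on $\B$; I therefore expect no genuine obstacle in the argument.
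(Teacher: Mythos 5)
There is a genuine gap, and it sits exactly where you say the argument is "automatic". The notion of homotopy pullback in an exact dg category is \emph{not} the strict homotopy fibre product in the ambient pretriangulated category: by Definition~\ref{intro:homotopyleftexact}, a square with vertex $X_{00}$ over the cospan $(j,k)$ is homotopy cartesian when the canonical map $X_{00}\rightarrow \Sigma^{-1}\Cone([-j,k])$ becomes an isomorphism only after applying $\tau_{\leq 0}\RHom(A^{\wedge},-)$ for $A\in\A$, and all four vertices are required to be quasi-representable. So when you form the pullback of $S$ in $\C_{\dg}(\B^{op}\otimes\A)$, the vertex you obtain, evaluated at $B$, is the strict homotopy fibre $V(-,B)=\Sigma^{-1}\Cone\bigl(X_1(-,B)\oplus PX_2(-,B)\rightarrow X_2(-,B)\bigr)$. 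The hypothesis only gives an object $H(B)\in\A$ together with a map $s_B:H(B)\rightarrow V(-,B)$ inducing isomorphisms on $\tau_{\leq 0}\RHom(A^{\wedge},-)$; this is \emph{not} a quasi-isomorphism in general. Already for a Quillen exact category $\A$ (concentrated in degree $0$) and a non-split deflation, $V(-,B)$ has a nonzero $H^1$, namely the defect of the conflation, while $H(B)$ is the ordinary pullback. Hence your appeal to "uniqueness of homotopy pullbacks" is illegitimate: that uniqueness compares homotopy cartesian squares in $\rep(k\Sq,\A)$, i.e.\ squares with quasi-representable vertices, and your square at $B$ is not of this type. The conclusion "$Y(-,B)$ is quasi-isomorphic to a representable" is simply false in general, so $Y\notin\rep_{\dg}(\B,\A)$.

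The real content of the lemma is the gluing problem you dismiss: the objects $H(B)$ are given only one $B$ at a time, and one must produce a single bimodule $X_0$ with $X_0(-,B)\simeq H(B)$ coherently. The paper does this by working with the correction terms rather than with the $H(B)$ directly: it sets $G(B)=\Sigma\Cone(s_B)$, observes that each $G(B)$ lies in the heart of the canonical t-structure on $\D(\A)$, checks that $B\mapsto G(B)$ is functorial on $H^0(\B)$ (using vanishing of the relevant negative-degree Hom's), lifts this to a bimodule $G$ in the heart of $\D(\B^{op}\otimes\A)$, and then constructs a bimodule morphism $u:\Sigma V\rightarrow G$ restricting to the canonical maps at each $B$ --- the point being that $\RHom_{\A}(\Sigma V,G)$ is concentrated in degrees $0$ and $1$, so such a morphism is determined by an $H^0(\B)$-bimodule morphism. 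The corrected vertex is then $X_0=\Sigma^{-1}\Cone(u)$, which is pointwise quasi-isomorphic to $H(B)$ and hence lies in $\rep_{\dg}(\B,\A)$; only at this stage does Lemma~\ref{suff} apply. Your proposal skips all of this, so it does not establish the lemma.
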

 \begin{proof}
 We show the case when $S\in\rep(k\mathrm{Cosp}, \rep_{\dg}(\B,\A))$. 
 The other case can be shown dually. 
 We may assume $S$ is of the form
 \[
 \begin{tikzcd}
 &X_1^{\wedge}\ar[d,"j^{\wedge}"]\\
 PX_2^{\wedge}\ar[r]&X_2^{\wedge}
 \end{tikzcd}
 \]
 where $PX_2^{\wedge}=\Cone(\Id_{\Sigma^{-1}X_2^{\wedge}})$, and $X_i\in \rep_{\dg}(\B,\A)$, $i=1$, $2$, and $j$ is a graded-split surjection. 
  Since $j$ and $PX_2\rightarrow X_2$ are graded-split surjections, by assumption, 
 for each $B\in\B$, $F_B(S)$ admits a homotopy pullback
 \[
 \begin{tikzcd}
 H(B)\ar[r,""]\ar[d]&X_1(-,B)\ar[d,"j_{(-,B)}"]\\
 PX_2(-,B)\ar[r]&X_2(-,B)
 \end{tikzcd}
 \]
 where $H(B)\in H^0(\A)$.
 Let $V$ be the kernel of the map $X_1\oplus PX_2\rightarrow X_2$ in $\C(\B^{op}\otimes \A)$. 
 Then for each $B\in\B$, we have a canonical map $s_B: H(B)\rightarrow V(-,B)$.
Put $G(B)=\Sigma\Cone(s_B)$. By definition $G(B)$ lies in the heart of the canonical t-structure on $\D(\A)$, 
which is equivalent to $\Mod(H^0(\A))$. 
 For each morphism $b:B\rightarrow B'$ in $H^0(\B)$, we have a canonical morphism $H(b):H(B)\rightarrow H(B')$ in $H^0(\B)$ which makes the following diagram in $\D(\A)$ commute 
 \[
 \begin{tikzcd}
 H(B)\ar[r]\ar[d]&H(B')\ar[d]\\
 V(-,B)\ar[r]&V(-,B').
 \end{tikzcd}
 \]
Since $\Hom_{\D(\A)}(\Sigma H(B),\Sigma^{-1}G(B'))=0$, it induces a unique morphism $G(b):G(B)\rightarrow G(B')$ in $\D(\A)$ rendering the following commutative diagram in $\D(\A)$
\[
\begin{tikzcd}
H(B)\ar[d,"H(b)"swap]\ar[r]&V(-,B)\ar[r]\ar[d,"V(-{,}b)"]&\Sigma^{-1}G(B)\ar[d,dashed,"\Sigma^{-1}G(b)"]\ar[r]&\Sigma H(B)\ar[d,"\Sigma H(b)"]\\
H(B')\ar[r]&V(-,B')\ar[r]&\Sigma^{-1}G(B')\ar[r]&\Sigma H(B').
\end{tikzcd}
\]
Therefore we have a canonical functor $G: H^0(\B)\rightarrow \Mod(H^0(\A))$ which sends $B$ to $G(B)$ and $b:B\rightarrow B'$ to $G(b):G(B)\rightarrow G(B')$.
By adjunction, the functor $G$ corresponds to an object in $\Mod(H^0(\B)^{op}\otimes H^0(\A))$, which is equivalent to the heart of the canonical t-structure on $\D(\B^{op}\otimes \A)$.
We still denote by $G$ the corresponding object in $\D(\B^{op}\otimes \A)$. We may assume $G$ is cofibrant.

 Next, we will define a morphism from $\Sigma V$ to $G$ in $\D(\B^{op}\otimes \A)$.
We have the canonical isomorphism
\begin{align}
\Hom_{\D(\B^{op}\otimes\A)}(\Sigma V, G)&\xrightarrow{\sim}\Hom_{\D(\B^{e})}(\B, \RHom_{\A}(\Sigma V, G)).\label{bimodule1}\tag{41.1}
\end{align}
We have that $\RHom_{\A}(\Sigma V, G)$ is concentrated in degrees 0 and 1.
So we have
\begin{align}
\Hom_{\D(\B^{e})}(\B, \RHom_{\A}(\Sigma V, G))\xrightarrow{\sim} \Hom_{(H^0\B)^e}(H^0(\B), H^0\RHom_{\A}(\Sigma V, G)). \label{bimodule2}\tag{41.2}
\end{align}
For each pair of objects $B,B'\in H^0(\B)$, we have the map
\[
\alpha_{B,B'}:\Hom_{H^0(\B)}(B,B')\rightarrow \Hom_{\D(\A)}(\Sigma V(-,B), G(-,B'))
\]
where $b:B\rightarrow B'$ is sent to $\Sigma V(-,B)\rightarrow G(-,B)\xrightarrow{G(-,b)} G(-,B')$ where the first map is the canonical map.
It is not hard to check 
\[
\alpha_{-,-}: \Hom_{H^0(\B)}(-,-)\rightarrow \Hom_{\D(\A)}(\Sigma V, G)
\]
defines an $H^0(\B)$-$H^0(\B)$-bimodule morphism.  
By the canonical isomorphisms (\ref{bimodule1}) and (\ref{bimodule2}), the morphism $\alpha_{-,-}$ defines a morphism $u:\Sigma V\rightarrow G$ in $\D(\B^{op}\otimes \A)$. By definition, $u_{(-,B)}$ is the canonical morphism $\Sigma V(-,B)\rightarrow G(-,B)$.

Put $X_0=\Sigma^{-1}\Cone(u)$. 
Then $X_0(-,B)$ is quasi-isomorphic to $H(B)$ for each $B\in\B$ and hence $X_0\in \rep_{\dg}(\B, \A)$. 
Consider the composition morphism $X_0\rightarrow V\rightarrow X_1\oplus PX_2$.
The sequence $X_0\rightarrow X_1\oplus PX_2\rightarrow X_2$ then corresponds to a square
\[
\begin{tikzcd}
X_0^{\wedge}\ar[r]\ar[d]&X_1^{\wedge}\ar[d]\\
PX_2^{\wedge}\ar[r]&X_2^{\wedge}
\end{tikzcd}
\]
which is homotopy cartesian and hence a homotopy pullback of $S$.
\end{proof}
 \begin{proof}[Proof of Theorem \ref{fun}]
 Let $\tilde{\S}$ be the class of homotopy bicartesian squares $X$ in $ \rep_{\dg}(\B,\A)$ such that $F_B(X)\in\rep(k\Sq,\A)$ is a conflation for each $B\in\B$. 
 By Lemma~\ref{suff} and Lemma~\ref{pointwise}, one checks that $(\rep_{\dg}(\B,\A),\tilde{\S})$ is an exact dg category.
\end{proof}

In Theorem~\ref{fun}, when $\A$ and $\B$ are concentrated in degree zero so that $\A$ is a Quillen exact category by Example~\ref{exm:exactdg} 2), we recover the pointwise exact structure on the category $\Fun_{k}(\B,\A)$ of $k$-linear functors.
\begin{lemma}
Let $\B$ be a $k$-category 
and $\A$ a Quillen exact category considered as an exact dg category. 
Then $\tau_{\leq 0}\rep_{\dg}(\B,\A)$ is concentrated in degree zero.
Moreover, we have a canonical equivalence of exact categories $\Fun_{k}(\B,\A)\iso \rep(\B,\A)$, where $\Fun_{k}(\B,\A)$ is endowed with the pointwise exact structure, and $\rep(\B,\A)$ is endowed with the exact structure from Theorem~\ref{fun}.
\end{lemma}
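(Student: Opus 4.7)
The plan is to proceed in three stages: first, compute morphism complexes in $\rep_{\dg}(\B,\A)$ to show their cohomology is concentrated in non-negative degrees (whence $\tau_{\leq 0}\rep_{\dg}(\B,\A)$ is quasi-equivalent to an ordinary category, necessarily $\rep(\B,\A)$); second, construct an equivalence $\Phi:\Fun_{k}(\B,\A)\iso\rep(\B,\A)$ of underlying categories; third, match the two exact structures.

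For the first stage, I would fix $X,Y\in\rep_{\dg}(\B,\A)$ and compute
\[
\RHom_{\A\otimes^{\mathbb L}\B^{op}}(X,Y)\simeq\RHom_{\B^e}(\B,\RHom_{\A}(X,Y)),
\]
using the Hom-tensor adjunction. For each $B,B'\in\B$, the complex $\RHom_{\A}(X(-,B),Y(-,B'))$ is quasi-isomorphic to $\Hom_{\A}(A_{X,B},A_{Y,B'})$ where $X(-,B)\simeq A_{X,B}^{\wedge}$ and $Y(-,B')\simeq A_{Y,B'}^{\wedge}$, since $\A$ is concentrated in degree zero. Hence $\RHom_{\A}(X,Y)$ is quasi-isomorphic to a $\B^e$-module concentrated in degree zero, and so $\RHom_{\B^e}(\B,-)$ applied to it has cohomology in non-negative degrees (these are Hochschild-type Ext groups). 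It follows that $\tau_{\leq 0}\rep_{\dg}(\B,\A)$ has cohomology concentrated in degree zero, so it is quasi-equivalent to its $H^0$, namely $\rep(\B,\A)$.

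For the second stage, I define $\Phi$ by sending a $k$-linear functor $F:\B\to\A$ to (a cofibrant replacement of) the dg bimodule $X_F$ with $X_F(A,B)=\Hom_{\A}(A,F(B))$; then $X_F(-,B)=F(B)^{\wedge}$ is representable, so $X_F\in\rep(\B,\A)$. Full faithfulness follows from the same calculation as above applied to $F,G\in\Fun_k(\B,\A)$: the zeroth cohomology of $\RHom_{\B^e}(\B,\Hom_{\A}(F(-),G(-)))$ equals the set of natural transformations $F\to G$, since a bimodule map from $\B$ to a bimodule $M$ concentrated in degree zero is exactly a family of maps indexed by objects of $\B$ natural in both variables. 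For essential surjectivity, given $X\in\rep(\B,\A)$ I choose representatives $X(-,B)\simeq F(B)^{\wedge}$ for each $B$ and use the preceding Hom computation (and the Yoneda embedding) to promote $F$ to a $k$-linear functor $\B\to\A$ with $\Phi(F)\simeq X$.

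For the third stage, I recall that by Theorem~\ref{fun} a homotopy bicartesian square in $\rep_{\dg}(\B,\A)$ is declared a conflation precisely when its image under every evaluation functor $F_B:\rep_{\dg}(\B,\A)\to\tilde{\A}$ is a conflation in $\A$. Under $\Phi$, the evaluation $F_B$ corresponds to the ordinary pointwise evaluation $F\mapsto F(B)$, so a short exact sequence of functors is pointwise exact in $\A$ if and only if its image under $\Phi$ satisfies the condition of Theorem~\ref{fun}. The main subtlety to check is that any such pointwise conflation does give rise to a genuine homotopy bicartesian square of the form used in Theorem~\ref{fun}: here one uses that a short exact sequence in $\Fun_k(\B,\A)$ produces a bimodule square whose pointwise evaluation is a conflation in $\A$ in the sense of Example~\ref{exm:exactdg}~2), i.e.\ a Quillen conflation, which is automatically homotopy bicartesian. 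This is the step that requires the most care, but it reduces to the unpacking of the definitions together with the degree-zero collapse proved in the first stage.
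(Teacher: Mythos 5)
Your proposal is correct and follows essentially the same route as the paper: the adjunction $\RHom_{\A\otimes^{\mathbb L}\B^{op}}(X,Y)\simeq\RHom_{\B^e}(\B,\RHom_{\A}(X,Y))$ together with the degree-zero concentration of $\RHom_{\A}(X,Y)$ to kill negative cohomology, the identification of quasi-representable bimodules in the heart with $k$-linear functors $\B\to\A$, and the pointwise comparison of the two exact structures via the evaluation functors $F_B$. The only cosmetic difference is that the paper passes through the heart $\Mod(\A\otimes\B^{op})$ of the canonical t-structure to get the equivalence with $\Fun_k(\B,\A)$, where you construct the functor $\Phi$ explicitly; the content is the same.
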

\begin{proof}
For objects $X$ and $Y$ in $\rep(\B,\A)$, we have a canonical isomorphism
\[
\Hom_{\D(\A\otimes\B^{op})}(X,\Sigma^i Y)\iso \Hom_{\D(\B^{e})}(\B,\RHom_{\A}(X,\Sigma^i Y)).
\]
Note that the space $\tau_{\leq 0}\RHom_{\A}(X,\Sigma^i Y)$ is zero if $i<0$, since the dg category $\A$ is concentrated in degree $0$.
Thus we have a quasi-equivalence of dg categories
\[
\tau_{\leq 0}\rep_{\dg}(\B,\A)\iso H^0(\rep_{\dg}(\B,\A))\iso \rep(\B,\A).
\]
Therefore, the canonical exact dg structure on $\rep_{\dg}(\B,\A)$ provided by Theorem \ref{fun} induces a Quillen exact structure on $\rep(\B,\A)$. 
 
 The heart of the canonical t-structure on $\D(\A\otimes \B^{op})$ is equivalent to $\Mod(\A\otimes \B^{op})$.
 Since the objects $X$ and $Y$ both have cohomology concentrated in degree 0, they belong to the heart.
 So they correspond to $k$-linear functors $\A^{op}\otimes \B\rightarrow \Mod k$. 
 Furthermore, they are given by functors $F,G:\B\rightarrow \A$ because $X(-,B)$ and $Y(-,B)$ are quasi-representable for each $B\in\B$.
 So we have an inclusion functor
 \begin{align*}
  \Fun_k(\B,\A)&\hookrightarrow \Fun_{k}(\B,\Mod \A)\iso\Fun_{k}(\A^{op}\otimes \B,\Mod k)\\
 &\iso \Mod(\A\otimes \B^{op})\hookrightarrow \D(\A\otimes \B^{op})
 \end{align*}
whose essential image is exactly $\rep(\B,\A)$.
If we identify the categories $\Fun_{k}(\B,\A)$ and $\rep(\B,\A)$ via the above functor, we see that the Quillen exact structure on $\rep(\B,\A)$ is identified with the componentwise Quillen exact structure on $\Fun_{k}(\B,\A)$.
\end{proof}

\begin{remark}\label{rmk:funexact}
From the definition of the canonical exact dg structure on $\rep_{\dg}(\B,\A)$, it is clear that an exact morphism $F:\A\rightarrow \A'$ in $\Hqe$ induces an exact morphism $F_*:\rep_{\dg}(\B,\A)\rightarrow\rep_{\dg}(\B,\A')$ in $\Hqe$.
\end{remark}
\begin{remark}
Let $\B$ be a connective dg category. 
Let $F:\A\rightarrow \D^b_{\dg}(\A,\S)$ be the universal exact morphism from $\A$ to a pretriangulated dg category. 
By Remark~\ref{rmk:funexact}, the canonical morphism 
$
F_*:\rep_{\dg}(\B,\A)\rightarrow\rep_{\dg}(\B,\D^b_{\dg}(\A,\S))
$
is exact and hence induces a canonical morphism 
$
\phi: \D^b_{\dg}(\rep_{\dg}(\B,\A),\tilde{\S})\rightarrow \rep_{\dg}(\B,\D^b_{\dg}(\A,\S))
$
in $\Hqe$.
For objects $X$ and $Y$ in $\rep_{\dg}(\B,\A)$, the canonical map
$
\tau_{\leq 0}\RHom(X,Y)\iso \tau_{\leq 0}\RHom(F_*X,F_*Y).
$
is an isomorphism in $\D(k)$.
Also, by Corollary~\ref{strictify}, the canonical map
\begin{equation}\label{bij:repderived}
\mathbb E(X, Y)\iso \Hom_{\rep(\B, \D^b_{\dg}(\A))}(F_*X, \Sigma F_* Y)
\end{equation}
is a bijection. 
We do not know whether the same is true for higher extension groups (and hence do not know whether $H^0(\phi)$ is a triangle equivalence).
\end{remark}
We end this subsection with an example which generalises the category of $2$-periodic complexes over an additive category.
Let $\B$ be the path $k$-category of the following quiver 
\[
\begin{tikzcd}
0\ar[r,shift left=1ex,"u"]&1\ar[l,shift left=1ex,"v"]
\end{tikzcd}
\]
with relations $uv=0$ and $vu=0$.
Let $\A$ be a connective additive dg category with the trivial exact structure $\S$, cf.~Example~\ref{exm:exactdg} 5). 
\begin{proposition}\label{prop:2-periodic}
The dg category $\rep_{\dg}(\B,\A)$ with the exact structure $\tilde{\S}$ given by Theorem~\ref{fun} is Frobenius.
\end{proposition}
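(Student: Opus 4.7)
The plan is to exhibit enough projective-injective objects in the extriangulated category associated with $(\rep_{\dg}(\B,\A), \tilde{\S})$ and to identify them among the ``free'' representations $L_i(A)$ for $A\in\A$ and $i\in\{0,1\}$.

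First I would unpack the structure. By Theorem~\ref{fun}, a conflation in $\tilde{\S}$ is a sequence whose evaluation at each vertex $B\in\B$ is a conflation in $(\A,\S)$; since $\S$ is the trivial exact structure, such sequences are precisely the pointwise split ones. Using Corollary~\ref{strictify}, I would represent an object of $H^0(\rep_{\dg}(\B,\A))$, up to homotopy equivalence, as a quadruple $X=(X_0,X_1,u,v)$ with $X_0,X_1\in \A$ and closed degree-zero maps $u:X_0\to X_1$, $v:X_1\to X_0$ satisfying $uv=0$ and $vu=0$.

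Next, for $A\in\A$ and $i\in\{0,1\}$, introduce the free representations
\[
L_0(A)\coloneqq (A,A,\Id_A,0),\qquad L_1(A)\coloneqq (A,A,0,\Id_A),
\]
corresponding to the dg bimodules $A^{\wedge}\otimes \B(i,-)$; they are quasi-representable at each vertex. The functor $L_i$ is left adjoint to evaluation $R_i:\rep_{\dg}(\B,\A)\to\tilde{\A}$, and since $R_i$ sends $\tilde{\S}$-conflations to (split) conflations in the trivial exact structure on $\A$, the adjunction shows $L_i(A)$ is projective. To prove $L_i(A)$ is also injective, I would argue directly: given a pointwise split monomorphism $f=(f_0,f_1):L_0(A)\hookrightarrow Y$, the compatibility $f_1=u_Y f_0$ (forced by $u_{L_0(A)}=\Id_A$) together with any retraction $g_1:Y_1\to A$ of the split mono $f_1$ yields a retraction $(g_1 u_Y,g_1):Y\to L_0(A)$ of $f$; compatibility with $v$ uses $uv=0$, and the case of $L_1(A)$ is symmetric. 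This reflects the fact that the algebra $\B$ is itself self-injective.

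Then I would produce enough projectives and injectives. For $X=(X_0,X_1,u,v)$, the natural morphisms
\[
L_0(X_0)\oplus L_1(X_1)\longrightarrow X\qquad\text{and}\qquad X\longrightarrow L_1(X_0)\oplus L_0(X_1)
\]
with components $\bigl[\Id_{X_0}\ \ v\bigr],\bigl[u\ \ \Id_{X_1}\bigr]$ at vertices $0,1$ for the first arrow and $\left[\begin{smallmatrix}\Id_{X_0}\\ u\end{smallmatrix}\right],\left[\begin{smallmatrix}v\\ \Id_{X_1}\end{smallmatrix}\right]$ for the second, are well-defined $\B$-equivariant maps (the relations $uv=vu=0$ ensure $\B$-equivariance) and are pointwise split; hence they are a deflation and an inflation in $\tilde{\S}$ whose other term is projective-injective. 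It follows that any projective $P$ is a direct summand of $L_0(P_0)\oplus L_1(P_1)$ and is therefore injective, and dually every injective is projective. This would establish that $(\rep_{\dg}(\B,\A),\tilde{\S})$ is Frobenius.

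The main obstacle is the strictification step: converting objects of the bimodule category $\rep_{\dg}(\B,\A)$ into honest quadruples $(X_0,X_1,u,v)$ over $\A$, so that the concrete constructions of $L_i$ and of the canonical inflations and deflations make sense. Once this bookkeeping is carried out via Corollary~\ref{strictify}, the remaining arguments are essentially formal and parallel the classical proof that $\mathbb Z/2$-graded complexes over an additive category form a Frobenius exact category with the componentwise split exact structure.
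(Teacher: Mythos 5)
Your proposal is correct and follows essentially the same route as the paper: it identifies the same projective-injective objects $(A,A,\Id_A,0)$ and $(A,A,0,\Id_A)$ and exhibits the same pointwise-split conflations to show there are enough of them. The only cosmetic difference is that the paper deduces both projectivity and injectivity of these objects from two adjoint triples (induced by the dg functors $k\rightarrow\B$ and $\B\rightarrow k$), whereas you use one adjunction for projectivity and an explicit retraction computation for injectivity; your flagged strictification step is handled by the quadruple description of dg $\A\otimes\B^{op}$-modules and is unproblematic.
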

\begin{proof}
We first show that objects of the forms $\begin{tikzcd}X\ar[r,shift left=1ex, "\Id_{X}"]&X\ar[l,shift left=1ex,"0"]\end{tikzcd}$, and $\begin{tikzcd}X\ar[r,shift left=1ex, "0"]&X\ar[l,shift left=1ex,"\Id_{X}"]\end{tikzcd}$, where $X\in H^0(\A)$, are projective-injective in $\rep_{\dg}(\B,\A)$.
We denote by $I$ the dg functor $k\rightarrow \B$ sending the unique object of $k$ to the object $0$ in $\B$.
We obtain the following adjoint triple:
\[
\begin{tikzcd}
\D(\A)\ar[rr, shift left=2ex,"U"{description}]\ar[rr, shift right= 2ex,"R"{description}]&&\D(\A\otimes \B^{op})\ar[ll,"L"{description}]
\end{tikzcd}
\]
induced by $I$.
The functor $L$ sends an object $\begin{tikzcd}X\ar[r,shift left=1ex,"f"] &Y\ar[l,shift left=1ex, "g"]\end{tikzcd}$ in $\D(\A\otimes \B^{op})$ to the object $X$.
The functor $R$ sends an object $X$ to the object $\begin{tikzcd}X\ar[r,shift left=1ex, "\Id_{X}"]&X\ar[l,shift left=1ex,"0"]\end{tikzcd}$.
So we have 
\[
\Hom_{\D(\A\otimes \B^{op})}(\begin{tikzcd}X\ar[r,shift left=1ex,"f"] &Y\ar[l,shift left=1ex, "g"]\end{tikzcd}, R(Z))\iso \Hom_{\D(\A)}(X,Z).
\]
Therefore, objects of the form $\begin{tikzcd}X\ar[r,shift left=1ex, "\Id_{X}"]&X\ar[l,shift left=1ex,"0"]\end{tikzcd}$ are injective in $(\rep_{\dg}(\B,\A),\tilde{\S})$.
We denote by $J$ the dg functor $B\rightarrow k$ sending $0$ and $1$ to the unique object of $k$, and $u$ to $\Id$ and $v$ to $0$.
Then we obtain the following adjoint triple
\[
\begin{tikzcd}
\D(\A\otimes \B^{op})\ar[rr, shift left=2ex,"U'"{description}]\ar[rr, shift right= 2ex,"R'"{description}]&&\D(\A)\ar[ll,"L'"{description}]
\end{tikzcd}
\]
induced by $J$.
The functor $L'$ sends an object $X\in \D(\A)$ to the object $\begin{tikzcd}X\ar[r,shift left=1ex, "\Id_{X}"]&X\ar[l,shift left=1ex,"0"]\end{tikzcd}$ in $\D(\A\otimes \B^{op})$.
The functor $R'$ sends an object $\begin{tikzcd}X\ar[r,shift left=1ex,"f"] &Y\ar[l,shift left=1ex, "g"]\end{tikzcd}$ in $\D(\A\otimes \B^{op})$ to $X$.
So we have 
\[
\Hom_{\D(\A\otimes \B^{op})}(L'(Z), \begin{tikzcd}X\ar[r,shift left=1ex,"f"] &Y\ar[l,shift left=1ex, "g"]\end{tikzcd})\iso \Hom_{\D(\A)}(Z,X).
\]
Therefore, objects of the form $\begin{tikzcd}X\ar[r,shift left=1ex, "\Id_{X}"]&X\ar[l,shift left=1ex,"0"]\end{tikzcd}$ are projective in $(\rep_{\dg}(\B,\A),\tilde{\S})$.
Dually, objects of the form $\begin{tikzcd}X\ar[r,shift left=1ex, "0"]&X\ar[l,shift left=1ex,"\Id_{X}"]\end{tikzcd}$ are also projective-injective in $(\rep_{\dg}(\B,\A),\tilde{\S})$.
For each object $\begin{tikzcd}X\ar[r,shift left=1ex,"f"] &Y\ar[l,shift left=1ex, "g"]\end{tikzcd}$ in $\rep_{\dg}(\B,\A)$, we have a conflation
\[
\begin{tikzcd}
X\ar[rr,"{[}1{,}f{]^{\intercal}}"{description}]\ar[d,"f"swap]&& X\oplus Y\ar[d,"\begin{bmatrix}0\ 0\\0\ 1\end{bmatrix}"]\ar[rr,"{[}-f{,}1{]}"{description}]&&Y\ar[d,"g"]\\
Y\ar[rr,"{[}g{,}1{]^{\intercal}}"{description}]\ar[d,"g"swap]&& X\oplus Y\ar[d,"\begin{bmatrix}1\ 0\\0\ 0\end{bmatrix}"]\ar[rr,"{[}-1{,}g{]}"{description}]&&X\ar[d,"f"]\\
X\ar[rr,"{[}1{,}f{]^{\intercal}}"{swap,description}]&& X\oplus Y\ar[rr,"{[}-f{,}1{]}"{swap,description}]&&Y
\end{tikzcd}
\]
Hence the exact dg category $(\rep_{\dg}(\B,\A),\tilde{\S})$ has enough injectives. Dually, it also has enough projectives and therefore it is a Frobenius exact dg category.
\end{proof}
\subsection{Closed monoidal structure on $\Hqe^{\cn}_{\ex}$}
Let $(\A,\mathcal S)$ and $(\A',\mathcal S')$ be small exact dg categories. 
Recall that a morphism $\A\rightarrow \A'$ in $\Hqe$ is {exact} if the induced functor $\mathcal H_{3t}(\A)\rightarrow \mathcal H_{3t}(\A')$ sends objects in $\mathcal S$ to objects in $\mathcal S'$.
Recall that we denote by $\Hqe_{\ex}((\A,\S),(\A',\S'))$ the subset of $\Hqe(\A,\A')$ consisiting of exact morphisms. 
Let $(\C,\S'')$ be an exact dg category. 
A morphism $\mu: \A\otimes \A'\rightarrow \C$ in $\Hqe$ is {\em biexact}, if for each object $A\in \A$ and $A'\in \A'$, the induced morphisms 
\[
\mu_{A,-}:\A'\rightarrow \C,\;\;\mu_{-,A'}:\A\rightarrow \C
\]
are both exact morphisms.
Suppose $\A$ and $\A'$ are both connective. 
By Theorem~\ref{main}, we have universal fully exact embeddings $\A\rightarrow \D^b_{\dg}(\A,\S)$ and $\A'\rightarrow \D^b_{\dg}(\A',\S')$ into pretriangulated dg categories.
We denote by $\rep_{\dg}^{\ex}(\A,\A')$ the full dg subcategory of $\rep_{\dg}(\A,\A')$ consisting of the exact morphisms.
\begin{lemma}\label{lem:internalhom}
Suppose $\A$ and $\A'$ are small connective exact dg categories.
The dg category $\rep_{\dg}^{\ex}(\A,\A')$ is stable under extensions in $\rep_{\dg}(\A,\A')$. 
In particular, it inherits a canonical exact dg structure.
\end{lemma}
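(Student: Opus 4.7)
The plan is to verify that $\rep_{\dg}^{\ex}(\A,\A')$ is extension-closed in $\rep_{\dg}(\A,\A')$; the induced exact dg structure will then follow from Example~\ref{exm:exactdg}~6). Concretely, given a conflation $X \rightarrowtail Y \twoheadrightarrow Z$ in $\rep_{\dg}(\A,\A')$ with $X$ and $Z$ exact, I must show that $Y$ is exact.

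The first move is to enlarge the target to a pretriangulated dg category. Let $\iota \colon \A' \to \D^b_{\dg}(\A',\S')$ be the universal exact morphism of Theorem~\ref{main}. By Remark~\ref{rmk:funexact}, post-composition with $\iota$ yields an exact morphism $\rep_{\dg}(\A,\A') \to \rep_{\dg}(\A,\D^b_{\dg}(\A',\S'))$, the target being endowed with its canonical exact structure (Example~\ref{exm:exactdg}~1)). By the universal property of the pretriangulated hull, each of $\iota X, \iota Y, \iota Z\colon \A \to \D^b_{\dg}(\A',\S')$ extends canonically in $\Hqe$ to $\bar X, \bar Y, \bar Z\colon \pretr(\A) \to \D^b_{\dg}(\A',\S')$, and the conflation extends correspondingly, producing a distinguished triangle $\bar X(N) \to \bar Y(N) \to \bar Z(N) \to \Sigma \bar X(N)$ in $\D^b(\A',\S')$ for every $N \in \pretr(\A)$.

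The crucial observation, which I read off Lemma~\ref{univer}, is that a morphism $W\colon \A \to \T$ in $\Hqe$ with $\T$ pretriangulated is exact if and only if its canonical extension $\bar W\colon \pretr(\A) \to \T$ annihilates the thick subcategory $\N \subseteq \tr(\A)$ generated by totalisations of conflations of $(\A,\S)$. Since exactness is stable under composition, both $\iota X$ and $\iota Z$ are exact, so $\bar X$ and $\bar Z$ vanish on $\N$. For each $N \in \N$ the triangle $\bar X(N) \to \bar Y(N) \to \bar Z(N)$ then has vanishing outer terms, forcing $\bar Y(N) \simeq 0$. Hence $\iota Y$ is exact. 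Applied to any conflation $C\colon A_1 \xrightarrow{f} A_2 \xrightarrow{g} A_3$ in $\A$, the exactness of $\iota Y$ exhibits $YA_1 \xrightarrow{Yf} YA_2 \xrightarrow{Yg} YA_3$ as a distinguished triangle in $\D^b(\A',\S')$ whose terms all lie in the extension-closed subcategory $\A'$; Theorem~\ref{main}~2) identifies it with an $\mathbb{E}$-extension realising $YC$ itself as a conflation in $\A'$. Thus $Y$ is exact, as desired.

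The main technical point is the characterisation of exactness via annihilation of $\N$: it replaces the extriangulated $3 \times 3$-lemma one might otherwise need by the triviality that a distinguished triangle whose outer terms vanish has a vanishing middle term.
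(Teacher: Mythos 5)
Your proof is correct, but it follows a genuinely different route from the paper's. The paper fixes a conflation $A\to B\to C$ in $\A$, evaluates $X$, $Y$, $Z$ on it to get a $3\times 3$ diagram in $\C(\A')$, and argues with the subcategory $\N$ of $\tr(\A')$ attached to the \emph{target}: the totalizations of the outer rows are conflation-totalizations, hence lie in $\N_{\A'}$, and the totalization of the middle row, being an extension of these (an implicit $3\times 3$/octahedron argument in $\D(\A')$), lies in $\N_{\A'}$ as well; one then descends through $\D^b_{\dg}(\A',\S')$ exactly as you do. You instead work with the subcategory $\N$ attached to the \emph{source}: you extend the conflation of bimodules along $\A\to\pretr(\A)$ and evaluate at a totalization $N\in\N_{\A}$, obtaining a triangle with vanishing outer terms, and you replace the $3\times 3$ computation by the characterisation (correctly extracted from Lemma~\ref{univer}) that a morphism into a pretriangulated dg category is exact iff its canonical extension annihilates $\N_{\A}$. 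The two arguments are essentially transposes of one another — each amounts to saying that the totalization of the middle line of a $3\times 3$ grid is an extension of the totalizations of the outer lines — and both conclude with the same descent step via Theorem~\ref{main}~1) and 2). Your version trades the explicit diagram chase for the universal property, which is arguably cleaner; the only points to make fully precise are (i) that the conflation $\iota X\to\iota Y\to\iota Z$ really does induce a triangle of bimodules in $\D(\D^b_{\dg}(\A',\S')\otimes^{\mathbb L}\A^{op})$, so that evaluation at any $N\in\pretr(\A)$ is triangle-preserving, and (ii) a small terminological slip: $\N$ is defined as the triangulated (not thick) subcategory generated by the totalizations, though this does not affect the argument.
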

\begin{proof}
Suppose we have a conflation in $\rep_{\dg}(\A,\A')$
\[
F\rightarrow G\rightarrow H
\]
where we omit the homotopy, and where $F$ and $H$ are exact morphisms.
We would like to show that $G$ is also exact.
Assume that we are given a conflation in $\A$
\[
A\rightarrow B\rightarrow C
\]
where again we omit the homotopy. 
Then we have the following diagram in $\C(\A')$
\begin{equation}\label{dia:conflations}
\begin{tikzcd}
F(-,A)\ar[r,tail]\ar[d,tail]&F(-,B)\ar[d,tail]\ar[r,two heads]&F(-,C)\ar[d,tail]\\
G(-,A)\ar[r,blue]\ar[d,two heads]&G(-,B)\ar[d,two heads]\ar[r,blue]&G(-,C)\ar[d, two heads]\\
H(-,A)\ar[r,tail]&H(-,B)\ar[r,two heads]&H(-,C)
\end{tikzcd}
\end{equation}
where we have omitted the homotopies.
The totalizations of the first and the third rows are in $\N$ associated with $(\A',\S')$, cf.~Subsection~\ref{subsec:N}. 
 The totalization of the second row, being an extension of the totalizations of the first 
 and the third rows, is thus also in $\N$.
 Therefore the second row is  homotopy bicartesian in $\A'$.
  Consider the image of the diagram (\ref{dia:conflations}) under the universal exact morphism $F:\A'\rightarrow \D^b_{\dg}(\A',\S')$.
  By Theorem~\ref{main}, we see that the image of the second row under $F$ is isomorphic to the image of a conflation under $F$.
  Therefore, the second row in (\ref{dia:conflations}) is a conflation.
 It follows that $G$ is exact.
\end{proof}

Let $\A\boxtimes \A'$ be the $\tau_{\leq 0 }$-truncation of the extension closure of the quasi-essential image of the morphism in $\Hqe$
\[
F: \A\otimes \A'\rightarrow \D^b_{\dg}(\A)\otimes\D^b_{\dg}(\A')\rightarrow \pretr(\D^b_{\dg}(\A)\otimes\D^b_{\dg}(\A'))
\]
with the inherited exact structure.
\begin{proposition}\label{prop:universalbilinear}
 The natural morphism $\A\otimes \A'\rightarrow \A\boxtimes \A'$ is the universal biexact morphism in $\Hqe$ from $\A\otimes \A'$ to an exact dg category.
\end{proposition}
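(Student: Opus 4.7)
The plan is to verify biexactness of the natural morphism and then to establish its universal property by iteratively invoking Theorem~\hyperref[intro:main]{A} together with a currying step through internal homs.

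For biexactness of the canonical morphism $G: \A\otimes\A' \to \A\boxtimes\A'$, I would fix $A\in\A$ and factor $G_{A,-}$ as the composition
\[
\A' \to \D^b_{\dg}(\A',\S') \to \D^b_{\dg}(\A,\S)\otimes \D^b_{\dg}(\A',\S') \hookrightarrow \pretr\bigl(\D^b_{\dg}(\A,\S)\otimes \D^b_{\dg}(\A',\S')\bigr),
\]
landing in the extension closure of the image of $\A\otimes\A'$, and then in its $\tau_{\leq 0}$-truncation $\A\boxtimes \A'$. The first arrow is exact by Theorem~\hyperref[intro:main]{A}; the second is a dg functor into a (dg category whose canonical exact structure is given by) pretriangulated target and is exact since conflations in $\D^b_{\dg}(\A',\S')$ are shift-closed and send triangles to triangles under $-\otimes F(A)$; the inclusion into $\pretr$ and into an extension-closed dg subcategory preserves conflations; and passing to $\tau_{\leq 0}$ is harmless thanks to Example~\ref{exm:exactdg}~4) and connectivity of $\A\otimes^{\mathbb L}\A'$. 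The symmetric argument for fixed $A'\in\A'$ is identical.

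For the universal property, let $\mu:\A\otimes \A'\to \C$ be a biexact morphism to an exact dg category $\C$. Since $\A\otimes^{\mathbb L}\A'$ is connective and the exact structure on $\C$ is determined by $\tau_{\leq 0}\C$ (cf.~\cite[Remark 4.25]{Chen24}), we may assume $\C$ is connective, and post-compose with $\C\to \D^b_{\dg}(\C)$ from Theorem~\hyperref[intro:main]{A} to assume furthermore that the target of $\mu$ is pretriangulated. Biexactness of $\mu$ means that, upon currying the corresponding bimodule, we obtain an exact morphism $\tilde\mu:\A\to \rep^{\ex}_{\dg}(\A',\D^b_{\dg}(\C))$. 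By Lemma~\ref{lem:internalhom} combined with the fact that $\rep^{\ex}_{\dg}(\A',\D^b_{\dg}(\C))$ is shift-closed in the pretriangulated $\rep_{\dg}(\A',\D^b_{\dg}(\C))$ (since triangles in a pretriangulated dg category produce conflations), this internal hom dg category is itself pretriangulated. Thus by the universal property of Theorem~\hyperref[intro:main]{A}, $\tilde\mu$ extends uniquely to an exact morphism $\D^b_{\dg}(\A,\S)\to \rep^{\ex}_{\dg}(\A',\D^b_{\dg}(\C))$; uncurrying gives a biexact morphism $\D^b_{\dg}(\A,\S)\otimes \A'\to \D^b_{\dg}(\C)$. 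Applying the same currying argument in the second variable extends this further to a biexact $\D^b_{\dg}(\A,\S)\otimes \D^b_{\dg}(\A',\S')\to \D^b_{\dg}(\C)$, and the universal property of the pretriangulated hull then yields a morphism $\pretr(\D^b_{\dg}(\A,\S)\otimes \D^b_{\dg}(\A',\S'))\to \D^b_{\dg}(\C)$.

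Restricting to $\A\boxtimes\A'$, I would then use Theorem~\hyperref[intro:main]{A}~1) to identify $\tau_{\leq 0}\C$ with $\tau_{\leq 0}\D'$ for an extension-closed dg subcategory $\D'\subseteq \D^b_{\dg}(\C)$: since the image of $\A\otimes\A'$ lies in $\D'$ and $\D'$ is extension-closed, the extension closure of this image lies in $\D'$, and its $\tau_{\leq 0}$-truncation thus lands in $\tau_{\leq 0}\D'\simeq \C$. This produces the required exact morphism $\bar\mu:\A\boxtimes\A'\to \C$ with $\bar\mu\circ G\simeq \mu$; uniqueness follows at each step (Theorem~\hyperref[intro:main]{A}, currying, the universal property of $\pretr$) because an exact morphism from $\A\boxtimes\A'$ is already determined by its restriction to the image of $\A\otimes\A'$ via successive extension and truncation. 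The main obstacle will be making the currying/uncurrying manipulations rigorous in $\Hqe$ (controlling cofibrancy of $\A\otimes^{\mathbb L}\A'$, and checking that the $\rep^{\ex}_{\dg}$-valued morphism assembled from the two universal extensions coincides, as a bimodule, with what one expects), and verifying that $\rep^{\ex}_{\dg}(\A',\D^b_{\dg}(\C))$ is indeed pretriangulated so that Theorem~\hyperref[intro:main]{A} can be applied.
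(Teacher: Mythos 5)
Your proposal is correct and follows essentially the same route as the paper's proof: reduce to a connective target, embed it into its bounded dg derived category, curry the biexact morphism through $\rep^{\ex}_{\dg}$, apply the universal property of Theorem~\hyperref[intro:main]{A} in each variable to extend to $\D^b_{\dg}(\A,\S)\otimes\D^b_{\dg}(\A',\S')$ and then to its pretriangulated hull, and finally use extension-closedness of the image of $\tau_{\leq 0}\C$ in $\D^b_{\dg}(\tau_{\leq 0}\C)$ to land back in $\C$. You in fact spell out two points the paper leaves implicit (biexactness of the canonical morphism and the pretriangulatedness of $\rep^{\ex}_{\dg}(\A',\D^b_{\dg}(\C))$, which is needed to invoke the universal property), which is a welcome addition rather than a deviation.
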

\begin{proof}
Let $(\C,\S'')$ be an exact dg category and $G:\A\otimes \A'\rightarrow \C$ a biexact morphism in $\Hqe$. 
Since $\A\otimes \A'$ is connective, the morphism $G$ factors through $\tau_{\leq 0}\C$.
We still denote the induced exact structure on $\tau_{\leq 0}\C$ by $\S''$.
By adunction, we obtain a morphism $\A\rightarrow \rep_{\dg}^{\ex}(\A',\D^b_{\dg}(\tau_{\leq 0}\C,\S''))$.
We endow $ \rep_{\dg}^{\ex}(\A',\D^b_{\dg}(\tau_{\leq 0}\C,\S''))$ with the exact structure given by Lemma~\ref{lem:internalhom}.
Then the above morphism is exact and hence we obtain an induced morphism $\D^b_{\dg}(\A,\S)\rightarrow \rep_{\dg}^{\ex}(\A',\D^b_{\dg}(\tau_{\leq 0}\C,\S''))$.
Again by adjunction we obtain an exact morphism $\A'\rightarrow \rep_{\dg}(\D^b_{\dg}(\A,\S),\D^b_{\dg}(\tau_{\leq 0}\C,\S''))$ which yields a canonical morphism $\D^b_{\dg}(\A',\S')\rightarrow \rep_{\dg}(\D^b_{\dg}(\A,\S),\D^b_{\dg}(\tau_{\leq 0}\C,\S''))$.
So we have the following diagram
\[
\begin{tikzcd}
&\A\otimes \A'\ar[r,"F"]\ar[d]\ar[dd,bend right=10ex]&\pretr(\D^b_{\dg}(\A,\S)\otimes \D^b_{\dg}(\A',\S'))\ar[dd,dashed,bend left=10ex]\\
&\Im(F)\ar[r]\ar[ru,hook]&\A\boxtimes \A'\ar[u,hook]\ar[ld,dashed,red]\\
\C&\tau_{\leq 0}\C\ar[l,hook]\ar[r,hook]&\D^b_{\dg}(\tau_{\leq 0}\C,\S'')
\end{tikzcd}.
\] 
Since the essential image of the inclusion $\tau_{\leq 0}{\C}\rightarrow \D^b_{\dg}(\tau_{\leq 0}\C)$ is extension-closed, the conclusion follows immediately.
\end{proof}

\begin{corollary}\label{cor:exactadjunction}
Let $\A$ and $\B$ be small connective exact dg categories. 
Let $\C$ be a small exact dg category. 
Then we have the following natural bijection
\[
\Hqe_{\ex}(\A\boxtimes \B,\C)\iso \Hqe_{\ex}(\A,\rep_{\dg}^{\ex}(\B,\C)).
\]
\end{corollary}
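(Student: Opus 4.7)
The plan is to combine the universal property of $\A\boxtimes\B$ from Proposition~\ref{prop:universalbilinear} with the standard tensor-hom adjunction
\[
\Hqe(\A\otimes^{\mathbb L}\B,\C)\iso \Hqe(\A,\rep_{\dg}(\B,\C)),
\]
and to show that this bijection restricts to the stated one. The goal is to identify, under the adjunction, ``biexact morphism $\A\otimes \B\to \C$'' on the left with ``exact morphism $\A\to \rep_{\dg}^{\ex}(\B,\C)$'' on the right.

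First I would start with a morphism $\mu:\A\otimes\B\to \C$ in $\Hqe$ and its adjoint $\phi:\A\to \rep_{\dg}(\B,\C)$, and split biexactness into its two halves. The condition that $\mu_{A,-}:\B\to\C$ is exact for each $A\in\A$ translates, by the very definition of $\rep_{\dg}^{\ex}(\B,\C)$, into the statement that $\phi(A)\in\rep_{\dg}^{\ex}(\B,\C)$ for each object $A$; equivalently, $\phi$ factors through the full dg subcategory $\rep_{\dg}^{\ex}(\B,\C)$, which is extension-closed in $\rep_{\dg}(\B,\C)$ and hence carries a canonical exact structure by Lemma~\ref{lem:internalhom}. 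For the second half, I would use that the restriction $\mu_{-,B}:\A\to \C$ is naturally isomorphic in $\Hqe$ to the composite $\A\xrightarrow{\phi}\rep_{\dg}^{\ex}(\B,\C)\xrightarrow{\mathrm{ev}_B}\C$, so that the condition ``$\mu_{-,B}$ is exact for each $B$'' becomes ``$\mathrm{ev}_B\circ \phi$ is exact for each $B$''.

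Next I would invoke the pointwise description of the exact structure on $\rep_{\dg}(\B,\C)$ extracted from the proof of Theorem~\ref{fun} (via Lemmas~\ref{suff} and~\ref{pointwise}): a 3-term h-complex in $\rep_{\dg}(\B,\C)$ is a conflation precisely when its image under $\mathrm{ev}_B$ is a conflation in $\C$ for every $B\in\B$. Since $\rep_{\dg}^{\ex}(\B,\C)$ inherits its exact structure by being extension-closed in $\rep_{\dg}(\B,\C)$, the same pointwise characterization transports to it. Consequently, $\phi:\A\to \rep_{\dg}^{\ex}(\B,\C)$ is exact if and only if every composite $\mathrm{ev}_B\circ \phi$ is exact. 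Putting the two steps together, the standard adjunction restricts to a natural bijection between biexact morphisms $\A\otimes\B\to\C$ and exact morphisms $\A\to \rep_{\dg}^{\ex}(\B,\C)$. Applying Proposition~\ref{prop:universalbilinear} to rewrite the left-hand side as $\Hqe_{\ex}(\A\boxtimes\B,\C)$ closes the argument.

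The main obstacle I expect is the pointwise characterization of exactness for morphisms into $\rep_{\dg}^{\ex}(\B,\C)$: one direction (``exact implies pointwise exact'') is immediate because each $\mathrm{ev}_B$ is exact, but the converse relies on the fact, extracted from Lemma~\ref{pointwise}, that the homotopy pullbacks and pushouts required by $\Ex2$ and $\Ex2^{\mathrm{op}}$ are genuinely computed pointwise in $\rep_{\dg}(\B,\C)$ and remain inside the extension-closed subcategory $\rep_{\dg}^{\ex}(\B,\C)$. Once this pointwise picture is firmly in place, the remainder of the proof is a formal manipulation of adjunctions and universal properties.
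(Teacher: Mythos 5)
Your proposal is correct and follows essentially the same route as the paper, which proves the corollary by composing the universal property of $\A\boxtimes\B$ from Proposition~\ref{prop:universalbilinear} with the bijection between biexact morphisms $\A\otimes\B\to\C$ and exact morphisms $\A\to\rep_{\dg}^{\ex}(\B,\C)$. The paper states this second bijection without further justification, whereas you supply the (correct) verification via the pointwise characterization of the exact structure on $\rep_{\dg}(\B,\C)$ and the extension-closedness of $\rep_{\dg}^{\ex}(\B,\C)$ from Lemma~\ref{lem:internalhom}.
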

\begin{proof}
The required bijection follows from the following composition of bijections
\[
\Hqe_{\ex}(\A\boxtimes\B,\C)\iso \Hqe_{\mathrm{biexact}}(\A\otimes \B,\C)\iso\Hqe_{\ex}(\A,\rep_{\dg}^{\ex}(\B,\C))
\]
where $\Hqe_{\mathrm{exact}}(\A\otimes\B,\C )$ denotes the set of biexact morphisms.
\end{proof}
Let $\Hqe^{\cn}_{\ex}$ be the full subcategory of $\Hqe_{\ex}$ consisting of connective exact dg categories. 
Let $\I$ be the category of free $k$-modules of finite rank which we consider as a dg category concentrated in degree zero.
We endow $\I$ with the trivial exact structure.
It is clear that the tensor product $\A\boxtimes \A'$ extends to a bifunctor $-\boxtimes - :\Hqe_{\ex}^{\cn}\times \Hqe_{\ex}^{\cn}\rightarrow \Hqe_{\ex}^{\cn}$. 
\begin{proposition}
The bifunctor $-\boxtimes -$ defines a symmetric monoidal structure on $\Hqe^{\cn}_{\ex}$ with $\I$ being the tensor unit. 
\end{proposition}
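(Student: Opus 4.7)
The plan is to transport the symmetric monoidal structure on $(\Hqe, \otimes^{\mathbb L}, k)$ to $(\Hqe^{\cn}_{\ex}, \boxtimes, \I)$ by repeated application of the universal property of Proposition~\ref{prop:universalbilinear}. Throughout, the key meta-principle is that once a universal-biexact morphism factors a construction, all coherence data descend uniquely, so the coherence axioms are automatic.

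First I would dispose of the unit. A biexact morphism $F:\I\otimes\A\to \D$ in $\Hqe$, with $\D$ an exact dg category and $\I$ carrying the trivial exact structure, is exact in the $\I$-variable for free (every morphism out of $\I$ is exact, cf.~Example~\ref{exm:exactdg}~5)). Its restriction to $k\otimes \A\simeq \A$ is exact, and, since $\D$ is additive and $\I$ is the additive hull of $k$, $F$ is reconstructed from this restriction up to canonical isomorphism. Therefore
\[
\Hqe_{\ex}(\I\boxtimes\A,\D)\;\cong\;\Hqe_{\mathrm{biexact}}(\I\otimes\A,\D)\;\cong\;\Hqe_{\ex}(\A,\D),
\]
and Yoneda yields a canonical exact quasi-equivalence $\lambda_{\A}:\I\boxtimes \A\iso \A$; similarly on the right.

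Next, the symmetry $\sigma_{\A,\B}:\A\boxtimes \B\to \B\boxtimes \A$ is induced by precomposing the universal biexact morphism $\B\otimes \A\to \B\boxtimes \A$ with the swap $\tau_{\A,\B}:\A\otimes \B\iso\B\otimes\A$ in $\Hqe$; this composite is clearly biexact, hence factors through $\A\boxtimes\B$. The involutive relation $\sigma_{\B,\A}\circ\sigma_{\A,\B}=\id$ is forced by the uniqueness clause in Proposition~\ref{prop:universalbilinear} applied to the biexact morphism $\A\otimes \B\to \A\boxtimes \B$. For the associator, I would introduce the notion of a \emph{triexact} morphism $\A\otimes\B\otimes\C\to \D$ (exact in each argument separately when the other two are fixed) and check that the composite
\[
\A\otimes \B\otimes \C\longrightarrow (\A\boxtimes \B)\otimes \C\longrightarrow (\A\boxtimes \B)\boxtimes \C
\]
is the universal triexact morphism. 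Indeed, a triexact morphism $\A\otimes \B\otimes \C\to \D$ adjoins (via Corollary~\ref{cor:exactadjunction}) to a biexact morphism $\A\otimes \B\to \rep_{\dg}^{\ex}(\C,\D)$, which factors uniquely through $\A\boxtimes \B$, and the resulting exact morphism $(\A\boxtimes \B)\otimes \C\to \D$ in its second argument is biexact and so factors through $(\A\boxtimes \B)\boxtimes \C$. A symmetric argument shows $\A\boxtimes (\B\boxtimes \C)$ enjoys the same universal property, yielding a unique associator $\alpha_{\A,\B,\C}$.

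Finally, the pentagon, triangle, and hexagon identities reduce, after composing with the universal biexact/tri\-exact morphisms out of the relevant tensor products $\A\otimes\B$, $\A\otimes\B\otimes\C$, $\A\otimes\B\otimes\C\otimes\D$, to the corresponding identities for $(\Hqe,\otimes^{\mathbb L},k)$, which hold. Since the universal property identifies the two sides of each coherence diagram as the unique factorization of a fixed multi-exact morphism, each diagram commutes. The main obstacle in this argument is the bookkeeping required to set up the universal property for $n$-exact morphisms and to check that compositions with universal biexact morphisms stay in the multi-exact class; once this is in place, the entire symmetric monoidal structure, including coherence, is a formal consequence of uniqueness in the universal property together with the corresponding structure on $\Hqe$.
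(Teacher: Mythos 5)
Your proposal is correct and follows essentially the same route as the paper: associativity via identifying both bracketings as universal triexact morphisms out of $\A\otimes\B\otimes\C$, symmetry induced by the swap on $\otimes$, and the unit isomorphism coming from the observation that a biexact morphism out of $\I\otimes\A$ is forced to be exact in the $\I$-variable and is determined by its restriction to $k\otimes\A$ by additivity (the paper phrases this as the explicit morphism $G:(\bigoplus_J k, A)\mapsto \bigoplus_J A$ being universal biexact, using that finite direct sums of conflations are conflations). The coherence axioms are likewise dispatched in the paper by the uniqueness clause of the universal property, exactly as you indicate.
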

\begin{proof}
Let $\A$, $\B$ and $\C$ be small connective exact dg categories. 
The tensor products $\A\boxtimes (\B\boxtimes \C)$ and $(\A\boxtimes \B)\boxtimes \C$ are both solutions of universal {\em triexact} morphisms from $\A\otimes \B\otimes \C$ to an exact dg category.
Therefore they are naturally isomorphic in $\Hqe_{\ex}^{\cn}$.
It is straightforward to check that this defines a monoidal structure on $\Hqe_{\ex}^{\cn}$.

We have $\A\otimes \B\iso \B\otimes \A$ in $\Hqe$ which induces a canonical isomorphism $\A\boxtimes \B\iso \B\boxtimes \A$
in $\Hqe_{\ex}^{\cn}$. 
Therefore the monoidal structure is symmetric.

We may assume that the small connective exact dg category $\A$ is such that $Z^0(\A)$ is additive. 
We define a  morphism in $\Hqe$
\[
G: \I\otimes \A\rightarrow \A,\;\; (\bigoplus_{\text{$J$ finite}} k, A)\mapsto \bigoplus_{\text{$J$ finite}} A.
\]
By \cite[Proposition 4.9 c)]{Chen24}, it is biexact.
Let $F:\I\otimes \A\rightarrow \E$ be the universal biexact morphism.
Then we have $F(\bigoplus_{\text{$J$ finite}} k, A)\iso F(k,\bigoplus_{\text{$J$ finite}} A)$.
Hence, the morphism $G$ is universal biexact. Therefore,  $\I$ is the tensor unit.
\end{proof}
By Corollary~\ref{cor:exactadjunction}, this monoidal structure is {\em closed} with internal Hom $\tau_{\leq 0}\rep_{\dg}^{\ex}(?,-)$.
\begin{corollary}\label{cor:closedmonoidal}
Let $\A$, $\B$ and $\C$ be small connective exact dg categories. 
We have the following quasi-equivalence of exact dg categories
\[
\tau_{\leq 0}\rep^{\ex}_{\dg}(\A\boxtimes\B,\C)\iso\tau_{\leq 0}\rep^{\ex}_{\dg}(\A,\tau_{\leq 0}\rep_{\dg}^{\ex}(\B,\C)).
\]
\end{corollary}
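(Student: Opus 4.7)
The idea is a formal Yoneda argument in the closed symmetric monoidal category $(\Hqe^{\cn}_{\ex}, \boxtimes, \I)$ established in the preceding proposition and Corollary~\ref{cor:exactadjunction}. I would first verify that the adjunction bijection of Corollary~\ref{cor:exactadjunction} is natural in all three variables $\A$, $\B$, $\C$: naturality in $\C$ is immediate from the functoriality of $\rep^{\ex}_{\dg}(\B,-)$, while naturality in $\A$ and $\B$ follows from the universal property of the biexact morphism $\A \otimes \B \to \A \boxtimes \B$ given by Proposition~\ref{prop:universalbilinear}. Note also that for a connective dg category $\D$ and any (non-connective) exact dg category $\E$, any exact morphism $\D \to \E$ in $\Hqe$ factors uniquely through $\tau_{\leq 0}\E$, which allows one to freely insert or remove the outer $\tau_{\leq 0}$ when the source is in $\Hqe^{\cn}_{\ex}$.

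Concretely, for any small connective exact dg category $\D$, I would chain together three applications of Corollary~\ref{cor:exactadjunction} with the associativity of $\boxtimes$ to obtain natural bijections
\begin{align*}
\Hqe_{\ex}(\D, \tau_{\leq 0}\rep^{\ex}_{\dg}(\A \boxtimes \B, \C))
&\iso \Hqe_{\ex}(\D \boxtimes (\A \boxtimes \B), \C) \\
&\iso \Hqe_{\ex}((\D \boxtimes \A) \boxtimes \B, \C) \\
&\iso \Hqe_{\ex}(\D \boxtimes \A, \tau_{\leq 0}\rep^{\ex}_{\dg}(\B,\C)) \\
&\iso \Hqe_{\ex}(\D, \tau_{\leq 0}\rep^{\ex}_{\dg}(\A, \tau_{\leq 0}\rep^{\ex}_{\dg}(\B,\C))).
\end{align*}
Applying the Yoneda lemma in $\Hqe^{\cn}_{\ex}$, this natural isomorphism of representable functors yields an isomorphism in $\Hqe^{\cn}_{\ex}$ between the two exact dg categories. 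Since isomorphisms in $\Hqe^{\cn}_{\ex}$ are precisely exact quasi-equivalences (cf.~Definition~\ref{def:exactmorphism}), this concludes the argument.

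The main obstacle is making the Yoneda step genuinely rigorous rather than merely suggestive, since we are working in a homotopy category obtained by localisation. The cleanest remedy is to exhibit the comparison morphism explicitly: start from the identity element on the left-hand side, transport it through the chain above to obtain a canonical biexact evaluation $\A \boxtimes \B \boxtimes \tau_{\leq 0}\rep^{\ex}_{\dg}(\A \boxtimes \B, \C) \to \C$, and then classify it on the right to produce a morphism $\tau_{\leq 0}\rep^{\ex}_{\dg}(\A \boxtimes \B, \C) \to \tau_{\leq 0}\rep^{\ex}_{\dg}(\A, \tau_{\leq 0}\rep^{\ex}_{\dg}(\B,\C))$ in $\Hqe^{\cn}_{\ex}$. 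By the uniqueness clause in the universal property of $\boxtimes$ (Proposition~\ref{prop:universalbilinear}), this canonical map implements each of the bijections above and is therefore an isomorphism in $\Hqe^{\cn}_{\ex}$, i.e.,~an exact quasi-equivalence.
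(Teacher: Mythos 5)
Your argument is correct and is essentially the one the paper intends: the corollary is stated as a formal consequence of the closedness of the monoidal structure $(\Hqe^{\cn}_{\ex},\boxtimes,\I)$ established via Corollary~\ref{cor:exactadjunction}, and the paper leaves precisely this Yoneda/internal-hom adjunction chain implicit. Your additional care about naturality in all three variables and about exhibiting the comparison morphism explicitly is a sound way to fill in the details the paper omits.
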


\section{Application to $0$-Auslander extriangulated categories}\label{sec:applications}
An extriangulated category $(\C,\mathbb E,\mathfrak s)$ is {\em $0$-Auslander} \cite{GorskyNakaokaPalu23} if 
\begin{itemize}
\item it has enough projectives;
\item it has positive global dimension at most 1;
\item it has dominant dimension at least 1.
\end{itemize}
It is {\em reduced} if moreover the only projective-injectives are the zeros.
An exact dg category $\A$ is {\em $0$-Auslander} if $H^0(\A)$ is $0$-Auslander.
For a pretriangulated category $\T$ with full dg subcategories $\C$ and $\D$, we write $\C\ast\D$ the full dg subcategory of $\T$ consisting of objects in the subcategory 
\begin{align*}
H^0(\C)\ast H^0(\D)=\{X\in H^0(\T)&\mid \text{$\exists$ triangle $C\rightarrow X\rightarrow D\rightarrow \Sigma C$ in $H^0(\T)$, where} \\
&\text{ $C\in H^0(\C)$, $D\in H^0(\D)$}\}
\end{align*}
 of the triangulated category $H^0(\T)$.
Inspired by the talk by Matthew Pressland on the recent work of
Fang--Gorsky--Palu--Plamondon--Pressland \cite{FangGorskyPaluPlamondonPressland23}, we have the following $0$-Auslander correspondence.
\begin{theorem}[0-Auslander correspondence]\label{thm:0-Auslander correspondence}
There is a bijective correspondence between the following:
\begin{itemize}
\item[(1)] equivalence classes of connective exact dg categories $\A$ which are $0$-Auslander;
\item[(2)] equivalence classes of pairs $(\P,\I)$ consisting of 
\subitem{$\bullet$} a connective additive dg category $\P$, and
\subitem{$\bullet$} an additive dg subcategory $\I$ such that $H^0(\I)$ is covariantly finite in $H^0(\P)$.
\end{itemize}
The bijection from $(1)$ to $(2)$ sends $\A$ to the pair $(\P_{\A},\I_{\A})$ formed by  the full dg subcategory $\P_{\A}$ on the projectives of $\A$ and its full dg subcategory $\I_{\A}$ of projective-injectives. 
The inverse bijection sends $(\P,\I)$ to the $\tau_{\leq 0}$-truncation of the  dg subcategory $(\P*\Sigma\P)\cap \ker\, \Ext^1(-,\I)$ of $\pretr(\P)$.
\end{theorem}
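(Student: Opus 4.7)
The plan is to define two mutually inverse constructions and verify that each is well-defined up to the appropriate equivalence. For the forward direction $(1)\to(2)$, I take $\P_{\A}$ and $\I_{\A}$ to be the full dg subcategories of projectives and projective-injectives of $\A$, respectively; both are automatically connective. Covariant finiteness of $H^0(\I_{\A})$ in $H^0(\P_{\A})$ is exactly the dominant dimension axiom: for each projective $P$, the inflation $P\to I$ with $I\in\I_{\A}$ provided by dom.dim $\geq 1$ serves as a left $H^0(\I_{\A})$-approximation, since each $I'\in\I_{\A}$ is injective in $\A$ and hence $\Hom_{H^0(\A)}(I,I')\to\Hom_{H^0(\A)}(P,I')$ is surjective.

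For the backward direction $(2)\to(1)$, given $(\P,\I)$ I set $\B=(\P*\Sigma\P)\cap\ker\Ext^1(-,\I)$ inside $\pretr(\P)$ and $\A=\tau_{\leq 0}\B$. First I verify $\B$ is extension-closed in $\pretr(\P)$: the octahedral axiom shows $\P*\Sigma\P$ is extension-closed, and the long exact Hom sequence shows the same for $\ker\Ext^1(-,\I)$. Hence $\B$ inherits a canonical exact dg structure by Example~\ref{exm:exactdg} 1) and 6), which transfers to $\A$ via Example~\ref{exm:exactdg} 4) applied to the connective cover. To check $\A$ is $0$-Auslander: each $X\in H^0(\A)$ comes from a triangle $P_1\to P_0\to X\to\Sigma P_1$ with $P_i\in\P$, which by extension-closure is a conflation in $\A$, giving enough projectives of projective dimension at most one; and for dom.dim $\geq 1$, the approximation $P\to I$ has cone lying in $\B$ (here the condition $X\in\ker\Ext^1(-,\I)$ is precisely covariant finiteness of $\I$), producing the required conflation $P\to I\to X$ in $\A$.

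For the round trip $\A\mapsto(\P_{\A},\I_{\A})\mapsto\A'$, the inclusion $\P_{\A}\hookrightarrow\A\to\D^b_{\dg}(\A,\S)$ extends to $\pretr(\P_{\A})\to\D^b_{\dg}(\A,\S)$. This is a quasi-equivalence: Hom-complexes between projectives agree because $\mathbb{E}^i(P,P')=\Ext^i_{\D^b(\A)}(P,P')$ by Proposition~\ref{higher} and these vanish for $i\geq 1$ and projective $P$; essential surjectivity after triangulated closure follows because gl.dim $\leq 1$ yields length-$1$ projective resolutions. Under this identification, the image of $\A$ is contained in $(\P_{\A}*\Sigma\P_{\A})\cap\ker\Ext^1(-,\I_{\A})$: the first condition comes from projective presentations and the second from $\Ext^1_{\D^b(\A,\S)}(X,I)\cong\mathbb{E}(X,I)=0$ for $I\in\I_{\A}$ injective. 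The reverse round trip $(\P,\I)\mapsto\A\mapsto(\P_{\A},\I_{\A})$ uses that any projective $Q\in H^0(\A)$ splits off its projective cover $P_0\in H^0(\P)$, and similarly any injective in $H^0(\P)$ splits off its $\I$-approximation, so $\P_{\A}\simeq\P$ and $\I_{\A}\simeq\I$ (up to idempotent completion).

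The main obstacle is the sharpness of the above characterisation: showing that every $Y\in(\P_{\A}*\Sigma\P_{\A})\cap\ker\Ext^1(-,\I_{\A})\subset\D^b(\A,\S)$ lies in $H^0(\A)$. Writing $Y=\Cone(f\colon P_1\to P_0)$, the $\ker\Ext^1$ condition combined with the long exact sequence and Proposition~\ref{higher} translates into surjectivity of $\Hom(f,I)\colon\Hom(P_0,I)\to\Hom(P_1,I)$ for every $I\in\I_{\A}$. Using dom.dim $\geq 1$, pick an inflation $i\colon P_1\to I_1$ in $\A$ with $I_1\in\I_{\A}$; surjectivity yields $g\colon P_0\to I_1$ with $gf=i$. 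I then form the homotopy pushout
\[
\begin{tikzcd}
P_1\ar[r,"f"]\ar[d,"i"swap]&P_0\ar[d,"i'"]\\
I_1\ar[r,"f'"swap]&Z
\end{tikzcd}
\]
in $\A$ via axiom $\Ex2^{op}$, giving $Z\in H^0(\A)$; the universal property produces $h\colon Z\to I_1$ with $hf'=\Id_{I_1}$, so $Z\cong I_1\oplus K$ for some $K\in H^0(\A)$. Applying the octahedron to compare the triangles for $\Cone(f)$ and $\Cone((i,f)^{\intercal})$ yields a triangle $Y\to Z\to I_1\to\Sigma Y$ in $\D^b(\A,\S)$; the splitting identifies $Y\cong K$, placing $Y$ in $H^0(\A)$ and completing the proof.
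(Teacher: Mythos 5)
Your two constructions and most intermediate verifications (extension-closedness of $(\P*\Sigma\P)\cap\ker\Ext^1(-,\I)$ in $\pretr(\P)$, identification of the projectives and projective-injectives, the quasi-equivalence $\pretr(\P_{\A})\iso\D^b_{\dg}(\A,\S)$, and the pushout-along-an-$\I$-approximation argument for the hard inclusion) agree with the paper's. The gap is at the last step of your ``main obstacle'': from $Z\cong I_1\oplus K$ you assert $K\in H^0(\A)$ and hence $Y\cong K\in H^0(\A)$. This is false in general. The object $K$ is the kernel of the retraction $h\colon Z\to I_1$, computed in the ambient triangulated category $\D^b(\A,\S)$, and $H^0(\A)$ is not assumed weakly idempotent complete, so $K$ need only lie in the closure $\overline{\A}$ of $H^0(\A)$ under kernels of retractions. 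A concrete failure: take $\P$ to be the category of finitely generated free modules over a ring admitting a stably free non-free module, with the trivial (split) exact structure, so that $\I=\P$; then $(\P*\Sigma\P)\cap\ker\Ext^1(-,\P)$ is the weak idempotent completion of $\P$ and strictly contains $H^0(\P)$. This is precisely why the paper states the theorem for equivalence classes with respect to Definition~\ref{def:equ} (identifying $(\A,\S)$ with $(\overline{\A},\overline{\S})$ on one side and $(\P,\I)$ with $(\Q_{\P,\I},\J_{\P,\I})$ on the other), and why Proposition~\ref{prop:0-Auslander} concludes only that $Y\in\overline{\A}$, obtaining an exact quasi-equivalence on the nose only in the reduced case $\I=0$.

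Because of this, your proof also never engages with the equivalence relations the statement quantifies over: you do not check that the two assignments are well defined on equivalence classes (Lemma~\ref{rmk: equivalence 0-Auslander} and its analogue for pairs), nor that the round trips close up modulo these relations. The parenthetical ``up to idempotent completion'' is not the right fix, since the relevant completion is only the partial one realised inside $H^0(\A_{\P,\I})$; Lemma~\ref{lem:weakly idempotent complete} shows that category is weakly idempotent complete, but it is the target of your round trip and genuinely enlarges $\A$ in general. Replacing ``$Y\in H^0(\A)$'' by ``$Y\in\overline{\A}$'' and then running the whole argument at the level of the equivalence classes of Definition~\ref{def:equ} repairs the proof and recovers the paper's argument.
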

We will prove the theorem after Proposition~\ref{prop:0-Auslander}.
The map from (2) to (1) is obtained independently in \cite[Theorem 4.1]{FangGorskyPaluPlamondonPressland23a}.
Clearly, under the correspondence of the theorem,
\begin{itemize}
\item[a)] the exact dg categories in $(1)$ whose corresponding extriangulated category is moreover reduced correspond to the pairs in $(2)$ where $\I=0$; 
\item[b)] the exact dg categories in $(1)$ with the trivial exact structure (so that each object is both projective and injective) correspond to the pairs in $(2)$ where $\I=\P$.
\end{itemize}

For a pair $(\P,\I)$ in $(2)$, we denote by $\A_{\P,\I}$ the dg subcategory $\P*\Sigma\P\cap \ker(\Ext^1(-,\I))$ of $\pretr(\P)$ and by $\Q_{\P,\I}$ (resp.,~$\J_{\P,\I}$) be the full dg subcategory of $\A_{\P,\I}$ whose objects are the direct summands in $H^0(\A_{\P,\I})$ of objects in $H^0(\P)\subset H^0(\A_{\P,\I})$ (resp.,~$H^0(\I)\subset H^0(\A_{\P,\I})$).
Clearly $\Q_{\P,\I}$ is stable under kernels of retractions in $\pretr(\Q_{\P,\I})\iso \pretr(\P)$.
Below we will show that $H^0(\A_{\P,\I})$ is {\em weakly idempotent complete}, 
i.e.,~retractions of epimorphisms possess kernels, cf.~Lemma~\ref{lem:weakly idempotent complete}.

For a connective exact dg category $\A$, we denote by $\overline{\A}$ the full dg subcategory 
of $\D^{b}_{\dg}(\A,\S)$ consisting of objects in the closure under kernels of retractions of $H^0(\A)$ in $\D^{b}(\A,\S)$. 
Clearly $\overline{\A}$ is stable under extensions in $\D^b_{\dg}(\A,\S)$ and 
the thus inherited exact structure $\overline{\S}$ induces a quasi-equivalence $\D^b_{\dg}(\overline{\A},\overline{\S})\iso \D^b_{\dg}(\A,\S)$.

\begin{definition}\label{def:equ}
Two connective exact dg categories $(\A,\S)$ and $(\B,\S')$ are {\em equivalent} 
if there is an exact quasi-equivalence $\overline{\A}\iso \overline{\B}$.
 
Two pairs $(\P,\I)$ and $(\P',\I')$ in Theorem~\ref{thm:0-Auslander correspondence} (2) are {\em equivalent} if there is a quasi-equivalence $\Q_{\P,\I}\iso \Q_{\P',\I'}$ inducing a quasi-equivalence $ \J_{\P,\I}\iso \J_{\P',\I'}$.
\end{definition}
It is straightforward to verify the following fact.
\begin{lemma}\label{rmk: equivalence 0-Auslander}
A connective exact dg category $(\A,\S)$ is $0$-Auslander if and only if $(\overline{\A},\overline{\S})$ is $0$-Auslander.
\end{lemma}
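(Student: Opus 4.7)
The plan is to translate each of the three defining axioms of a $0$-Auslander extriangulated category across the canonical fully exact embedding $H^0(\A) \hookrightarrow H^0(\overline{\A})$. The essential preliminary observation is a compatibility of extensions: by Theorem~\ref{main}, Proposition~\ref{higher}, and the quasi-equivalence $\D^b_{\dg}(\overline{\A},\overline{\S}) \iso \D^b_{\dg}(\A,\S)$ built into the construction of $\overline{\A}$, the natural maps $\mathbb E^n_{\A}(X,Y) \to \mathbb E^n_{\overline{\A}}(X,Y)$ are bijective for all $X, Y \in H^0(\A)$ and all $n \geq 0$. Moreover, since $H^0(\overline{\A})$ is obtained from $H^0(\A)$ by iteratively closing under kernels of retractions inside the triangulated (hence weakly idempotent complete) category $\D^b(\A,\S)$, the category $H^0(\overline{\A})$ is weakly idempotent complete, and every object $\bar X \in H^0(\overline{\A})$ decomposes as a summand $X \cong \bar X \oplus Y$ of some $X \in H^0(\A)$. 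In a weakly idempotent complete extriangulated category, every split epimorphism is a deflation, and dually for split monomorphisms.

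For the implication that $\A$ is $0$-Auslander $\Rightarrow$ $\overline{\A}$ is $0$-Auslander, first note that an object $P \in H^0(\A)$ is projective in $\A$ if and only if it is projective in $\overline{\A}$: the group $\mathbb E^1_{\overline{\A}}(P, \bar Y)$ is a direct summand of $\mathbb E^1_{\overline{\A}}(P, Y) = \mathbb E^1_{\A}(P, Y)$ for any $\bar Y$ summand of $Y \in H^0(\A)$. Given $\bar X \in H^0(\overline{\A})$, write $X \cong \bar X \oplus Y$ with $X \in H^0(\A)$, choose a deflation $P \twoheadrightarrow X$ from a projective of $\A$, and compose with the split epimorphism $X \to \bar X$ to obtain a deflation $P \twoheadrightarrow \bar X$ in $\overline{\A}$ with $P$ projective; this yields enough projectives. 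Global dimension $\leq 1$ transfers because $\mathbb E^n_{\overline{\A}}(\bar X, \bar Y)$ is a summand of $\mathbb E^n_{\A}(X,Y)$, which vanishes for $n \geq 2$. For dominant dimension $\geq 1$, a projective of $\overline{\A}$ is a summand of a projective of $\A$ (apply enough projectives of $\A$ to a representative in $H^0(\A)$); the inflation into an injective guaranteed by the $0$-Auslander structure on $\A$ then pulls back along the split inclusion.

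For the converse direction, global dimension $\leq 1$ for $\A$ is again immediate from the $\mathbb E^n$-compatibility. For enough projectives, given $X \in H^0(\A)$, the $0$-Auslander property of $\overline{\A}$ yields a conflation $\bar J \to \bar P \to X$ in $\overline{\A}$ with $\bar P$ projective and $\bar J$ projective-injective; after choosing $P = \bar P \oplus P^{\ast}$ and $J = \bar J \oplus J^{\ast}$ in $H^0(\A)$ and summing with the identity conflation on $P^{\ast} \oplus J^{\ast}$, one obtains a conflation
\[
J \oplus P^{\ast} \longrightarrow P \oplus J^{\ast} \longrightarrow X
\]
in $\overline{\A}$ with all three terms in $H^0(\A)$, hence a conflation in $\A$ by extension-closedness. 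Applying the same absorption trick recursively to the complement $P^{\ast} \oplus J^{\ast}$ using the $0$-Auslander resolution in $\overline{\A}$ converts this to a deflation in $\A$ onto $X$ whose source is projective in $\overline{\A}$, and hence, since it lies in $H^0(\A)$, projective in $\A$. Dominant dimension $\geq 1$ in $\A$ is proved by the dual absorption argument applied to inflations into injectives.

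The main obstacle is the absorption/lifting argument in the backward direction: one must ensure that the $0$-Auslander resolutions chosen in $\overline{\A}$ can be recast so that all three terms lie in $H^0(\A)$ without losing projectivity or injectivity of the appropriate terms. Once the extension-group compatibility and the summand relation between $H^0(\A)$ and $H^0(\overline{\A})$ are in place, the rest is bookkeeping with sums of split conflations.
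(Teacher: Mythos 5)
Your forward implication is essentially the argument the paper intends: every object of $H^0(\overline{\A})$ is a direct summand of an object of $H^0(\A)$, the identifications $\mathbb E^n_{\A}\cong\mathbb E^n_{\overline{\A}}$ on $H^0(\A)$ follow from Theorem~\ref{main}, Proposition~\ref{higher} and the quasi-equivalence of derived categories, and the three axioms are transported by splitting off complements. The one point you gloss over is the dominant dimension: for a projective $\bar P$ of $\overline{\A}$ realised as a summand of a projective $Q$ of $\A$, one does not ``pull back'' the inflation $Q\rightarrowtail I$ but composes $\bar P\rightarrowtail Q\rightarrowtail I$ and checks via an octahedron that the resulting cokernel is an extension of the cokernel of $Q\rightarrowtail I$ by the complementary summand of $Q$; this is routine and is exactly how the paper argues.

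The backward implication is where your proposal has a genuine gap. First, enough projectives together with global dimension $\leq 1$ only produces a conflation $\bar J\to\bar P\to X$ with $\bar J$ \emph{projective}, not projective-injective. Much more seriously, the absorption step fails: the complements $P^{*}$ and $J^{*}$ are only objects of $H^0(\overline{\A})$, so $J\oplus P^{*}$ and $P\oplus J^{*}$ lie in $H^0(\overline{\A})$ but there is no reason whatsoever for them to lie in $H^0(\A)$, since $H^0(\A)$ is extension-closed in $\D^b(\A,\S)$ but is closed neither under direct summands nor under direct sums with objects of $H^0(\overline{\A})$. Consequently you may not invoke extension-closedness to promote the displayed sequence to a conflation of $(\A,\S)$, and the recursion on $P^{*}\oplus J^{*}$ never lands in $H^0(\A)$. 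This is not repairable by bookkeeping with split conflations: take $\D^b(\A,\S)\simeq\per(k)$ and let $H^0(\A)$ be the extension-closed additive subcategory on the objects $k[1]^{n}\oplus k^{m}$ with $m\geq n$, so that $H^0(\overline{\A})=\add(k\oplus k[1])\simeq\H^{[-1,0]}(\proj k)$ is $0$-Auslander. Then $H^0(\A)$ has enough projectives (namely $\add k$, via the conflations $k^{2a}\to k^{a+b}\to k[1]^{a}\oplus k^{b}$) and global dimension $\leq 1$, but its only projective-injective object is $0$ while $k[1]\notin H^0(\A)$, so the projective $k$ admits no inflation into a projective-injective and the dominant-dimension axiom fails for $\A$. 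So a split-conflation absorption argument cannot establish the converse; you need to identify precisely which consequences of the $0$-Auslander property of $\overline{\A}$ actually descend to an extension-closed but non-summand-closed subcategory with the same bounded derived category, and the example above shows that the dominant-dimension axiom is exactly the one that does not descend for free.
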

\begin{lemma}\label{lem:weakly idempotent complete}
For a pair $(\P,\I)$ in Theorem~\ref{thm:0-Auslander correspondence} (2), the category $H^0(\A_{\P,\I})$ is weakly idempotent complete.
\end{lemma}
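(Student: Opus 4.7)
The plan is, given a retraction $r : X \to Y$ in $H^{0}(\A_{\P,\I})$ with section $s$, to realise its kernel inside $\A_{\P,\I}$. Since $\pretr(\P)$ is pretriangulated, I would first form $K := \Sigma^{-1}\Cone(r)$; the triangle $K \to X \xrightarrow{r} Y \to \Sigma K$ has vanishing connecting morphism because $r$ admits the section $s$, so it splits and $X \cong K \oplus Y$ in $\tr(\P)$. It then suffices to verify that $K$ belongs to $\A_{\P,\I}$, i.e., that $K \in \P\ast\Sigma\P$ and $\Ext^{1}(K,I)=0$ for every $I \in \I$. The second condition is immediate by additivity from $\Ext^{1}(X,I)=0$; all the work lies in producing a two-term $\P$-resolution of $K$.

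For that, I would fix resolutions $P_{1}^{Z} \to P_{0}^{Z} \to Z \to \Sigma P_{1}^{Z}$ for $Z = X, Y$ with $P_{i}^{Z}\in\P$ and exploit the connectivity of $\P$: the vanishing $\Hom_{\tr(\P)}(P,\Sigma P') = H^{1}\Hom_{\P}(P,P') = 0$ for $P,P'\in\P$ lifts $r p_X : P_{0}^{X}\to Y$ and $s p_Y : P_{0}^{Y}\to X$ through the respective projective covers, yielding $\gamma : P_{0}^{X}\to P_{0}^{Y}$ and $\delta : P_{0}^{Y}\to P_{0}^{X}$ with $p_{Y}\gamma = r p_{X}$ and $p_{X}\delta = s p_{Y}$. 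The core of the argument is then to consider the combined morphism $\phi := (p_{X},\, s p_{Y}) : P_{0}^{X}\oplus P_{0}^{Y}\to X$ and compute its cone in two ways. On the one hand, $\phi = p_{X}\circ(\mathrm{id},\delta)$ exhibits $\phi$ as a composition with a split epimorphism of kernel $P_{0}^{Y}$, so the octahedron yields a triangle $\Sigma P_{0}^{Y}\to \Cone(\phi)\to \Sigma P_{1}^{X}\to \Sigma^{2}P_{0}^{Y}$ whose connecting morphism lies in the vanishing $\Hom(P_{1}^{X},\Sigma P_{0}^{Y})$, forcing $\Cone(\phi)\cong \Sigma P_{0}^{Y}\oplus \Sigma P_{1}^{X}$. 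On the other hand, using the splitting $X = K\oplus Y$ with $s$ as the inclusion of $Y$ and changing basis on the source via $(a,b)\mapsto (a,\, b-\gamma(a))$ to kill the off-diagonal entry $r p_{X} - p_{Y}\gamma$, the morphism $\phi$ becomes block-diagonal $p_{K}\oplus p_{Y}$, so that $\Cone(\phi)\cong \Cone(p_{K})\oplus \Sigma P_{1}^{Y}$. Equating these gives the crucial isomorphism
\[
\Cone(p_{K})\oplus \Sigma P_{1}^{Y}\;\cong\;\Sigma P_{0}^{Y}\oplus \Sigma P_{1}^{X} \quad\text{in } \tr(\P),
\]
equivalently, for $M := \Sigma^{-1}\Cone(p_{K})$, an isomorphism $M\oplus P_{1}^{Y}\cong P_{0}^{Y}\oplus P_{1}^{X}$.

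To finish, I would take the fiber sequence $M\to P_{0}^{X}\to K\to \Sigma M$ of $p_{K}$, form its direct sum with the contractible triangle $P_{1}^{Y}\xrightarrow{\mathrm{id}} P_{1}^{Y}\to 0\to \Sigma P_{1}^{Y}$, and transport the first vertex along the isomorphism above to produce the triangle
\[
P_{0}^{Y}\oplus P_{1}^{X}\;\to\; P_{0}^{X}\oplus P_{1}^{Y}\;\to\; K\;\to\; \Sigma(P_{0}^{Y}\oplus P_{1}^{X}),
\]
whose outer terms lie in $\P$; this places $K$ in $\P\ast\Sigma\P$ and hence, with the $\Ext$-vanishing, in $\A_{\P,\I}$. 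The main obstacle, both conceptually and technically, is that $\P\ast\Sigma\P$ is not \emph{a priori} closed under direct summands inside $\tr(\P)$ when $\P$ fails to be Karoubian: a naive attempt to extract a resolution of $K$ directly from one of $X$ yields it only in the idempotent completion of $\P$. The stabilisation trick of adding $P_{1}^{Y}$, which absorbs the failure of Karoubianness into an auxiliary summand, is what sidesteps this obstruction while keeping the two new outer terms $Q_{0}=P_{0}^{X}\oplus P_{1}^{Y}$ and $Q_{1}=P_{0}^{Y}\oplus P_{1}^{X}$ literally in $\P$.
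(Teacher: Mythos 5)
Your proof is correct, but it takes a more explicit route than the paper. The paper reduces to showing that $\P\ast\Sigma\P$ is stable under kernels of retractions in $\tr(\P)$ and then argues purely with the calculus of the $\ast$-operation: from the split triangle one gets $K\in\Sigma^{-1}\P\ast\P\ast\Sigma\P$, and the vanishing $\Hom_{\tr(\P)}(\Sigma^{-1}\P,K)=0$ (a consequence of connectivity) forces the $\Sigma^{-1}\P$-layer to split off and be absorbed, yielding $K\in\P\ast\Sigma\P\ast\Sigma\P=\P\ast\Sigma\P$; no presentation of $K$ is ever written down. You instead run a triangulated Schanuel-type argument: computing $\Cone(p_X,\,sp_Y)$ in two ways to obtain the isomorphism $M\oplus P_1^Y\cong P_0^Y\oplus P_1^X$, and then stabilising by the contractible triangle on $P_1^Y$ to produce an explicit two-term presentation $P_0^Y\oplus P_1^X\to P_0^X\oplus P_1^Y\to K$. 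Both arguments hinge on the same input, namely $\Hom_{\tr(\P)}(\P,\Sigma\P)=H^1\Hom_{\P}(-,-)=0$, used for the liftings $\gamma,\delta$ and the splitting of connecting maps. What your version buys is constructivity and transparency about the genuine subtlety you correctly flag: $\P$ is only assumed additive, not Karoubian, so a naive summand argument would land $K$ in $\add(\P)\ast\Sigma\P$ rather than $\P\ast\Sigma\P$, and your stabilisation by $P_1^Y$ is exactly what repairs this. The paper's version buys brevity: by always rotating split triangles rather than extracting complements of idempotents, it never has to confront the Karoubianness issue explicitly. (Minor point: you should also record, as you do in passing, that the condition $\Ext^1(-,\I)=0$ passes to the summand $K$, which is immediate; the paper leaves this implicit in its opening reduction.)
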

\begin{proof}
It is enough to show that $\P\ast\Sigma \P$ is stable under kernels of retractions in $\tr(\P)$.
Suppose we have $Y\iso X\oplus Z$ in $\tr(\P)$, where both $Y$ and $Z$ are in $\P\ast\Sigma\P$.
Then $X$ is an extension of $\Sigma^{-1} Z$ and $Y$ and so $X\in \Sigma^{-1}\P\ast\P\ast\Sigma\P$.
Since $\Hom_{\tr(\P)}(\Sigma^{-1}\P,X)=0$, we have $X\in \P\ast\Sigma\P\ast\Sigma\P=\P\ast\Sigma \P$.
\end{proof}
Now we show that for a pair $(\P,\I)$ in Theorem~\ref{thm:0-Auslander correspondence} (2), the dg subcategory $\A_{\P,\I}$ of $\pretr(\P)$ inherits a canonical exact dg structure whose corresponding extriangulated category is 0-Auslander.
\begin{lemma}\label{lem:0-Auslander}
Let $\P$ be a connective additive dg category and $\I\subseteq \P$ a full additive dg subcategory such that $H^0(\I)$ is covariantly finite in $H^0(\P)$.
The following statements hold.
\begin{itemize}
\item[(1)]  
The full subcategory $H^0(\A_{\P,\I})$ of $\tr(\P)$ is extension-closed and the inherited extriangulated structure is 0-Auslander. 
Put $\A=\tau_{\leq 0}\A_{\P,\I}$ and $\S$ the corresponding exact structure on $\A$.
\item [(2)] 
We have that $\Q_{\P,\I}$ is the full dg subcategory of projectives in $\A_{\P,\I}$ and $\J_{\P,\I}$ is the full dg subcategory of projective-injectives in $\A_{\P,\I}$. Moreover, the pair $(\Q_{\P,\I},\J_{\P,\I})$ is equivalent to $(\P,\I)$.
\item [(3)]The bounded dg derived category of $(\A,\S)$ is $\pretr(\P)$.
\end{itemize}
\end{lemma}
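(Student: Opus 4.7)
My plan is to verify each of the three parts in turn, exploiting the co-t-structure on $\tr(\P)$ from Proposition~\ref{cot}, the connectivity of $\P$ which gives $\Hom_{\tr(\P)}(\Sigma^{i}P,P')=0$ for all $i \geq 1$ and $P,P' \in \P$, and the identification $\mathbb{E}^{n} \simeq \Ext^{n}_{\D^{b}(\A)}$ of Proposition~\ref{higher}.

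For part~(1), I will first show that $H^{0}(\A_{\P,\I})$ is extension-closed in $\tr(\P)$: an octahedral-axiom argument using the defining triangles $P_{1}\to P_{0}\to X$ and $Q_{1}\to Q_{0}\to Z$ for $X,Z \in \P*\Sigma\P$ together with the above $\Hom$-vanishing places any extension $Y$ of $Z$ by $X$ again in $\P*\Sigma\P$; the vanishing $\Ext^{1}(Y,\I)=0$ then follows from the long exact sequence in $\Ext^{*}(-,I)$ applied to the extension triangle. For the 0-Auslander axioms, the triangle $P_{1}\to P_{0}\to Y\to \Sigma P_{1}$ is itself a conflation in $\A_{\P,\I}$ (since $P_{1}\in \P\subseteq \A_{\P,\I}$) with both ends projective, which simultaneously yields enough projectives and positive global dimension at most $1$. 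For dominant dimension at least $1$, I use covariant finiteness to pick a left $H^{0}(\I)$-approximation $P\to I$, complete to a triangle $P\to I\to U\to \Sigma P$, and verify $U\in \A_{\P,\I}$: the membership in $\P*\Sigma\P$ is immediate, while $\Ext^{1}(U,I')=0$ for $I'\in \I$ comes from the long exact sequence together with the surjectivity $\Hom(I,I')\twoheadrightarrow \Hom(P,I')$ from the approximation property and the vanishing $\Ext^{1}(I,I')=\Hom(I,\Sigma I')=0$ from connectivity. Since the objects of $\I$ are injective in $\A_{\P,\I}$ by construction, this provides the required inflation into a projective-injective.

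For part~(2), any projective $Y$ in $\A_{\P,\I}$ admits the splitting conflation $P_{1}\to P_{0}\to Y$ from part~(1), hence is a summand of $P_{0}\in \P$; conversely, summands of objects of $\P$ are projective since $\P$ is. This identifies $\Q_{\P,\I}$ with the full dg subcategory of projectives. Objects of $\I$ are both projective (via the same inclusion) and injective (by the defining vanishing), so their summands are projective-injective. Conversely, for a projective-injective $J$, writing $J\oplus J'\simeq P\in \P$ and applying part~(1) to $P$ produces a conflation $P\to I\to U$ whose restriction along $J\hookrightarrow P$ yields an inflation $J\to I''$ with $I''$ a summand of $I\in \I$; injectivity of $J$ splits this, showing $J\in \J_{\P,\I}$. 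The equivalence of pairs $(\P,\I)\sim(\Q_{\P,\I},\J_{\P,\I})$ in the sense of Definition~\ref{def:equ} is then immediate because the $\Q_{-,-}$ and $\J_{-,-}$ constructions only involve summand closure, which is idempotent.

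For part~(3), Theorem~\ref{main} applied to the exact inclusion $\A\hookrightarrow \pretr(\P)$ induces a canonical morphism $F\colon \D^{b}_{\dg}(\A,\S)\to \pretr(\P)$ in $\Hqe$. Essential surjectivity of $H^{0}(F)$ is clear since the image contains $\P$, which generates $\tr(\P)$ as a triangulated category. For quasi-full-faithfulness, it suffices to compare Hom complexes between objects $A,B\in \A$, because the locus where $F$ is quasi-fully faithful is closed under cones and shifts while $\D^{b}_{\dg}(\A,\S)$ is triangulated-generated by the image of $\A$. In non-positive degrees, the natural map $\A(A,B)\to \pretr(\P)(A,B)$ is a quasi-isomorphism by the defining truncation $\A=\tau_{\leq 0}\A_{\P,\I}$ together with Theorem~\ref{main}(1); in degree $1$, the bijection $\mathbb{E}(A,B)\simeq \Ext^{1}_{\tr(\P)}(A,B)$ follows from extension-closure of $\A_{\P,\I}$ in $\tr(\P)$ combined with Theorem~\ref{main}(2); in degrees $\geq 2$, both sides vanish, the left by global dimension at most $1$ via Proposition~\ref{higher} and the right by connectivity of $\P$ applied to the defining triangles for $A,B\in \P*\Sigma\P$. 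The main obstacle will be bookkeeping: ensuring the approximation argument in part~(1) genuinely produces a conflation in $\A_{\P,\I}$ rather than a mere triangle in $\tr(\P)$ at every step, and carefully tracking the various identifications among $\mathbb{E}^{n}$, $\Ext^{n}_{\D^{b}(\A)}$, and $\Ext^{n}_{\tr(\P)}$ so that the quasi-equivalence in part~(3) is unambiguously established.
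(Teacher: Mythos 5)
Your proposal is correct and follows essentially the same route as the paper's proof: extension-closure of $\P*\Sigma\P$ from connectivity of $\P$, the two-term presentation $P_1\to P_0\to X$ giving enough projectives and positive global dimension at most one, the left $\I$-approximation triangle giving dominant dimension and the splitting characterisation of projective-injectives, and Theorem~\ref{main} together with Proposition~\ref{higher} for part~(3). The only point to tidy is that dominant dimension must be verified for \emph{all} projectives, i.e.\ for summands of objects of $\P$ and not only for objects of $\P$ itself; this is immediate by composing your conflation $P\rightarrowtail I\twoheadrightarrow U$ with the split inclusion $X\to P$, exactly the composite trick you already use for projective-injectives, whereas the paper avoids the issue by running the approximation argument for an arbitrary object of $\A_{\P,\I}$.
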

\begin{proof}
(1) Since $\P$ is connective, the full dg subcqtegory $\P\ast\Sigma \P$ is extension-closed in $\pretr(\P)$ and hence $\A_{\P,\I}$ is also extension-closed in $\pretr(\P)$.

(2) It is clear that the objects in $\Q_{\P,\I}$ are projective.
 Let $X$ be an object in $\A$. 
 Then we have the following triangle in $\tr(\P)$:
 \begin{equation}\label{tri:X}
P_1\xrightarrow{f} P_2\xrightarrow{g} X\xrightarrow{h} \Sigma P_1.
\end{equation}
Suppose that $X$ is projective in $\A$. 
Then triangle (\ref{tri:X}) splits 
and hence $X\oplus P_1\iso P_2$ and $X\in \Q_{\P,\I}$.
Therefore, $\Q_{\P,\I}$ is the full dg subcategory of projective objects.
It is also clear that the objects in $\I$ are projective-injective.
 Let us show that $H^0(\A)$ has dominant dimension at least $1$: if $X$ is projective in $\A$, then the triangle (\ref{tri:X}) splits.
Since $H^0(\I)$ is covariantly finite in $H^0(\P)$, we have a triangle 
$P_2\xrightarrow{u} I\xrightarrow{v} Y\xrightarrow{w} \Sigma X$ in $\tr(\P)$,
where $I\in\I$ and $u$ is a left $\I$-approximation of $P_2$. 
Then it is clear that $Y$ lies in $\A$. 
 We have $\Hom_{\tr(\P)}(A,\Sigma^2A')=0$ for $A$ and $A'$ in $\A$. 
 By definition, we have that $I$ is injective in $H^0(\A)$.
 Therefore $Y$ is injective in $H^0(\A)$.
We have the following diagram in $\tr(\P)$:
\begin{equation}
\begin{tikzcd}\label{dia:X}
X\ar[r,equal]\ar[d]&X\ar[d]&&\\
P_2\ar[r,"u"]\ar[d]&I\ar[r,"v"]\ar[d]&Y\ar[d,equal]\ar[r]&\Sigma P_2\ar[d]\\
P_1\ar[r]&U\ar[r]&Y\ar[r]&\Sigma P_1.
\end{tikzcd}
\end{equation}
Since $U$ is an extension of $P_1$ and $Y$, it is injective in $\A$.
Therefore, from the middle column of diagram (\ref{dia:X}) we deduce that the dominant dimension of $X$ is a least 1.
Therefore, $H^0(\A_{\P,\I})$ is a $0$-Auslander extriangulated category.
If $X$ is moreover injective in $\A$, then the triangle in the middle column splits and hence $X$ lies in $\J_{\Q,\I}$.
So $\J_{\P,\I}$ is the full dg subcategory of projective-injectives in $\A_{\P,\I}$.
It is straightforward to check that the pair $(\Q_{\P,\I},\J_{\P,\I})$ is equivalent to $(\P,\I)$.

(3) The natural dg functor $\A\rightarrow \A_{\P,\I}\rightarrow \pretr(\P)$ is clearly exact.
So we have the following diagram in $\Hqe$:
\[
\begin{tikzcd}
\P\ar[r,hook]&\A\ar[r,hook]\ar[d,hook]&\A_{\P,\I}\ar[r,hook]&\pretr(\P)\\
&\D^b_{\dg}(\A,\S).\ar[rru,dashed,""]
\end{tikzcd}
\]
Note that the objects in $\P$ are projective in $\A$ and generate $\D^b(\A,\S)$ as a triangulated category.
The claim follows from Theorem~\ref{main} and Proposition~\ref{higher}.
\end{proof}

 \begin{proposition}\label{prop:0-Auslander}
Let $(\A',\S')$ be a connective exact dg category such that the corresponding extriangulated category $H^0(\A')$ is $0$-Auslander. 
Let $\P\subset \A'$ be the full dg subcategory of projectives and 
$\I\subset \P$ the full dg subcategory of projective-injectives (so $H^0(\I)$ is covariantly finite in $H^0(\P)$).
Then $(\A',\S')$ is equivalent (in the sense of Definition~\ref{def:equ}) to $(\A,\S)$ where $\A=\tau_{\leq 0}(\A_{\P,\I})$ and $\S$ is the exact structure given by Lemma~\ref{lem:0-Auslander}.
When $\I=0$, then $(\A',\S')$ is exactly quasi-equivalent to $(\A,\S)$.
\end{proposition}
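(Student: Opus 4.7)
The strategy is to realise both $\overline{\A'}$ and $\overline{\A}$ as full dg subcategories of the same ambient pretriangulated dg category $\pretr(\P)$, and to verify that they consist of the same objects with the same induced exact structure. Since $\A = \tau_{\leq 0}(\A_{\P,\I})$ with $\A_{\P,\I}$ extension-closed in $\pretr(\P)$, Lemma~\ref{lem:0-Auslander}(3) already identifies $\D^b_{\dg}(\A,\S)$ with $\pretr(\P)$, and $\overline{\A}$ is then the full dg subcategory on $H^0(\A_{\P,\I})$, which is weakly idempotent complete by Lemma~\ref{lem:weakly idempotent complete}. So everything reduces to describing $\D^b_{\dg}(\A',\S')$ as $\pretr(\P)$ in a compatible way.

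First I will show that the canonical extension $\Phi : \pretr(\P) \to \D^b_{\dg}(\A',\S')$ of the composite $\P \hookrightarrow \A' \hookrightarrow \D^b_{\dg}(\A',\S')$ is a quasi-equivalence. Because $\A'$ has enough projectives and positive global dimension at most $1$, every $X \in H^0(\A')$ admits a conflation $\Omega X \rightarrowtail P \twoheadrightarrow X$ with $\Omega X, P \in H^0(\P)$; this places $H^0(\A') \subseteq H^0(\P) \ast H^0(\Sigma\P)$ in $\D^b(\A',\S')$, and combined with the generation statement from Theorem~\ref{main} yields quasi-essential surjectivity of $\Phi$. Quasi-fully faithfulness on $\P$ reduces to the case of shifts by $i \geq 1$, where Proposition~\ref{higher} identifies $\Hom_{\D^b(\A',\S')}(P, \Sigma^i Q)$ with $\mathbb{E}^i(P,Q)$; this vanishes by projectivity of $P$ for $i = 1$ and by the global dimension bound for $i \geq 2$, matching the vanishing in the connective $\pretr(\P)$, while the non-positive part of the Hom complex is preserved by the $\tau_{\leq 0}$-clause of Theorem~\ref{main}.

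Using $\Phi$ to identify $\D^b_{\dg}(\A',\S')$ with $\pretr(\P)$, I next identify $\overline{\A'}$ with $H^0(\A_{\P,\I})$. The inclusion $H^0(\A') \subseteq H^0(\A_{\P,\I})$ combines the membership in $\P \ast \Sigma\P$ just established with the vanishing $\Ext^1_{\pretr(\P)}(X,I) \cong \mathbb{E}(X,I) = 0$ for $X \in \A'$ and $I \in \I$ injective (Proposition~\ref{higher}); Lemma~\ref{lem:weakly idempotent complete} then forces $\overline{\A'} \subseteq H^0(\A_{\P,\I})$. For the converse, take $Y \in H^0(\A_{\P,\I})$ with triangle $P_1 \xrightarrow{f} P_2 \to Y \to \Sigma P_1$; the dominant dimension hypothesis furnishes an inflation $i : P_1 \rightarrowtail I_1$ in $\A'$ with $I_1 \in \I$ and cokernel $P_1' \in \P$, and $\Ex2^{op}$ applied to $f$ and $i$ produces a homotopy bicartesian square and a conflation $P_2 \rightarrowtail E \twoheadrightarrow P_1'$, whence $E \in H^0(\A')$. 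Octahedralising the composition $P_1 \xrightarrow{(f,i)} P_2 \oplus I_1 \xrightarrow{\pi_{P_2}} P_2$ yields a triangle $I_1 \to E \to Y \to \Sigma I_1$, and applying $\Hom_{\pretr(\P)}(-, I_1)$ together with the defining vanishing $\Ext^1_{\pretr(\P)}(Y, I_1) = 0$ splits the map $I_1 \to E$, exhibiting $Y$ as a direct summand of $E \in H^0(\A')$, hence as an object of $\overline{\A'}$.

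Both $\overline{\A'}$ and $\overline{\A}$ are then extension-closed in $\pretr(\P)$ on the same set of objects, and both inherit their exact structures from this ambient pretriangulated dg category (Example~\ref{exm:exactdg} 6)); the two exact structures therefore coincide, producing the equivalence $\overline{\A'} \iso \overline{\A}$ in the sense of Definition~\ref{def:equ}. The main obstacle is the octahedral construction in step three: once the triangle $I_1 \to E \to Y \to \Sigma I_1$ is in place, the $\Ext^1$-vanishing built into $\A_{\P,\I}$ finishes matters mechanically. In the reduced case $\I = 0$, the subcategory $H^0(\A_{\P,0}) = H^0(\P) \ast H^0(\Sigma\P)$ is already weakly idempotent complete by Lemma~\ref{lem:weakly idempotent complete}, and, given the absence of nonzero projective-injectives, $H^0(\A')$ itself should coincide with $H^0(\P)\ast H^0(\Sigma\P)$ so that no closure under retracts is needed, upgrading the equivalence to an actual exact quasi-equivalence $(\A',\S') \iso (\A,\S)$.
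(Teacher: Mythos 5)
Your proposal is correct and follows essentially the same route as the paper: identify $\pretr(\P)$ with $\D^b_{\dg}(\A',\S')$ via Theorem~\ref{main} and Proposition~\ref{higher}, then show that every $Y\in\A_{\P,\I}$ is a direct summand of an object of $\A'$ by pushing the presentation $P_1\to P_2\to Y$ out along the conflation $P_1\rightarrowtail I_1$ supplied by the dominant-dimension hypothesis and splitting off $I_1$ using $\Ext^1(Y,\I)=0$ --- the paper packages this as a $3\times 3$ diagram rather than your octahedron, but the content is identical, and the $\I=0$ degeneration is handled the same way. One harmless slip: the cokernel of $P_1\rightarrowtail I_1$ is an injective object, not an object of $\P$; this does not affect your argument, since all you use is that $E$ is an extension of two objects of $\A'$ and hence lies in $\A'$.
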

\begin{proof}
Since $H^0(\A')$ is $0$-Auslander, the subcategory $H^0(\I)$ is covariantly finite in $H^0(\P)$.
So $(\P,\I)$ forms a pair in Theorem~\ref{thm:0-Auslander correspondence} (2).
We have the following diagram in $\Hqe$:
\[
\begin{tikzcd}
&\P\ar[r]\ar[ld,hook]&\A'\ar[r,hook]&\D^b_{\dg}(\A',\S')\\
\A_{\P,\I}\ar[r,hook]&\pretr(\P)\ar[rru,"\mu"swap]&&
\end{tikzcd}
\]
By Theorem~\ref{main} and Proposition~\ref{higher}, the morphism $\P\rightarrow \D^b_{\dg}(\A',\S')$ is quasi-fully faithful and $\D^b(\A',\S')$ is generated as a triangulated category by the objects in $\P$. 
Therefore, the morphism $\mu:\pretr(\P)\rightarrow \D^b_{\dg}(\A',\S')$ is a quasi-equivalence.
By Lemma~\ref{lem:0-Auslander} 3), $\pretr(\P)$ is the dg derived category of $(\A,\S)$.
It is enough to show that the quasi-essential image of $\A\subset \pretr(\P)$ under the morphism $\mu$ is in $\overline{\A'}$.
Let $X\in \D^b(\A',\S')$ be an object with a triangle (\ref{tri:X}) such that $\Hom_{\D^b(\A',\S')}(X,\Sigma \I)=0$. 
We claim that $X\in \overline{\A'}$. 
Indeed, the object $P_1$ in the triangle (\ref{tri:X}) admits a triangle
$
P_1\rightarrow I\rightarrow K\rightarrow \Sigma P_1
$
where $I\in \I$ and $K$ is injective in $H^0(\A')$.
We have the following diagram
\begin{equation}
\begin{tikzcd}\label{dia:Y}
P_1\ar[r,"f"]\ar[d]&P_2\ar[r,"g"]\ar[d]&X\ar[d,equal]\ar[r]&\Sigma P_1\ar[d]\\
I\ar[r]\ar[d]&U\ar[r]\ar[d]&X\ar[r]&\Sigma I\\
K\ar[r,equal]&K.& &
\end{tikzcd}
\end{equation}
By assumption, we have $\Hom(X,\Sigma I)=0$ and therefore $X\oplus I$ is isomorphic to $U$, an extension of $P_2$ and $K$ (so $U\in \A'$). 
Hence $X$ lies in $\overline{\A'}$.
On the other hand, it is clear that each object in $\A'$ is in the quasi-essential image of $\A$ under the morphism $\mu$.
If $\I=0$, then $X\iso U$ lies in $\A'$ and hence $\A'$ is quasi-equivalent to $\A$.
\end{proof}
\begin{proof}[Proof of Theorem~\ref{thm:0-Auslander correspondence}]\label{proof:main theorem}
By Lemma~\ref{rmk: equivalence 0-Auslander}, the property of being 0-Auslander is invariant under the equivalence relation introduced in Definition~\ref{def:equ}.
For a connective 0-Auslander exact dg category $(\A,\S)$, the inclusion $\A\rightarrow \overline{\A}$ induces a quasi-equivalence $\D^b_{\dg}(\A,\S)\iso \D^b_{\dg}(\overline{\A},\overline{\S})$ and $\D^b_{\dg}(\A,\S)$ is quasi-equivalent to $\pretr(\P_{\A})$.
Therefore, the map sending the equivalence class of $(\A,\S)$ to the equivalence class of $(\P_{\A},\I_{\A})$ is well-defined.

Let $(\P,\I)$ be a pair in Theorem~\ref{thm:0-Auslander correspondence} (2).
Put $\A=\tau_{\leq 0}\A_{\P,\I}$.
By Lemma~\ref{lem:0-Auslander},  $(\A,\S)$ is a $0$-Auslander exact dg category. 
By Lemma~\ref{lem:weakly idempotent complete} we have that  $H^0(\A_{\P,\I})$ is weakly idempotent complete. 
Therefore, the map sending the equivalence class of a pair $(\P,\I)$ to the equivalence class of $(\A,\S)$ is well-defined.

Combining Lemma~\ref{lem:0-Auslander} and Proposition~\ref{prop:0-Auslander}, we see that the above maps are inverse to each other.
\end{proof}

For an extriangulated category $(\C,\mathbb E,\mathfrak s)$, 
we denote by $\proj \C$ respectively $\inj \C$ its full subcategory of projectives respectively injectives.
Inspired by recent results of Fang--Gorsky--Palu--Plamondon--Pressland~\cite{FangGorskyPaluPlamondonPressland23a}, we have the following result which relates algebraic 0-Auslander extriangulated categories to homotopy categories of two-term complexes.
It confirms a conjecture in~\cite{FangGorskyPaluPlamondonPressland23a} in the algebraic case.
\begin{theorem}\label{thm:FGPPP}
For each algebraic $0$-Auslander extriangulated category $\C$, we have
an equivalence of extriangulated categories
\begin{equation}\label{equ:0-Auslander}
\C/[\inj \C\rightarrow\proj \C]\iso \H^{[-1,0]}(\proj \C/[\I])
\end{equation}
where $\I$ is the full subcategory of $\C$ consisting of projective-injectives.
\end{theorem}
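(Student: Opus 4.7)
The idea is to use the $0$-Auslander correspondence (Theorem~\ref{thm:0-Auslander correspondence}) as a bridge. By Proposition-Definition~\ref{algebraic}, $\C$ admits a connective dg enhancement, and by Proposition~\ref{prop:idealquotientinjectiveprojective} the quotient $\overline{\C} := \C/[\inj\C\rightarrow\proj\C]$ is again algebraic. I will show that $\overline{\C}$ is a \emph{reduced} $0$-Auslander extriangulated category whose projective subcategory is $\proj\C/[\I]$ and whose only projective-injective is~$0$; applying Theorem~\ref{thm:0-Auslander correspondence} in reverse to the associated pair $(\proj\C/[\I], 0)$ then produces the exact dg category $\tau_{\leq 0}((\proj\C/[\I])\ast\Sigma(\proj\C/[\I]))$, whose $H^0$ is $\H^{[-1,0]}(\proj\C/[\I])$ since $\proj\C/[\I]$ is concentrated in degree~$0$.

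The main ingredient is a short \emph{key lemma}: in any $0$-Auslander extriangulated category, every morphism $P\to J$ from a projective to an injective factors through an object of $\I$. Indeed, dominant dimension at least one provides an inflation $P\hookrightarrow I$ with $I\in\I$, and since $J$ is injective the morphism $P\to J$ extends along this inflation to give a factorisation $P\to I\to J$. Consequently, each generating composition $P\to J\to P'\to Q$ (with $J\in\inj\C$ and $P'\in\proj\C$) in the ideal $[\inj\C\rightarrow\proj\C]$ is already in $[\I]$, so that $[\inj\C\rightarrow\proj\C](P,Q) = [\I](P,Q)$ whenever $P,Q\in\proj\C$. This identifies $\proj\overline{\C}$ with $\proj\C/[\I]$. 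Moreover, each $I\in\I$ becomes isomorphic to $0$ in $\overline{\C}$ because $\id_I$ factors as $I\xrightarrow{\id}I\xrightarrow{\id}I$ in $[\inj\C\rightarrow\proj\C]$, so $\overline{\C}$ is reduced. Enough projectives and global dimension at most one descend from $\C$: deflations survive the ideal quotient, and two-term projective presentations in $\C$ descend to two-term projective presentations in $\overline{\C}$.

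\textbf{Main obstacle.} The principal technical hurdle is checking that the $0$-Auslander axioms genuinely survive the passage to $\overline{\C}$ --- in particular, that the extriangulated structure supplied by Proposition~\ref{prop:idealquotientinjectiveprojective} has global dimension at most one, which amounts to verifying that the higher extension bimodules $\mathbb E^n_{\overline{\C}}$ vanish for $n\geq 2$. The cleanest route is via the explicit construction in the proof of Proposition~\ref{prop:idealquotientinjectiveprojective}, which realises $\overline{\C}$ as $\F/(\text{family of projective-injectives})$ for an exact category $\F$; this furnishes a dg enhancement of $\overline{\C}$ to which the $0$-Auslander correspondence applies, and from which the matching of the pair $(\proj\C/[\I],0)$ becomes transparent. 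A secondary subtlety is the dominant dimension condition in the reduced regime $\I_{\overline{\C}}=0$, handled by appealing directly to the validity of the pair $(\proj\C/[\I],0)$ in Theorem~\ref{thm:0-Auslander correspondence}~(2), where the zero subcategory is automatically covariantly finite.
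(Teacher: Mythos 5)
Your key lemma (every morphism from a projective to an injective factors through $\I$, via dominant dimension and injectivity) is correct, and the reductions you sketch (that $\I$ becomes zero, that the ideal restricted to $\proj\C$ equals $[\I]$, that $\overline{\C}$ is algebraic by Proposition~\ref{prop:idealquotientinjectiveprojective}) are all sound or fillable. The genuine gap is in the final step, where you ``apply Theorem~\ref{thm:0-Auslander correspondence} in reverse to the pair $(\proj\C/[\I],0)$''. The correspondence is a statement about \emph{dg-level} data: the pair attached to $\overline{\C}$ is $(\Q,0)$ where $\Q$ is the full \emph{dg} subcategory of projectives inside a connective dg enhancement $\B$ of $\overline{\C}$, and the inverse map returns $\tau_{\leq 0}\A_{\Q,0}$, whose $H^0$ is the extension closure $H^0(\Q)\ast\Sigma H^0(\Q)$ computed inside $\tr(\Q)$. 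There one has $\Hom(\Sigma P,Q)=H^{-1}\Hom_{\Q}(P,Q)$, which has no reason to vanish for a general enhancement. You have only identified $H^0(\Q)$ with $\proj\C/[\I]$; that does not identify $\Q$ with $\proj\C/[\I]$ placed in degree $0$, so you cannot conclude that the two equivalence classes of pairs coincide, nor that $H^0(\A_{\Q,0})=\H^{[-1,0]}(\proj\C/[\I])$. This is not a pedantic point: the entire content of the theorem is precisely that passing to the ideal quotient ``formalises'' the projectives down to their $H^0$, and your proposal assumes this where it should prove it.

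The gap is fillable, but the filling is essentially the paper's proof. One can show $H^{-1}\Hom_{\Q}(P,Q)=\Hom_{\overline{\C}}(\Sigma P,Q)=0$, because the cosyzygy $\Sigma P$ in $\overline{\C}$ is the image of an injective of $\C$ (from a conflation $P\rightarrowtail I\twoheadrightarrow K$ with $I\in\I$), and all morphisms from injectives to projectives have been killed; one then checks that for two-term objects only $H^0$ and $H^{-1}$ of $\Hom_{\Q}$ enter the Hom and $\mathbb E$ spaces, so that $\Q\ast\Sigma\Q\simeq \H^{[-1,0]}(H^0(\Q))$ as extriangulated categories. That last comparison is exactly the five-lemma computation the paper carries out: it first reduces to $\I=0$ using Theorem~\ref{quot}, realises $\A\simeq\tau_{\leq 0}\A_{\P,0}$ via Proposition~\ref{prop:0-Auslander}, and then explicitly compares $H^0(\A)/[\J\rightarrow\P]$ with $\H^{[-1,0]}(H^0(\P))$ on two-term presentations. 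You should also close two smaller holes: that no \emph{new} projectives or projective-injectives appear in $\overline{\C}$ beyond retracts of images of $\proj\C$ (the former via enough projectives, the latter from the splitting of $X\rightarrowtail 0\twoheadrightarrow\Sigma X$ for $X$ injective), and the hereditarity of $\overline{\C}$, which you flag but do not verify.
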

\begin{proof}
Let $(\A,\S)$ be a connective dg enhancement for $\C$ and $\I$ its full dg subcategory of projective-injectives.
Let $\A/\I$ be the dg quotient. 
By Theorem~\ref{quot} the dg quotient $\A/\I$ carries a canonical exact structure $(\A/\I,\overline{\S})$ whose corresponding extriangulated category is $H^0(\A)/[\I]$ which is again {0-Auslander}.
So we may replace $(\A,\S)$ by $(\A/\I,\overline{\S})$ and assume $\I=0$. 
Let $\P$ be the full dg subcategory of projectives in $\A$ and $\J$ the full dg subcategory of injectives in $\A$.
Then we have $\J=\Sigma \P$ in $\pretr(\P)$ and $\Hom_{H^0(\A)}(\P,\J)=0$.
By Proposition~\ref{prop:0-Auslander}, $\A$ is exactly quasi-equivalent to $\tau_{\leq 0}\A_{\P,0}$.
We keep the same notations $\P$ and $\J$ for the corresponding subcategories $H^0(\P)$ and $H^0(\J)$ of $H^0(\A)$.

The dg functor $\P\rightarrow H^0(\P)$ induces a canonical exact morphism 
$
\A_{\P,0}\rightarrow \A_{H^0(\P),0}
$
which is quasi-dense.
By taking 0-homology on both sides, we obtain a canonical functor
$
H^0(\A)\rightarrow \mathcal H^{[-1,0]}(H^0(\P))
$.
In the homotopy category $\mathcal H^{[-1,0]}(H^0(\P))$ of two-term complexes we have $\Hom(\Sigma \P,\P)=0$. 
Therefore, it induces a canonical functor 
\[
F: \C'=H^0(\A)/(\J\rightarrow \P)\iso \mathcal H^{[-1,0]}(H^0(\P))=\C''.
\]
We denote by $\mathbb E'$ respectively $\mathbb E''$ the bifunctor associated with the extriangulated category $\C'$ respectively $\C''$.
Let $X$ and $Y$ be objects in $\A$. 
Then we have triangles in $\tr(\P)$
\begin{equation}\label{tri:X'}
P_1\xrightarrow{f} P_0\rightarrow X\rightarrow \Sigma P_1,
\end{equation}
and 
\begin{equation}\label{tri:Y'}
Q_1\xrightarrow{g} Q_0\rightarrow Y\rightarrow \Sigma Q_1,
\end{equation}
where $P_i$ and $Q_i$ are in $\P$ for $i=0$, $1$.
Since $\Hom_{H^0(\A)}(\P,\J)=0$, we have 
\[
\Hom_{\C'}(\P,\P)\iso \Hom_{H^0(\A)}(\P,\P)\iso\Hom_{\C''}(\P,\P),
\]
and 
\[
\Hom_{\C'}(\P,\Sigma\P)\iso \Hom_{H^0(\A)}(\P,\Sigma \P)\iso \Hom_{\C''}(\P,\Sigma\P)=0.
\]
We apply the cohomological functor $\Hom_{\tr(\P)}(-,\P)$ to the triangle (\ref{tri:X'}) and we get a long exact sequence of Hom spaces in $\tr(\P)$
\[
\ldots\rightarrow (\Sigma P_1,\P)\rightarrow (X,\P)\rightarrow (P_0,\P)\xrightarrow{(f,\P)} (P_1,\P)\rightarrow (X,\Sigma \P)\rightarrow 0. 
\]
By direct inspection we see that $\Hom_{\C'}(X,\P)$ is isomorphic to the kernel of the map $\Hom(f,\P)$ and that $\Hom_{H^0(\A)}(X,\Sigma \P)\iso \Hom_{\C'}(X,\Sigma \P)$. 
Therefore, we have 
\[
\Hom_{\C'}(X,\P)\iso \Hom_{\C''}(FX,F \P)
\]
 and $\Hom_{\C'}(X,\Sigma \P)\iso \Hom_{\C''}(FX,F \Sigma \P)$.
 Now from the $\mathbb E'$-triangle given by (\ref{tri:Y'}), we obtain the following diagram:
\[
\begin{tikzcd}
\cdots\ar[r]& {\C'}(X,Q_0)\ar[r]\ar[d,"\sim"]& {\C'}(X,Y)\ar[d,dashed]\ar[r]& \mathbb E'(X,Q_1)\ar[r,"\mathbb E'{(}X{,}g{)}"]\ar[d,"\sim"]& \mathbb E'(X,Q_0)\ar[d,"\sim"]\\
\cdots\ar[r]& {\C''}(FX,FQ_0)\ar[r]& {\C''}(FX,FY)\ar[r]& \mathbb E''(FX,FQ_1)\ar[r,"\mathbb E''{(}FX{,}Fg{)}"swap]& \mathbb E''(FX,FQ_0).
\end{tikzcd}
\]
Since $(\C',\mathbb E',\mathfrak s')$ is hereditary, the space $\mathbb E'(X,Y)$ is isomorphic to the cokernel of $\mathbb E'(X,g)$. 
Similarly, the space $\mathbb E''(FX,FY)$ is isomorphic to the cokernel of $\mathbb E''(FX,Fg)$.
We have $\mathbb E(X,Q_1)\iso \Hom_{H^0(\A)}(X,\Sigma Q_1)\iso\Hom_{\C'}(X,\Sigma Q_1)$. 
By the Five-Lemma, we see that $\Hom_{\C'}(X,Y)\iso \Hom_{\C''}(FX,FY)$ and $\mathbb E'(X,Y)\iso \mathbb E''(FX,FY)$.
Therefore, the functor $F$ is fully faithful and is an equivalence of extriangulated categories.
\end{proof}
\begin{remark}
While the statement of Theorem~\ref{thm:FGPPP} is irrelevant with the dg enhancement, our construction of the equivalence (\ref{equ:0-Auslander}) does depend on the dg enhancement.
\end{remark}
	\def\cprime{$'$} \def\cprime{$'$}
	\providecommand{\bysame}{\leavevmode\hbox to3em{\hrulefill}\thinspace}
	\providecommand{\MR}{\relax\ifhmode\unskip\space\fi MR }
	\providecommand{\MRhref}[2]{%
		\href{http://www.ams.org/mathscinet-getitem?mr=#1}{#2}
	}
	\providecommand{\href}[2]{#2}
%


	\bibliographystyle{amsplain}
	\bibliography{stanKeller}

\end{document}